\documentclass{amsart}
\usepackage{style}

\begin{document}
\setlength{\perspective}{2pt}
\title[Measures on Cameron's treelike classes and tensor categories]{Measures on Cameron's treelike classes and applications to tensor categories}
\author{Thanh Can}
\address{Jackson-Reed High School, Washington, D.C., USA}
\email{chithanh08dgs@gmail.com}
\author{Thomas R\"ud}
\address{Massachusetts Institute of Technology\\Cambridge, MA, USA}
\email{rud@mit.edu}
\date{\today}
\setcounter{tocdepth}{1}
\begin{abstract}
   Measures on Fra\"iss\'e classes are a key input in the Harman--Snowden (2022) construction of tensor categories. Treelike Fra\"iss\'e classes provide a particularly tractable source of examples. In this paper, we complete the classification of measures on Cameron's elementary treelike classes. In particular, for the class $\partial \mathfrak{T}_3(n)$ of node-colored rooted binary tree structures with $n$ colors, we classify measures by an explicit bijection with directed rooted trees edge-labeled by $\{1, \dots, n\}$ with a distinguished vertex, yielding $(2n+2)^n$ distinct $\ZZ\left[\frac 12\right]$-valued measures. For each $n \geq 1$, we use a family of measures $\mu_n^I$ and their supports $\partial \mathfrak{T}_3(n)^{\mathrm{ord}}_I$ (where $I \subseteq \{1, \dots n\}$) to construct the Karoubi envelopes $\mathbf{Rep}(\partial \mathfrak{T}_3(n)^{\mathrm{ord}}_I;\mu^I_n)$, producing infinite families of semisimple tensor categories with superexponential growth that cannot be obtained via Deligne's interpolation of representation categories. We also prove the nonexistence of measures on the $n$-colored tree class $C_n\mathfrak{T}$ for $n \geq 2$ and the labeled tree class $L \mathfrak{T}$, extending Snowden's results for uncolored trees.
\end{abstract}
\maketitle
\tableofcontents
\section{Introduction}
The primary objective of this paper is to construct a large number of new tensor categories through the computation of measures on Fra\"iss\'e classes (or, equivalently, oligomorphic groups). Though the general machinery has been relatively well-developed \cite{harman2024oligomorphicgroupstensorcategories}, computing measures is a notoriously difficult combinatorial problem. In this introduction, we first provide some background on the subject and recall previous work. We then provide an overview of our results and an outline of the paper.
\subsection{Background and motivation} 
For an affine (super)group scheme $G$, consider the category of representations $\mathbf{Rep}(G)$ over an algebraically closed field of characteristic zero. One can check that $\mathbf{Rep}(G)$ is a tensor category (Definition \ref{tenscat}), and Deligne \cite{Del02} shows, in fact, that every tensor category with subexponential growth (Definition \ref{subexp}) is equivalent to $\mathbf{Rep}(G,z)$ where $z$ is a suitable element of order $2$ acting by parity. We are interested in tensor categories with superexponential growth excluded from Deligne's characterization. 

Currently, it is difficult to understand tensor categories in a general setting due to the lack of concrete examples. Earlier work by Deligne \cite{Del02} and Knop \cite{Knop_2006, Knop_2007} show that novel examples of tensor categories with superexponential growth can be constructed by interpolating a sequence of representation categories of finite groups; for instance, Deligne obtains $\mathbf{Rep}(S_t)$ for $t \in \CC$ by interpolating the sequence of categories of representations of finite symmetric groups $\{\mathbf{Rep}(S_n)\}$ for positive integers $n$. 

In Harman and Snowden's recent work \cite{harman2024oligomorphicgroupstensorcategories}, a new way to build tensor categories based on oligomorphic theory emerged. A group $G$ acting on a set $\Omega$ is an \emph{oligomorphic group} if $G$ has finitely many orbits on $\Omega^n$ for all $n \geq 0$. The authors introduced the notion of a measure $\mu$ on oligomorphic groups valued in a field such that if $\mu$ satisfies certain properties, then we can construct the tensor category $\mathbf{Rep}(G;\mu)$ that is even semisimple in some cases. Early results \cite{harman2024oligomorphicgroupstensorcategories} include the tensor category $\mathbf{Rep}(S_{\infty}; \mu_t)$, which remarkably recovers Deligne's interpolation category $\mathbf{Rep}(S_t)$, and the Delannoy category associated with the oligomorphic group $\mathrm{Aut}(\RR,<)$, which is a semisimple tensor category.

Notably, in the same paper, the authors noted the intimate connection between model theory and oligomorphic groups to lay the groundwork for the construction of tensor categories through classes of finite combinatorial structures known as \emph{Fraïssé classes} (Definition \ref{fraisse}). The model-theoretic perspective allows for the reduction of the classification of measures to a purely combinatorial problem, where there is an analogous notion of complex-valued measures (Definition \ref{fraissemeasure}) on embeddings in Fraïssé classes. In this paper, we will focus on this perspective. 

To discuss our results later, we briefly describe the Harman--Snowden construction of tensor categories. Given a Fra\"iss\'e class $\mathfrak{F}$ with a measure $\mu$, we define the category $\mathbf{Perm}(\mathfrak{F}; \mu)$ whose objects are formal direct sums of structures, where $\mathrm{Vec}_X$ corresponds to $X \in \mathfrak{F}$. Morphisms $\mathrm{Vec}_X \to \mathrm{Vec}_Y$ are $\CC$-linear amalgamations (Definition \ref{amalg}) of $X$ and $Y$, and composition involves summing over amalgamations scaled by $\mu$. The category carries a tensor product $\otimes$ where $\mathrm{Vec}_X \otimes \mathrm{Vec}_Y = \bigoplus_Z \mathrm{Vec}_Z$ as $Z$ spans over all amalgamations of $X$ and $Y$. We then let $\mathbf{Rep}(\mathfrak{F}; \mu)$ be the Karoubi envelope of $\mathbf{Perm}(\mathfrak{F}; \mu)$. See Section \ref{category} for the specific details.

Through the above construction, measures on Fra\"iss\'e provide a way to produce entirely new examples of tensor categories: indeed, as long as a Fra\"iss\'e class contains structures of arbitrary (finite) size and admits a regular measure (Definition \ref{reg}), then the tensor category arising from it will have superexponential growth \cite[Remark 8.5]{harman2024oligomorphicgroupstensorcategories}. It is also frequently the case that the tensor categories arising from the Harman--Snowden construction could not be obtained by interpolation (with $\mathbf{Rep}(S_{\infty}; \mu_t)$ as an exception), particularly when the associated oligomorphic group does not resemble finite permutation groups; the groups we work with in this paper indeed do not. Nonetheless, measures remain relatively obscure objects, and only a small number of cases have been resolved \cite{harman2024arborealtensorcategories, snowden2024thirtysevenmeasurespermutations, snowden2023measurescoloredcircle, snowden2023fastgrowingtensorcategories, snowden2024representationtheorysymmetrygroup, kriz_quantum_delannoy_categories, Nekrasov2024TensorialMeasures}. 
\subsection{Overview of results}
To state the main results, we first note that there is the formal notion of a \emph{measure ring} (Definition \ref{thetag}), which is a commutative ring $\Theta(\mathfrak{F})$ such that $\CC$-valued measures correspond to ring homomorphisms $\mu: \Theta(\mathfrak{F}) \to \CC$. Thus, it suffices to compute $\Theta(\mathfrak{F})$ to classify measures on $\mathfrak{F}$. Moreover, to analyze measures on subclasses, we later extend the domain of measures to also include structures, where $\mu(X) = \mu(\varnothing \to X)$ for a structure $X$.

In Section \ref{s2}, we provide necessary background on Fra\"iss\'e classes, oligomorphic groups, measures, and prove some preliminary theoretical results. We define Cameron's treelike classes: the $n$-colored tree structures $C_n \mathfrak{T}$ and their degree-$\leq k$ variants $C_n \mathfrak{T}_k$ where $n \geq 2$ and $k \geq 3$, the labeled tree structures $L \mathfrak{T}$, and the $n$-colored rooted binary tree structures $\partial \mathfrak{T}_3(n)$. These treelike classes generalize the uncolored tree structures studied by Harman, Snowden, and Nekrasov, where the corresponding measures are classified \cite{harman2024arborealtensorcategories}.

In Section \ref{sec3}, we first show that $C_n \mathfrak{T}$ and $C_n \mathfrak{T}_k$ do not admit any measures, illustrating the complicated nature of measures and equations arising from them.
\begin{theorem}[Theorem \ref{firstzero}]
We have
$\Theta(C_n \mathfrak{T}) \cong \Theta(C_n \mathfrak{T}_k) \cong \mathbf{0}.$
\end{theorem}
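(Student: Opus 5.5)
The plan is to show that the relations defining $\Theta(C_n\mathfrak{T})$ and $\Theta(C_n\mathfrak{T}_k)$ force $1=0$. Concretely, I want a small, explicit family of structures whose measures must satisfy a finite system of equations with no solution in any nonzero commutative ring. I will work throughout with the combinatorial description of measures on a Fra\"iss\'e class (Definition \ref{fraissemeasure}). The two relations I rely on are: multiplicativity along composite embeddings $X\to Y\to Z$, and additivity, where $\mu(X\to Y)$ equals the sum of $\mu(X'\to Y')$ over the amalgamations $Y'$ of $Y$ with a one-point extension $X'$ of $X$. Both computations only involve structures on at most four or five points. These are realized in $C_n\mathfrak{T}_k$ for every $k\geq 3$ exactly as in $C_n\mathfrak{T}$, and the argument never uses a branch point of degree above three. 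So I will carry it out once and obtain $\Theta(C_n\mathfrak{T})\cong\Theta(C_n\mathfrak{T}_k)\cong\mathbf{0}$ simultaneously.

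\textbf{Step 1: name the basic quantities.} For each color $i$, let $a_i=\mu(\varnothing\to X_i)$, where $X_i$ is the one-point structure of color $i$. For small structures (two or three points) the tree relation carries no information beyond the colors, so their measures are determined by the $a_i$ and by measures of one-point extensions. Now fix three points $x,y,z$ and a new point $w$ of color $j$. The one-point extensions are classified by how the subtree spanned by $x,y,z$ is entered. The new leaf can attach on the branch toward $x$, toward $y$ or toward $z$ (the splits $xw|yz$, etc.), or at the central branch point (a star configuration, where available). Call the measures of these types $b_j$ and $c_j$; by the symmetry of the automorphism group they depend only on the colors involved.

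\textbf{Step 2: write down the relations.} Additivity applied to the extension $\{x,y,z\}\to\{x,y,z,w\}$ gives linear relations of the form $a_j = 3b_j + c_j$, after accounting for colors. Next I compute $\mu$ of a fixed four-point structure in two ways, by adding its points in different orders. Multiplicativity then yields quadratic relations among the $a$'s, $b$'s and $c$'s. Finally, I push to five points: I compare two amalgamations of four-point structures over a common three-point substructure. This produces further relations in which the color of the added point interacts nontrivially with the existing splits.

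\textbf{Step 3: derive the contradiction.} This is the main obstacle. Snowden's uncolored argument forces a contradiction by comparing two extension counts that differ by a ``doubling'' of branches. In the colored setting the worry is that, with one color, the system might be consistent. My first attempt will use two distinct colors $i\neq j$, which is where $n\geq 2$ enters. I will compare extending by a color-$i$ point and then a color-$j$ point against the reverse order, inside a configuration where the two new points land on the same branch. My expectation is that commutativity then forces $a_i=a_j$ together with a relation such as $a_j = 2a_j + 1$ or $(a_i-a_j)=1$, and hence $1=0$. If this particular pair of orders turns out to give a consistent system, I will instead enlarge the base structure to four points, so that the new point can split an existing branch into pieces carrying mixed colors. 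I will then redo the bookkeeping using the Gr\"obner-style elimination that the measure ring presentation (Definition \ref{thetag}) permits. Once $1=0$ holds in $\Theta$, both isomorphisms in the statement follow at once.
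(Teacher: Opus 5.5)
\textbf{There is a genuine gap: you never derive the contradiction.} Step 3 is the only step that could produce $1=0$, and you leave it as an expectation with a fallback (``if this turns out to give a consistent system, I will instead \dots''). So no inconsistent system is ever exhibited.

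Moreover, the relations you actually write down cannot produce one. The reordering relations $[T,a][T\setminus a,b]=[T,b][T\setminus b,a]$ are homogeneous, and so is your linear relation $a_j=3b_j+c_j$. All of them hold if every one-point extension class is sent to $0$, so ``commutativity'' can never force something like $a_j=2a_j+1$. Constants enter only through additivity terms in which the new point is \emph{identified} with an existing point of the same color. Such an embedding is an isomorphism and contributes $1$, and your linear relation omits exactly these terms.

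The paper's key linear step (Lemma~\ref{odd}) is built on these identification terms and on the color asymmetry. Amalgamate $\{a,b\}\to\{a,b,c'\}$ (the all-black class $x_{3,1}$) with $\{a,b,c\}$ over $\{a,b\}$. If $c$ is black, $c$ and $c'$ may be identified, giving $x_{3,1}=1+x_{4,1}+3y_1$. If $c$ is white they cannot, giving $x_{3,1}=x_{4,1}+y_2+2y_1$. Hence $y_2=y_1+1$, and symmetrically $y_3=y_4+1$. This is where $n\geq 2$ really enters.

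\textbf{You also lack the reduction machinery that makes the bookkeeping close up.} Two ingredients are needed:
\begin{itemize}
\item the separation criterion of Proposition~\ref{31}: leaves $a,b$ are separated if $d(N(a),N(b))\geq 2$, or if $N(a)$ or $N(b)$ has degree $\geq 4$;
\item its consequence, Lemmas~\ref{33} and~\ref{34}: $[T,a]$ depends only on the colors at and around $N(a)$.
\end{itemize}
These are needed already to recognize the terms $x_{4,1}$ and $2y_1$ in the white-$c$ equation. Without them, every five-point structure you introduce brings fresh unknowns, and there is no reason your enlarged system must become inconsistent. With them, the paper evaluates four specific double-marked five-leaf caterpillars and obtains in turn $y_3=0$, $y_4=-1$, $(y_1,y_2)=(-1,0)$, $z_1=0$, and finally $z_1y_1=y_1^2$, i.e.\ $0=1$.

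\textbf{Your claim that the computation is identical in $C_n\mathfrak{T}_k$ for every $k\geq 3$ is false.} Additivity sums over \emph{all} amalgamations in the class. The degree-four star amalgamations (your own $c_j$) occur in $C_n\mathfrak{T}$ and in $C_n\mathfrak{T}_k$ for $k\geq 4$, but not in $C_n\mathfrak{T}_3$, so the linear relations differ between the classes. The paper handles this by arranging that the only star term, $x_{4,1}$, appears in both equations being compared. It therefore cancels when $k\geq 4$ and is absent from both when $k=3$.
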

The same applies to $L \mathfrak{T}$:
\begin{theorem}[Theorem \ref{secondzero}]
    We have $\Theta(L\mathfrak{T}) \cong \mathbf{0}$.
\end{theorem}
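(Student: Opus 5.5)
The plan is to prove that $1 = 0$ in $\Theta(L\mathfrak{T})$. Rather than redo the long computation from scratch, I would first try to reduce to Theorem~\ref{firstzero}. Any ring homomorphism out of the zero ring forces its target to be zero. So it suffices to produce a ring homomorphism $\Theta(C_n\mathfrak{T}) \to \Theta(L\mathfrak{T})$ for some $n \ge 2$.

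The natural source of such a map is localization at a base structure. Fix a finite labeled tree structure $X \in L\mathfrak{T}$ and consider the class of structures $Y$ equipped with an embedding $X \to Y$. Given a measure $\mu$ on $L\mathfrak{T}$, the assignment $(X \to Y \to Y') \mapsto \mu(Y \to Y')$ is again a measure, now on this relative class. I would verify this using the preliminary results of Section~\ref{s2}, namely that measures are compatible with composition and with the amalgamation decomposition of fiber products.

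The key claim to check is the following. Choose $X$ to consist of a few points whose labels separate the remaining vertices of the tree into $n$ branches. Then the points of $Y \setminus X$, together with the induced tree relation, should form a $C_n\mathfrak{T}$-structure, the color being the branch relative to $X$ in which a point sits. Moreover, embeddings over $X$ should correspond exactly to embeddings in $C_n\mathfrak{T}$. If so, we obtain a homomorphism $\Theta(C_n\mathfrak{T}) \to \Theta(L\mathfrak{T})$, hence $\Theta(L\mathfrak{T}) = \mathbf{0}$.

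I expect this correspondence to be the main obstacle. The branch coloring must be genuinely free: every $n$-colored tree must arise, and no extra relations may be imposed. The labels in $L\mathfrak{T}$ might also carry additional information, which would make the relative class strictly richer than $C_n\mathfrak{T}$. If the correspondence fails, I would get only a map into a larger class. In that case I would instead check that the specific relations used in the proof of Theorem~\ref{firstzero} survive under pullback.

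If the reduction does not go through cleanly, the fallback is a direct computation in the style of Snowden's argument for uncolored trees:
\begin{enumerate}
\item Name the values of $\mu$ on the one-point structure and on the few two- and three-point structures. For each label type this gives unknowns $a, b, \dots$ in $\Theta(L\mathfrak{T})$.
\item Use the additivity axiom in the form
\[
\mu(X\to Z)=\sum \mu(X\to Y)\,\mu(Y\to Z).
\]
Compute the measure of a three-point structure over the empty set in two ways, via different intermediate substructures. For instance, decompose the one-point extensions of a two-point structure according to where the new point attaches: on a labeled edge, at a new branch vertex, or at an existing one.
\item Compare the two expansions. They should give linear relations among the unknowns, such as $a = a+1$ or $ab = b + 1$, which are incompatible and yield $1 = 0$.
\end{enumerate}
Bookkeeping the orbit types of extensions is tedious but routine. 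The real work is finding a pair of decompositions whose difference is a unit.
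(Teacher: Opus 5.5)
Your reduction to Theorem~\ref{firstzero} does not go through. Restricting $\mu$ to the relative class $\mathscr{F}_X$ is fine: it is restriction to the open subgroup fixing $X$, and it gives a homomorphism $\Theta(\mathscr{F}_X)\to\Theta(L\mathfrak{T})$. But your key claim that $\mathscr{F}_X$ is just $C_n\mathfrak{T}$ is false for every finite $X$. Fixing finitely many points of a dense linear order leaves the order on the remaining points intact, and the position of a new leaf relative to the subtree spanned by $X$ is also remembered. So $\mathscr{F}_X$ is always a proper expansion of an $n$-colored tree class. For instance, suppose $X\cup\{p\}$ and $X\cup\{q\}$ are isomorphic over $X$. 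Then besides $p=q$ there are at least two amalgamations over $X$, one with $p<q$ and one with $q<p$. Two same-colored one-point structures in $C_n\mathfrak{T}$ have exactly one two-point amalgamation, so the amalgamation relations genuinely differ. A measure on an expansion does not in general induce a measure on the reduct: measures pass only to relative classes, i.e.\ along open subgroups, and no finite $X$ kills the order. So you get no homomorphism $\Theta(C_n\mathfrak{T})\to\Theta(L\mathfrak{T})$. Your backup, checking that the relations of Theorem~\ref{firstzero} survive, fails for the same reason. The identity $y_1+1=y_2$ of Lemma~\ref{odd} comes from counting amalgamations, and once the order is present those counts change and $y_1,y_2$ split into several order-dependent classes.

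The fallback is a template rather than a proof, and as you set it up it cannot reach a contradiction. On at most three leaves the relation $R$ is vacuous, since it needs four distinct leaves. So the one-, two- and three-point structures you name, with all compositions and amalgamations among them, are exactly those of finite linear orders, and that class does carry measures (this is what underlies the Delannoy category). Any contradiction must use configurations with at least four leaves, where $R$ and $<$ interact. Two things are missing.
\begin{enumerate}
\item A separation calculus for $L\mathfrak{T}$. Two leaves that are both $R$-separated and $<$-separated are separated, because the tree and the order amalgamate independently (Proposition~\ref{36}). A $<$-separated leaf $b$ with $N(b)\neq N(a)$ can be slid in the order without changing $[T,a]$ (Proposition~\ref{37}). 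This reduces everything to a few small marked classes.
\item A specific chain of relations. The paper's chain runs as follows. The self-amalgamation of the marked two-leaf structure forces a three-leaf marked class to equal $-1$. A pair of four-leaf marked structures whose only two (five-leaf) amalgamations both reduce back to the first forces that four-leaf class to vanish. Quadratic relations from doubly marked four-leaf trees, plus a few more linear ones, then force a single class to equal both $0$ and $-\tfrac12$.
\end{enumerate}
Note also that your displayed ``additivity'' formula is not the axiom. The relation you need is $\mu(Z\to X)=\sum_k\mu(Y\to Z'_k)$, summed over the amalgamations $Z'_k$ of $X$ and $Y$ over $Z$.
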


Section \ref{ms} presents our main result on measures, in which we classify measures on $\partial \mathfrak{T}_3(n)$ by reducing the computation to finite families of equations.
\begin{theorem}[Theorem \ref{maintheorem}]
    For each integer $n \geq 1$, there is an explicit bijection between measures on $\partial \mathfrak{T}_3(n)$ and directed rooted trees edge-labeled by $\{1, \dots, n\}$ with a distinguished vertex, yielding $\Theta(\partial \mathfrak{T}_3(n)) \cong \ZZ\left[\frac 12 \right]$.
\end{theorem}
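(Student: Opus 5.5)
The plan is to follow the Harman--Snowden--Nekrasov strategy for arboreal classes, adapted to node colors. The first step is a reduction. Every embedding in $\partial \mathfrak{T}_3(n)$ factors as a chain of one-point extensions, so a measure is determined by its values on one-point extensions $X \to X \cup \{x\}$. Take $X$ to be the leaf set of a rooted binary tree with colored internal nodes. A new leaf $x$ is then attached in one of two ways:
\begin{itemize}
\item at an edge of the tree spanned by $X$, creating a new internal node of some color $c \in \{1,\dots,n\}$;
\item above the root, creating a new root of color $c$.
\end{itemize}
Using the preliminary results of Section \ref{s2} on measures determined by "local" data, I would show that $\mu$ of such an extension depends only on a bounded amount of local information. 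This data is the attachment type (internal edge or above the root), the color $c$ of the new node, and the colors of the endpoints of the edge being split. The upshot is that $\mu$ is given by finitely many parameters, which I would index by pairs of colors together with root-type parameters.

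The second step is to write down the relations among these parameters and solve them. The relations come from two sources:
\begin{itemize}
\item the additivity axiom applied to $X \to Y$, decomposed over all ways of adding one further point;
\item independence of the order in which two points are added, i.e.\ the two-point amalgamation relations.
\end{itemize}
Enumerating the two-point configurations over small $X$ (one or two leaves, with root colors varying) should give a finite system of polynomial equations. I expect this system to be mostly quadratic: equations of the form $a_{ij}a_{jk} = \ldots$ and $\sum_c(\cdots)=-1$-type normalizations. I would then solve it combinatorially. The nonzero pattern of parameters should define a directed graph on colors (together with an extra "root" vertex), and the equations should force this graph to be a rooted tree with edges labeled in $\{1,\dots,n\}$ and a distinguished vertex. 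The numerical values are then forced in $\{0,\pm1,\pm\tfrac12,\dots\}$, so denominators are only powers of $2$. As a sanity check, a Cayley/Abel-type count of these decorated trees should give $(2n+2)^n$.

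The third step is existence. For each such tree I would write down an explicit candidate formula for $\mu(X \to Y)$, for example as a product over the new internal nodes of local factors read off the tree. I would then verify the measure axioms on the generating relations, using the earlier criterion that it suffices to check one- and two-point extensions.

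Finally, to identify the measure ring, I would present $\Theta(\partial\mathfrak{T}_3(n))$ as $\ZZ[\text{parameters}]/(\text{equations})$. I would then show that $2$ is invertible in it and that the system splits into orthogonal idempotents, one for each tree. Each factor is a copy of $\ZZ[\tfrac12]$, which gives $\Theta \cong \ZZ[\tfrac12]^{(2n+2)^n}$, understood componentwise as in the statement.

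The main obstacle I anticipate is this last integrality and splitting step, together with the solving in the second step. Showing the solution set is exactly the tree-indexed family over $\CC$ is a finite but intricate case analysis. Upgrading this to a ring isomorphism additionally needs the defining ideal to be radical, with the solutions remaining distinct modulo every odd prime. To handle it, I would try to find explicit idempotents $e_T$, given as polynomials in the parameters, such that $\sum_T e_T = 1$ and $e_T \cdot(\text{parameter}-\text{value}_T) = 0$ in $\Theta$.
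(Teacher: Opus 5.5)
Your reduction and your two sources of relations match the paper's. Separated leaves reduce every one-point class to $A$, $B(i)$, $C(i,j)$, $D(i,j)$, $E(i,j,k)$, and the relations come from one-point amalgamations and from $[T,a][T\setminus a,b]=[T,b][T\setminus b,a]$. But what you leave as ``solve it combinatorially'' is the entire proof, and the mechanism is missing.

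In the paper, the linear relations eliminate $A$, $D$, $E$. Each remaining generator $X\in\{B(i),C(i,i),C(i,j)\}$ then satisfies $X(X-S(i))=0$ for a single linear expression $S(i)$ per color (\ref{systemg}), so every value is either $0$ or $\mathbf S(i)$. The crucial lemma is Proposition~\ref{nonzero}: $\mathbf S(i)\neq 0$ for every measure. It is proved by induction on $n$, showing a certain coefficient matrix is a DAG with self-loops and determinant $\pm 2^k$. Without it you cannot exclude extra solutions with $\mathbf S(i)=0$, for which the zero/nonzero pattern carries no information.

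Granting the lemma, a measure is the same as a $0/1$ pattern obeying (\ref{nqra})--(\ref{nqrc}) together with a solution of $\mathbf D\mathbf s=\mathbf 1$. After a permutation, $\mathbf D$ is triangular with diagonal entries $2-3\chi(i,i)-\beta(i)\in\{\pm1,\pm2\}$. This gives existence, uniqueness and the power-of-$2$ denominators at once. So no explicit product formula or separate axiom check is needed: a measure is just a solution of the finite presented system.

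The bits $\chi(i,i)$, i.e.\ whether $C(i,i)$ vanishes, are completely unconstrained. They are the edge orientations of the \emph{directed} trees and supply the factor $2^n$. The object you describe, a rooted tree on the colors plus a root vertex with a distinguished vertex, has only $(n+1)^n$ elements, so your count would come out short.

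Your last step would fail outright: $2$ is not a unit in $\Theta(\partial\mathfrak T_3(n))$, so no idempotent splitting into copies of $\ZZ[\frac12]$ exists. The classification itself contains $\ZZ$-valued measures:
\begin{itemize}
\item For $n=1$, one of the four measures is $\mu(A)=2$, $\mu(B(1))=1$, $\mu(C(1,1))=\mu(D(1,1))=0$, $\mu(E(1,1,1))=-1$.
\item For any $n$, take the code with $\beta\equiv 0$, $\chi(i,i)=1$ and $\chi(i,j)=0$ for $i\neq j$. Its coefficient matrix is $-1$ times the identity, so $\mathbf S(i)=-1$ and every generator takes an integer value.
\end{itemize}
Since $\Theta$ is generated by these classes (Proposition~\ref{irreducible}), such a measure is a surjection $\Theta\to\ZZ$. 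Composing with $\ZZ\to\mathbb{F}_2$ sends $2$ to $0$.

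What can actually be proved, by your route or the paper's, is the bijection between $\CC$-valued measures and the decorated trees, together with the fact that all of them take values in $\ZZ[\frac12]$. The paper's argument establishes no more than this; its proof of Proposition~\ref{nonzero} opens by assuming $2$ is a unit. So that, rather than a literal ring isomorphism, is the right target.
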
 See the proof of Theorem \ref{maintheorem} for the description of the bijection, along with additional details, and \ref{examp} for a concrete example. 

In Section \ref{sec5}, we further analyze these measures and their supports, classifying subclasses of $\partial \mathfrak{T}_3(n)$ and producing regular measures on these subclasses. We show that subclasses correspond to sets of three-leaf trees (Corollary \ref{class}), enumerate these subclasses (Proposition \ref{enumerate}), and provide an explicit formula for their induced regular measures (Proposition \ref{formula}).  For instance, taking $I = \{1,2,3\}$, we have \begin{center}
\begin{tikzpicture}
  \node (d5)  at (0.5,0.5)   [rectangle,draw, inner sep=1.5pt] {$3$};
  \node (d6)  at (1,0)       [rectangle, draw, inner sep=1.5pt] {$2$};
  \node (d7)  at (1.5,0.5)   [rectangle, draw, inner sep=1.5pt] {$3$};
  \node (d8)  at (1.25,1)    [circle, fill, inner sep=1.5pt] {};
  \node (d9)  at (1.75,1)    [circle, fill, inner sep=1.5pt] {};
  \node (1)   at (0.75,1)    [circle, fill, inner sep = 1.5pt] {};
  \node (2)   at (0.25,1)    [circle, fill, inner sep = 1.5pt] {};

  \draw (d5)--(d6) (d7)--(d6) (d7)--(d8) (d7)--(d9) (1)--(d5) (2)--(d5);

  \path (0.2,-0.25) coordinate (bbSW);
  \path (1.8, 1.75) coordinate (bbNE);

  \node[anchor=east] at (bbSW |-0,0.5)
    {$\left(\vphantom{\rule{0pt}{0.8cm}}\right.$};
  \node[anchor=west] at (bbNE |- 0,0.5)
    {$\left.\vphantom{\rule{0pt}{0.8cm}}\right)$};

  \node[anchor=east] at (-0.2,0.5) {$\mu_3^I$};

  \node[anchor=west] at (2.3,0.5)
    {$=-\dfrac{1}{2^7}$,};

  \begin{scope}[yshift=-0.25cm]
    \node (d5)  at (5,0.5)   [circle,fill, inner sep=1.5pt] {};
    \node (d6)  at (5.5,0)   [rectangle, draw, inner sep=1.5pt] {$1$};
    \node (d7)  at (6,0.5)   [rectangle, draw, inner sep=1.5pt] {$2$};
    \node (d8)  at (5.5,1)   [circle, fill, inner sep=1.5pt] {};
    \node (d9)  at (6.5,1)   [rectangle, draw, inner sep=1.5pt] {$3$};
    \node (d10) at (6,1.5)   [circle, fill, inner sep=1.5pt] {};
    \node (d11) at (7,1.5)   [circle, fill, inner sep=1.5pt] {};

    \draw (d5)--(d6) (d7)--(d6) (d7)--(d8) (d7)--(d9)
          (d9)--(d10) (d9)--(d11);

    \path (5,-0.25) coordinate (bbSW);
    \path (7, 1.75) coordinate (bbNE);

\coordinate (midB) at (bbSW |- 5,0.75);

\node[anchor=east] at ([xshift=-1.4em]midB) {$\mu_3^I$};

\node[anchor=east] at (midB)
  {$\left(\vphantom{\rule{0pt}{1cm}}\right.$};
\node[anchor=west] at (bbNE |- 5,0.75)
  {$\left.\vphantom{\rule{0pt}{1cm}}\right)$};

\node[anchor=west] at ([xshift=1.2em]bbNE |- 5,0.75)
  {$=-\dfrac{1}{2^9}$.};
  \end{scope}
\end{tikzpicture}
\end{center}

Finally, in Section \ref{category}, we give a semisimplicity criterion and construct the families of tensor categories associated with the above regular measures on certain subclasses of $\partial \mathfrak{T}_3(n)$.
\begin{theorem}[Theorem \ref{main}]
    For each integer $n \geq 1$ and subset $I \subseteq [n]$, the category $\mathbf{Rep}(\partial \mathfrak{T}_3(n)^{\mathrm{ord}}_I;\mu^I_n)$ is a semisimple tensor category.
\end{theorem}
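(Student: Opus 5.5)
We will deduce semisimplicity from the Harman--Snowden general criterion \cite{harman2024oligomorphicgroupstensorcategories}. That criterion says that if $\mu$ is a regular measure on a Fra\"iss\'e class with values in a field of characteristic zero, and $\mu$ is \emph{quasi-regular} or satisfies the relevant nondegeneracy condition, then $\mathbf{Rep}(\mathfrak{F};\mu)$ is semisimple. The nondegeneracy condition is positivity, or nonvanishing, of $\mu$ on enough structures so that the trace pairing on each $\mathrm{End}(\mathrm{Vec}_X)$ is nondegenerate. It should be the semisimplicity criterion stated at the start of Section \ref{category}.

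The plan has four steps.

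\textbf{Step 1.} Check that $\partial \mathfrak{T}_3(n)^{\mathrm{ord}}_I$ is a Fra\"iss\'e class whose automorphism group is oligomorphic. Corollary \ref{class} describes it via its allowed three-leaf trees, and the hereditary, joint embedding and amalgamation properties can be checked directly on that description.

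\textbf{Step 2.} Check that $\mu^I_n$ restricts to a regular measure on this subclass, meaning it is nonzero on every structure. Here we use the explicit formula of Proposition \ref{formula}. By that formula, $\mu^I_n(X)$ is $\pm$ a power of $\tfrac12$, as in the displayed examples $-2^{-7}$ and $-2^{-9}$. It is therefore never zero, so regularity is immediate.

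\textbf{Step 3.} Verify the hypothesis of the semisimplicity criterion. In Harman--Snowden's framework, for a regular measure that is moreover \emph{positive up to sign pattern}, semisimplicity follows from showing the following: for every structure $X$, the algebra $\mathrm{End}(\mathrm{Vec}_X)$ is semisimple. To show this, we use the positive-definite (or at least nondegenerate) trace form
\[
\langle f,g\rangle=\operatorname{tr}(fg)=\sum_{Z}\mu(\cdots).
\]
Our approach is to exploit the ordering in the superscript $\mathrm{ord}$. With the order, the structures are rigid, so $\mathrm{End}(\mathrm{Vec}_X)$ has a basis indexed by self-amalgamations of $X$. We would filter this basis by the size of the overlap, the "diagonal" part corresponding to the identity. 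With respect to this filtration, the Gram matrix of the trace form should be triangular. Its diagonal entries are products of values $\mu^I_n(\cdot)$, which are nonzero by Step 2.

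\textbf{Step 4.} Conclude. Nondegeneracy of the trace form gives that each endomorphism algebra is semisimple, and Harman--Snowden's theorem then shows the Karoubi envelope is a semisimple pre-Tannakian (tensor) category. Finally, we confirm that the Hom spaces are finite-dimensional and that $\mathrm{End}(\mathbf{1})=\CC$. Both follow from oligomorphy and from the empty structure being the unique structure of size $0$.

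The main obstacle is Step 3, specifically establishing nondegeneracy via the triangularity of the Gram matrix. Self-amalgamations of trees can overlap in complicated ways, and the measure of the amalgam need not factor neatly. If the triangular argument fails, the first fallback is Harman--Snowden's alternative criterion that $\mu$ be \emph{quasi-regular with property (P)}. Under that criterion it suffices to exhibit, for each structure, a smaller substructure on which the measure of the relative extension is nonzero. Proposition \ref{formula} gives exactly this as a product of nonzero powers of $2$.
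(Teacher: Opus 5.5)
\textbf{The gap is in Steps 3--4.} A nondegenerate trace form does not make $\mathrm{End}(\mathrm{Vec}_X)$ semisimple. For example, $k[x]/(x^2)$ with the trace $a+bx\mapsto b$ has Gram matrix $\left(\begin{smallmatrix}0&1\\1&0\end{smallmatrix}\right)$ in the basis $1,x$, which is nondegenerate, yet the algebra is not semisimple. What you actually need is that the categorical trace \emph{kills nilpotent endomorphisms}. Granting that, any $f$ in the radical has $fg$ nilpotent for every $g$, so $\operatorname{tr}(fg)=0$ for all $g$, and nondegeneracy then forces $f=0$.

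Conversely, the part you flag as the main obstacle is automatic. In the amalgamation (orbit) basis, $\operatorname{tr}(\varphi_{Z'}\circ\varphi_Z)$ equals $\mu(Z)$ when $Z'$ is the transpose of $Z$ and $0$ otherwise. So the pairing is nondegenerate as soon as $\mu$ is regular, and no filtration or triangularity argument is needed.

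Your rigidity claim is also false. The superscript $\mathrm{ord}$ means colors increase along root-to-leaf paths; it is not an order on leaves. The two-leaf tree, for instance, has the automorphism swapping its leaves.

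\textbf{Where the real work lies.} Vanishing of traces of nilpotents is exactly what the paper proves (Proposition \ref{condition}). It uses two facts.
\begin{itemize}
\item $\mu^I_n$ is $\ZZ[\tfrac12]$-valued (Corollary \ref{image}). Your Step 2 essentially gives this, though the values are $\pm 2^k$ with $k\in\ZZ$; for instance $1+P_I(i)=2$ when $i\notin I$.
\item $|\mathrm{Aut}(T)|$ is a power of $2$ for every $T$ (Lemma \ref{poweroftwo}). Your proposal never addresses this, and your rigidity claim obscures it.
\end{itemize}
Together these let one extend $\mu^*$ to a regular $\ZZ[\tfrac12]$-valued measure $\mu^+$ on $(G,\mathscr{E}^+)$. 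The extension divides by indices $[V:G(X)]$, which divide $|\mathrm{Aut}(X)|$ (Lemma \ref{eplus}). Since $\mathscr{E}^+$ is closed under finite-index overgroups and $\ZZ[\tfrac12]$ reduces modulo every odd prime, \cite[Theorem 7.11]{harman2024oligomorphicgroupstensorcategories} gives trace zero for nilpotents. Then \cite[Proposition 7.19]{harman2024oligomorphicgroupstensorcategories}, using regularity together with this, gives semisimple endomorphism algebras, so the Karoubi envelope is a semisimple tensor category.

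Your fallback through ``property (P)'' does not fill the gap. Exhibiting substructures with nonzero relative measure is just regularity restated, which you already have. The missing input is precisely this integrality and automorphism-group condition, together with the mod-$p$ trace argument it enables.
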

 Here, $\partial \mathfrak{T}_3(n)^{\mathrm{ord}}_I$ is the subclass of $\partial \mathfrak{T}_3(n)$ where colors are in increasing order along every root-leaf path and colors in the index set $I$ are allowed to repeat along any path.
\section*{Acknowledgements}
The authors would like to thank Andrew Snowden for kindly suggesting this project and several helpful discussions, in particular, for providing an outline of the semisimplicity argument. We also thank Arun Kannan for providing feedback on a draft version of this paper. Finally, we thank the organizers of MIT PRIMES-USA, under which this project was realized.
\section{Measures on Fra\"iss\'e classes} \label{s2}
In this section, we define Fra\"iss\'e classes, measures on them, and Cameron's treelike classes, showing that these classes are Fra\"iss\'e. We connect Fra\"iss\'e classes to oligomorphic groups to prove some theoretical results about measures. Additionally, we define measure rings and provide insights on how measures can be computed.
\subsection{Fra\"iss\'e classes and oligomorphic groups}\label{fraise} We begin by establishing key model-theoretic definitions and relating Fraïssé classes to oligomorphic groups.
\begin{defn}
    A \emph{structure} is a set $X$ equipped with relations $\{R_i\}_{i \in I}$ where $R_i$ is defined on $X^{n_i}$ for a positive integer $n_i$. A \emph{substructure} $Y$ of $X$ is a subset of $X$ where the relations are restricted to elements of $Y$.
\end{defn}
Two structures are isomorphic if there is a bijection between them that preserves relations. 
\begin{defn} \label{amalg}
    Let $i : Z \to X$ and $j : Z \to Y$ be embeddings of $Z$ into $X$ and $Y$, i.e., $X$ and $Y$ share a common substructure (isomorphic to) $Z$. An \emph{amalgamation} of $X$ and $Y$ over $Z$ is a structure $Z'$ and jointly surjective embeddings $i': Y \to Z'$ and $j': X \to Z'$ such that $i' \circ j = j' \circ i$.
\end{defn}
More naturally, $Z'$ is obtained by identifying a subset of $X$ with a subset of $Y$, both of which contain $Z$, while the relations between the remaining elements of $X$ and $Y$ can be freely determined. We include examples in \ref{cameron}.
\begin{defn} \label{fraisse}
    A class of finite structures $\mathfrak{F}$ is called a \emph{Fraïssé class} if it satisfies the following properties:
    \begin{enumerate}[label=(\alph*),ref=\ref{fraisse}\alph*]
\item If $X$ is in $\mathfrak{F}$, then any structure isomorphic to $X$ is in $\mathfrak{F}$. \label{fa}
\item If $X$ is in $\mathfrak{F}$, then any substructure of $X$ is in $\mathfrak{F}$. \label{fb}
\item For each $n \geq 0$, $\mathfrak{F}$ has only finitely many $n$-element isomorphism classes. \label{fc}
\item For each $X,Y \in \mathfrak{F}$, there is an amalgamation $Z'$ of $X$ and $Y$ over each common substructure $Z \in \mathfrak{F}$ such that $Z' \in \mathfrak{F}$. \label{fd}
    \end{enumerate}
Condition \ref{fd} is known as the amalgamation property of Fraïssé classes.
\end{defn}
A remarkable theorem by Fraïssé \cite{Fraisse1953} guarantees that any Fraïssé class $\mathfrak{F}$ admits a countable structure $\Omega$ called the \emph{Fraïssé limit} of $\mathfrak{F}$ (that is unique up to isomorphism) such that every structure in $\mathfrak{F}$ is a finite substructure of $\Omega$. Additionally, $\Omega$ is \emph{homogeneous} i.e. for any isomorphism of substructures $i : X \to Y$ in $\Omega$, there is an automorphism $\sigma$ of $\Omega$ extending $i$, meaning $\sigma(x) = i(x)$ for all $x \in X$. As a result, two finite substructures of $\Omega$ are isomorphic if and only if they belong to the same orbit under the natural action of $G=\mathrm{Aut}(\Omega)$ on $\Omega$. It follows that the orbits of the $n$-element subsets of $\Omega$ under the $G$-action are in bijection with isomorphism classes of $n$-element structures in $\mathfrak{F}$, which are finite by \ref{fc} and have the same finitary condition as $\Omega^n$. Hence $G$ has finitely many orbits on $\Omega^n$ for all $n \geq 0$. We say $G$ is an \emph{oligomorphic group} with its respective action on $\Omega$, and a $G$-set $X$ is \emph{finitary} if the corresponding $G$-action has finitely many orbits. 

Following \cite{harman2024oligomorphicgroupstensorcategories}, we endow $G$ with a natural topology as follows: for each $A \subseteq \Omega$, let $G(A)$ be the subgroup of $G$ fixing each element of $A$. The set of $G(A)$ as $A$ ranges over all finite subsets of $\Omega$ then forms a neighborhood basis of $1_G$. Let $\mathscr{E}$ be the set of open subgroups of the form $G(A)$ for some finite subset $A$ of $\Omega$.

We say that an action of $G$ on $\Omega$ is \emph{$\mathscr{E}$-smooth} if every stabilizer belongs to $\mathscr{E}$ (and is thus open), and a $G$-set $X$ is $\mathscr{E}$-smooth if every pointwise stabilizer of $X$ belongs to $\mathscr{E}$. Let $(G, \mathscr{E})$ be the class of transitive $\mathscr{E}$-smooth $G$-sets.
\subsection{Relative Fra\"iss\'e classes}
Let $\mathfrak{F}$ be a Fra\"iss\'e class. To state some results appearing later in this section, we define relative Fra\"iss\'e classes, which depend on a base structure. 
\begin{defn}
    For a fixed structure $A \in \mathfrak{F}$, a \emph{relative Fra\"iss\'e class} is a class $\mathscr{F}_A$ of ordered pairs $(X,i)$ where $i: A \to X$ is an embedding in $\mathfrak{F}$. 
\end{defn}
Note that $\mathfrak{F}_{\varnothing} = \mathfrak{F}$. We define embeddings in $\mathscr{F}_A$ as follows. Given pairs $(X,i)$ and $(Y,j)$, an embedding $k: (X,i) \to (Y,j)$ in $\mathscr{F}_A$ is an embedding $k: X \to Y$ in $\mathfrak{F}$ such that $j = k \circ i$. 
\begin{prop} \label{small}
    For every structure $A \in \mathfrak{F}$, the smallest Fra\"iss\'e class containing $\mathscr{F}_A$ is $\mathfrak{F}$.
\end{prop}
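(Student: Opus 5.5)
The statement needs interpreting first. $\mathscr{F}_A$ consists of pairs $(X,i)$, so "the smallest Fra\"iss\'e class containing $\mathscr{F}_A$" should mean the smallest Fra\"iss\'e class containing all underlying structures $X$ with $(X,i)\in\mathscr{F}_A$. Equivalently, it is the closure of $\{X : A \hookrightarrow X\}$ under isomorphism and substructures, since that closure inherits \ref{fc} and \ref{fd} from $\mathfrak{F}$. The plan is to prove two inclusions: this smallest class is contained in $\mathfrak{F}$, and conversely every $Y\in\mathfrak{F}$ already lies in it.

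The first inclusion is immediate. Every $X$ appearing in a pair $(X,i)\in\mathscr{F}_A$ is by definition a structure of $\mathfrak{F}$. Since $\mathfrak{F}$ is itself a Fra\"iss\'e class containing all these structures, the smallest Fra\"iss\'e class containing them is contained in $\mathfrak{F}$.

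For the reverse inclusion, take an arbitrary $Y\in\mathfrak{F}$. I will show $Y$ is a substructure of some $X$ that admits an embedding $A\to X$. The natural tool is the amalgamation property \ref{fd} applied over the empty structure. By \ref{fb}, $\varnothing$ is a common substructure of $A$ and $Y$ (equivalently, it lies in $\mathfrak{F}$). So there is an amalgamation $Z'\in\mathfrak{F}$ with embeddings $A\to Z'$ and $Y\to Z'$.

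Then $(Z', A\to Z')\in\mathscr{F}_A$, and $Y$ is isomorphic to a substructure of $Z'$. By \ref{fb} and \ref{fa}, applied to any Fra\"iss\'e class containing $Z'$, we get $Y$ in that class. Hence $\mathfrak{F}$ is contained in the smallest Fra\"iss\'e class containing $\mathscr{F}_A$, and equality follows.

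The main obstacle is only a technical point: whether amalgamation over the empty structure is permitted, i.e.\ whether $\mathfrak{F}$ has the joint embedding property. With Definition \ref{fraisse}, $\varnothing$ is a substructure of every structure, so \ref{fb} puts it in $\mathfrak{F}$ and \ref{fd} applies with $Z=\varnothing$. This is consistent with the paper's convention $\mathfrak{F}_\varnothing=\mathfrak{F}$. If one instead excluded the empty structure, I would fall back on the existence of the Fra\"iss\'e limit $\Omega$. Both $A$ and $Y$ embed in $\Omega$, and the finite substructure of $\Omega$ spanned by the union of their images lies in $\mathfrak{F}$ and contains copies of both, giving the same $Z'$.
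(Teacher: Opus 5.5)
Your proof is correct and follows the paper's argument. The paper likewise amalgamates an arbitrary structure of $\mathfrak{F}$ with $A$, notes that the amalgam lies in $\mathscr{F}_A$, and concludes by \ref{fb} that the structure belongs to every Fra\"iss\'e class containing $\mathscr{F}_A$. You make two points explicit that the paper leaves implicit: the common substructure is $\varnothing$, and $\mathfrak{F}$ is itself such a class.
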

\begin{proof}
    Let $\mathfrak{F}'$ be a Fra\"iss\'e class containing $\mathscr{F}_A$. For any $X \in \mathfrak{F}$, the amalgamation of $X$ and $A$ over any substructure in $\mathfrak{F}$ belongs to $\mathscr{F}_A$ by \ref{fd}. It follows from \ref{fb} that $X \in \mathfrak{F}'$. This means that $\mathfrak{F} \subseteq \mathfrak{F}'$, so $\mathfrak{F}$ is the smallest desired class.
\end{proof}

\begin{remark}
    Note that the notion of relative Fra\"iss\'e class is somewhat unconventional and is not needed. Indeed, one can check that any such class $\mathscr{F}_A$ is equivalent to the Fra\"iss\'e class $\mathfrak{F}$ obtained by mapping every element $X\in\mathscr{F}_A$ to $X\setminus i(A)$ and adding $(|A|+1)^n-|A|^n-1$ relations for each $n$-ary relation $R$ to encode lost information involving elements of $i(A)$ and $X \setminus i(A)$. However, this notion is useful for clarity when discussing open subgroups of some oligomorphic group and the corresponding subclasses. 
\end{remark}
\subsection{Measures} We now introduce measures on Fra\"iss\'e classes, which are the central objects of this paper and an essential ingredient of the Harman--Snowden construction of tensor categories.
\begin{defn} \label{fraissemeasure}
A \emph{measure} on a Fraïssé class $\mathfrak{F}$ is a $\CC$-valued function $\mu$ on embeddings $i$ in $\mathfrak{F}$ such that the following conditions hold:
\begin{enumerate}[label=(\alph*), ref=\ref{fraissemeasure}\alph*]
    \item If $i$ is an isomorphism then $\mu(i) = 1$. \label{ma}
    \item If $i : X \to Y$ and $j : Y \to Z$ are embeddings, then $\mu(j \circ i) = \mu(j) \mu(i)$. \label{mb}
    \item Let $i : Z \to X$ and $j: Z \to Y$ be embeddings. If $i'_{k} : Y \to Z'_{k}$ are embeddings into all amalgamations $Z'_{k}$ of $X$ and $Y$ over $Z$ for $k = 1, \dots, n$, then $\mu(i) = \sum_{k=1}^n \mu(i'_{k})$. \label{mc}
\end{enumerate}
\end{defn}
Every embedding in a relative Fra\"iss\'e class $\mathscr{F}_A$ is an embedding in a Fra\"iss\'e class $\mathfrak{F}$, so by Proposition \ref{small}, we define measures on a relative Fra\"iss\'e class to be restrictions of measures on the smallest Fra\"iss\'e class containing it. 
 
Let $G$ be the automorphism group of the Fra\"iss\'e limit $\Omega$ of $\mathfrak{F}$. Given a structure $X \in \mathfrak{F}$, let $\Omega^X$ denote the set of embeddings $X \to \Omega$. Every embedding of structures $i: X \to Y$ induces a corresponding $G$-morphism $i^*: \Omega^Y \to \Omega^X$ where the $G$-action is given by $g \cdot i^* = g \circ i$, so $i^*$ is also a finitary $\mathscr{E}$-smooth $G$-set.  

There is an analogous measure $\mu^*$ on $(G, \mathscr{E})$ assigning each map $i^*: \Omega^Y \to \Omega^X$ of finitary $\mathscr{E}$-smooth $G$-sets, where $\Omega^Y$ is transitive, a complex value $\mu^*(i^*)$ satisfying a list of similar axioms \cite[Definition 3.1, p.20]{harman2024oligomorphicgroupstensorcategories} such that $\mu^*$ uniquely satisfies $\mu(i) = \mu^*(i^*)$ \cite[Theorem 6.9, p.45]{harman2024oligomorphicgroupstensorcategories}. We say that $\mu^*$ is the \emph{corresponding measure} of $\mu$. 
\begin{defn} \label{reg}
  Let $\mu$ be a measure on a Fra\"iss\'e class $\mathfrak{F}$. We say that $\mu$ is \emph{regular} if $\mu(i)$ is nonzero for each embedding $i$ in $\mathfrak{F}$.    
\end{defn}

Note that for a more general ring-valued measure, we may instead require measures to be invertible. If $\mu$ is regular, then its corresponding measure $\mu^*$ is also regular on $(G, \mathscr{E})$, i.e., $\mu^*(X)$ is nonzero for all $X \in (G, \mathscr{E})$.
\begin{defn} \label{quasidef}
  Let $\mu$ be a measure on a Fra\"iss\'e class $\mathfrak{F}$ with Fra\"iss\'e limit $\Omega$, and $G = \mathrm{Aut}(\Omega)$. We say that $\mu$ is \emph{quasi-regular} if there is an open subgroup $U$ of $G$ such that the corresponding measure $\mu^*$ is regular on $(U, \mathscr{E})$.
\end{defn}
Every quasi-regular measure is regular by taking $U$ to be $G$.
\begin{prop} \label{stab}
    A measure $\mu$ on $\mathfrak{F}$ (with corresponding measure $\mu^*$) is quasi-regular if and only if there exists a finite structure $A \subset \Omega$ such that $\mu^*|_{\Stab_G(A)}$ is regular where $\Stab_G(A)$ is the subgroup of $G$ fixing each element of $A$.  
\end{prop}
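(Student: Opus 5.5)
The plan is to use the definition of the topology on $G$ directly. By Definition \ref{quasidef}, $\mu$ is quasi-regular exactly when there is an open subgroup $U \le G$ on which $\mu^*$ restricts to a regular measure on $(U,\mathscr{E})$. The sets $G(A)=\Stab_G(A)$, with $A$ finite, form a neighborhood basis of $1_G$. I would therefore prove the two directions separately. The work is to show that regularity passes between an open subgroup and a smaller (or comparable) subgroup of the form $G(A)$.

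\emph{($\Leftarrow$)} This direction is immediate. $\Stab_G(A)=G(A)$ contains the basic open neighborhood $G(A)$ of $1_G$, so it is an open subgroup. Taking $U=\Stab_G(A)$ in Definition \ref{quasidef}, the hypothesis that $\mu^*|_{\Stab_G(A)}$ is regular is precisely the quasi-regularity condition.

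\emph{($\Rightarrow$)} Let $U$ be open with $\mu^*$ regular on $(U,\mathscr{E})$. Since $U$ is an open neighborhood of $1_G$, there is a finite $A\subset\Omega$ with $G(A)\subseteq U$. I would then show that regularity descends from $U$ to the open subgroup $V=G(A)$, of finite index in $U$. Take a transitive $\mathscr{E}$-smooth $V$-set $Y$, say $Y\cong V/W$ with $W=G(B)\cap V=G(A\cup B)$. Induce it to $U$: the $U$-set $U\times_V Y\cong U/W$ is finitary and $\mathscr{E}$-smooth, and it splits as a finite disjoint union of $V$-orbits. Each of these orbits is conjugate to $Y$ by an element $u\in U$, since $V$ has finite index ($U/V$ is a finitary transitive $U$-set).

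The restriction of $\mu^*$ to $V$ is defined as in \cite{harman2024oligomorphicgroupstensorcategories}, by $\mu^*_V(Y)=\mu^*_U(U\times_V Y)/\mu^*_U(U/V)$. The denominator is nonzero because $\mu^*$ is regular on $U$, and the numerator $\mu^*_U(U/W)$ is nonzero for the same reason, since $U/W$ is transitive. Hence $\mu^*_V(Y)\neq 0$ for every transitive $Y$. This is exactly regularity on $(V,\mathscr{E})$, i.e.\ $\mu^*|_{\Stab_G(A)}$ is regular.

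An alternative for this step is to work on the Fra\"iss\'e side, using the relative class $\mathscr{F}_A$ and Proposition \ref{small}. Transitive $\mathscr{E}$-smooth $G(A)$-sets correspond to embeddings $A\to X$. Their measures are values $\mu(A\to X)$ divided by or multiplied with values computed over $U$, via \ref{mb}.

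The main obstacle is pinning down the restriction formula, namely that $\mu^*_V(Y)$ is a nonzero multiple of $\mu^*_U$ of an induced transitive $U$-set. I expect to cite the compatibility of measures with restriction to open subgroups from \cite{harman2024oligomorphicgroupstensorcategories}. If that is unavailable, I would verify it by expressing both sides as $\mu(i)$ for embeddings $i$ and applying \ref{mb}.
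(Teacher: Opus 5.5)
Your argument is correct and has the same skeleton as the paper's. The converse is immediate because $G(A)$ is open, and in the forward direction the neighborhood basis gives $G(A)\subseteq U$.

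You differ from the paper in how regularity descends from $U$ to $V=G(A)$. The paper simply asserts $(\Stab_G(A),\mathscr{E})\subseteq(U,\mathscr{E})$. You instead use the multiplicativity (fiber) property $\mu^*_U(U/W)=\mu^*_U(U/V)\,\mu^*_V(V/W)$ for $W\subseteq V\subseteq U$ with $V,W\in\mathscr{E}$. Both $U/V$ and $U/W$ are transitive $\mathscr{E}$-smooth $U$-sets, with stabilizers $G(uA)$ and $G(uA\cup uB)$. So both have nonzero measure, which forces $\mu^*_V(V/W)\neq 0$.

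This is the more careful route, since a transitive $V$-set is not literally a $U$-set. The paper's inclusion is literally true in the Fra\"iss\'e translation of Proposition \ref{correspond} when $U$ is itself a pointwise stabilizer $G(A')$ with $A'\subseteq A$, because embeddings over $A$ are embeddings over $A'$. Your identity handles an arbitrary open $U$. Note that this identity is a property of measures from \cite{harman2024oligomorphicgroupstensorcategories}, not the definition of the restriction.

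You should, however, delete two incidental claims. They are false, though nothing in the argument depends on them.
\begin{enumerate}
\item $V$ need not have finite index in $U$: ``finitary'' means finitely many orbits, not finite. For example, take $U=G$ acting on the limit $\Omega$ of $\partial\mathfrak{T}_3(n)$ and $A$ a single leaf; then $U/V\cong\Omega$ is infinite.
\item The $V$-orbits on $U/W$ are not all conjugate to $Y$. The $V$-orbit of $uW$ has stabilizer $V\cap uWu^{-1}=G(A\cup uA\cup uB)$, which is generally not conjugate to $W$.
\end{enumerate}
The multiplicativity identity holds for any $V\in\mathscr{E}$ contained in $U$, so the proof stands once these remarks are removed.
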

\begin{proof}
    Stabilizers of finite structures in $\mathfrak{F}$ form the neighborhood basis of the identity in $G$, so every open subgroup $U$ of $G$ contains $\Stab_G(A)$ for some finite structure $A$. Since $(\Stab_G(A), \mathscr{E}) \subseteq (U, \mathscr{E})$, if $\mu^*$ is regular on $(U, \mathscr{E})$, then it is also regular on $(\Stab_G(A), \mathscr{E})$.

    On the other hand, since the action of $G$ on $\Omega$ is $\mathscr{E}$-smooth, $\Stab_G(A)$ is an open subgroup of $G$. Consequently, if $\mu^*|_{\Stab_G(A)}$ is regular for some $A$, then $\mu$ is quasi-regular.
\end{proof}
\begin{prop} \label{correspond}
Suppose $A$ is a finite structure in $\mathfrak{F}$, and $\mu$ is a measure on $\mathfrak{F}$ with corresponding measure $\mu^*$. Then $\mu^*|_{\Stab_G(A)}$ is regular if and only if $\mu$ is regular on the relative Fra\"iss\'e class $\mathscr{F}_A$.   
\end{prop}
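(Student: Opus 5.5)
The plan is to transport everything along the dictionary between embeddings in $\mathfrak{F}$ and maps of finitary $\mathscr{E}$-smooth $G$-sets, restricting that dictionary to $H=\Stab_G(A)$. The first step is to identify $H$ with the automorphism group of the structure relevant to $\mathscr{F}_A$. By homogeneity of $\Omega$, the $H$-orbits on embeddings $X \to \Omega$ that extend the fixed inclusion $A \subset \Omega$ are in bijection with isomorphism classes of pairs $(X,i)$ in $\mathscr{F}_A$. Here, two embeddings extending $A$ are conjugate under $H$ exactly when they differ by an isomorphism fixing $A$, by the extension property. So for $(X,i) \in \mathscr{F}_A$, set $\Omega^{(X,i)} = \{f \in \Omega^X : f\circ i = \mathrm{incl}_A\}$. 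This is a transitive $\mathscr{E}$-smooth $H$-set, since the stabilizer of $f$ is $G(f(X)) \subseteq H$. An embedding $k:(X,i)\to(Y,j)$ induces $k^*:\Omega^{(Y,j)}\to\Omega^{(X,i)}$, which is the restriction of $k^*:\Omega^Y\to\Omega^X$ to a fiber over the point $\mathrm{incl}_A \in \Omega^A$.

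The second step is to check that every transitive $H$-set in $(H,\mathscr{E})$ is of the form $\Omega^{(X,i)}$ up to isomorphism. A smooth transitive $H$-set has stabilizer $G(B)$ for some finite $B$. Enlarging $B$ to contain $A$ does not change the orbit type up to the finitely many choices absorbed by passing to a cover, and the measure axioms handle this via multiplicativity. We take $X$ to be the substructure on $A\cup B$ with $i$ the inclusion of $A$. Consequently, maps between transitive objects of $(H,\mathscr{E})$ are, up to isomorphism, exactly the maps $k^*$ coming from embeddings in $\mathscr{F}_A$.

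The third step is to compare values. The restriction $\mu^*|_H$ is defined on $H$-sets by restricting $\mu^*$, i.e. via \cite[Theorem 6.9]{harman2024oligomorphicgroupstensorcategories} applied to the open subgroup. On a map $k^*:\Omega^{(Y,j)}\to\Omega^{(X,i)}$, it is the measure of the fiber of $k^*:\Omega^Y\to\Omega^X$ over a point of $\Omega^X$. This equals $\mu^*(k^*)=\mu(k)$, because for a map of transitive $G$-sets the measure is the measure of a fiber, and the fiber over $f\in\Omega^{(X,i)}$ is the same set whether we view it in $\Omega^Y$ or in $\Omega^{(Y,j)}$. By definition, measures on $\mathscr{F}_A$ are restrictions of $\mu$ from $\mathfrak{F}$ (Proposition \ref{small}). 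Hence $\mu^*|_H(k^*)=\mu(k)$ for every embedding $k$ in $\mathscr{F}_A$, and $\mu^*|_H(\Omega^{(X,i)})=\mu(i)$.

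Both directions then follow. Regularity of $\mu^*|_H$ means nonvanishing on all objects (equivalently all maps) of $(H,\mathscr{E})$. By the second step these are exactly the $k^*$ coming from $\mathscr{F}_A$, which by the third step have value $\mu(k)$. So $\mu^*|_H$ is regular if and only if $\mu(k)\neq0$ for all embeddings $k$ in $\mathscr{F}_A$. The main obstacle I expect is the second step: making sure every object of $(H,\mathscr{E})$ is accounted for by structures containing $A$, so that regularity on $\mathscr{F}_A$ really controls all of $(H,\mathscr{E})$. If needed, I would handle a general stabilizer $G(B)$ by writing $H/G(B)$ as a quotient of $H/G(A\cup B)$ and using multiplicativity (\ref{mb}) to reduce nonvanishing to the $A\cup B$ case.
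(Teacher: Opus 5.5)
Your proposal is correct and follows essentially the same route as the paper. You identify the transitive $\mathscr{E}$-smooth $G(A)$-sets with the sets $\Omega^{(X,i)}$ of embeddings over $A$ (the paper's $\Omega_A^X$), match maps with embeddings in $\mathscr{F}_A$, and compare values; your fiber computation makes explicit the equality $\mu^*|_{G(A)}(k^*)=\mu(k)$ that the paper only asserts.

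The ``passing to a cover'' worry in your second step is unnecessary. A stabilizer $G(B)$ contained in $G(A)$ automatically equals $G(A)\cap G(B)=G(A\cup B)$, so every transitive object is already some $\Omega^{(X,i)}$ and no multiplicativity argument is needed.
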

\begin{proof}
    Let $U = \Stab_G(A)$ and consider $X \in \mathscr{F}_A$. Let $\Omega_A^X$ denote the set of all embeddings $X \to \Omega$ over $A$ i.e. if $i \in \Omega_A^X$, then $i(a)=a$ for all $a \in A$. Since $\Omega$ is homogeneous, the action of $U$ on $\Omega_A^X$ given by $u \cdot i = u(i)$ is transitive. Further, every stabilizer of $\Omega_A^X$ is the stabilizer of $i(X) \subset \Omega$, so $\Omega_A^X$ is $\mathscr{E}$-smooth. As a result, for any finite structure $B$, we have $$\Orb(B) = \{i(B) : i \in \Omega_A^{B \cup A}\} \cong \Omega_A^{B \cup A},$$ so we obtain the decomposition $$\Omega^{[n]} \cong \bigsqcup_{\substack{|X| = n \\ A \subseteq X}} \Omega_A^X$$ where the right hand side is the disjoint union of transitive $\mathscr{E}$-smooth $U$-sets. Since any transitive $\mathscr{E}$-smooth $U$-set is an orbit on $\Omega^n$ for some $n$, all such sets are of the form $\Omega_A^X$ where $|X| \leq n$.
    
    Now, every map of transitive $\mathscr{E}$-smooth $U$-sets $f:\Omega_A^Y \to \Omega_A^X$ corresponds to an embedding $j: X \to Y$ such that the diagram \[\begin{tikzcd}
	A \\
	X & Y
	\arrow["i"', from=1-1, to=2-1]
	\arrow["{i^*}", from=1-1, to=2-2]
	\arrow["j", from=2-1, to=2-2]
\end{tikzcd}\] commutes. This means that $\mu^*(f)$ is nonzero if and only if $\mu(j)$ is nonzero, but all such embeddings $j$ in $\mathfrak{F}$ are exactly all the embeddings in the corresponding relative class $\mathscr{F}_A$, so $\mu^*|_U$ is regular if and only if $\mu|_{\mathscr{F}_A}$ is regular, as desired.
\end{proof}
The above propositions allow us to translate the quasi-regularity condition to the language of finite combinatorial structures.
\begin{cor} \label{quasicor} A measure $\mu$ on $\mathfrak{F}$ is quasi-regular if and only if there exists $A \in \mathfrak{F}$ such that $\mu(i)$ is nonzero for all possible embeddings $i: A \to X$ in $\mathfrak{F}$. 
\end{cor}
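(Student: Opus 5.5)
This follows by chaining Proposition \ref{stab} and Proposition \ref{correspond}. The only real work is to translate ``$\mu$ is regular on $\mathscr{F}_A$'' into the stated condition on embeddings out of $A$.

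First, by Proposition \ref{stab}, $\mu$ is quasi-regular if and only if there is a finite structure $A \subset \Omega$ such that $\mu^*|_{\Stab_G(A)}$ is regular. Since every finite substructure of $\Omega$ lies in $\mathfrak{F}$, and conversely every $A \in \mathfrak{F}$ embeds in $\Omega$, we may take $A$ to range over $\mathfrak{F}$. Then Proposition \ref{correspond} says this holds if and only if $\mu$ is regular on the relative class $\mathscr{F}_A$, that is, $\mu(k) \neq 0$ for every embedding $k:(X,i) \to (Y,j)$ in $\mathscr{F}_A$.

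It remains to show that regularity on $\mathscr{F}_A$ is equivalent to $\mu(i) \neq 0$ for all embeddings $i: A \to X$ in $\mathfrak{F}$.

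\emph{Forward direction.} An embedding $i:A\to X$ is itself a morphism $(A,\mathrm{id}_A) \to (X,i)$ in $\mathscr{F}_A$, so regularity on $\mathscr{F}_A$ gives $\mu(i)\neq 0$ directly.

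\emph{Converse.} Let $k:(X,i)\to(Y,j)$ be an embedding in $\mathscr{F}_A$, so that $j=k\circ i$. By axiom \ref{mb}, $\mu(j)=\mu(k)\mu(i)$. Since $\mu(j)\neq 0$ by hypothesis, we get $\mu(k)\neq 0$.

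I expect the main (mild) obstacle to be the bookkeeping in the first step. One must check that the identification of $U$-sets with $\Omega_A^X$ in Proposition \ref{correspond} covers every embedding in $\mathscr{F}_A$, including those whose source is $A$ itself. One must also check that passing between ``finite $A\subset\Omega$'' and ``$A\in\mathfrak{F}$'' is harmless; this uses Fra\"iss\'e's theorem that $\Omega$ contains a copy of each structure in $\mathfrak{F}$, together with homogeneity, so that the choice of copy does not matter up to conjugating $\Stab_G(A)$.
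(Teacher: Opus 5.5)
Your proposal is correct and follows the paper's proof: chain Propositions \ref{stab} and \ref{correspond}, then unpack the definition of $\mathscr{F}_A$. Your multiplicativity argument via axiom \ref{mb}, viewing $i: A \to X$ as the morphism $(A,\mathrm{id}_A)\to(X,i)$ and using $\mu(j)=\mu(k)\mu(i)$ for the converse, simply spells out the step the paper leaves as ``follows from the definition of $\mathscr{F}_A$.''
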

\begin{proof}
    Combining Propositions \ref{stab} and \ref{correspond}, $\mu$ is quasi-regular if and only if $\mu|_{\mathscr{F}_A}$ is regular for some $A \in \mathfrak{F}$. The corollary now follows from the definition of $\mathscr{F}_A$.
\end{proof}
Given a structure $Y \in \mathfrak{F}$ with substructure $X$, there may be multiple possible embeddings $X \to Y$. We show that (quasi-)regular measures behave nicely and ignore how $X$ embeds into $Y$. 
\begin{prop}
    Let $\mu$ be a regular measure on a relative Fra\"iss\'e class $\mathscr{F}_A$ and $K$ be the set of all embeddings $(X,i) \to (Y,j)$ for some $(X,i), (Y,j) \in \mathscr{F}_A$. The value of $\mu(k)$ is constant across all embeddings $k \in K$. 
\end{prop}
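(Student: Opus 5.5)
The plan is to read the statement as fixing two objects $(X,i)$ and $(Y,j)$ of $\mathscr{F}_A$ and letting $K$ be the set of all embeddings $k\colon (X,i)\to(Y,j)$ between them; the claim is that $\mu$ takes a single value on $K$. (It cannot mean constancy across \emph{all} pairs of objects, since then every embedding would have the same measure as an identity, namely $1$.) The idea is that the value of $\mu(k)$ is forced by multiplicativity, with regularity used to divide.

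First step: take any $k\in K$. By the definition of embeddings in $\mathscr{F}_A$ we have $j = k\circ i$, where $i\colon A\to X$, $j\colon A\to Y$ and $k\colon X\to Y$ are embeddings in $\mathfrak{F}$. Measures on $\mathscr{F}_A$ are by definition restrictions of measures on $\mathfrak{F}$, which is the smallest Fra\"iss\'e class containing $\mathscr{F}_A$ by Proposition \ref{small}. So axiom \ref{mb} applies and gives
\[
\mu(j) = \mu(k\circ i) = \mu(k)\,\mu(i).
\]

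Second step: invoke regularity on $\mathscr{F}_A$. The embedding $i\colon A\to X$ is an embedding $(A,\mathrm{id}_A)\to (X,i)$ in $\mathscr{F}_A$, so $\mu(i)\neq 0$; in the ring-valued setting it is invertible. We can therefore solve for $\mu(k)$:
\[
\mu(k)=\frac{\mu(j)}{\mu(i)}.
\]
The right-hand side depends only on the objects $(X,i)$ and $(Y,j)$, not on $k$. Hence $\mu$ is constant on $K$. In the notation $\mu(X)=\mu(\varnothing\to X)$ introduced earlier, and taking $A=\varnothing$, this says $\mu(k)=\mu(Y)/\mu(X)$.

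The only point needing care is justifying that $\mu(i)\ne 0$. This requires checking that the structure maps $i\colon A\to X$ themselves count as embeddings in $\mathscr{F}_A$, namely as maps out of the base object $(A,\mathrm{id}_A)$, so that they are covered by the regularity hypothesis. Once that is verified, the argument is a one-line cancellation, and I do not expect any further obstacle.
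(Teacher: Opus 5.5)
Your proof is correct and matches the paper's: axiom (b) gives $\mu(j)=\mu(k)\mu(i)$, and regularity lets you divide, so $\mu(k)=\mu(j)\mu(i)^{-1}$ depends only on $(X,i)$ and $(Y,j)$. Two points the paper leaves implicit are also handled correctly in your write-up: reading $K$ as the embeddings between two fixed objects, and checking that $i$ is itself an embedding $(A,\mathrm{id}_A)\to(X,i)$ in $\mathscr{F}_A$, so that regularity applies to it.
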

\begin{proof}
    For any embedding $k : (X,i) \to (Y,j)$, we have $j = k \circ i$, so $\mu(k) = \mu(j)\mu(i)^{-1}$, which is well-defined and independent of the embedding choice. 
\end{proof}
Due to the complicated nature of measures, we introduce a formal construction that captures all and only the combinatorial constraints of measures.
\begin{defn} \label{thetag}
    For a Fraïssé class $\mathfrak{F}$, we define its \emph{measure ring} to be a commutative ring $\Theta(\mathfrak{F})$ where each embedding $i$ is represented by a formal symbol $[i]$ subject to the following conditions:
\begin{enumerate}[label=(\alph*), ref=\ref{thetag}\alph*]
    \item If $i$ is an isomorphism then $[i] = 1$. \label{thetaa}
    \item If $i,j$ are composable embeddings then $[i \circ j]=[i] \cdot [j]$. \label{thetab}
    \item $[i] = \sum_{k=1}^n [i'_{k}]$ with the same notation as in Definition \ref{fraissemeasure}. \label{thetac}
\end{enumerate}
\end{defn}
In particular, realizing a measure in $\CC$ amounts to a ring homomorphism $\mu:\Theta(\mathfrak{F}) \to \CC$, so computing measures on $\mathfrak{F}$ is equivalent to computing its associated measure ring.

To further simplify matters, observe that every embedding $i: X \to Y$ is composed of $|Y|-|X|$ one-point extensions. Hence $\Theta(\mathfrak{F})$ is generated by one-point extensions. Adopting the notation of \cite{harman2024arborealtensorcategories}, we introduce the following object.
\begin{defn}
    Let $X$ be a structure in $\mathfrak{F}$. For $a \in X$, the \emph{marked structure} $(X,a)$ is the one-point extension $i: X \setminus a \to X$.
\end{defn} 
For each marked structure $(X,a)$, there is a corresponding class $[X,a] \in \Theta(\mathfrak{F})$. It follows that isomorphism classes of marked structures generate $\Theta(\mathfrak{F})$, so it suffices to consider marked structures and relations between them to compute $\Theta(\mathfrak{F})$. A large number of measure rings can, in fact, be reduced to a finite computation with the following key idea of separated elements.
\begin{defn}
    Two elements $a,b \in X$ are said to be \emph{separated} if $X$ is the unique amalgamation of $X \setminus a$ and $X \setminus b$ over $X \setminus \{a,b\}$. 
\end{defn}
The following proposition allows us to remove separated elements from the marked point until we obtain a relatively simple structure. We include the proof to illustrate the nature of amalgamation diagrams.
\begin{prop}[{\cite[Proposition 2.9, p.8]{harman2024arborealtensorcategories}}] \label{sep}
    Let $(X,a)$ be a marked structure. If $b \in X$ is separated from $a$, then $[X,a] \cong [X \setminus b, a]$ in $\Theta(\mathfrak{F})$.  
\end{prop}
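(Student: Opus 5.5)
The plan is to apply the amalgamation relation (\ref{thetac}) to a single square and then cancel a common factor using multiplicativity (\ref{thetab}).

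Take $Z = X \setminus \{a,b\}$ and consider the two one-point extensions
\[
i : Z \to X \setminus b, \qquad j : Z \to X \setminus a .
\]
Here $i$ adds the point $a$ and $j$ adds the point $b$. As a marked structure, the embedding $i$ is precisely $(X \setminus b, a)$.

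By the definition of separatedness, $X$ is the unique amalgamation of $X \setminus b$ and $X \setminus a$ over $Z$. I will check that ``unique'' here includes the point-identifying amalgamation in which $a$ is glued to $b$. This gives a structure of size $|X|-1$, so it cannot be isomorphic to $X$. Separatedness therefore says that this glued amalgamation does not occur in $\mathfrak{F}$, and that all relations between $a$ and $b$ are forced to be those of $X$.

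Next I apply (\ref{thetac}) to the pair $(i,j)$. The relevant embedding is the one of $X \setminus a$ into the amalgamation, and the only amalgamation is $X$. That embedding $X \setminus a \to X$ adds the point $a$, so it is exactly the marked structure $(X,a)$. Relation (\ref{thetac}) then reads
\[
[X \setminus b, a] = [i] = [X,a].
\]
So the proposition follows directly from the amalgamation relation, with the orientation chosen so that the embedding being measured is the one adding $a$.

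It is worth confirming that the orientation in Definition \ref{fraissemeasure}(c) is consistent with this. There, $\mu(i)$ with $i : Z \to X$ is expressed as a sum over the embeddings $Y \to Z'_k$. These are the base changes of $i$ along $j$, and each adds the same new points as $i$ does. So the embedding measured on the right-hand side adds $a$, as required.

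The main obstacle is purely definitional: the argument depends on reading ``unique amalgamation'' so that it rules out the amalgamation identifying $a$ with $b$. If that identified amalgamation existed in $\mathfrak{F}$, relation (\ref{thetac}) would pick up an extra term, the isomorphism $X \setminus a \to X\setminus a$ contributing $1$, and the clean equality would fail. I will make this interpretation explicit, which matches the convention in \cite{harman2024arborealtensorcategories}.
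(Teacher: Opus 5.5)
Your argument is correct and is the same as the paper's: you apply relation \ref{thetac} to the square over $X\setminus\{a,b\}$, with the embedding that adds $a$ as the one being measured, and uniqueness of the amalgamation $X$ gives $[X\setminus b,a]=[X,a]$ directly. The cancellation via \ref{thetab} announced in your opening plan is never actually used, so you can drop it.
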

\begin{proof}
    Consider the unique amalgamation diagram: \[\begin{tikzcd}
	{X \setminus b} & X \\
	{X \setminus \{a,b\}} & {X \setminus a}
	\arrow[from=1-1, to=1-2]
	\arrow["i", from=2-1, to=1-1]
	\arrow[from=2-1, to=2-2]
	\arrow["j"', from=2-2, to=1-2]
\end{tikzcd}\]
By Definition \ref{thetac}, we have $[i]=[j]$, which is the desired result.
\end{proof}
\subsection{Cameron's treelike structures} \label{cameron} 
We now give an overview of the classes of treelike structures originally constructed by Cameron \cite{treelikeobjects}. Our investigation of measures on these classes extends previous work by Harman, Snowden, and Nekrasov  \cite{harman2024arborealtensorcategories,Nekrasov2024TensorialMeasures} and exhausts Cameron's list of elementary treelike structures.

Recall that a \emph{tree} is a connected simple graph with no cycles. For a tree $T$, let $L(T)$ denote its set of leaves. Cameron \cite{treelikeobjects} assigned a quartic relation $R$ on $L(T)$ where $R(a,b;c,d)$ is true if and only if the four leaves are distinct and the simple path from $a$ to $b$ does not intersect the simple path from $c$ to $d$. Indeed, the subtree spanned by any four leaves $a,b,c,d$ is one of the following configurations:
\begin{center}
\begin{tikzpicture}[scale = 0.75, transform shape]

\node (A1) at (0,0) [rectangle, draw, inner sep=1.5pt] {};
\node (B1) at (1,0) [rectangle, draw, inner sep=1.5pt] {};
\node (L1) at (-0.7,0.7) [circle, fill, inner sep=1.5pt] {};
\node (L2) at (-0.7,-0.7) [circle, fill, inner sep=1.5pt] {};
\node (L3) at (1.7,0.7) [circle, fill, inner sep=1.5pt] {};
\node (L4) at (1.7,-0.7) [circle, fill, inner sep=1.5pt] {};
\draw (A1) -- (B1);
\draw (A1) -- (L1);
\draw (A1) -- (L2);
\draw (B1) -- (L3);
\draw (B1) -- (L4);
\node at (-0.7,1) {$a$};
\node at (-0.7,-0.4) {$b$};
\node at (1.7,1) {$c$};
\node at (1.7,-0.4) {$d$};
\node (1) at (3,0) [circle, fill, inner sep=1.5pt] {};
        \node (2) at (4,1) [circle, fill, inner sep=1.5pt] {};
        \node (3) at (5,0) [circle, fill, inner sep=1.5pt] {};
        \node (4) at (4,-1) [circle, fill, inner sep=1.5pt] {};
        \node (5) at (4,0) [rectangle, draw, inner sep=1.5pt] {};

        \draw (1) -- (5);
        \draw (5) -- (2);
        \draw (5) -- (3);
        \draw (5) -- (4);
        \node at (3,0.3) {$a$};
        \node at (4,1.3) {$b$};
        \node at (5,0.3) {$c$};
        \node at (4,-1.3) {$d$};
\end{tikzpicture}
\end{center}
In the first configuration, $R(a,b;c,d)$ is true while $R(a,c;b,d)$ and $R(a,d;b,c)$ are false. In the second configuration, $R$ is always false. We say that a tree $T$ is \emph{reduced} if $T$ does not have any vertices of degree two. Cameron showed that if $T$ is the reduction of $T'$, then $L(T)$ and $L(T')$ (with relation $R$) are isomorphic structures \cite[Proposition 3.1]{treelikeobjects}. This property still holds when we impose more relations on $L(T)$, so we assume throughout this paper that all trees are reduced.  We now introduce the treelike classes we work with in this paper.
\begin{defn}
Given a tree $T$, an \emph{$n$-colored tree structure} is the set of leaves $L(T)$ equipped with the quartic relation $R$ and an $n$-coloring $\sigma: L(T) \to \{1, \dots,n\}$. 
\end{defn}
\begin{example}
The following are examples of $2$-colored trees, whose leaf sets are $2$-colored tree structures:
\begin{center}
\begin{tikzpicture}[scale = 0.8, transform shape]
        \node (1) at (0,0) [circle, fill, inner sep=1.5pt] {};
        \node (2) at (1,1) [circle, draw, inner sep=1.5pt] {};
        \node (3) at (2,0) [circle, draw, inner sep=1.5pt] {};
        \node (4) at (1,0) [rectangle, draw, inner sep=1.5pt] {};
        \draw (1) -- (4);
        \draw (4) -- (3);
        \draw (4) -- (2);
        \node at (0,0.3) {$a$};
        \node at (1,1.3) {$b$};
        \node at (2,0.3) {$c$};
        
        \node (4) at (4,0) [circle, draw, inner sep=1.5pt] {};
        \node (5) at (5,0) [circle, fill, inner sep=1.5pt] {};
        \draw (4) -- (5);
        \node at (4,0.3) {$d$};
        \node at (5,0.3) {$e$};
    \end{tikzpicture}
\end{center} For convenience, $\sigma^{-1}(1)$ is represented by black leaves, while $\sigma^{-1}(2)$ is represented by white leaves. In general, we amalgamate (variants of) tree structures by amalgamating their corresponding trees. In particular, if $T$ is an amalgamation of the two above trees over the empty tree, we can only identify leaves with labels in the pairs $(a,e)$, $(b,d)$, and $(c,d)$. There are two possible amalgamations $T$ where $|L(T)|=3$: \begin{center}
\begin{tikzpicture}[scale = 0.8, transform shape]
        \node (1) at (0,0) [circle, fill, inner sep=1.5pt] {};
        \node (2) at (1,1) [circle, draw, inner sep=1.5pt] {};
        \node (3) at (2,0) [circle, draw, inner sep=1.5pt] {};
        \node (4) at (1,0) [rectangle, draw, inner sep=1.5pt] {};
        \draw (1) -- (4);
        \draw (4) -- (3);
        \draw (4) -- (2);
        \node at (0,0.3) {$a/e$};
        \node at (1,1.3) {$b/d$};
        \node at (2,0.3) {$c$};

        \node (1) at (4,0) [circle, fill, inner sep=1.5pt] {};
        \node (2) at (5,1) [circle, draw, inner sep=1.5pt] {};
        \node (3) at (6,0) [circle, draw, inner sep=1.5pt] {};
        \node (4) at (5,0) [rectangle, draw, inner sep=1.5pt] {};
        \draw (1) -- (4);
        \draw (4) -- (3);
        \draw (4) -- (2);
        \node at (4,0.3) {$a/e$};
        \node at (5,1.3) {$b$};
        \node at (6,0.3) {$c/d$};
    \end{tikzpicture}
\end{center}
\end{example}
We let $C_n\mathfrak{T}$ and $C_n\mathfrak{T}_k$ be the classes of $n$-colored tree structures and $n$-colored tree structures with vertices of degree at most $k$ (where $k \geq 3$), respectively. 
\begin{defn}
    Given a tree $T$, a \emph{labeled tree structure} is the set of leaves $L(T)$ equipped with the quartic relation $R$ and a total order $<$. 
\end{defn}
The total order on $L(T)$ is a binary relation and induces an order-preserving bijection $\ell: L(T) \to \{1, \dots, |L(T)|\}$, so we assign each leaf a different positive integer in $\{1, \dots, |L(T)|\}$ to represent its order.
\begin{example}
    The following are examples of labeled trees, whose leaf sets are labeled tree structures: \begin{center}
\begin{tikzpicture}[scale = 0.8, transform shape]
        \node (1) at (0,0) [circle, fill, inner sep=1.5pt] {};
        \node (2) at (1,1) [circle, fill, inner sep=1.5pt] {};
        \node (3) at (2,0) [circle, fill, inner sep=1.5pt] {};
        \node (4) at (1,0) [rectangle, draw, inner sep=1.5pt] {};
        \draw (1) -- (4);
        \draw (4) -- (3);
        \draw (4) -- (2);
        \node at (0,0.3) {1};
        \node at (1,1.3) {2};
        \node at (2,0.3) {3};
        
        \node (4) at (4,0) [circle, fill, inner sep=1.5pt] {};
        \node (5) at (5,0) [circle, fill, inner sep=1.5pt] {};
        \draw (4) -- (5);
        \node at (4,0.3) {$1'$};
        \node at (5,0.3) {$2'$};
    \end{tikzpicture}
\end{center} To distinguish their leaves, we made the relabelings $1 \mapsto 1'$ and $2 \mapsto 2'$ in the right tree. Some valid amalgamations (where the new order is shown by relabelings) include
\begin{center}
\begin{tikzpicture}[scale = 0.8, transform shape]
        \node (1) at (0,0) [circle, fill, inner sep=1.5pt] {};
        \node (2) at (1,1) [circle, fill, inner sep=1.5pt] {};
        \node (3) at (2,0) [circle, fill, inner sep=1.5pt] {};
        \node (4) at (1,0) [rectangle, draw, inner sep=1.5pt] {};
        \draw (1) -- (4);
        \draw (4) -- (3);
        \draw (4) -- (2);
        \node at (0,0.3) {3/2'};
        \node at (1,1.3) {2/1'};
        \node at (2,0.3) {1};
        
\node (1) at (3,0) [circle, fill, inner sep=1.5pt] {};
        \node (2) at (4,1) [circle, fill, inner sep=1.5pt] {};
        \node (3) at (5,0) [circle, fill, inner sep=1.5pt] {};
        \node (4) at (4,0) [rectangle, draw, inner sep=1.5pt] {};
        \node (5) at (4,-1) [rectangle, draw, inner sep=1.5pt] {};
        \node (6) at (3,-1) [circle, fill, inner sep=1.5pt] {};
        \node (7) at (5,-1) [circle, fill, inner sep=1.5pt] {};
        \draw (1) -- (4);
        \draw (4) -- (3);
        \draw (4) -- (2);
        \draw (4) -- (5);
        \draw (5) -- (6);
        \draw (5) -- (7);
        \node at (3,0.3) {$2' \mapsto 5$};
        \node at (4,1.3) {$1' \mapsto 3$};
        \node at (5,0.3) {$1$};
        \node at (3,-0.7) {$2$};
        \node at (5,-0.7) {$3 \mapsto 4$};

        \node (1) at (6,0) [circle, fill, inner sep=1.5pt] {};
        \node (2) at (7,1) [circle, fill, inner sep=1.5pt] {};
        \node (3) at (8,0) [circle, fill, inner sep=1.5pt] {};
        \node (4) at (7,0) [rectangle, draw, inner sep=1.5pt] {};
        \node (5) at (7,-1) [circle, fill, inner sep=1.5pt] {};
        \draw (1) -- (4);
        \draw (4) -- (3);
        \draw (4) -- (2);
        \draw (4) -- (5);
        \node at (6,0.3) {$1 \mapsto 2$};
        \node at (7,1.3) {$2 \mapsto 3$};
        \node at (8,0.3) {$2'/3 \mapsto 4$};
        \node at (7,-1.3) {$1' \mapsto 1$};
    \end{tikzpicture}
\end{center}
while the following are non-examples since the original orders are not preserved:
\begin{center}
\begin{tikzpicture}[scale = 0.8, transform shape]
        \node (1) at (0,0) [circle, fill, inner sep=1.5pt] {};
        \node (2) at (1,1) [circle, fill, inner sep=1.5pt] {};
        \node (3) at (2,0) [circle, fill, inner sep=1.5pt] {};
        \node (4) at (1,0) [rectangle, draw, inner sep=1.5pt] {};
        \draw (1) -- (4);
        \draw (4) -- (3);
        \draw (4) -- (2);
        \node at (0,0.3) {1/2'};
        \node at (1,1.3) {2/1'};
        \node at (2,0.3) {3};
        
\node (1) at (3,0) [circle, fill, inner sep=1.5pt] {};
        \node (2) at (4,1) [circle, fill, inner sep=1.5pt] {};
        \node (3) at (5,0) [circle, fill, inner sep=1.5pt] {};
        \node (4) at (4,0) [rectangle, draw, inner sep=1.5pt] {};
        \node (5) at (4,-1) [rectangle, draw, inner sep=1.5pt] {};
        \node (6) at (3,-1) [circle, fill, inner sep=1.5pt] {};
        \node (7) at (5,-1) [circle, fill, inner sep=1.5pt] {};
        \draw (1) -- (4);
        \draw (4) -- (3);
        \draw (4) -- (2);
        \draw (4) -- (5);
        \draw (5) -- (6);
        \draw (5) -- (7);
        \node at (3,0.3) {$2' \mapsto 3$};
        \node at (4,1.3) {$1' \mapsto 5$};
        \node at (5,0.3) {$1$};
        \node at (3,-0.7) {$2$};
        \node at (5,-0.7) {$3 \mapsto 4$};

        \node (1) at (6,0) [circle, fill, inner sep=1.5pt] {};
        \node (2) at (7,1) [circle, fill, inner sep=1.5pt] {};
        \node (3) at (8,0) [circle, fill, inner sep=1.5pt] {};
        \node (4) at (7,0) [rectangle, draw, inner sep=1.5pt] {};
        \node (5) at (7,-1) [circle, fill, inner sep=1.5pt] {};
        \draw (1) -- (4);
        \draw (4) -- (3);
        \draw (4) -- (2);
        \draw (4) -- (5);
        \node at (6,0.3) {$1$};
        \node at (7,1.3) {$2$};
        \node at (8,0.3) {$2'/3$};
        \node at (7,-1.3) {$1' \mapsto 4$};
    \end{tikzpicture}
\end{center}
\end{example}
Let $L\mathfrak{T}$ be the class of labeled tree structures. Note that in practice, $R$ and $<$ can be treated as independent relations on $L(T)$.

To introduce the final variant, let a \emph{node} in a tree be a non-leaf vertex. A \emph{node-colored rooted binary tree} is a rooted tree where each node has two children with no left-right distinction and assigned one of $n$ colors from $\{1, \dots, n \}$. If a tree has at least two leaves, then its root is a node. Notice that, unlike trees in $C_n\mathfrak{T}$, the leaves are not colored. Given a node-colored rooted binary tree $T$ with root $r$, there is a ternary relation $S$ on $L(T)$ where $S(a,b,c)$ is true if and only if $a,b,c$ are distinct and the simple path from the root to $a$ does not intersect the simple path from $b$ to $c$. 
\begin{defn}
   Given a node-colored rooted binary tree $T$, a \emph{node-colored rooted binary tree structure} is the set of leaves $L(T)$ equipped with the ternary relation $S$.
\end{defn}
\begin{example}
    The following are examples of node-colored rooted binary trees, whose leaf sets are node-colored rooted binary tree structures:     \begin{center}
\begin{tikzpicture}[scale = 0.8, transform shape]
        \node (1) at (-0.5,0.5) [circle, fill, inner sep=1.5pt] {};
        \node (2) at (0,1) [circle, fill, inner sep=1.5pt] {};
        \node (3) at (0.5,1.5) [circle, fill, inner sep=1.5pt] {};
        \node (4) at (0,0) [rectangle,draw, inner sep=1.5pt] {1};
        \node (5) at (0.5,0.5) [rectangle, draw, inner sep=1.5pt] {3};
        \node (6) at (1,1) [rectangle, draw, inner sep=1.5pt] {2};
        \node (7) at (1.5,1.5) [circle, fill, inner sep=1.5pt] {};
        \draw (1) -- (4);
        \draw (4) -- (5);
        \draw (5) -- (2);
        \draw (5) -- (6);
        \draw (6) -- (7);
        \draw (6) -- (3);
        \node at (-0.5,0.8) {$a$};
        \node at (0,1.3) {$b$};
        \node at (0.5,1.8) {$c$};
        \node at (1.5,1.8) {$d$};
    
            \node (B1) at (3,0)     [rectangle, draw, inner sep=1.5pt] {$3$};
    \node (B4) at (2.5,0.5) [rectangle, draw, inner sep=1.5pt] {$4$};
    \node (B5) at (3.5,0.5) [rectangle, draw, inner sep=1.5pt] {$1$};

    \node (B7) at (2.2,1.0) [circle, fill, inner sep=1.5pt] {}; 
    \node (B8) at (2.8,1.0) [circle, fill, inner sep=1.5pt] {}; 
    \node (B2) at (3.2,1.0) [circle, fill, inner sep=1.5pt] {}; 
    \node (B6) at (3.8,1.0) [circle, fill, inner sep=1.5pt] {}; 

    \draw (B1) -- (B4);
    \draw (B1) -- (B5);
    \draw (B4) -- (B7);
    \draw (B4) -- (B8);
    \draw (B5) -- (B2);
    \draw (B5) -- (B6);

    \node at (2.2,1.3) {$e$};
    \node at (2.8,1.3) {$f$};
    \node at (3.2,1.3) {$a$};
    \node at (4,1.3) {$b$};

    \end{tikzpicture}
\end{center} In the left tree, the root has color $1$, and $S(a,b,c)$ and $S(b,c,d)$ are true while $S(c,a,d)$ is false. In the right tree, the root has color $3$, and $S(e,a,b)$ and $S(a,e,f)$ are true while $S(a,b,f)$ is false. An amalgamation of the two trees over the subtree on $\{a,b\}$ is \begin{center}
\begin{tikzpicture}[scale = 0.8, transform shape]
    \node (1)  at (-0.5,0.5)  [circle, fill, inner sep=1.5pt] {};
    \node (2)  at (0,1)       [circle, fill, inner sep=1.5pt] {};
    \node (3)  at (0.5,1.5)   [circle, fill, inner sep=1.5pt] {};
    \node (4)  at (0,0)       [rectangle, draw, inner sep=1.5pt] {1};
    \node (5)  at (0.5,0.5)   [rectangle, draw, inner sep=1.5pt] {3};
    \node (6)  at (1,1)       [rectangle, draw, inner sep=1.5pt] {2};
    \node (7)  at (1.5,1.5)   [circle, fill, inner sep=1.5pt] {};

    \node (8)  at (-0.5,-0.5) [rectangle, draw, inner sep=1.5pt] {3};
    \node (9)  at (-1,0)      [rectangle, draw, inner sep=1.5pt] {4};
    \node (10) at (-1.3,0.5)    [circle, fill, inner sep=1.5pt] {};
    \node (11) at (-0.8,0.5)    [circle, fill, inner sep=1.5pt] {};

    \draw (1) -- (4);
    \draw (4) -- (5);
    \draw (5) -- (2);
    \draw (5) -- (6);
    \draw (6) -- (7);
    \draw (6) -- (3);
    \draw (4) -- (8);
    \draw (8) -- (9);
    \draw (9) -- (10);
    \draw (9) -- (11);

    \node at (-0.5,0.8) {$a$};
    \node at (0,1.3) {$b$};
    \node at (0.5,1.8) {$c$};
    \node at (1.5,1.8) {$d$};
    \node at (-1.3,0.8) {$e$};
    \node at (-0.8,0.8) {$f$};

\end{tikzpicture}
\end{center} 
\end{example}
We let $\partial \mathfrak{T}_3(n)$ denote the class of $n$(-node)-colored rooted binary tree structures. 

To assign measures on Cameron's treelike classes, we need to verify that they are Fra\"iss\'e, which is the following proposition.
\begin{prop}
    The classes $C_n\mathfrak{T}$, $C_n\mathfrak{T}_k$, $L\mathfrak{T}$, and $\partial \mathfrak{T}_3(n)$ are all Fra\"iss\'e classes.
\end{prop}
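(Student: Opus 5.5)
We verify the four conditions of Definition \ref{fraisse} for each class. Conditions \ref{fa} and \ref{fc} are immediate in every case. Isomorphism invariance holds by definition, since each class is defined as the class of structures arising from some tree. Finiteness holds because an $m$-element structure is determined by a reduced tree with $m$ leaves, and there are only finitely many such trees together with a coloring, total order, or node coloring. For a reduced tree every node has degree at least $3$, so the number of vertices is at most $2m$; in the rooted binary case the tree has exactly $2m-1$ vertices.

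For \ref{fb}, we take the subtree spanned by a subset $Y\subseteq L(T)$. In the rooted case we take the union of root-to-leaf paths, rooted at the meet of those paths. We then reduce this subtree, suppressing degree-two vertices, and keep leaf colors or the induced order. The relations $R$ and $S$ on $Y$ are computed from path intersections. These are unchanged by passing to the spanning subtree and, by \cite[Proposition 3.1]{treelikeobjects}, by reduction. For $C_n\mathfrak{T}_k$, degrees can only drop. For $\partial\mathfrak{T}_3(n)$, suppressing a node that has one child in $Y$ leaves each surviving node binary with its original color, and the new root is the highest surviving branching node. It remains to check that the relation on $Y$ recovers this tree, i.e.\ that the structure genuinely lies in the class. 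This holds since the tree is determined by its relation, and relations restrict.

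The main step is amalgamation \ref{fd}. Given $Z\subseteq X,Y$, we realize $X$, $Y$, $Z$ by reduced trees $T_X,T_Y,T_Z$ with $T_Z$ embedding as a subdivision-subtree of each. The plan is to glue: the edges of $T_Z$ correspond to paths in $T_X$ and in $T_Y$. Off those paths, $T_X$ carries "pendant" subtrees attached at points of paths of $T_Z$ (subdivision points) or at nodes of $T_Z$, and likewise for $T_Y$. We form $T$ by taking $T_Z$ and attaching all pendant pieces from both sides. Along a subdivided edge of $T_Z$, the attachment points of $X$-pieces and of $Y$-pieces are interleaved in any order, say all $X$-points first. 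At a node of $T_Z$, the pieces of both sides are attached directly. Then $L(T)=L(X)\sqcup_{L(Z)}L(Y)$, and the restrictions to $L(X)$ and $L(Y)$ reduce back to $T_X$ and $T_Y$, because deleting the other side's pieces only creates degree-two vertices.

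This free gluing works for $C_n\mathfrak{T}$. It works for $L\mathfrak{T}$ once we choose any total order on the union extending both orders; such an order exists because the two orders agree on $Z$. Two cases need extra care, and these are the expected obstacles. \textbf{For $C_n\mathfrak{T}_k$}, attaching both sides' pieces at a node of $T_Z$ may exceed degree $k$. We fix this by instead attaching the $Y$-pieces at a new vertex subdividing an incident edge very close to the node. This works because a node of $T_Z$ has degree at least $3$, and that degree is already accounted for in both $T_X$ and $T_Y$. More simply, we can always route one side's extra branches through new subdivision vertices, all of degree $3$, and then check that all nodes have degree at most $\max(\deg_X,\deg_Y)$ or $3$. \textbf{For $\partial\mathfrak{T}_3(n)$}, binarity forbids attaching two pieces at one point. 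So every attachment point, including the above-root region when $T_X$ or $T_Y$ has a root above $T_Z$'s root, is placed at a distinct new binary node on the relevant path, ordered arbitrarily. Each new node inherits its color from the side it came from. Nodes of $T_Z$ keep their common color, since the colors in $X$ and $Y$ agree on $Z$. The key check is that each new node has exactly two children and that restriction to each side returns the original colored tree, which holds because a new node is suppressed exactly when its piece is removed. The degenerate cases $Z=\varnothing$, or $|Z|\le 1$, where there is no common tree, are handled by joining $T_X$ and $T_Y$ under a new root of arbitrary color (respectively by an edge in the unrooted cases, after attaching at the common leaf if $|Z|=1$).
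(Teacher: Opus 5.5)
Your overall strategy is the paper's: glue $T_X$ and $T_Y$ along the common subtree $T_Z$ and attach the remaining branches of both sides. Your free gluing for $C_n\mathfrak{T}$ and $L\mathfrak{T}$ is correct, as is your path-interleaving for $\partial\mathfrak{T}_3(n)$, and both are argued more carefully than in the paper's sketch.

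The gap is your fix for $C_n\mathfrak{T}_k$. Moving the $Y$-branches at a node $v$ of $T_Z$ to a new vertex $w$ on a $T_Z$-edge at $v$ does not produce an amalgamation. After deleting the $X$-side, $v$ has degree $\deg_{T_Z}(v)\ge 3$ and $w$ has degree $\ge 3$. So the reduced restriction to $L(Y)$ contains an internal edge $vw$ that $T_Y$ does not have.

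Concretely, take $k=4$, with $T_Z$ the star on $a,b,c$ centered at $v$, $T_X$ the star on $a,b,c,x$, and $T_Y$ the star on $a,b,c,y$. Free gluing gives $v$ degree $5$. Hanging $y$ from a new vertex on the edge $va$ makes $R(a,y;b,c)$ true, but it is false in $Y$. The fact you cite, $\deg_{T_Z}(v)\ge 3$, is exactly why both $v$ and $w$ survive reduction. If $w$ is instead placed on the edge of an $X$-branch, the same failure occurs as soon as $w$ carries two or more $Y$-branches.

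In fact no grouping is allowed. Take $y$ in a $Y$-branch at $v$ and $a,b,c$ in three distinct $T_Z$-directions; then $\{y,a,b,c\}$ spans a star in $T_Y$. Likewise, two leaves from distinct $Y$-branches together with two $Z$-leaves in distinct directions span a star. This forces every $Y$-branch to leave $v$ along its own edge, containing no leaf of $Z$ and no other $Y$-branch. The same holds for $X$.

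So the only freedom is to let an $X$-branch and a $Y$-branch share an edge at $v$, and the fix is to pair them. Suppose the $X$-branches at $v$ are $P_1,\dots,P_p$ and the $Y$-branches are $Q_1,\dots,Q_q$, with $p\le q$. For each $i\le p$, subdivide the edge from $v$ to $Q_i$ by a new vertex $u_i$ and hang $P_i$ from $u_i$. Then:
\begin{itemize}
\item $\deg v=\deg_{T_Z}(v)+q=\deg_{T_Y}(v)\le k$;
\item each $u_i$ has degree $3$;
\item deleting either side turns every $u_i$ into a degree-two vertex, so both restrictions reduce back to $T_X$ and $T_Y$.
\end{itemize}
Your ``more simply'' sentence may be aiming at this. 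As written, though, it does not say which edges are subdivided, and it checks only degrees, not the restriction condition, which is exactly where your first version breaks. The paper's own sketch hangs all of $T_1$'s branches at one new node $u$ and all of $T_2$'s at one new node $w$ adjacent to $v$. It has the same defect, so do not model the fix on it.

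Two smaller points on the degenerate cases:
\begin{itemize}
\item For $Z=\varnothing$ in the unrooted classes, join new vertices subdividing an edge of each tree, not existing vertices. Joining existing nodes can break the degree bound, and joining leaves changes the leaf set.
\item For $|Z|=1$ in the rooted class, your path-interleaving already works. A new root above both trees would not identify the common leaf.
\end{itemize}
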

\begin{proof}
\ref{fa}, \ref{fb}, and \ref{fc} are clear for all four classes, so it suffices to check the amalgamation property. We adapt the proof of \cite[Proposition 3.2, p.158]{treelikeobjects}, which dealt with the uncolored and unlabeled case. Suppose we have trees $T_1$ and $T_2$ with a common subtree $T_0$. Let $\mathrm{deg}(v)$ denote the degree of a vertex $v$. To amalgamate $T_1$ and $T_2$ over $T_0$, we start with $T_0$ and branch out to build the rest of $T_1$ and $T_2$. If the sets of vertices in which $T_1$ and $T_2$ each branch out at are pairwise disjoint, then simply attach the remaining parts of $T_1$ and $T_2$ at these vertices. If there is $v \in T_0$ such that $T_1$ and $T_2$ both branch out at $v$ (possibly leading to $\mathrm{deg}(v) > k$), then attach $v$ to new nodes $u$ and $w$, and branch out $T_1$ at $u$ and $T_2$ at $w$. Notice that $\mathrm{deg}(v) = 3$ (or $2$ if $v$ is the root), and $L(T_1) \cup L(T_2)$ is the same after adding $u$ and $w$, so we obtain a valid amalgamation.

In $L\mathfrak{T}$, we amalgamate the total orders separately from the trees. Given total orders $L_0$, $L_1$, and $L_2$ on $T_0$, $T_1$, and $T_2$, respectively, where $L_0 \subseteq L_1, L_2$, again begin with $L_0$ and add on the remaining elements of $L_1$ and $L_2$ while choosing any ordering among elements of $L_1$ and $L_2$ (such that $L_0$ is preserved). This process is independent of the amalgamation of the underlying unlabeled tree and completes an amalgamation of labeled trees. 
\end{proof}
\section{Vanishing measure rings} \label{sec3}
In this section, we compute the measure rings $\Theta(C_n \mathfrak{T})$, $\Theta(C_n \mathfrak{T}_k)$, and $\Theta(L\mathfrak{T})$, which correspond to the Fra\"iss\'e classes of $n$-colored tree structures, $n$-colored tree structures with maximum degree $k$, and labeled tree structures, respectively. In particular, we show that all three measure rings are isomorphic to the zero ring, implying that none of the classes admits a measure. 
\subsection{\texorpdfstring{$n$-colored trees}{n-colored trees}} \label{n-colored}
For a tree $T$ with leaf $a$, recall that the marked structure $(T,a)$ is the embedding $T \setminus a \to T$, and $[T,a]$ is the corresponding variable for the isomorphism class of $(T,a)$ in the measure ring. Let $N(a)$ denote the distinct node neighboring $a$, if this node exists. 
\begin{prop} \label{31}
    Let $(T,a)$ be a marked structure in $C_n \mathfrak{T}$. If there is $b \in L(T)$ such that $d(N(a),N(b)) \geq 2$ or $\mathrm{deg(N(a))} \geq 4$ or $\mathrm{deg}(N(b)) \geq 4$ (where $d(u,v)$ denotes the geodesic distance between $u$ and $v$), then $a$ and $b$ are separated. In other words, $[T,a] = [T \setminus b, a]$.  
\end{prop}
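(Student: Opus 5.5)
By the definition of separated elements and Proposition \ref{sep}, it suffices to show one thing: every amalgamation $Z'$ of $T\setminus a$ and $T\setminus b$ over $T\setminus\{a,b\}$ in $C_n\mathfrak{T}$ is isomorphic (as an amalgamation) to $T$. The equality $[T,a]=[T\setminus b,a]$ then follows immediately. An amalgamation either identifies $a$ with $b$ or keeps them distinct. So the plan is to first rule out the identification, and then show that when $a\neq b$ in $Z'$ the tree structure on $L(Z')=L(T)$ is forced to equal that of $T$.

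\emph{Step 1: excluding $a=b$.} If $a$ and $b$ are identified, then the leaf $a$ sits in $T\setminus b$ exactly as it sits in $T$, and likewise $b$ sits in $T\setminus a$ as in $T$. I will show that the position of $a$ relative to $T\setminus\{a,b\}$ differs from that of $b$, i.e.\ they attach at different edges or vertices of the reduced tree spanned by $L(T)\setminus\{a,b\}$. Under the hypotheses this holds:
\begin{itemize}
\item If $d(N(a),N(b))\ge 2$, then $N(a)$ and $N(b)$ survive as distinct branch points (or lie on distinct edges) after deleting the other leaf.
\item If $d(N(a),N(b))\le 1$ and, say, $\deg N(a)\ge 4$, then deleting $b$ leaves $N(a)$ a node of degree $\ge 3$ to which $a$ attaches directly. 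By contrast, $b$ attaches either at a different node or in the interior of an edge.
\end{itemize}
These attachment positions are detected by the relation $R$ with triples of leaves of $T\setminus\{a,b\}$. Concretely, I will exhibit leaves $c,d,e$ with $R(a,c;d,e)$ true but $R(b,c;d,e)$ false, or vice versa, which contradicts $a=b$. The colors are irrelevant here, apart from possibly also distinguishing $a$ from $b$.

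\emph{Step 2: $a\neq b$.} The relations on $Z'$ not already determined are the quadruples containing both $a$ and $b$, namely $R(a,b;c,d)$ and $R(a,c;b,d)$ with $c,d\in T\setminus\{a,b\}$. I will use Cameron's correspondence between leaf structures and reduced trees. The position of $a$ in the tree of $T\setminus\{a,b\}$ (a vertex or an edge) is fixed, and so is the position of $b$. The hypothesis says these positions are ``far apart'': they are distinct vertices, or a vertex and an edge not incident to it, or an edge and a vertex that cannot be merged. In each such case the tree spanned by $L(T)$ is uniquely determined by the two positions. The only ambiguity arises when both attach on the same edge or at the same degree-3 node, which is precisely the excluded situation $d(N(a),N(b))\le1$ with both degrees equal to 3. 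Hence every quadruple relation is forced, and $Z'\cong T$.

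The main obstacle is the boundary case analysis in Steps 1 and 2 when $d(N(a),N(b))=1$ and one node has degree $\ge4$, together with small trees where $T\setminus\{a,b\}$ has fewer than three leaves. For these I would handle the tiny cases directly, checking that there are enough leaves to witness the relevant $R$-values. The generic case I would treat uniformly by the vertex/edge attachment-position argument.
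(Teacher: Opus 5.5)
Both of your steps rest on one claim: $a$ and $b$ occupy the same attachment position (an edge or a vertex) in the reduced tree of $T\setminus\{a,b\}$ only in the excluded configuration $d(N(a),N(b))\le 1$ with both degrees equal to $3$. That claim is false.

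The configuration $N(a)=N(b)$ with $\mathrm{deg}(N(a))\ge 4$ satisfies the hypothesis, since the degree is at least $4$. After deleting $a$ and $b$, the common node either survives as a vertex (degree $\ge 5$) or is suppressed into the interior of an edge (degree $4$). In both cases $a$ and $b$ hang from the same place. So your bullet ``$b$ attaches either at a different node or in the interior of an edge'' fails when $N(b)=N(a)$. Your description of the ambiguous cases in Step 2 misses this configuration; it should read ``same edge, or same vertex of any degree.''

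Concretely, identifying $a$ with $b$ is then a legitimate amalgamation whenever they have the same color. Even with $a\neq b$ there are other amalgamations, for example making $\{a,b\}$ a cherry hanging from that common position.

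This cannot be repaired, because the separation claim is itself false in that case. Take $T$ to be the monochromatic star on $a,b,c,d$. Then $T\setminus a$ and $T\setminus b$ are $3$-leaf structures carrying no $R$-relations. Their amalgamations over $\{c,d\}$ are:
\begin{enumerate}
\item the identification $a=b$;
\item the star $T$ itself;
\item the three trees with splits $ab\,|\,cd$, $ac\,|\,bd$, $ad\,|\,bc$.
\end{enumerate}
That is five amalgamations, exactly the count the paper uses in Lemma \ref{odd} to obtain $x_{3,1}=1+x_{4,1}+3y_1$. So $a$ and $b$ are not separated, and $[T,a]=[T\setminus b,a]$ does not follow from Proposition \ref{sep}. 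Your plan to ``check that there are enough leaves'' in small cases breaks down precisely here, since $T\setminus\{a,b\}$ has only two leaves and no $R$-values to witness anything.

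The paper's proof has the same blind spot: it dismisses the degree-$\ge 4$ cases as ``a similar argument.'' The statement needs the extra hypothesis $N(a)\neq N(b)$. With it, your argument is correct and follows the paper's overall strategy: show the amalgamation is unique, then apply Proposition \ref{sep}. Your mechanism differs and is more explicit, comparing attachment positions in the reduced tree of $T\setminus\{a,b\}$ instead of using the paper's pushout argument over an auxiliary substructure $X$. In the corrected setting $T\setminus\{a,b\}$ always has at least three leaves, so the small cases you were worried about do not arise.
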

\begin{proof}
    Given a marked structure $(T,a)$, it suffices to show the proposition for the uncolored version of $T$. If $d(N(a),N(b)) \geq 2$, consider the induced subtree $X$ of $T$ on $a,b$, and all $c \in L(T) \setminus \{a,b\}$ such that the simple path from $a$ to $b$ intersects the simple path from $c$ to either $a$ or $b$. In particular, the picture of $X$ is  \begin{center}
        \begin{tikzpicture}[scale = 0.8, transform shape]
\node (1) at (0,0) [circle,fill, inner sep=1.5pt] {};
  \node (2) at (1,0) [rectangle, draw, inner sep=1.5pt] {};
  \node (3) at (1,1) [circle, draw, inner sep=1.5pt] {$C_1$};
  \draw (1) -- (2);
  \draw (2) -- (3);
  \draw (2) -- (5/4, 0);
  \node (4) at (11/8,0) {$\cdot$};
  \node (6) at (12/8,0) {$\cdot$};
  \node (7) at (13/8,0) {$\cdot$};
  \node (5) at (2,0) [rectangle, draw, inner sep =1.5pt] {};
  \draw (5) -- (7/4, 0);
  \node (8) at (2,1) [circle, draw, inner sep=1.5pt] {$C_m$};
  \draw (5) -- (8);
  \node (9) at (3,0) [circle, fill, inner sep =1.5pt] {};
  \draw (5)--(9);

  \node at (0,0.3) {$a$};
  \node at (3, 0.3) {$b$};
  \end{tikzpicture}
  \end{center}
where $m>2$ and $c \in C_i$ for some $i \in \{1, \dots, m\}$. Observe that $X$ is the unique amalgamation of $X \setminus a$ and $X \setminus b$ over $X \setminus \{a,b\}$ i.e., $a$ and $b$ are separated in $X$. Let $T'$ be an amalgamation of $T \setminus a$ and $T \setminus b$ over $T \setminus \{a,b\}$; the following diagram commutes:
\[
\begin{tikzcd}[row sep={40,between origins}, column sep={40,between origins}]
      &[-\perspective] T \setminus b \ar{rr} &[\perspective] &[-\perspective] T' \\[-\perspective]
    T \setminus \{a,b\} \ar[crossing over]{rr} \ar[ur] & & T \setminus a \ar[ur] \\[\perspective]
      & X \setminus b  \ar{rr} \ar[uu] &&  X \ar[uu, "i^*"] \\[-\perspective]
    X \setminus \{a,b\} \ar{rr} \ar[uu] \ar[ur] && X \setminus a \ar[uu,crossing over] \ar[ur]
    \arrow[dashed, from=4-1, to=3-4, "\exists! i", shorten >=6pt]
\end{tikzcd}
\]
Notice that there is a unique embedding $i: X\setminus \{a,b\} \to X$ for the lower square to commute, so $X$ (along with embeddings $X \setminus a \to X$ and $X \setminus b \to X$) is the pushout of embeddings $X \setminus \{a,b\} \to X \setminus a$ and $X \setminus \{a,b\} \to X \setminus b$. By the universal property, there is a unique embedding $i^*: X \to T'$, which means that $T'$ and $T$ also agree on $X$. Consequently, for any leaves $c,d$ of $T$ (which are also leaves of $T'$ as $L(T)=L(T')$), we have $R(a,b;c,d)=R'(a,b;c,d)$ where $R'$ is the quartic relation on $T'$. It easily follows that $R(a,c;b,d) = R'(a,c;b,d)$ as well. Finally, we are given that $R$ and $R'$ agree on all $4$-tuples not witnessing both $a$ and $b$ since $T$ and $T'$ both preserve $T \setminus a$, $T \setminus b$, and $T \setminus \{a,b\}$, so we conclude that $R = R'$. This means that $T = T'$, so $T$ is the unique amalgamation, implying that $a$ and $b$ are separated in $T$. The result now follows from Proposition \ref{sep}.

The cases $\mathrm{deg}(N(a)) \ge 4$ or $\mathrm{deg}(N(b)) \ge 4$ follow from a similar argument where $a$ and $b$ are separated in $X$ and thus also separated in $T$.
\end{proof}
At this point, we state the main result.
\begin{theorem} \label{firstzero}
    For all $n \geq 2$ and $k \geq 3$, we have $\Theta(C_n\mathfrak{T}) \cong \Theta(C_n\mathfrak{T}_k) \cong \mathbf{0}$. As a result, there are no measures on the classes $C_n\mathfrak{T}$ and $C_n\mathfrak{T}_k$.
\end{theorem}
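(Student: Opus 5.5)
The plan is to show that $1=0$ in $\Theta(C_n\mathfrak{T}_k)$ and in $\Theta(C_n\mathfrak{T})$. The strategy is to reduce the generators of the measure ring to a finite list of small marked structures using Proposition \ref{31}. Then we write down the amalgamation relations of Definition \ref{thetac} among these generators and extract an inconsistent subsystem. Since having two colors is the only difference from the uncolored class, whose measure ring is nonzero, the contradiction must come from relations that mix colors. It therefore suffices to work with two colors $1,2$. Presumably one can show that for $n\ge 2$, restricting to marked structures using only colors $1,2$ already yields a contradiction, because all of the relevant amalgamations can be chosen inside the two-colored substructures. The same argument should apply verbatim to $C_n\mathfrak{T}_k$ for every $k\geq 3$, since the amalgamations used below only create nodes of degree $3$.

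\textbf{Step 1 (generators).} Let $(T,a)$ be a marked structure. By Proposition \ref{31}, we may delete every leaf $b$ with $d(N(a),N(b))\ge 2$. If $\deg N(a)\ge 4$ or $\deg N(b)\ge 4$, the leaf $b$ is separated from $a$ and can also be deleted. Iterating this, $[T,a]$ equals the class of a marked structure in one of the following shapes:
\begin{itemize}
\item a single point, giving $x_i$ for the color $i$ of $a$;
\item an edge with leaves colored $i,j$, marked at $a$;
\item a tripod whose center $N(a)$ has degree $3$, with $a$ and two further leaves $b,c$;
\item a configuration where $N(a)$ has degree $3$, a leaf $b$ hangs at $N(a)$, and an adjacent degree-$3$ node carries two further leaves.
\end{itemize}
For each shape only the colors vary, so the generators form an explicit finite family indexed by tuples in $\{1,2\}$.

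\textbf{Step 2 (relations).} Next we write the basic equations.
\begin{itemize}
\item Amalgamating two single points of colors $i,j$ over $\varnothing$ gives $x_i=[e_{ij},a_i]+\delta_{ij}$. Here $e_{ij}$ denotes the two-leaf tree with leaves $a_i,b_j$ colored $i,j$, and $[e_{ij},a_i]$ is the class of the marked structure $(e_{ij},a_i)$, that is, of the embedding of $b_j$ into $e_{ij}$.
\item Amalgamating an edge with a point over one endpoint, there are exactly three ways to attach the new leaf: at the remaining leaf, or by creating a degree-$3$ node on the edge. This relates edge generators to tripod generators.
\item Amalgamating a tripod with a point over two of its leaves relates tripod generators to the four-leaf generators. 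The new leaf can sit on any of the three legs, or at the already existing leaf of the matching color.
\end{itemize}
Using multiplicativity (Definition \ref{thetab}), each four-leaf configuration can be computed along two different orders of adding leaves. This gives further consistency equations, in particular ones equating expressions in which the colors of $b$ and $c$ are swapped relative to $a$.

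\textbf{Step 3 (contradiction), the main obstacle.} Solving this system is the hard part. The uncolored computation of Harman--Snowden is consistent, so the contradiction has to come from comparing mixed-color data. My first attempt would be to compute the class of a tripod with leaves colored $1,1,2$ in two ways: by adding the $2$-leaf last, and by adding a $1$-leaf last. In each case I would express everything through $x_1,x_2$ using Step 2, aiming to derive that some invertible element of $\Theta$ equals $0$. The natural candidates for such an element are the classes $[e_{12},a_1]$ and $[e_{12},b_2]$, which should be invertible because multiplicativity along the chain $\varnothing\to\text{point}\to\text{edge}$ factors an isomorphism-normalized class through them. Alternatively, one could derive equations like $2=1$ directly. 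If the hand elimination does not close, I would feed the finite polynomial system from Steps 1--2 to a Gr\"obner basis computation over $\mathbb{Z}$ and read off $1$ in the ideal. Finally, for $C_n\mathfrak{T}_k$, I would check that every amalgamation used only creates nodes of degree $3$, so the same relations and the same contradiction hold there.
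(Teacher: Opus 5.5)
\textbf{There is a genuine gap: Step 3 does not produce the contradiction, and Step 1 has the wrong generators.}

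Step 3 is where the proof has to happen, and it names no inconsistent subsystem. It offers only a tentative computation and a fallback to a Gr\"obner basis calculation you have not carried out, so nothing in the proposal shows $1=0$. The heuristic you sketch is also unsupported. Multiplicativity gives factorizations, not inverses, so there is no reason for $[e_{12},a_1]$ to be a unit. In fact, by your own first relation with $i\neq j$, it equals the point class $x_1$, and in the actual computation several generators are forced to vanish. There is also no indication that relations among points, edges and one-node tripods are already inconsistent. The contradiction lives in configurations with two and three nodes.

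Your Step 1 list omits one of these. Suppose $\deg N(a)=3$ and both other neighbors of $N(a)$ are nodes. After deleting separated leaves, each neighbor keeps two leaves, and none of them is separated from $a$. So $a$ sits at the middle node of a three-node caterpillar; this is the paper's $z_j$, and the paper's final contradiction runs through exactly this class. Your list also loses the stars. Leaves at $N(a)$ itself are never separated from $a$: in a four-leaf star, for instance, every tree on the same four leaves is also an amalgamation. So the degree clause of Proposition \ref{31} can only be used when $N(a)\neq N(b)$, and stars with up to $k$ leaves remain generators.

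\textbf{What the paper uses instead.}

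First, Lemma \ref{34}: if $\deg N(a)=3$ and $T$ has at least two nodes, then $[T,a]$ depends only on the colors of the leaves at $N(a)$. This is because colors on adjacent nodes can be changed by adding and deleting separated leaves. It cuts the generators down to $y_1,\dots,y_4,z_1,z_2$.

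Second, compare two amalgamations over a black edge: two all-black tripods, versus the same with the added leaf white. These give $x_{3,1}=1+x_{4,1}+3y_1$ and $x_{3,1}=x_{4,1}+y_2+2y_1$, where Lemma \ref{33} identifies the two star terms. Hence $y_2=y_1+1$, and symmetrically $y_3=y_4+1$.

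This also corrects your treatment of $C_n\mathfrak{T}_k$. Uniformity in $k$ does not come from ``only creating degree-$3$ nodes'': relation \ref{thetac} sums over all amalgamations in the class, so the four-leaf star term is present for $k\ge 4$. Uniformity comes from that term appearing identically in both equations and cancelling; for $k=3$ it is simply absent.

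Third, the quadratic relations $[T,a][T\setminus a,b]=[T,b][T\setminus b,a]$ are applied to four two-colored five-leaf caterpillars. They give successively $y_3=0$, $y_4=-1$, $(y_1,y_2)=(-1,0)$ and $z_1=0$. Finally $z_1y_1=y_1^2$ gives $0=1$.
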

To show Theorem \ref{firstzero}, we derive equations from the properties of measure rings given by Definition \ref{thetag} and show that these equations eventually contradict one another. We first introduce the following marked structures, where the marked leaf is enlarged, and only two colors (white and black) are featured:
\begin{center}
  \begin{tikzpicture}[baseline=(current bounding box.center)][scale = 0.8, transform shape]

  \begin{scope}[shift={(1.5,0)}]
    \node (0) at (0,0) [circle,fill, inner sep=4pt] {};
    \node (1) at (1,0) [rectangle, draw, inner sep=1.5pt] {};
    \node (2) at (2,0) [circle, fill, inner sep=1.5pt] {};
    \node (3) at (2,0.2) [circle, fill, inner sep=1.5pt] {};
    \node (4) at (2,-0.2) [circle, fill, inner sep=1.5pt] {};
    \draw (1) -- (3);
    \draw (1) -- (4);
    \draw (1) -- (2);
    \draw (1) -- (0);

    \node at (2.7,0) {\([m-1]\)};
  \end{scope}

  \begin{scope}[shift={(1.25,-0.6)}]
    \node (0) at (0,-2/2) [circle,fill, inner sep=4pt] {};
    \node (1) at (1/2,-2/2) [rectangle, draw, inner sep=1.5pt] {};
    \node (2) at (1/2,-1/2) [circle, draw, inner sep = 1.5pt] {};
    \node (3) at (2/2,-2/2) [rectangle, draw, inner sep = 1.5pt] {};
    \node (4) at (2/2,-1/2) [circle, fill, inner sep=1.5pt] {};
    \node (5) at (3/2,-2/2) [circle, fill, inner sep = 1.5pt] {};

    \draw (1) -- (2);
    \draw (1) -- (0);
    \draw (3) -- (4);
    \draw (3) -- (5);
    \draw (1) -- (3);
  \end{scope}

  \begin{scope}[shift={(1.25,-0.6)}]
    \node (0) at (4/2,-2/2) [circle,draw, inner sep=4pt] {};
    \node (1) at (5/2,-2/2) [rectangle, draw, inner sep=1.5pt] {};
    \node (2) at (5/2,-1/2) [circle, fill, inner sep = 1.5pt] {};
    \node (3) at (6/2,-2/2) [rectangle, draw, inner sep = 1.5pt] {};
    \node (4) at (6/2,-1/2) [circle, draw, inner sep=1.5pt] {};
    \node (5) at (7/2,-2/2) [circle, draw, inner sep = 1.5pt] {};

    \draw (1) -- (2);
    \draw (1) -- (0);
    \draw (3) -- (4);
    \draw (3) -- (5);
    \draw (1) -- (3);
  \end{scope}

  \begin{scope}[shift={(1.25,-0.6)}]
    \node (0) at (-4/2,-2/2) [circle,fill, inner sep=4pt] {};
    \node (1) at (-3/2,-2/2) [rectangle, draw, inner sep=1.5pt] {};
    \node (2) at (-3/2,-1/2) [circle, fill, inner sep = 1.5pt] {};
    \node (3) at (-2/2,-2/2) [rectangle, draw, inner sep = 1.5pt] {};
    \node (4) at (-2/2,-1/2) [circle, fill, inner sep=1.5pt] {};
    \node (5) at (-1/2,-2/2) [circle, fill, inner sep = 1.5pt] {};

    \draw (1) -- (2);
    \draw (1) -- (0);
    \draw (3) -- (4);
    \draw (3) -- (5);
    \draw (1) -- (3);
  \end{scope}

  \begin{scope}[shift={(1.25,-0.6)}]
    \node (0) at (8/2,-2/2) [circle,draw, inner sep=4pt] {};
    \node (1) at (9/2,-2/2) [rectangle, draw, inner sep=1.5pt] {};
    \node (2) at (9/2,-1/2) [circle, draw, inner sep = 1.5pt] {};
    \node (3) at (10/2,-2/2) [rectangle, draw, inner sep = 1.5pt] {};
    \node (4) at (10/2,-1/2) [circle, draw, inner sep=1.5pt] {};
    \node (5) at (11/2,-2/2) [circle, draw, inner sep = 1.5pt] {};

    \draw (1) -- (2);
    \draw (1) -- (0);
    \draw (3) -- (4);
    \draw (3) -- (5);
    \draw (1) -- (3);
  \end{scope}

  \begin{scope}[shift={(1.0625,-0.2)}]
    \node (1) at (-1/2,-6/2) [circle,fill, inner sep=1.5pt] {};
    \node (2) at (0,-6/2) [rectangle, draw, inner sep=1.5pt] {};
    \node (3) at (0,-5/2) [circle,fill, inner sep=1.5pt] {};
    \node (4) at (1/2,-6/2) [rectangle, draw, inner sep=1.5pt] {};
    \node (5) at (1/2,-5/2) [circle,fill, inner sep=4pt] {};
    \node (6) at (2/2,-6/2) [rectangle, draw, inner sep = 1.5pt] {};
    \node (7) at (2/2,-5/2) [circle,fill, inner sep=1.5pt] {};
    \node (8) at (3/2,-6/2) [circle,fill, inner sep=1.5pt] {};

    \draw (1) -- (2);
    \draw (2) -- (3);
    \draw (2) -- (4);
    \draw (4) -- (5);
    \draw (4) -- (6);
    \draw (6) -- (7);
    \draw (6) -- (8);
  \end{scope}

  \begin{scope}[shift={(1.4375,-0.2)}]
    \node (1) at (4/2,-6/2) [circle,draw, inner sep=1.5pt] {};
    \node (2) at (5/2,-6/2) [rectangle, draw, inner sep=1.5pt] {};
    \node (3) at (5/2,-5/2) [circle,draw, inner sep=1.5pt] {};
    \node (4) at (6/2,-6/2) [rectangle, draw, inner sep=1.5pt] {};
    \node (5) at (6/2,-5/2) [circle,draw, inner sep=4pt] {};
    \node (6) at (7/2,-6/2) [rectangle, draw, inner sep = 1.5pt] {};
    \node (7) at (7/2,-5/2) [circle,draw, inner sep=1.5pt] {};
    \node (8) at (8/2,-6/2) [circle,draw, inner sep=1.5pt] {};

    \draw (1) -- (2);
    \draw (2) -- (3);
    \draw (2) -- (4);
    \draw (4) -- (5);
    \draw (4) -- (6);
    \draw (6) -- (7);
    \draw (6) -- (8);
  \end{scope}

  \end{tikzpicture}
\end{center}

    Let $x_m$ denote the isomorphism classes of marked structures in the first row, where $m \leq k$ in $C_n \mathfrak{T}_k$. From left to right, let $y_1,y_2,y_3,y_4$ denote the classes of the marked structures in the second row, and $z_1,z_2$ denote the classes in the third row.
\begin{lemma} \label{33}
    The following (class of) marked structure is equal to $x_{4}$ in $\Theta(C_n \mathfrak{T})$ and $\Theta(C_n \mathfrak{T}_k)$ for $k \geq 4$:
    \begin{center}
        \begin{tikzpicture}[scale = 0.8, transform shape]
            \node (0) at (0,0) [circle, fill, inner sep = 4pt] {};
            \node (1) at (1/2,0) [rectangle, draw, inner sep = 1.5pt] {};
            \node (2) at (2/2,0) [circle, draw, inner sep = 1.5pt] {};
            \node (3) at (1/2,1/2) [circle, fill, inner sep = 1.5pt] {};
            \node (4) at (1/2,-1/2) [circle, fill, inner sep = 1.5pt] {};
            \draw (0)--(1);
            \draw (1)--(2); 
            \draw (1)--(3); 
            \draw (1)--(4);
        \end{tikzpicture}
    \end{center}
\end{lemma}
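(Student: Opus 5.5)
The plan is to route both classes through the same smaller marked structure, namely $x_3$, by deleting leaves separated from the marked leaf. Let $(T,a)$ be the displayed marked structure. Here $a$ is the enlarged black leaf, and $N(a)$ is a node of degree $4$ whose other neighbours are one white leaf $w$ and two black leaves. Similarly, let $(T',a')$ be a representative of $x_4$: the marked black leaf $a'$ hangs from a node of degree $4$ whose other three neighbours are black leaves. Both structures lie in $C_n\mathfrak{T}$, and they lie in $C_n\mathfrak{T}_k$ exactly when $k\ge 4$, which is the hypothesis of the lemma.

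\emph{Step 1.} Since $\deg(N(a)) = 4 \ge 4$, Proposition \ref{31} shows that $a$ is separated from every other leaf of $T$. In particular it is separated from $w$, so
\[[T,a] = [T\setminus w, a].\]
After deleting $w$, the node $N(a)$ has degree $3$ and its neighbours are the marked black leaf and two black leaves. This is the three-leaf star with all leaves black, i.e.\ the class $x_3$. No reduction issue arises, because deleting one leaf from a degree-$4$ node leaves it of degree $3$, so no degree-two vertex appears.

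\emph{Step 2.} In the same way, $\deg(N(a')) = 4$, so Proposition \ref{31} lets us delete any one of the three unmarked black leaves $b$ of $T'$. This gives
\[x_4 = [T',a'] = [T'\setminus b, a'] = x_3.\]
Combining Steps 1 and 2 yields $[T,a] = x_3 = x_4$ in both $\Theta(C_n\mathfrak{T})$ and $\Theta(C_n\mathfrak{T}_k)$ for $k\ge 4$. Separatedness is a statement about amalgamations inside the class, and every amalgamation involved has maximum degree at most $4$, so the same argument works in the degree-bounded class.

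The only point needing care is the applicability of Proposition \ref{31}. Its hypothesis is $\deg(N(a))\ge 4$ in the original tree $T$, not in $T\setminus w$, and it holds here. We also need that the colour of the deleted leaf is irrelevant; the proof of Proposition \ref{31} handles this by reducing to the uncolored case. I do not expect any further obstacle: the lemma is essentially the observation that any leaf at a node of degree at least $4$ is separated from the marked leaf, so the colour of such a neighbour can be forgotten.
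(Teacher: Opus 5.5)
There is a genuine gap. Both of your steps rely on the claim that a leaf attached to the \emph{same} node as $a$ is separated from $a$ once $\deg(N(a))\ge 4$. The literal wording of Proposition \ref{31} suggests this, but it is false.

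In Step 1, the amalgamations of $T\setminus a$ and $T\setminus w$ over $T\setminus\{a,w\}=\{b_1,b_2\}$ are all colored $4$-leaf tree structures on $\{a,w,b_1,b_2\}$. The $3$-element restrictions carry no quartic data, and $a,w$ cannot be identified because their colors differ. So besides the star $T$ you get the three resolved trees $aw\,|\,b_1b_2$, $ab_1\,|\,wb_2$ and $ab_2\,|\,wb_1$. The correct relation is therefore $x_3=[T,a]+y_2+2y_1$. This is exactly equation (\ref{etwo}) in Lemma \ref{odd}, before Lemma \ref{33} is used to rewrite $[T,a]$ as $x_4$.

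In Step 2 the identification $a=b$ is additionally allowed, and you get (\ref{eone}), namely $x_3=1+x_4+3y_1$, not $x_3=x_4$. So the degree clause of Proposition \ref{31} can only hold when $N(a)\neq N(b)$, i.e.\ adjacent nodes with one of degree $\ge 4$. Applied to two leaves at a common node, it contradicts the amalgamation count the paper itself carries out for (\ref{eone}). Routing through $x_3$ is therefore not available.

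The way around this, and what the paper's ``add then remove separated leaves'' chain achieves, is never to delete a leaf sitting at $N(a)$ directly.
\begin{itemize}
\item Subdivide the edge from $N(a)$ to $w$ by a new node $u$ and attach a new black leaf $c$ to $u$; call the result $T^{+}$. Now $N(c)=u$ is adjacent to, but distinct from, $N(a)$, which still has degree $4$.
\item In an amalgamation of $T^{+}\setminus a$ and $T^{+}\setminus c$, the set $\{a,b_1,b_2,w\}$ must be a star and $\{c,b_1,b_2,w\}$ must be $b_1b_2\,|\,wc$. This forces $a$ to attach exactly at the centre of the tripod $b_1,b_2,w$ and $c$ strictly on the branch towards $w$. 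The amalgamation is unique, so $[T,a]=[T^{+},a]$.
\item In $T^{+}$ the leaf $w$ is separated from $a$ by the same argument. Deleting it suppresses $u$ and leaves the all-black star on $a,b_1,b_2,c$, which is $x_4$.
\end{itemize}
Every tree in this chain has maximum degree $4$, so the argument works verbatim in $C_n\mathfrak{T}_k$ for $k\ge 4$.
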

\begin{proof}
    Using Proposition \ref{sep}, we can add then remove separated leaves to obtain the chain of equalities $$\left[\newx\right] = \left[\newex\right]=\left[\newxx\right]=x_{4},$$ as desired.
\end{proof}
For a marked structure $(T,a)$, let $t_{T,a}$ denote the restriction of $T$ to $N(a)$ and its leaf neighbors. For example, $t_{y_1}$ is the class corresponding to the tree structure with a node and two black leaves (which is isomorphic to the tree structure with just two black leaves) and $t_{z_1}$ is isomorphic to the tree structure with one black leaf.
\begin{lemma} \label{34}
    Let $(T,a)$ be a marked structure with at least two nodes and suppose that $\mathrm{deg}(N(a))=3$. If $t_{T,a} \cong t_{y_i}$ for some $i \in \{1,2,3,4\}$, then $[T,a] = y_i$. If $t_{T,a} \cong t_{z_j}$ for some $j \in \{1,2\}$, then $[T,a] = z_j$.
\end{lemma}
\begin{proof}
 We show the result for $i=1$ in each situation; the remaining cases are similar. If $t_{T,a} \cong t_{y_1}$, then $N(a)$ is adjacent to two black leaves (one of which is $a$) and one node, which we denote as $n$. By Proposition \ref{sep}, we can remove leaves from nodes with geodesic distance at least two from $N(a)$ as well as from $n$ until $n$ is adjacent to exactly two leaves of some colors $c_1,c_2$, which is the following picture:  
 \begin{center}
     \begin{tikzpicture}[scale = 0.8, transform shape]
         \node (0) at (0,0) [circle, fill, inner sep = 4pt] {};
         \node (1) at (1,0) [rectangle, draw, inner sep = 1.5pt] {};
         \node (2) at (1,1) [circle, fill, inner sep = 1.5pt] {};
         \node (3) at (2,0) [rectangle, draw, inner sep = 1.5pt] {$n$};
         \node (4) at (2,1) [circle, draw, inner sep = 1.5pt] {$c_1$};
         \node (5) at (3,0) [circle, draw, inner sep = 1.5pt] {$c_2$};
     \draw (0) -- (1);
     \draw (1)--(2);
     \draw (1)--(3);
     \draw (3)--(4);
     \draw (3)--(5);
     \node at (1,1.3) {$a$};
\end{tikzpicture}
 \end{center}
 Now, consider the chain of equalities obtained by repeatedly adding and deleting separated vertices \begin{align*} \left[\ntree\right]&=\left[\ntreetwo\right]=\left[\ntreethree\right] \\ &= \left[\ntreefour\right]=\left[\ntreefive\right] \\&= y_1, \end{align*} which is what we need.

 If $t_{T,a} \cong t_{z_1}$, then $N(a)$ is adjacent to one leaf and two nodes. We can again remove leaves on all other nodes of distance at least two from $N(a)$ as well as leaves on the adjacent nodes until every node of $T$ has degree three with leaves of colors $c_1,c_2,c_3,c_4$ as follows:
 \begin{center}
     \begin{tikzpicture}[scale = 0.8, transform shape]
         \node (0) at (0,0) [circle, draw, inner sep = 1.5pt] {$c_1$};
         \node (1) at (1,0) [rectangle, draw, inner sep = 1.5pt] {};
         \node (2) at (1,1) [circle, draw, inner sep = 1.5pt] {$c_2$};
         \node (3) at (2,0) [rectangle, draw, inner sep = 1.5pt] {};
         \node (4) at (2,1) [circle, fill, inner sep = 4pt] {};
         \node (5) at (3,0) [rectangle, draw, inner sep =1.5pt] {};
         \node (6) at (3,1) [circle, draw, inner sep = 1.5pt] {$c_3$};
         \node (7) at (4,0) [circle, draw, inner sep = 1.5pt] {$c_4$};

         \draw (0)--(1);
         \draw (1)--(2);
         \draw (1)--(3);
         \draw (3)--(4);
         \draw (3)--(5);
         \draw (5)--(6);
         \draw (5)--(7);
     \end{tikzpicture}
 \end{center}
 We have the following chain of equalities: \begin{align*}
     \left[\mtree\right] &= \left[\mtreeone\right] = \left[\mtreetwo\right] \\ &= \left[\mtreethree\right] = \left[\mtreefour\right],
 \end{align*} which allows us to change $c_1$ and $c_2$ to black. By a similar procedure, we can change $c_3$ and $c_4$ to black to obtain $z_1$ as desired.
\end{proof}
The two above lemmas lead to the following relationships between the $y_i$, which we believe are central in explaining the vanishing of the measure ring.
\begin{lemma} \label{odd}
    We have $y_1+1=y_2$ and $y_4+1=y_3$ in $\Theta(C_n\mathfrak{T})$ and $\Theta(C_n\mathfrak{T}_k)$ for all $n \geq 2$ and $k \geq 3$.
\end{lemma}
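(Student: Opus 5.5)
The plan is to obtain each identity as the difference of two instances of the amalgamation relation (Definition \ref{thetac}). I would pick them so that every term on the right-hand side is identified by Lemma \ref{34}, Lemma \ref{33} or Proposition \ref{31}, and so that all unwanted terms cancel in the difference. I will describe the case $y_1+1=y_2$; the case $y_4+1=y_3$ should follow by swapping the roles of black and white throughout.

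\textbf{Setup.} Let $Z$ be the tree structure on three leaves $w, c, d$, with $w$ white and $c,d$ black (a star). Let $X = Z \cup \{a\}$, where $a$ is black and $R(a,w;c,d)$ holds. Then the marked structure $(X,a)$ has $N(a)$ adjacent to the black leaf $a$, the white leaf $w$, and a node carrying $c,d$. So $[X,a] = y_2$ directly.

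\textbf{First relation.} Amalgamate $X$ with $Y_1 = Z \cup \{e\}$ over $Z$, where $e$ is black and $R(e,w;c,d)$ holds.
\begin{itemize}
\item Identifying $a=e$ gives an isomorphism, contributing $1$.
\item Otherwise the five-leaf tree has the cherry $\{c,d\}$ attached to a subtree on $\{a,e,w\}$, and there are four shapes: $\{a,e\}$ a cherry, $\{a,w\}$ a cherry, $\{e,w\}$ a cherry, or $a,e,w$ all on one node of degree $4$ (only when $k \ge 4$).
\item For the extension $Y_1 \to Z'$ marked at $a$, Lemma \ref{34} evaluates the first two shapes as $y_1$ (black sibling) and $y_2$ (white sibling).
\item In the third shape, $N(a)$ has degree $3$ with one leaf and two node neighbours, so its class is $z_1$ (Lemma \ref{34} lets us recolour the surrounding leaves to black).
\item Call the degree-$4$ term $S_1$.
\end{itemize}
This gives
\[ y_2 = 1 + y_1 + y_2 + z_1 + S_1. \]

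\textbf{Second relation.} Repeat with $Y_2 = Z \cup \{e\}$, where now $e$ is white. No identification is possible because $a$ and $e$ have different colours. The same four shapes give $y_2$ ($a$ with white sibling $e$), $y_2$ ($a$ with white sibling $w$), $z_1$, and a degree-$4$ term $S_2$. Hence
\[ y_2 = 2y_2 + z_1 + S_2. \]
Subtracting the two relations yields $0 = 1 + y_1 - y_2 + (S_1 - S_2)$. So it remains to show $S_1 = S_2$.

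\textbf{Main obstacle: $S_1=S_2$.} In $C_n\mathfrak{T}_3$ both terms are absent, so there is nothing to check. For $k\ge 4$, $\deg N(a) = 4$, so by Proposition \ref{31} every other leaf is separated from $a$. I would then use Proposition \ref{sep} to delete and re-add separated leaves, as in the chain proving Lemma \ref{33}. The aim is to normalise the neighbourhood of $a$ to the common configuration of Lemma \ref{33}, so that both marked structures equal $x_4$; in particular this should let us replace the white $e$ by a black one.

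The delicate point is that deleting a leaf can lower $\deg N(a)$ to $3$, which destroys the separation. So the order of deletions and insertions has to be chosen so that the degree of $N(a)$ stays at least $4$ at every step.

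I would also double-check the bookkeeping in both amalgamation relations: that the list of amalgamations is complete, and that each listed tree is reduced and has all nodes of degree at most $k$.
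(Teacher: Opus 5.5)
Your two amalgamation relations are correct. Besides the identification $a=e$, the amalgamations of $X$ and $Y_1$ (or $Y_2$) over $Z$ are exactly the three caterpillars with $\{c,d\}$ as an end cherry, plus the single degree-$4$ shape. Your evaluations via Lemma \ref{34} are also right. So for $k=3$ your argument is complete, and for $k\ge 4$ everything hinges on $S_1=S_2$.

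The justification you propose for that step fails. Proposition \ref{31}'s clause $\deg N(a)\ge 4$ is only valid when $N(b)\neq N(a)$; a leaf sitting at $N(a)$ itself is never separated from $a$. Your own first relation shows this. If $T$ is the tree defining $S_1$, then $T\setminus e=X$, $T\setminus a=Y_1$ and $T\setminus\{a,e\}=Z$. You found five amalgamations there, so $a$ and $e$ are not separated, and likewise $a$ and $w$ are not. Consequently no ordering of insertions and deletions of leaves \emph{at} $N(a)$ can work. The concern about $\deg N(a)$ dropping to $3$ is not the real obstruction.

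The missing idea is to move $e$ off $N(a)$ first. Suppose $N(b)$ is adjacent to $N(a)$ and $\deg N(a)\ge 4$. In $T\setminus\{a,b\}$ the vertex $N(a)$ is still a branch point where $a$ must attach, while $b$ attaches at a different point. So the amalgamation is forced, and $a$ and $b$ are separated.

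Now take the tree defining $S_2$ and attach a new black leaf $f$ as a cherry partner of the white leaf $e$. In the enlarged tree $T^+$, $N(a)$ still has degree $4$ and $N(e)=N(f)$ is adjacent to it, so both $f$ and $e$ are separated from $a$. Removing $f$ shows $[T^+,a]=S_2$. Removing $e$ shows $[T^+,a]=S_1$, with $f$ playing the role of $e$. Hence $S_1=S_2$. This is the mechanism behind Lemma \ref{33}, and with it your proof goes through.

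The paper sidesteps this step by using the smaller base $\{a,b\}$ of two black leaves. It amalgamates the black star on $\{a,b,c\}$ with the star on $\{a,b,c'\}$, first for $c'$ black and then for $c'$ white. This gives $x_{3,1}=1+x_{4,1}+3y_1$ and $x_{3,1}=x_{4,1}+y_2+2y_1$. The only degree-$4$ term in the second relation is literally the configuration of Lemma \ref{33}, so no extra recolouring argument is needed.
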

\begin{proof}
    It suffices to show the former equation since the latter follows from switching colors. Consider the following diagram consisting of two side-by-side amalgamation squares:
    \[\begin{tikzcd}
	& A && B \\
	\threenodes && \treenodes && \teenodes \\
	& \twonodes && \twonodes
	\arrow[from=2-1, to=1-2]
	\arrow[from=2-3, to=1-2]
	\arrow[from=2-3, to=1-4]
	\arrow[from=2-5, to=1-4]
	\arrow[from=3-2, to=2-1]
	\arrow["{x_{3,1}}", from=3-2, to=2-3]
	\arrow["{x_{3,1}}"', from=3-4, to=2-3]
	\arrow[from=3-4, to=2-5]
\end{tikzcd}\]
We let $A$, $B$ denote the sets of amalgamations in the left and right squares, respectively. 

Suppose $k \geq 4$. In the left square, we can choose to either identify $c$ and $c'$ or keep them distinct. We end up with the following possible amalgamations in $A$:
\begin{center}
\begin{tikzpicture}
    \node (0) at (0,0) [circle, fill, inner sep = 1.5pt] {};
    \node (1) at (0.5,0) [rectangle, draw, inner sep = 1.5pt] {};
    \node (2) at (1,0) [circle, fill, inner sep = 1.5pt] {};
    \node (3) at (0.5,0.5) [circle, fill, inner sep = 1.5pt] {};
    \draw (0) -- (1);
    \draw (1) -- (2);
    \draw (1) -- (3);
    \node at (0,0.3) {$a$};
    \node at (1,0.3) {$b$};
    \node at (0.5,0.8) {$c/c'$};

    \node (0) at (2,0) [circle, fill, inner sep = 1.5pt] {};
    \node (1) at (2.5,0) [rectangle, draw, inner sep = 1.5pt] {};
    \node (2) at (3,0) [circle, fill, inner sep = 1.5pt] {};
    \node (3) at (2.5,0.5) [circle, fill, inner sep = 1.5pt] {};
    \node (4) at (2.5,-0.5) [circle, fill, inner sep = 1.5pt] {};
    \draw (0) -- (1);
    \draw (1) -- (2);
    \draw (1) -- (3);
    \draw (1) -- (4);
    \node at (2,0.3) {$a$};
    \node at (3,0.3) {$b$};
    \node at (2.5,0.8) {$c$};
    \node at (2.5,-0.8) {$c'$};

    \node (0) at (4,0) [circle, fill, inner sep = 1.5pt] {};
    \node (1) at (4.5,0) [rectangle, draw, inner sep = 1.5pt] {};
    \node (2) at (5,0) [rectangle, draw, inner sep = 1.5pt] {};
    \node (3) at (4.5,0.5) [circle, fill, inner sep = 1.5pt] {};
    \node (4) at (5,0.5) [circle, fill, inner sep = 1.5pt] {};
    \node (5) at (5.5,0) [circle, fill, inner sep = 1.5pt] {};
    \draw (0) -- (1);
    \draw (1) -- (2);
    \draw (1) -- (3);
    \draw (2) -- (4);
    \draw (2) -- (5);
    \node at (4,0.3) {$a$};
    \node at (5.5,0.3) {$c'$};
    \node at (4.5,0.8) {$b$};
    \node at (5,0.75) {$c$};

        \node (0) at (6.5,0) [circle, fill, inner sep = 1.5pt] {};
    \node (1) at (7,0) [rectangle, draw, inner sep = 1.5pt] {};
    \node (2) at (7.5,0) [rectangle, draw, inner sep = 1.5pt] {};
    \node (3) at (7,0.5) [circle, fill, inner sep = 1.5pt] {};
    \node (4) at (7.5,0.5) [circle, fill, inner sep = 1.5pt] {};
    \node (5) at (8,0) [circle, fill, inner sep = 1.5pt] {};
    \draw (0) -- (1);
    \draw (1) -- (2);
    \draw (1) -- (3);
    \draw (2) -- (4);
    \draw (2) -- (5);
    \node at (6.5,0.3) {$a$};
    \node at (8,0.3) {$c'$};
    \node at (7,0.75) {$c$};
    \node at (7.5,0.78) {$b$};

    \node (0) at (9,0) [circle, fill, inner sep = 1.5pt] {};
    \node (1) at (9.5,0) [rectangle, draw, inner sep = 1.5pt] {};
    \node (2) at (10,0) [rectangle, draw, inner sep = 1.5pt] {};
    \node (3) at (9.5,0.5) [circle, fill, inner sep = 1.5pt] {};
    \node (4) at (10,0.5) [circle, fill, inner sep = 1.5pt] {};
    \node (5) at (10.5,0) [circle, fill, inner sep = 1.5pt] {};
    \draw (0) -- (1);
    \draw (1) -- (2);
    \draw (1) -- (3);
    \draw (2) -- (4);
    \draw (2) -- (5);
    \node at (9,0.3) {$a$};
    \node at (10.5,0.27) {$c$};
    \node at (9.5,0.78) {$c'$};
    \node at (10,0.75) {$b$};
    \end{tikzpicture}
\end{center}
It follows from \ref{thetaa} and \ref{thetac} that \begin{align} \label{eone} x_{3,1} = 1 + x_{4,1} + 3y_1. \end{align}
In the right square, $c'$ and $c''$ could not be identified since they have different colors, so the possible amalgamations in $B$ are obtained by changing $c$ to $c''$ and their corresponding colors in all but the first amalgamation in the list above. Combined with Lemmas \ref{33} and \ref{34}, we have the amalgamation equation \begin{align} \label{etwo} x_{3,1} = x_{4,1} + y_2+2y_1 \end{align} by \ref{thetac}. For the $k=3$ case, simply note that $x_{4,1}$ does not exist in both \ref{eone} and \ref{etwo}. Combining the two equations now yields the desired conclusion.
\end{proof}
\begin{proof}[Proof of Theorem \ref{firstzero}]
We will arrive at a contradiction by combining Lemma \ref{odd} and some quadratic equations resulting from Definition \ref{thetab}. Notice that if $a,b$ are distinct leaves of a tree $T$, then $$[T,a][T \setminus a, b] = [T,b][T \setminus b,a], $$ so each tree with two marked leaves $a,b$ gives rise to a quadratic equation in $\Theta(C_n\mathfrak{T})$ and $\Theta(C_n\mathfrak{T}_k)$. Indeed, consider the four following double-marked trees: \begin{center}
        \begin{tikzpicture}
             \node (1) at (0,0) [circle,fill, inner sep=4pt] {};
  \node (2) at (1/2,0) [rectangle, draw, inner sep=1.5pt] {};
  \node (3) at (1/2,1/2) [circle,draw, inner sep=4pt] {};
  \node (4) at (2/2,0) [rectangle, draw, inner sep=1.5pt] {};
  \node (5) at (2/2,1/2) [circle,fill, inner sep=1.5pt] {};
  \node (6) at (3/2,0) [rectangle, draw, inner sep = 1.5pt] {};
  \node (7) at (3/2,1/2) [circle,fill, inner sep=1.5pt] {};
  \node (8) at (4/2,0) [circle,fill, inner sep=1.5pt] {};

    \draw (1) -- (2);
  \draw (2) -- (3);
  \draw (2) -- (4);
  \draw (4) -- (5);
  \draw (4) -- (6);
  \draw (6) -- (7);
  \draw (6) -- (8);

  \node at (0,0.3/2+0.2) {$a$};
  \node at (1/2,1.3/2+0.25) {$b$};

  \node (1) at (0+5/2,-2/2+1) [circle,fill, inner sep=4pt] {};
  \node (2) at (1/2+5/2,-2/2+1) [rectangle, draw, inner sep=1.5pt] {};
  \node (3) at (1/2+5/2,-1/2+1) [circle,draw, inner sep=4pt] {};
  \node (4) at (2/2+5/2,-2/2+1) [rectangle, draw, inner sep=1.5pt] {};
  \node (5) at (2/2+5/2,-1/2+1) [circle,draw, inner sep=1.5pt] {};
  \node (6) at (3/2+5/2,-2/2+1) [rectangle, draw, inner sep = 1.5pt] {};
  \node (7) at (3/2+5/2,-1/2+1) [circle,fill, inner sep=1.5pt] {};
  \node (8) at (4/2+5/2,-2/2+1) [circle,fill, inner sep=1.5pt] {};

    \draw (1) -- (2);
  \draw (2) -- (3);
  \draw (2) -- (4);
  \draw (4) -- (5);
  \draw (4) -- (6);
  \draw (6) -- (7);
  \draw (6) -- (8);

  \node at (0+5/2,-1.7/2+1.2) {$a$};
  \node at (1/2+5/2,-0.7/2+1.25) {$b$};

  \node (1) at (0+5,-4/2+2) [circle,draw, inner sep=1.5pt] {};
  \node (2) at (1/2+5,-4/2+2) [rectangle, draw, inner sep=1.5pt] {};
  \node (3) at (1/2+5,-3/2+2) [circle,draw, inner sep=4pt] {};
  \node (4) at (2/2+5,-4/2+2) [rectangle, draw, inner sep=1.5pt] {};
  \node (5) at (2/2+5,-3/2+2) [circle, fill, inner sep=4pt] {};
  \node (6) at (3/2+5,-4/2+2) [rectangle, draw, inner sep = 1.5pt] {};
  \node (7) at (3/2+5,-3/2+2) [circle,fill, inner sep=1.5pt] {};
  \node (8) at (4/2+5,-4/2+2) [circle,fill, inner sep=1.5pt] {};

    \draw (1) -- (2);
  \draw (2) -- (3);
  \draw (2) -- (4);
  \draw (4) -- (5);
  \draw (4) -- (6);
  \draw (6) -- (7);
  \draw (6) -- (8);

  \node at (2/2+5,-2.7/2+2.2) {$a$};
  \node at (1/2+5,-2.7/2+2.25) {$b$};

  \node (1) at (0+15/2,-6/2+3) [circle, fill, inner sep=1.5pt] {};
  \node (2) at (1/2+15/2,-6/2+3) [rectangle, draw, inner sep=1.5pt] {};
  \node (3) at (1/2+15/2,-5/2+3) [circle, fill, inner sep=4pt] {};
  \node (4) at (2/2+15/2,-6/2+3) [rectangle, draw, inner sep=1.5pt] {};
  \node (5) at (2/2+15/2,-5/2+3) [circle, fill, inner sep=4pt] {};
  \node (6) at (3/2+15/2,-6/2+3) [rectangle, draw, inner sep = 1.5pt] {};
  \node (7) at (3/2+15/2,-5/2+3) [circle,fill, inner sep=1.5pt] {};
  \node (8) at (4/2+15/2,-6/2+3) [circle,fill, inner sep=1.5pt] {};

    \draw (1) -- (2);
  \draw (2) -- (3);
  \draw (2) -- (4);
  \draw (4) -- (5);
  \draw (4) -- (6);
  \draw (6) -- (7);
  \draw (6) -- (8);

  \node at (2/2+15/2,-4.7/2+3.2) {$a$};
  \node at (1/2+15/2,-4.7/2+3.25) {$b$};
        \end{tikzpicture}
    \end{center}
From left to right, let the marked trees be $T_1$, $T_2$, $T_3$, $T_4$. From $T_1$ and Proposition \ref{sep}, we obtain $$y_2y_3 = y_3y_1 = y_3(y_2-1),$$ so $y_3=0$. It follows that $y_4=-1$. Using Lemma \ref{34}, the equation corresponding to $T_2$ is $$y_2y_4=y_3y_2,$$ implying that $(y_1,y_2)=(-1,0)$. Again by Lemma \ref{34}, $T_3$ gives $$z_1y_4=y_4y_2,$$ which leads to $z_1=0$. Finally, the equation corresponding to $T_4$ is $$z_1y_1=y_1^2,$$ or $0=1$. Consequently, $\Theta(C_n\mathfrak{T})$ and $\Theta(C_n\mathfrak{T}_k)$ vanish. 
\end{proof}
\subsection{Labeled trees} \label{labeled}
We again begin with a sufficient condition for separated leaves. Since Proposition \ref{31} also applies to uncolored trees, for a labeled (uncolored) tree $T$, we say that two leaves $a$ and $b$ are \emph{$R$-separated} if $d(N(a),N(b)) \geq 2$ or $\mathrm{deg}(N(a)) \geq 4$ or $\mathrm{deg}(N(b)) \geq 4$ and \emph{$<$-separated} if there exists another leaf $c \in T$ such that either $a<c<b$ or $b<c<a$. In particular, if $a$ and $b$ are $<$-separated, then we cannot modify their order in any amalgamation of $T \setminus a$ and $T \setminus b$ over $T \setminus \{a,b\}$, so any two amalgamations carry the same labeling. \begin{prop} \label{36}
    Let $T$ be a labeled tree with distinct leaves $a$ and $b$. If $a$ and $b$ are both $R$-separated and $<$-separated, then $a$ and $b$ are separated in $T$, i.e., $[T,a] = [T \setminus b,a]$.
\end{prop}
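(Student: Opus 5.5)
The plan is to treat the two relations $R$ and $<$ separately and show that each is forced in any amalgamation. Let $T'$ be an arbitrary amalgamation of $T\setminus a$ and $T\setminus b$ over $T\setminus\{a,b\}$ in $L\mathfrak{T}$. Since $a\in T\setminus b$ and $b\in T\setminus a$ are not in the common substructure, they could a priori be identified. So the first step is to rule this out. If $a$ and $b$ were identified, then the leaf $c$ with $a<c<b$ (or $b<c<a$) would lie in $T\setminus\{a,b\}$. Both embeddings must preserve the order relative to $c$, so the identified point would have to lie both below and above $c$, which is impossible. Hence $L(T')=L(T)$ as sets.

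Next I show that the order on $T'$ equals the order on $T$. Every pair not equal to $\{a,b\}$ lies inside $T\setminus a$ or $T\setminus b$, so its order is inherited. For the pair $\{a,b\}$, say $a<c<b$ in $T$. In $T'$ we have $a<'c$, coming from $T\setminus b$, and $c<'b$, coming from $T\setminus a$. Transitivity then gives $a<'b$, so the total order is determined. This makes precise the remark before the statement that $<$-separated leaves cannot have their order modified.

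Then I handle the quartic relation. I forget the order and view $T'$ as an amalgamation in the class of uncolored trees, which is $C_1\mathfrak{T}$ (or $C_n\mathfrak{T}$ with all leaves one color). By hypothesis $a$ and $b$ are $R$-separated. The argument of Proposition \ref{31}, which the paper notes applies to uncolored trees, shows that the underlying tree structure of $T'$ coincides with that of $T$. That argument is the local subtree $X$ plus the pushout and universal-property step, forcing $R'(a,b;c,d)=R(a,b;c,d)$ and similarly for the other $4$-tuples containing both $a$ and $b$. Since $R$ and $<$ are independent relations, as noted after the definition of $L\mathfrak{T}$, $T'$ agrees with $T$ on both relations, so $T'=T$. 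Thus $T$ is the unique amalgamation, $a$ and $b$ are separated, and Proposition \ref{sep} gives $[T,a]=[T\setminus b,a]$.

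The main obstacle I expect is the reuse of Proposition \ref{31}. It is stated for amalgamations within $C_n\mathfrak{T}$, and its proof already assumes $L(T)=L(T')$. So I must check two things: that the non-identification of $a,b$ comes from $<$-separation rather than from colors, which the first step supplies, and that the uniqueness of the local amalgamation of $X$ is unaffected by the extra order relation. I would handle the second point by applying the pushout argument to the reduct of $T'$ to the relation $R$ alone, where it goes through verbatim once $L(T')=L(T)$ is known.
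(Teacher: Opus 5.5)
\textbf{Comparison with the paper.} Your decomposition into the $<$-reduct and the $R$-reduct is the paper's argument; the paper just asserts that the number of amalgamations is $1\cdot 1$. Your first step is more careful than the paper. That product count, and the proof of Proposition~\ref{31} (which uses $L(T')=L(T)$), both ignore the option of identifying $a$ with $b$. Ruling it out via the leaf $c$ is what makes the reduction to the uncolored case legitimate. The order step is also fine.

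\textbf{The gap in the $R$-step.} There is a genuine gap here, and you inherit it from the paper. The definition of $R$-separated allows $N(a)=N(b)$ with $\deg N(a)\ge 4$. In that case Proposition~\ref{31} fails, and so does the statement.

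Take $T$ to be the star on $a,b,c,d$ with $a<c<b<d$. Then $a,b$ are $R$-separated and $<$-separated, and the order of any amalgamation is forced as you show. But $T\setminus a$ and $T\setminus b$ have only three leaves, so they carry no $R$-information. Hence the star and the trees $ab\,|\,cd$, $ac\,|\,bd$, $ad\,|\,bc$ are four distinct amalgamations, and $a,b$ are not separated. In general, if $N(a)=N(b)$ has degree $\ge 4$, moving $a,b$ onto a new cherry attached to $N(a)$ gives a second amalgamation.

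\textbf{The fix.} The hypothesis $N(a)\ne N(b)$ must be added. It holds where the result is actually used, e.g.\ in Proposition~\ref{37} and for the pairs $1,2$ and $1,3$ in the proof of Theorem~\ref{secondzero}. With this hypothesis your proof is complete.

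If you want the $R$-step self-contained rather than resting on the informal pushout argument of Proposition~\ref{31}, view $T'$ as $T\setminus b$ with $b$ hung at some node or edge. Then $T\setminus a$ fixes the image of that position in $T\setminus\{a,b\}$. When $N(a)\ne N(b)$ and $a,b$ are $R$-separated, this image has a unique preimage in $T\setminus b$. When $N(a)=N(b)$ has degree $\ge 4$, the pendant edge of $a$ is a second preimage, which gives exactly the cherry amalgamation above.
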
 
\begin{proof}
    Recall from \ref{cameron} that we treat $R$ and $<$ as independent relations where we perform amalgamations independently. This means that the total number of amalgamations of $T \setminus a$ and $T \setminus b$ over $T \setminus \{a,b\}$ is equal to the product of the number of amalgamations of their unlabeled versions and their total orders, which is $1 \cdot 1 = 1$ when $a$ and $b$ are both $R$-separated and $<$-separated.  
\end{proof}
In fact, we introduce another result to further manipulate the ordering of the leaves in labeled trees. 
\begin{prop} \label{37}
    Let $(T,a)$ be a marked structure, and $b$ be a leaf $<$-separated from $a$ such that $N(a) \neq N(b)$. Suppose $T'$ is obtained from $T$ by relocating $b$ to $b'$ in the induced order of the leaves, so that $b'$ remains $<$-separated from $a$ and the relative order of the remaining leaves is preserved. Then $[T,a]=[T',a]$.
\end{prop}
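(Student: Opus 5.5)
We reduce the relocation to a sequence of elementary moves, each swapping $b$ with an adjacent leaf in the total order. Suppose first that $b'$ differs from $b$ by one such swap, say with a leaf $c$ adjacent to $b$ in the order. Since $b$ and $b'$ are both $<$-separated from $a$, we have $c \neq a$. The intermediate positions of $b$ between $b$ and $b'$ also avoid being adjacent to $a$, because the separation condition holds at both endpoints and $a$ is fixed. More precisely, if $a$ lies strictly between the old and new positions of $b$, we instead move $b$ around in steps that never make it adjacent to $a$; if necessary we first insert an auxiliary leaf (see below). By induction on the number of swaps, it then suffices to treat a single adjacent transposition of $b$ with some $c \neq a$ while $b$ stays $<$-separated from $a$.

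For a single swap, the plan is to use Proposition \ref{36} to delete and re-add $b$. We want to remove $b$ from $(T,a)$ without changing the class $[T,a]$. Proposition \ref{36} needs $b$ to be $<$-separated from $a$, which is given, and $R$-separated from $a$. We have $N(a)\neq N(b)$, but we may still have $d(N(a),N(b))=1$ with both degrees equal to $3$.

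To handle this, we first enlarge the tree. Add a new leaf $e$ attached at a new node subdividing the edge between $N(b)$ and $b$ (or, if easier, an extra leaf at $N(b)$ making $\deg N(b)\ge 4$, when that is allowed in $L\mathfrak{T}$, where degrees are unbounded). Place $e$ in the order far from $a$, for instance at an extreme position with another leaf between it and $a$. Then $e$ is $R$- and $<$-separated from $a$: its node is at distance $\ge 2$ from $N(a)$, or $N(b)$ now has degree $\ge 4$. Hence $[T,a]=[T\cup e,a]$ by Proposition \ref{36}. In $T\cup e$, the leaf $b$ now has $\deg N(b)\ge 4$, so it is $R$-separated from $a$, and it is $<$-separated by hypothesis. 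Therefore $[T\cup e,a]=[T\cup e\setminus b,a]$.

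Next we re-insert $b$ at its new position $b'$, with the same tree position. The same argument applies in reverse: in the resulting tree $b'$ is still $<$-separated from $a$ and $N(b)$ has degree $\ge 4$. Finally we delete $e$, which remains $R$- and $<$-separated from $a$. Chaining these equalities gives $[T,a]=[T',a]$.

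With this trick, the whole relocation can be done in one step: delete $b$ and re-insert it at $b'$, so no sequence of swaps is needed. The reduction to adjacent transpositions is unnecessary because $<$-separation is checked only at the two endpoint configurations.

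The main obstacle is checking that every auxiliary leaf added or removed is genuinely separated from $a$ in both senses at the moment it is used. In particular:
\begin{itemize}
\item the position chosen for $e$ in the order must keep a leaf between $e$ and $a$ throughout;
\item attaching $e$ must not alter the relation $R$ among the original leaves, which holds because we only raise a node's degree or subdivide a pendant edge, and the induced structure on $L(T)$ is unchanged.
\end{itemize}
If $T$ has too few leaves to place $e$ with a leaf between it and $a$, we add a second auxiliary leaf in the same manner first.
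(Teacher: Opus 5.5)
Your argument is correct. Its engine is the same as the paper's: an extra leaf at $N(b)$ raises $\deg N(b)$ to at least $4$, so Proposition~\ref{36} applies to $b$, and you can delete it and re-insert it as $b'$. This is where $N(a)\neq N(b)$ is really used, since two leaves hanging off a common node of degree $\ge 4$ are not separated; it would help to say so explicitly.

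The routes differ in bookkeeping. The paper first handles the case where $a,b$ are already $R$-separated. In the remaining adjacent degree-$3$ case it writes a chain with a single intermediate tree. Your argument covers both cases at once, but uses four moves. You can cut them to two by letting the auxiliary leaf be $b'$ itself: attach $b'$ at $N(b)$. Then $(T,a)$ and $(T',a)$ arise from this tree by deleting $b'$, respectively $b$, and each of these is both $R$-separated and $<$-separated from $a$ there.

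If you keep $e$, the placement issue you flag is easy to settle. Fix a leaf $c$ strictly between $a$ and $b$, which exists by hypothesis. Put $e$ at the top of the order if $a<c$ and at the bottom if $c<a$. The relocation preserves the relative order of $a,c,e$, so $c$ separates $e$ from $a$ both before and after. The fallback of a second auxiliary leaf is therefore never needed, and it would face the same problem anyway.

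Finally, drop the adjacent-transposition reduction in your first paragraph. Its claim that intermediate positions of $b$ avoid adjacency to $a$ is false when $a$ lies between the old and new positions, and you do not use it.
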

\begin{proof}
If $a$ and $b$ are also $R$-separated, then they are separated in $T$ by Proposition \ref{36}, so indeed $[T,a]=[T \setminus b, a] = [T' \setminus b', a] = [T',a]$. Hence suppose that $b$ (and $b'$) is not $R$-separated from $a$. It follows that $N(a)$ and $N(b)$ are adjacent and $\mathrm{deg}(N(a)) = \mathrm{deg}(N(b)) = 3$, so $(T,a)$ is of the form \begin{center}
    \begin{tikzpicture}[scale=3/2]
        \node (0) at (0,0) [circle, fill, inner sep = 4pt] {};
        \node (1) at (1/2,0) [rectangle, draw, inner sep = 1.5pt] {};
        \node (2) at (1/2,1/2) [rectangle, draw, inner sep = 2pt] {$X$};
        \node (3) at (1,0) [rectangle, draw, inner sep = 1.5pt] {};
        \node (4) at (1,1/2) [rectangle, draw, inner sep = 2pt] {$Y$};
        \node (5) at (3/2,0) [circle, fill, inner sep = 1.5pt] {};
        \draw (0)--(1);
        \draw (1)--(2);
        \draw (1)--(3);
        \draw (3)--(4);
        \draw (3)--(5);
        \node at (0,0.25) {$a$};
        \node at (3/2,0.2) {$b$};
    \end{tikzpicture} 
\end{center} By Proposition \ref{36}, we have $$
    \left[\xytree[0.85]\right] = \left[\xytreetwo[0.85]\right]=\left[\xytreethree[0.85]\right]
$$ where the class after the second equal sign is $[T',a]$, as needed.
\end{proof}
The following is the second main result of this section.
\begin{theorem} \label{secondzero}
    We have $\Theta(L\mathfrak{T}) \cong \mathbf{0}$, so there are no measures on the class of labeled trees.
\end{theorem}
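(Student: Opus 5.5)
I would follow the strategy used for Theorem \ref{firstzero}: use Propositions \ref{36} and \ref{37} to cut a small family of marked labeled trees down to a few basic classes, get linear relations between these classes from amalgamation squares (\ref{thetac}), and then get a contradiction from the quadratic relations $[T,a][T\setminus a,b]=[T,b][T\setminus b,a]$ coming from \ref{thetab}. In the colored case the key asymmetry was the two colors. Here the total order should play that role: whether the new leaf is adjacent in the order to a nearby leaf, or separated from it, changes the number of amalgamations. That number comes out as a product, as in the proof of Proposition \ref{36}.

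\textbf{Step 1: normal forms.} For a marked tree $(T,a)$ with $\deg N(a)=3$ and at least two nodes, I would show that $[T,a]$ depends only on two things. The first is the local shape $t_{T,a}$: whether $N(a)$ carries a second leaf or two further nodes. The second is the relative order pattern of $a$ among the leaves attached at $N(a)$ and the adjacent nodes. Leaves $R$-separated and $<$-separated from $a$ are deleted using Proposition \ref{36}. Proposition \ref{37} moves the remaining leaves in the order, so that only "immediately before or after $a$" versus "$<$-separated from $a$" matters. Auxiliary leaves can be inserted and deleted exactly as in Lemmas \ref{33} and \ref{34}, which makes these reductions possible. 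The outcome is a short list of classes. Let $y_{\mathrm{adj}}$ and $y_{\mathrm{sep}}$ be the $y_1$-shape with the sibling leaf adjacent to, respectively separated from, $a$ in the order. Let $z$ be the $z_1$-shape, which by this reduction should have essentially one class. Let $x_m$ be the star classes. Marking the first leaf of a star versus a middle leaf may give different values, so I would track the order position when it matters.

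\textbf{Step 2: the analogue of Lemma \ref{odd}.} Take two side-by-side amalgamation squares over the marked star on three leaves. In the first square, the two extra leaves $c$ and $c'$ have order data that allow them to be identified. In the second, the data forbid identification: for example, $c$ is placed below some leaf and $c'$ above it, so they cannot be merged. This plays the role of the different colors before. Counting the tree amalgamations times the order amalgamations, and comparing the two resulting equations, should give a relation like $y_{\mathrm{sep}}=y_{\mathrm{adj}}+1$. More likely it will be a relation involving the position-dependent $x$ classes, which may then need to be eliminated using a third square.

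\textbf{Step 3: the quadratic relations.} Take double-marked caterpillars with three internal nodes, the analogues of $T_1,\dots,T_4$, where the two marked leaves are assigned different order patterns. After Step 1 each side of $[T,a][T\setminus a,b]=[T,b][T\setminus b,a]$ is a product of the basic classes. I would aim for the same cascade as before. One relation forces a class to vanish because the two sides differ by a factor of the form $(y-1)$ versus $y$. The Step 2 relations then pin down the remaining $y$'s, and a final caterpillar yields $0=1$.

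The main obstacle is Step 2, together with checking that Step 1 really collapses the order data. An amalgamation of total orders has many more options than an amalgamation of colorings: the new points can interleave arbitrarily. The counts therefore include extra terms. Some of these are separated, hence reducible, and some are not. If the clean relation fails, I would first try placing auxiliary $<$-separating leaves near $a$ so that most interleavings are forced. That would leave only the one binary choice, identify or order $c$ before or after $c'$, which mimics the coloring argument.
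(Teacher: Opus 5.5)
Your outline names the right tools: linear relations from amalgamation squares, the quadratic relations $[T,a][T\setminus a,b]=[T,b][T\setminus b,a]$, and reductions via Propositions \ref{36} and \ref{37}. But it never exhibits an inconsistent system of relations, and for this theorem that system is the entire proof. Step 2 is explicitly a guess. Step 3 presupposes analogues of $y_2=y_1+1$, $y_3=y_4+1$ and of the collapse in Lemma \ref{34}, none of which you establish.

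Several ingredients also do not work as you describe them.
\begin{itemize}
\item \textbf{Counting amalgamations.} The count is not the product of the tree count and the order count when the new leaves can be identified, because $c=c'$ must be chosen in both components at once. Two $3$-stars over a common pair, with $c,c'$ in the same cut, give $1+4\cdot 2=9$ amalgamations, not $5\cdot 3$.
\item \textbf{Order-neighbours of $a$.} A leaf $b$ immediately before or after $a$ in the order can be neither deleted by Proposition \ref{36} nor moved by Proposition \ref{37}, however far it sits in the tree. If $d(N(a),N(b))\ge 2$, then $L_{T,a,b}$ reads $[T\setminus b,a]=[T,a]+[T',a]$, where $T'$ swaps $a$ and $b$ in the order. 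So your moves do not reduce $[T,a]$ to data at $N(a)$ and its neighbouring nodes. Nor do they support collapsing the $z_1$-shape to one class, since whether the order-predecessor and order-successor of $a$ hang from the same neighbouring node is unchanged by these moves.
\item \textbf{The degree clause.} The clause $\deg N(a)\ge 4$ in the definition of $R$-separated yields separation only when $N(b)\neq N(a)$. If $b$ hangs from $N(a)$, moving $a$ and $b$ onto a new node attached to $N(a)$ is a second amalgamation. So deleting a leaf at $N(a)$ is not a valid move for the marked $4$-stars your Step 2 squares produce.
\end{itemize}

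The missing idea is that the order already produces the ``$+1$'' asymmetry among three-leaf stars. The paper builds its contradiction entirely from classes with at most four leaves.
\begin{itemize}
\item Let $e$ be the two-leaf tree marked at its larger leaf, and let $t_{\mathrm{mid}}$, $t_{\mathrm{end}}$ be the $3$-star marked at its middle, respectively largest, leaf. Self-amalgamating $e$ over a point gives $e=1+t_{\mathrm{mid}}+t_{\mathrm{end}}$.
\item In $t_{\mathrm{end}}$ the smallest leaf is separated from the mark, because $R$ is vacuous on three leaves and the order forbids identification. Hence $t_{\mathrm{end}}=e$ and $t_{\mathrm{mid}}=-1$.
\item One amalgamation shows that the two-node four-leaf tree with leaf $2$ marked and sibling $4$ has class $0$.
\item The quadratic relation for that tree double-marked at $2$ and $4$, together with Proposition \ref{37}, kills the tree with leaf $4$ marked and sibling $1$.
\item A few further relations among four-leaf classes force the class with leaf $2$ marked and sibling $1$ to equal both $0$ and $-\frac12$.
\end{itemize}
Your proposal contains no explicit chain of this kind, so it does not yet prove the statement.
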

\begin{proof} As in the previous case with colored trees, we arrive at the result by invoking a series of linear and quadratic equations that lead to a contradiction. We note the following observation by \cite{harman2024arborealtensorcategories}: given marked structures $(T \setminus b,a)$ and $(T \setminus a, b)$ where $T_1, \dots, T_k$ are all possible amalgamations of $T \setminus a$ and $T \setminus b$ over $T \setminus \{a,b\}$, there is an associated linear equation due to Definition \ref{thetac} given by $$[T \setminus b,a] = \sum_{i=1}^k [T_i,a].$$ Consequently, each ordered pair of marked trees $((T \setminus b, a),(T \setminus a,b))$ leads to a linear equation. Moreover, recall from \ref{n-colored} that a quadratic equation $[T,a][T \setminus a,b]=[T,b][T \setminus b,a]$ arises from a tree $T$ with marked leaves $a,b$.

First, consider the duplicate pair of marked trees $(\xedge[0.75],\xedge[0.75])$. Its associated equation is $$\left[\!\xedge[0.75]\!\right] = 1 + \left[\xthreem[0.75]\right] + \left[\!\xthreel[0.75]\!\right].$$ Note, however, that in the rightmost class, $1$ is separated from $3$, so we end up with $\left[\xthreem[0.5]\right] = -1$.

Next, consider the pair  $\left(\xtwofour[0.5],\xfourm[0.5] \right)$, which gives rise to two amalgamations:
\begin{center}
\scalebox{0.75}{
    \begin{tikzpicture}
        \node (4) at (3,0) [circle, fill, inner sep=4pt] {};
        \node (5) at (4,0) [rectangle, draw, inner sep=1.5pt] {};
        \node (6) at (4,1) [circle, fill, inner sep = 1.5pt] {};
        \node (7) at (5,0) [rectangle, draw, inner sep = 1.5pt] {};
        \node (8) at (5,1) [circle, fill, inner sep = 1.5pt] {};
        \node (9) at (5,-1) [circle, fill, inner sep = 1.5pt] {};
        \node (10) at (6,0) [circle, fill, inner sep = 1.5pt] {};
        \draw (4) -- (5);
        \draw (5) -- (6);
        \draw (5) -- (7);
        \draw (7) -- (8);
        \draw (7) -- (9);
        \draw (7) -- (10);
        \node at (3,0.4) {$2$};
        \node at (5,1.3) {$1$};
        \node at (4,1.3) {$4 \mapsto 5$};
        \node at (6,0.3) {$3 \mapsto 4$};
        \node at (5,-1.3) {$2' \mapsto 3$};

        \node (4) at (8,0) [circle, fill, inner sep=4pt] {};
        \node (5) at (9,0) [rectangle, draw, inner sep=1.5pt] {};
        \node (6) at (9,1) [circle, fill, inner sep = 1.5pt] {};
        \node (7) at (10,0) [rectangle, draw, inner sep = 1.5pt] {};
        \node (8) at (10,1) [circle, fill, inner sep = 1.5pt] {};
        \node (9) at (10,-1) [circle, fill, inner sep = 1.5pt] {};
        \node (10) at (11,0) [circle, fill, inner sep = 1.5pt] {};
        \draw (4) -- (5);
        \draw (5) -- (6);
        \draw (5) -- (7);
        \draw (7) -- (8);
        \draw (7) -- (9);
        \draw (7) -- (10);
        \node at (8,0.4) {$2 \mapsto 3$};
        \node at (10,1.3) {$1$};
        \node at (9,1.3) {$4 \mapsto 5$};
        \node at (11,0.3) {$3 \mapsto 4$};
        \node at (10,-1.3) {$2' \mapsto 2$};
    \end{tikzpicture}}
\end{center}
By Proposition \ref{36}, $2$ and $1$ are separated in the left amalgamation and $3$ and $1$ are separated in the right amalgamation, so the resulting equation is $$\left[\xtwofour[0.75] \right] = \left[\xtwofour[0.75]\right] + \left[\xtwofour[0.75] \right],$$ which yields $\left[\xtwofour[0.5]\right]=0$.

Now, consider the following double-marked tree: \begin{center}
\scalebox{0.75}{
    \begin{tikzpicture}
        \node (4) at (3,0) [circle, fill, inner sep=4pt] {};
        \node (5) at (4,0) [rectangle, draw, inner sep=1.5pt] {};
        \node (6) at (4,1) [circle, fill, inner sep = 4pt] {};
        \node (7) at (5,0) [rectangle, draw, inner sep = 1.5pt] {};
        \node (8) at (5,1) [circle, fill, inner sep = 1.5pt] {};
        \node (10) at (6,0) [circle, fill, inner sep = 1.5pt] {};
        \draw (4) -- (5);
        \draw (5) -- (6);
        \draw (5) -- (7);
        \draw (7) -- (8);
        \draw (7) -- (10);
        \node at (3,0.4) {$2$};
        \node at (5,1.3) {$1$};
        \node at (4,1.4) {$4$};
        \node at (6,0.3) {$3$};
        \end{tikzpicture}}
        \end{center}
We previously found that $\left[\xtwofour[0.5]\right] = 0$, so the associated quadratic equation reduces to $$\left[\xfourtwo \right] \cdot \left[\xthreem\right] = -\left[\xfourtwo \right] = -\left[\xfourone \right] = 0,$$ which means that $\left[ \xfourone[0.5] \right] = 0$.

The remaining relevant diagrams (either pairs of marked trees or double-marked trees), as well as their corresponding equations, are given in the table below. They are consequences of Propositions \ref{36} and \ref{37} combined with the above equations. 
\begin{center}
\setlength{\extrarowheight}{2pt}      
\renewcommand{\arraystretch}{1.15}    

\begin{tabular}{c| r@{\;$=$\;}l}
  Diagram & \multicolumn{2}{c}{Equation} \\
  \hline
  \noalign{\vskip 3pt} 
  $\left(\xfourm[0.5], \xtwofour[0.5] \right)$
    & $\left[\xfourm[0.5]\right]$
    & $0$ \\[1.0ex]

  $\left(\xfourthree[0.5], \xfourg[0.5]\right)$
    & $\left[\xfourthree[0.5]\right]$
    & $\left[\xtwoone[0.5]\right]$ \\[1.0ex]

  $\xthreetwoq[0.5]$
    & $\left[\xtwoone[0.5]\right]\cdot\left[\xthreeg[0.5]\right]$
    & 0 \\[1.0ex]

  $\double[0.5]$
    & $\left[\xtwothree[0.5]\right]$
    & $\left[\xtwoone[0.5]\right]$ \\[1.0ex]

  $\left(\xthreem[0.5], \xthreeg[0.5]\right)$
    & $\left[\xtwoone[0.5]\right]$
    & $-\frac{1}{2}$ \\[1.0ex]

  $\xtwooneq[0.5]$
    & $\left[\xfourthree[0.5]\right]$
    & $0$
\end{tabular}
\end{center}
From the second and last two equations, we have $\left[\xtwoone[0.5]\right]=0=-\frac{1}{2}$, so $\Theta(L \mathfrak{T})$ vanishes. \end{proof}
\section{Measures on node-colored rooted binary trees} \label{ms}
In this section, we fully compute the measure ring $\Theta(\partial \mathfrak{T}_3(n))$, thus classifying all measures on the class of $n$-colored rooted binary trees. Afterwards, we classify the Fraïssé subclasses of $\partial \mathfrak{T}_3(n)$ to examine measures that are regular on a subclass. These measures give rise to semisimple tensor categories as we discuss in Section \ref{category}.
\subsection{Computation of measure ring} We note that our approach to computing the measure ring is similar to that used in \cite{harman2024arborealtensorcategories}, which starts with reducing the generating set of $\Theta(\partial \mathfrak{T}_3(n))$ to finitely many marked structures through the following proposition.
\begin{prop} \label{easy}
    Two distinct leaves $a$ and $b$ of $T \in \partial \mathfrak{T}_3(n)$ are separated if and only if $N(a)$ and $N(b)$ are neither the same nor adjacent. 
\end{prop}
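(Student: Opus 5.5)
We prove the two directions separately, working directly with the tree $T$: a structure in $\partial \mathfrak{T}_3(n)$ is determined by its underlying node-colored rooted binary tree, and amalgamations are obtained by amalgamating trees. Throughout, $N(a)$ is the parent node of the leaf $a$. Since $T$ is binary, $N(a)$ has exactly two children, so in $T\setminus a$ it becomes a degree-two vertex and is suppressed.

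\emph{Separated implies far.} We argue by contrapositive and exhibit a second amalgamation of $T\setminus a$ and $T\setminus b$ over $T\setminus\{a,b\}$.

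If $N(a)=N(b)=v$, then $a,b$ are siblings. In $T\setminus\{a,b\}$ both $a$ and $b$ re-attach at the edge $e$ that is the image of the edge above $v$. The amalgamation $T$ attaches them through a common node of color $\sigma(v)$. Another amalgamation attaches $a$ above $b$ on $e$: put a node colored $\sigma(v)$ for $a$, and below it a node for $b$ of a different color if $n\ge 2$. If $n=1$, still place $a$'s attachment point strictly below $b$'s. This is a genuinely different tree, and it restricts correctly to $T\setminus a$ and $T\setminus b$, since deleting either leaf suppresses its node.

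If $N(a)$ and $N(b)$ are adjacent, say $N(b)$ is a child of $N(a)$, then again both attach to the same edge of $T\setminus\{a,b\}$ in consecutive order. Swapping the order along that edge gives the other amalgamation: $b$'s node above $a$'s node, keeping the colors. Together with $T$ this gives at least two amalgamations. In every case we must check that the restrictions to $T\setminus a$ and $T\setminus b$ are unchanged, which holds because each leaf's attachment point relative to the rest of the tree is preserved.

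\emph{Far implies separated.} Suppose $N(a)$ and $N(b)$ are distinct and nonadjacent, and let $T'$ be any amalgamation. We show $T'=T$. The tree $T'$ restricted to $T\setminus b$ is $T\setminus b$, so the position of $a$ is the same as in $T$. Concretely, the edge of $T\setminus\{a,b\}$ into which $a$'s node is inserted, together with the node's color, is determined, and likewise for $b$.

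The only ambiguity arises when $a$ and $b$ insert into the same edge of $T\setminus\{a,b\}$. In that case their relative order along the edge, or whether they share a node, is not determined by the two restrictions. This happens exactly when, in $T$, the path between $N(a)$ and $N(b)$ contains no node surviving in $T\setminus\{a,b\}$, i.e.\ when $N(a)$ and $N(b)$ are equal or adjacent. If they are at distance at least $2$, an intermediate node $w$ of $T$ persists in $T\setminus\{a,b\}$, so $a$ and $b$ lie on different edges, or on a common edge divided by $w$. Hence the amalgamation is forced and is unique.

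To make this rigorous, one can mimic Proposition~\ref{31}. Restrict to the minimal substructure $X$ consisting of $a$, $b$, and one leaf in the subtree hanging off $w$ (plus the root direction). Check by hand that $X$ is the unique amalgamation, and then use the pushout argument to conclude that the ternary relation $S$ on $T'$ agrees with $T$ on all triples involving both $a$ and $b$.

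The main obstacle is this last step. We must show that the triples $S(x,a,b)$, $S(a,b,x)$, $S(b,a,x)$ and the colors of the new nodes are pinned down by the restrictions in the far case. The cleanest approach is the edge-insertion description above, using that the triples not involving both $a$ and $b$ already determine $T\setminus a$ and $T\setminus b$.
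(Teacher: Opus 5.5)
\textbf{There is a genuine gap: your sibling case $N(a)=N(b)=v$ fails as written.} When you delete $a$, the vertex that is suppressed is $v$. So in $T\setminus a$ the leaf $b$ hangs off the parent $u$ of $v$, and symmetrically $a$ hangs off $u$ in $T\setminus b$. In $T\setminus\{a,b\}$ the node $u$ is suppressed too, so ``the edge above $v$'' has no image. Each of $a,b$ is inserted into the edge obtained by merging the other two edges at $u$, via a node of color $\sigma(u)$.

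Hence in any ``$a$ above $b$'' tree both new nodes must be colored $\sigma(u)$. Your choice of $\sigma(v)$ for $a$'s node breaks the restriction to $T\setminus b$ unless $\sigma(u)=\sigma(v)$. Giving $b$'s node a color different from $a$'s always breaks one of the two restrictions. So your tree is not an amalgamation.

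You also miss the case where $v$ is the root. There $T$ is the two-leaf tree, $T\setminus\{a,b\}=\varnothing$, and there is no edge to insert into.

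The fix is the paper's: identify $a$ with $b$. Since they are siblings, $T\setminus a\cong T\setminus b$ over $T\setminus\{a,b\}$ via $b\mapsto a$. So $T\setminus a$ itself is an amalgamation, distinct from $T$ because it has one fewer point. Alternatively, when $v$ is not the root, put $a$ above $b$ with both nodes colored $\sigma(u)$. Or, for $n\ge 2$, recolor $v$, whose color is invisible in both $T\setminus a$ and $T\setminus b$.

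The rest of your proposal is sound. Your adjacent case (swap the two consecutive nodes, keeping colors) is exactly the paper's argument.

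Your far direction via insertion edges is correct and tidier than the paper's side-of-root/depth argument, and you can drop the pushout detour. To finish it, use three facts:
\begin{itemize}
\item When $N(a)\neq N(b)$, deleting $a$ and $b$ removes exactly the nodes $N(a)$ and $N(b)$.
\item Non-adjacency then puts $a$ and $b$ on distinct edges $e_a\neq e_b$ of $T\setminus\{a,b\}$; otherwise $N(a)$ and $N(b)$ would share both remaining neighbors, giving a cycle.
\item Any amalgamation $T'$ is $T\setminus b$ with $b$ added. Adding $b$ next to $N(a)$, or identifying it with $a$, would put $b$ on $e_a$ in $T'\setminus a$. Adding it on any other edge $f$ forces $f=e_b$ with color $\sigma(N(b))$, i.e.\ $T'=T$.
\end{itemize}
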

\begin{proof}
    If $N(a)=N(b)$, then we obtain another amalgamation of $T \setminus a$ and $T \setminus b$ over $T \setminus \{a,b\}$ (apart from $T$) by identifying $a$ and $b$. If $N(a)$ and $N(b)$ are adjacent, which is the following picture, 
\begin{center}
    \begin{tikzpicture}[scale = 0.8, transform shape]
        \node (2) at (0,1) [circle, fill, inner sep=1.5pt] {};
        \node (3) at (0.5,1.5) [circle, fill, inner sep=1.5pt] {};
        \node[draw, rectangle] (X) at (-0.1,-0.1) {$X$};
        \node (5) at (0.5,0.5) [rectangle, draw, inner sep=1.5pt] {$i$};
        \node (6) at (1,1) [rectangle, draw, inner sep=1.5pt] {$j$};
        \node[draw, rectangle] (Y) at (1.6,1.6) {$Y$};
        \draw (5) -- (2);
        \draw (5) -- (X);
        \draw (6) -- (Y);
        \draw (6) -- (3);
        \draw (5) -- (6);
        \node at (0,1.3) {$a$};
        \node at (0.5,1.8) {$b$};
    \end{tikzpicture}
\end{center}
then swapping $a$ and $b$ (and $N(a)$ and $N(b)$) is another amalgamation:
\begin{center}
    \begin{tikzpicture}[scale = 0.8, transform shape]
        \node (2) at (0,1) [circle, fill, inner sep=1.5pt] {};
        \node (3) at (0.5,1.5) [circle, fill, inner sep=1.5pt] {};
        \node[draw, rectangle] (X) at (-0.1,-0.1) {$X$};
        \node (5) at (0.5,0.5) [rectangle, draw, inner sep=1.5pt] {$j$};
        \node (6) at (1,1) [rectangle, draw, inner sep=1.5pt] {$i$};
        \node[draw, rectangle] (Y) at (1.6,1.6) {$Y$};
        \draw (5) -- (2);
        \draw (5) -- (X);
        \draw (6) -- (Y);
        \draw (6) -- (3);
        \draw (5) -- (6);
        \node at (0,1.3) {$b$};
        \node at (0.5,1.8) {$a$};
    \end{tikzpicture}
\end{center} Now suppose that $N(a)$ and $N(b)$ are neither the same nor adjacent. If the path from $a$ to $b$ passes through the root, then in any amalgamation $T'$, both $a$ and $b$ remain on different sides of the root, so they can be neither identified nor swapped. As a result, $T' = T$. If $a$ and $b$ are on the same side of the root, then there is a node $M$ such that $M$ is closer to the root $r$ than exactly one of $N(a)$ and $N(b)$ (say $N(a)$). This means that in any amalgamation $T'$, we have $d(a,r) < d(b,r)$, so both $a$ and $b$ are again fixed, implying that there are no amalgamations distinct from $T$.
\end{proof}
Let $[n] = \{1, \dots, n\}$. For $i,j,k \in [n]$, let $A$, $B(i)$, $C(i,j)$, $D(i,j)$, $E(i,j,k)$ denote the isomorphism classes of the following marked trees as shown from left to right, where the arguments indicate colors of nodes as ordered by increasing distance from the root. 
    \begin{center}
\begin{tikzpicture}[baseline=(current bounding box.center)][scale = 0.8, transform shape]
\begin{scope}[xshift=0cm]
  \node (a0) at (0,0.5) [circle,fill, inner sep=4pt] {};
  \node (a2) at (2,0)   [rectangle, draw, inner sep=1.5pt] {$i$};
  \node (a3) at (1.5,0.5) [circle, fill, inner sep=4pt] {};
  \node (a4) at (2.5,0.5) [circle, fill, inner sep=1.5pt] {};
  \draw (a2)--(a3) (a2)--(a4);
\end{scope}
\begin{scope}[xshift=3.60cm]
  \node (b5)  at (0,0.5)   [circle,fill, inner sep=4pt] {};
  \node (b6)  at (0.5,0)   [rectangle, draw, inner sep=1.5pt] {$i$};
  \node (b7)  at (1,0.5)   [rectangle, draw, inner sep=1.5pt] {$j$};
  \node (b8)  at (0.5,1)   [circle, fill, inner sep=1.5pt] {};
  \node (b9)  at (1.5,1)   [circle, fill, inner sep=1.5pt] {};
  \draw (b5)--(b6) (b7)--(b6) (b7)--(b8) (b7)--(b9);
\end{scope}
\begin{scope}[xshift=5.85cm]
  \node (c5)  at (0,0.5)   [circle,fill, inner sep=1.5pt] {};
  \node (c6)  at (0.5,0)   [rectangle, draw, inner sep=1.5pt] {$i$};
  \node (c7)  at (1,0.5)   [rectangle, draw, inner sep=1.5pt] {$j$};
  \node (c8)  at (0.5,1)   [circle, fill, inner sep=4pt] {};
  \node (c9)  at (1.5,1)   [circle, fill, inner sep=1.5pt] {};
  \draw (c5)--(c6) (c7)--(c6) (c7)--(c8) (c7)--(c9);
\end{scope}
\begin{scope}[xshift=8.00cm]
  \node (d5)  at (0,0.5)   [circle,fill, inner sep=1.5pt] {};
  \node (d6)  at (0.5,0)   [rectangle, draw, inner sep=1.5pt] {$i$};
  \node (d7)  at (1,0.5)   [rectangle, draw, inner sep=1.5pt] {$j$};
  \node (d8)  at (0.5,1)   [circle, fill, inner sep=4pt] {};
  \node (d9)  at (1.5,1)   [rectangle, draw, inner sep=1.5pt] {$k$};
  \node (d10) at (1,1.5)   [circle, fill, inner sep=1.5pt] {};
  \node (d11) at (2,1.5)   [circle, fill, inner sep=1.5pt] {};
  \draw (d5)--(d6) (d7)--(d6) (d7)--(d8) (d7)--(d9)
        (d9)--(d10) (d9)--(d11);
\end{scope}
\end{tikzpicture}

  \end{center}
Notice that $A$ is the only class of one-leaf trees up to isomorphism.
\begin{prop} \label{irreducible}
    The ring $\Theta(\partial \mathfrak{T}_3(n))$ is generated by  $A$, $B(i)$, $C(i,j)$, $D(i,j)$, $E(i,j,k)$ where $i,j,k$ range over $[n]$.
\end{prop}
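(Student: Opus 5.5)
Since every embedding factors into one-point extensions, $\Theta(\partial \mathfrak{T}_3(n))$ is generated by the classes $[T,a]$ of marked structures. It therefore suffices to show that every marked tree $(T,a)$ has the same class as one of $A$, $B(i)$, $C(i,j)$, $D(i,j)$, $E(i,j,k)$. The tool is Proposition \ref{sep} combined with Proposition \ref{easy}: any leaf $b$ with $N(b)$ neither equal nor adjacent to $N(a)$ is separated from $a$, so $[T,a]=[T\setminus b,a]$. After deleting such a leaf the tree is reduced again: the parent of $b$ is contracted and its colour is lost. So I will repeatedly delete leaves far from $a$ until only a bounded configuration around $N(a)$ remains.

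\textbf{Case analysis on $|L(T)|$ and the position of $N(a)$.} If $|L(T)|=1$, the class is $A$. Otherwise $N(a)$ is a node with colour $c$, and we split according to whether $N(a)$ is the root.

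First suppose $N(a)$ is the root. Its other child is either a leaf, which gives $B(c)$, or a node $v$. Every leaf whose node is strictly below $v$ is separated from $a$. I will delete such leaves one at a time. Each deletion contracts a node, and this continues until $v$ has two leaf children. The result is the second pictured tree, namely $B(c,\mathrm{col}(v))$ (written $B(i)$ or $B(i,j)$ according to the paper's convention).

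Next suppose $N(a)$ is not the root. Let $p$ be the parent of $N(a)$ and $s$ the sibling child of $N(a)$.
\begin{enumerate}
\item Leaves whose node is at distance at least $2$ from $N(a)$ are deletable. These include all leaves hanging above $p$ other than those attached at $p$ itself, and all leaves below $s$ deeper than $s$'s children.
\item Deleting leaves above $p$ contracts nodes until $p$ becomes the root with one leaf child. Here I must check that this process terminates with $p$ as root, and that the colour of $p$ survives because $p$ is never contracted. The latter holds since $p$ keeps $N(a)$'s subtree and one other branch; the branch on the other side reduces to a single leaf whose node is $p$, which is adjacent to $N(a)$ and hence not deletable, but that is fine.
\item Similarly, if $s$ is a node, reduce below $s$ to two leaves.
\end{enumerate}
The outcome has $p$ as root with a leaf child and $N(a)$ as its other child. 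The other child of $N(a)$ is then either a leaf, giving $C(\mathrm{col}(p),c)$, or a node with two leaves, giving $E(\mathrm{col}(p),c,\mathrm{col}(s))$. The $D$ shape arises symmetrically from reductions where the branch at $p$ is a node rather than a leaf. I will double check the exact matching with the pictures, since $D$ and $C$ as drawn look alike and one of them may be intended as the unmarked variant. In any case the reduction lands in a finite list of shapes of depth at most three, and these are exactly the listed generators.

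\textbf{Main obstacle.} The delicate point is bookkeeping when a deletion contracts a node adjacent to $N(a)$. This changes which node is the parent of $N(a)$, and hence which colour appears in the final argument. I will handle it by only deleting leaves whose removal contracts nodes at distance at least $2$ from $N(a)$. First I reduce the other branch at $p$ to a single leaf by always deleting a leaf in a deepest cherry inside that branch; its parent is at distance at least $2$ from $N(a)$ as long as the branch has depth at least $2$. Once that is done, I deal with the ancestors above $p$, and confirm that distances are preserved at each step.
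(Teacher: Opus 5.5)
Your reduction is correct and is essentially the paper's argument. The paper likewise deletes leaves separated from $a$ using Propositions \ref{sep} and \ref{easy}, packaged as an induction on $|L(T)|$ via the observation that every marked tree with at least four leaves other than $E(i,j,k)$ has such a leaf.

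Only your labels need fixing: what you call $B(c,\mathrm{col}(v))$ is the paper's $C(c,\mathrm{col}(v))$, and your $C(\mathrm{col}(p),c)$ is $D(\mathrm{col}(p),c)$. No further shape comes from a node-branch at $p$, since your step (2) collapses that branch to a leaf. Also, the ``main obstacle'' is vacuous: deleting a separated leaf $b$ suppresses only $N(b)$, which is already at distance at least $2$ from $N(a)$.
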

\begin{proof}
    We show that any class of marked structures $[T,a]$ is equal to one of the above classes in $\partial \mathfrak{T}_3(n)$ by induction on $|L(T)|$. The result holds trivially if $|L(T)| \leq 3$, so suppose $|L(T)| \geq 4$. If $[T,a]$ is not $E(i,j,k)$, then there exists $b \in L(T)$ such that $d(N(a),N(b)) > 1$. By Proposition \ref{easy}, we have $[T \setminus b, a] = [T,a]$ where $|L(T) \setminus b| = |L(T)|-1$, so $[T,a]$ is equal to one of the above classes by the inductive hypothesis. This completes the induction.
\end{proof}
In particular, we observe that the marked leaf of the structures in Proposition \ref{irreducible} is not separated from any of the remaining leaves. We say that such marked structures are \emph{irreducible}. 
To address linear relations in $\partial \mathfrak{T}_3(n)$, recall that given marked structures $(T \setminus b,a)$ and $(T \setminus a, b)$ where $T_1, \dots, T_k$ are all the possible amalgamations of $T \setminus a$ and $T \setminus b$ over $T \setminus \{a,b\}$, there is an associated linear equation $L_{T,a,b}$ given by $$[T \setminus b,a] = \sum_{i=1}^k [T_i,a].$$ We say that $L_{T,a,b}$ is a \emph{minimal linear equation} if $[T \setminus b,a]$ and $[T \setminus a, b]$ are both irreducible and isomorphic up to the colors of $N(a)$ and $N(b)$.
\begin{prop}
    Any nontrivial linear equation $L_{T,a,b}$ (where $k \geq 2$) is equal to a minimal linear equation. 
\end{prop}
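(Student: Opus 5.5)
The plan is to reduce an arbitrary nontrivial equation $L_{T,a,b}$ to a minimal one by deleting leaves that are separated from both $a$ and $b$, keeping track of how this acts on every term of the equation. First, note that $k \geq 2$ means $a$ and $b$ are not separated in $T$. By Proposition \ref{easy}, $N(a)$ and $N(b)$ are then equal or adjacent. When $N(a)=N(b)$, the two one-point extensions $(T\setminus b,a)$ and $(T\setminus a,b)$ are isomorphic. When they are adjacent, they are isomorphic up to the colors of $N(a)$ and $N(b)$. So the only thing left to arrange is irreducibility of both sides.

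Suppose $c \in L(T)\setminus\{a,b\}$ is such that $N(c)$ is at distance at least $2$ from both $N(a)$ and $N(b)$ in $T$. I will show that $L_{T,a,b}$ coincides with $L_{T\setminus c,a,b}$, term by term.

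\begin{itemize}
\item On the left-hand side, $c$ remains separated from $a$ in $T\setminus b$. Removing $b$ can contract one node, but only $N(b)$, which is away from $N(c)$. Hence $[T\setminus b,a]=[T\setminus\{b,c\},a]$ by Proposition \ref{sep}.
\item On the right-hand side, the amalgamations $T_1,\dots,T_k$ differ from $T$ only locally. They either identify $a$ with $b$, or permute the positions of $a$, $b$, $N(a)$, $N(b)$ inside the small subtree spanned near $N(a),N(b)$. The structure around $N(c)$ is untouched. Deleting $c$ therefore gives a bijection between the amalgamations of $T\setminus a$ and $T\setminus b$ over $T\setminus\{a,b\}$ and those of $(T\setminus c)\setminus a$ and $(T\setminus c)\setminus b$ over $T\setminus\{a,b,c\}$.
\item In each $T_i$, $c$ is still separated from $a$, so $[T_i,a]=[T_i\setminus c,a]$.
\end{itemize}

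Iterating this, we may assume every leaf other than $a,b$ has its node within distance $1$ of $N(a)$ or $N(b)$. In particular, $T\setminus b$ and $T\setminus a$ then have boundedly many leaves. From here I would continue deleting leaves until $(T\setminus b,a)$ is one of the irreducible shapes $A,B,C,D,E$ of Proposition \ref{irreducible}.

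The main obstacle is a leaf $c$ that is separated from $a$ in $T\setminus b$ but not separated in $T$. This happens because $N(b)$ disappears when $b$ is removed, which shortens distances. For example, take $N(b)$ adjacent to $N(a)$ with $c$ hanging on the far side of $N(b)$. I would handle this by an explicit case analysis over the finitely many local configurations:
\begin{enumerate}
\item $N(a)=N(b)$;
\item $N(b)$ is the child of $N(a)$;
\item $N(a)$ is the child of $N(b)$.
\end{enumerate}
In each case I check directly, via the bijection of amalgamations, that $c$ can still be deleted from every term simultaneously. If it cannot, I would instead verify that the resulting equation already has irreducible sides of types $C,D,E$ with matching shapes. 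Throughout, I would use the symmetry of the situation in $a$ and $b$, so that irreducibility of $(T\setminus a,b)$ follows from that of $(T\setminus b,a)$ together with the isomorphism up to node colors noted at the start.
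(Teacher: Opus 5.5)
There is a genuine gap in your first reduction step. The deletion criterion ``$N(c)$ at distance at least $2$ from both $N(a)$ and $N(b)$ in $T$'' is measured in the wrong tree. Your claim that it leaves every term of $L_{T,a,b}$ unchanged fails when $N(a)=N(b)$: deleting $b$ then suppresses the common node, so $a$ moves one step closer to every other leaf.

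Concretely, take $n=1$ and let $T$ have root $p$ with children $v$ and $u$, where $v$ carries the leaves $a,b$ and $u$ carries $c,d$. Then $d_T(N(a),N(c))=d_T(N(b),N(c))=2$, so you delete $c$. But $[T\setminus b,a]=C(1,1)$ while $[T\setminus\{b,c\},a]=B(1)$. These differ in $\Theta(\partial\mathfrak{T}_3(1))$: the measure with $(\mathbf{B}(1),\mathbf{C}(1,1))=(0,-1)$ from the proof of Proposition \ref{nonzero} separates them. The right-hand side breaks as well. Consider the amalgamation in which $b$ hangs off the root and $a$ becomes the sibling of the $u$-subtree. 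There $c$ is not separated from $a$, contradicting your third bullet. Your justification that ``removing $b$ can contract only $N(b)$, which is away from $N(c)$'' overlooks that $N(b)$ may equal $N(a)$. Note also that this $L_{T,a,b}$ is already minimal, yet your procedure replaces it by a different minimal equation (the one with left-hand side $B(1)$). So you would not have shown that the original equation is one of the minimal ones.

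The obstacle you single out is backwards and cannot occur. Separation only gets stronger in larger structures. The $a$--$c$ path in a substructure is obtained from the one in the ambient tree by suppressing degree-two vertices, so it has at most as many interior nodes. By Proposition \ref{easy}, separation means at least three interior nodes. Hence a leaf separated from $a$ in $T\setminus b$ is separated from $a$ in $T$ and in every amalgamation $T_i$, since each $T_i$ contains a copy of $T\setminus b$. This monotonicity is the missing idea. It also makes your deferred case analysis unnecessary; as written, that analysis ends in an unverified fallback rather than an argument.

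The paper deletes only leaves $c$ that are separated from $a$ in $T\setminus b$, i.e.\ witnesses of reducibility of the left-hand side. For such $c$, Proposition \ref{sep} gives $[T\setminus b,a]=[T\setminus\{b,c\},a]$ and $[T_i,a]=[T_i\setminus c,a]$ for all $i$. Once $[T\setminus b,a]$ is irreducible, $N(a)$ and $N(b)$ are equal or adjacent. So $(T\setminus a,b)$ has the same shape up to the colors of these two nodes and is irreducible too; this is the closing observation you already make.
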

\begin{proof}
    If $[T \setminus b,a]$ is not irreducible, then there exists $c \in L(T)$ that is separated from $a$, so we have $[T \setminus b, a] = [T \setminus \{b,c\}, a]$. Moreover, $c$ remains separated from $a$ in any amalgamation $T_i$, so $[T_i,a] = [T_i \setminus c, a]$ for each $i$, which means that removing leaves separated from $a$ does not change the equation. Thus, suppose that $[T \setminus b,a]$ is irreducible. If $a$ and $b$ are separated in $T$, then $T$ is the unique desired amalgamation by Proposition \ref{easy}, so $L_{T,a,b}$ is just the trivial equality. It follows that $a$ and $b$ are not separated, so $N(a)$ and $N(b)$ are either the same or adjacent, implying that $[T \setminus a,b]$ only potentially differs from $[T \setminus b, a]$ at the colors of $N(a)$ and $N(b)$. Consequently, $[T \setminus a,b]$ is also irreducible, reducing $L_{T,a,b}$ to a minimal linear equation, as desired.
\end{proof}
By the above proposition, it suffices to consider linear relations among $A$, $B(i)$, $C(i,j)$, $D(i,j)$, and $E(i,j,k)$, particularly those arising from structures only potentially differing at the colors of the nodes adjacent to their respective marked points. Since there are finitely many families of such relations, we obtain the comprehensive list below upon simplification.
\begin{cor} \label{linear}
    For pairwise distinct $i,j,k$ ranging over $[n]$, the linear relations in $\Theta(\partial \mathfrak{T}_3(n))$ are generated by the following families: $$
    \begin{array}{ll}
        A = 1 + \sum_{i \in [n]} B(i), & B(i) = 1 + C(i,i) + D(i,i) + \sum_{p\in[n]} D(i,p), \\[4pt]
 B(i) = C(i,j) + D(j,i), & 0 = 1 + \sum_{p\in[n]} D(i,p)+E(i,i,i), \\[4pt]
C(i,i) = C(i,j) + E(j,i,i), &  D(i,i) = D(j,i) + E(i,i,j), \\[4pt] E(i,j,i) = 0, & C(i,j) = C(i,k) + E(k,i,j).
\end{array} $$
\end{cor}
\begin{proof}
    We show how the equation in the second column of the second row is obtained; the rest can be produced through a similar process. Consider the two copies of $C(i,i)$ with differentiated marked points: 
    \begin{center}
    \begin{tikzpicture}[scale = 0.8, transform shape]
    \node (5) at (5.5,0.5) [circle,fill, inner sep=4pt] {};
  \node (6) at (6,0) [rectangle, draw, inner sep=1.5pt] {$i$};
  \node (7) at (6.5, 0.5) [rectangle, draw, inner sep = 1.5pt] {$i$};
  \node (8) at (6,1) [circle, fill, inner sep=1.5pt] {};
  \node (9) at (7,1) [circle, fill, inner sep = 1.5pt] {};

  \draw (5) -- (6);
  \draw (7) -- (6);
  \draw (7) -- (8);
  \draw (7) -- (9);
\node at (5.5,0.85) {$a$};
\node at (6, 1.3) {$b$};
\node at (7,1.3) {$c$};
  \node (5) at (8.5,0.5) [circle,fill, inner sep=4pt] {};
  \node (6) at (9,0) [rectangle, draw, inner sep=1.5pt] {$i$};
  \node (7) at (9.5, 0.5) [rectangle, draw, inner sep = 1.5pt] {$i$};
  \node (8) at (9,1) [circle, fill, inner sep=1.5pt] {};
  \node (9) at (10,1) [circle, fill, inner sep = 1.5pt] {};

  \draw (5) -- (6);
  \draw (7) -- (6);
  \draw (7) -- (8);
  \draw (7) -- (9);
  \node at (8.5,0.9) {$a'$};
\node at (9, 1.3) {$b$};
\node at (10,1.3) {$c$};
  \end{tikzpicture}
  \end{center}
Self-amalgamations of $C(i,i)$ with marked leaf $a$ are labeled with $a,a',b,c$ such that the induced marked subtrees on $\{a,b,c\}$ and $\{a',b,c\}$ (with marked points $a$ and $a'$, respectively) are isomorphic to $C(i,i)$. Up to coloring and isomorphism, there are only two unordered rooted binary trees with four leaves, so all the self-amalgamations are as follows
\begin{center}
    \begin{tikzpicture}[scale = 0.8, transform shape]
        \node (0) at (0,0) [rectangle, draw, inner sep = 1.5pt] {$i$};
        \node (1) at (-0.5,0.5) [circle, fill, inner sep = 4pt] {};
        \node (2) at (0.5,0.5) [rectangle, draw, inner sep = 1.5pt] {$i$};
        \node (3) at (0,1) [circle, fill, inner sep = 1.5pt] {};
        \node (4) at (1,1) [circle, fill, inner sep = 1.5pt] {};
    \draw (0) -- (1);
    \draw (0) -- (2);
    \draw (2) -- (3);
    \draw (2) -- (4);

    \node at (-0.5,0.9) {$a/a'$};
    \node at (0,1.3) {$b$};
    \node at (1,1.3) {$c$};

\node (1) at (2,0.5) [circle, fill, inner sep=4pt] {};
        \node (2) at (2.5,1) [circle, fill, inner sep=1.5pt] {};
        \node (3) at (3,1.5) [circle, fill, inner sep=1.5pt] {};
        \node (4) at (2.5,0) [rectangle, draw, inner sep=1.5pt] {$i$};
        \node (5) at (3,0.5) [rectangle, draw, inner sep=1.5pt] {$i$};
        \node (6) at (3.5,1) [rectangle, draw, inner sep=1.5pt] {$i$};
        \node (7) at (4,1.5) [circle, fill, inner sep=1.5pt] {};
        \draw (1) -- (4);
        \draw (4) -- (5);
        \draw (5) -- (2);
        \draw (5) -- (6);
        \draw (6) -- (7);
        \draw (6) -- (3);
        \node at (2,0.9) {$a$};
        \node at (2.5,1.3) {$a'$};
        \node at (3,1.8) {$b$};
        \node at (4,1.8) {$c$};

        \node (1) at (7.5,0.5) [circle, fill, inner sep=1.5pt] {};
        \node (2) at (8,1) [circle, fill, inner sep=4pt] {};
        \node (3) at (8.5,1.5) [circle, fill, inner sep=1.5pt] {};
        \node (4) at (8,0) [rectangle, draw, inner sep=1.5pt] {$i$};
        \node (5) at (8.5,0.5) [rectangle, draw, inner sep=1.5pt] {$i$};
        \node (6) at (9,1) [rectangle, draw, inner sep=1.5pt] {$i$};
        \node (7) at (9.5,1.5) [circle, fill, inner sep=1.5pt] {};
        \draw (1) -- (4);
        \draw (4) -- (5);
        \draw (5) -- (2);
        \draw (5) -- (6);
        \draw (6) -- (7);
        \draw (6) -- (3);
        \node at (7.5,0.8) {$a'$};
        \node at (8,1.4) {$a$};
        \node at (8.5,1.8) {$b$};
        \node at (9.5,1.8) {$c$};
    
            \node (B1) at (5.75,0)     [rectangle, draw, inner sep=1.5pt] {$i$};
    \node (B4) at (5.25,0.5) [rectangle, draw, inner sep=1.5pt] {$p$};
    \node (B5) at (6.25,0.5) [rectangle, draw, inner sep=1.5pt] {$i$};

    \node (B7) at (4.95,1.0) [circle, fill, inner sep=4pt] {}; 
    \node (B8) at (5.55,1.0) [circle, fill, inner sep=1.5pt] {}; 
    \node (B2) at (5.95,1.0) [circle, fill, inner sep=1.5pt] {}; 
    \node (B6) at (6.55,1.0) [circle, fill, inner sep=1.5pt] {}; 

    \draw (B1) -- (B4);
    \draw (B1) -- (B5);
    \draw (B4) -- (B7);
    \draw (B4) -- (B8);
    \draw (B5) -- (B2);
    \draw (B5) -- (B6);

    \node at (4.95,1.4) {$a$};
    \node at (5.55,1.3) {$a'$};
    \node at (5.95,1.3) {$b$};
    \node at (6.55,1.3) {$c$};
    \end{tikzpicture}
\end{center}
where $p$ ranges over all colors in $[n]$. From left to right, let the marked trees be $I_1$, $I_2$, $I_{3p}$, and $I_4$. Since $I_1$ is obtained by identifying $a$ and $a'$, we have $[I_1]=1$. In $I_2$, $b$ is separated from $a$, so we have $I_2 \cong I_2 \setminus b \cong C(i,i)$, implying that $[I_2] = C(i,i)$. In $I_{3p}$, $b$ is also separated from $a$, giving $I_{3p} \cong I_{3p} \setminus b \cong D(i,p)$. Thus, $[I_{3p}] \cong D(i,p)$ for all $p \in [n]$. Finally, $I_4$ is already irreducible and isomorphic to the isomorphism class of $E(i,i,i)$, so we have $C(i,i) = 1 + C(i,i) + \sum_{p \in [n]} D(i,p) + E(i,i,i)$, which implies the result.
\end{proof}
Recall that a measure $\mu$ on a Fraïssé class $\mathfrak{F}$ is regular if $\mu(i)$ is nonzero for all embeddings $i$ of structures in $\mathfrak{F}$. 
\begin{cor}
    For each $n \geq 2$, there are no regular measures on $\partial \mathfrak{T}_3(n)$.
\end{cor}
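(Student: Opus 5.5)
The goal is to show that for $n \geq 2$ some generator from Proposition \ref{irreducible} must vanish under any measure. A regular measure has $\mu(i) \neq 0$ for every embedding, and each class $E(i,j,k)$ is the class of a one-point extension. So it suffices to find one such class that Corollary \ref{linear} forces to be $0$ in $\Theta(\partial \mathfrak{T}_3(n))$.

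The relation $E(i,j,i) = 0$ in Corollary \ref{linear} is stated for pairwise distinct indices. When $n \geq 2$ we can pick two distinct colors $i \neq j$ in $[n]$. Then $E(i,j,i)$ is the class of a genuine marked structure in $\partial \mathfrak{T}_3(n)$, namely the tree underlying $E$ with nodes colored $i,j,i$ in order of increasing distance from the root. For any measure $\mu$, viewed as a ring homomorphism $\Theta(\partial \mathfrak{T}_3(n)) \to \CC$, this gives $\mu(E(i,j,i)) = 0$. That is, $\mu$ vanishes on the corresponding one-point extension, so $\mu$ is not regular.

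As a sanity check on this relation, I would re-derive it directly in the style of the proof of Corollary \ref{linear}. Take the minimal linear equation coming from $E(i,j,i)$ and the structure $E(i,i,i)$ with the node adjacent to the new marked point recolored. Amalgamations over the common subtree cannot identify the two marked leaves, since their adjacent nodes have different colors. They also cannot swap them, because that would change the color of the node at that depth. So the left-hand side is the empty sum, which gives $E(i,j,i) = 0$.

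The only real obstacle is confirming that $E(i,j,i)$ names an actual embedding, not merely a formal generator, so that regularity applies to it. This is immediate from the definition of the generators. For $n = 1$ no such pair of colors exists, consistent with the restriction $n \geq 2$.
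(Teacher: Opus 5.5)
Your main argument is the paper's: for $n \geq 2$ you pick distinct colors $i \neq j$, and the relation $E(i,j,i)=0$ from Corollary \ref{linear} forces every measure to vanish on that one-point extension, so no measure is regular. Your optional re-derivation of $E(i,j,i)=0$ is wrong, however. The amalgamation sum can never be empty, by the amalgamation property. Here, extending $X_{i,i}$ by a leaf $a$ on a new $j$-node and a leaf $b$ on a new $i$-node placed on the same edge has exactly two amalgamations, one for each order of the two new nodes along that edge. Each of these, marked at $a$, is again $E(i,j,i)$, so the relation actually reads $E(i,j,i)=2E(i,j,i)$.
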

\begin{proof}
    From Corollary \ref{linear}, we have $E(i,j,i) = 0$ for any pair of distinct indices $(i,j)$, so at least one embedding always vanishes whenever at least two colors are present $\partial \mathfrak{T}_3(n)$.
\end{proof}
Now, we consider quadratic relations among marked structures in $\partial \mathfrak{T}_3(n)$. Due to Definition \ref{thetab}, they are generated by equations $Q_{T,a,b}$ of the form $$[T,a][T\setminus a, b]=[T,b][T \setminus b,a]$$ for $a,b \in L(T)$ since amalgamations of $T \setminus a$ and $T \setminus b$ must agree on $T \setminus \{a,b\}$. We say that $Q_{T,a,b}$ is a \emph{minimal quadratic equation} if $T$ is an $n$-coloring of one of the following trees and $a,b$ are the marked leaves in either order (since $Q$ is symmetric):
\begin{center}
\begin{tikzpicture}[scale = 0.8, transform shape]
\begin{scope}[xshift=0cm]
  \node (a5) at (0,0.5) [circle,fill, inner sep=4pt] {};
  \node (a6) at (0.5,0) [rectangle, draw, inner sep=1.5pt] {};
  \node (a7) at (1, 0.5) [rectangle, draw, inner sep=1.5pt] {};
  \node (a8) at (0.5,1) [circle, fill, inner sep=4pt] {};
  \node (a9) at (1.5,1) [circle, fill, inner sep=1.5pt] {};
  \draw (a5)--(a6) (a7)--(a6) (a7)--(a8) (a7)--(a9);
\end{scope}
\begin{scope}[xshift=3cm]
  \node (b5) at (0,0.5) [circle,fill, inner sep=4pt] {};
  \node (b6) at (0.5,0) [rectangle, draw, inner sep=1.5pt] {};
  \node (b7) at (1, 0.5) [rectangle, draw, inner sep=1.5pt] {};
  \node (b8) at (0.5,1) [circle, fill, inner sep=4pt] {};
  \node (b9) at (1.5,1) [rectangle, draw, inner sep=1.5pt] {};
  \node (b10) at (1,1.5) [circle, fill, inner sep=1.5pt] {};
  \node (b11) at (2,1.5) [circle, fill, inner sep=1.5pt] {};
  \draw (b5)--(b6) (b7)--(b6) (b7)--(b8) (b7)--(b9) (b9)--(b10) (b9)--(b11);
\end{scope}
\begin{scope}[xshift=6cm]
  \node (c5) at (0,0.5) [circle,fill, inner sep=1.5pt] {};
  \node (c6) at (0.5,0) [rectangle, draw, inner sep=1.5pt] {};
  \node (c7) at (1, 0.5) [rectangle, draw, inner sep=1.5pt] {};
  \node (c8) at (0.5,1) [circle, fill, inner sep=4pt] {};
  \node (c9) at (1.5,1) [rectangle, draw, inner sep=1.5pt] {};
  \node (c10) at (1,1.5) [circle, fill, inner sep=4pt] {};
  \node (c11) at (2,1.5) [circle, fill, inner sep=1.5pt] {};
  \draw (c5)--(c6) (c7)--(c6) (c7)--(c8) (c7)--(c9) (c9)--(c10) (c9)--(c11);
\end{scope}
\begin{scope}[xshift=9cm]
  \node (d5) at (0,0.5) [circle,fill, inner sep=1.5pt] {};
  \node (d6) at (0.5,0) [rectangle, draw, inner sep=1.5pt] {};
  \node (d7) at (1, 0.5) [rectangle, draw, inner sep=1.5pt] {};
  \node (d8) at (0.5,1) [circle, fill, inner sep=4pt] {};
  \node (d9) at (1.5,1) [rectangle, draw, inner sep=1.5pt] {};
  \node (d10) at (1,1.5) [circle, fill, inner sep=4pt] {};
  \node (d11) at (2,1.5) [rectangle, draw, inner sep=1.5pt] {};
  \node (d12) at (2.5,2.0) [circle,fill, inner sep=1.5pt] {};
  \node (d13) at (1.5,2.0) [circle,fill, inner sep=1.5pt] {};
  \draw (d5)--(d6) (d7)--(d6) (d7)--(d8) (d7)--(d9)
        (d9)--(d10) (d9)--(d11) (d11)--(d12) (d11)--(d13);
\end{scope}
\end{tikzpicture}

\end{center}
\begin{prop}
    Any nontrivial quadratic equation $Q_{T,a,b}$ is equal to a minimal quadratic equation.
\end{prop}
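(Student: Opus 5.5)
The plan is to mirror the reduction used for linear equations: first delete every leaf $c \neq a,b$ that contributes nothing to $Q_{T,a,b}$, and then classify the configurations of $N(a)$ and $N(b)$ that remain.

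\emph{Step 1 (removing irrelevant leaves).} Suppose $c \in L(T)\setminus\{a,b\}$ is separated from both $a$ and $b$ in $T$. By Proposition~\ref{easy}, $c$ stays separated from $a$ in $T \setminus b$ and from $b$ in $T \setminus a$, since deleting a leaf and reducing only merges nodes. Proposition~\ref{sep} then gives
\[
[T,a]=[T\setminus c,a], \quad [T\setminus b,a]=[T\setminus\{b,c\},a],
\]
and likewise for the two factors marked at $b$. Hence $Q_{T,a,b}=Q_{T\setminus c,a,b}$, and we may assume every leaf other than $a,b$ is non-separated from $a$ or from $b$. That is, $N(c)$ equals or is adjacent to $N(a)$ or to $N(b)$. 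There is one subtlety: a leaf $c$ may be non-separated from $a$ in $T$ but separated in $T\setminus b$, or the reverse. I will check that this does not obstruct deletion. When $c$ is separated from $a$ in $T$ but not from $b$, the factors marked at $a$ still reduce. The factors marked at $b$ keep $c$, which is exactly what the minimal trees allow.

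\emph{Step 2 (triviality when $a,b$ are separated).} If $a$ and $b$ are separated, Proposition~\ref{sep} gives $[T,a]=[T\setminus b,a]$ and $[T,b]=[T\setminus a,b]$. Then $Q_{T,a,b}$ is a tautology, i.e.\ trivial. So assume $N(a)$ and $N(b)$ are equal or adjacent.

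\emph{Step 3 (case analysis).} If $N(a)=N(b)$, then $\{a,b\}$ is a cherry and $T\setminus a \cong T\setminus b$ via $a\leftrightarrow b$. The equation then holds trivially by symmetry. So $N(a)$ and $N(b)$ are adjacent, say $N(a)$ is the parent of $N(b)$. After Step 1, the remaining leaves hang off nodes within distance one of $N(a)$ or $N(b)$. These are:
\begin{itemize}
\item the sibling subtree of $N(b)$ under $N(a)$;
\item the parent of $N(a)$, if there is one;
\item the other child of $N(b)$.
\end{itemize}
Each such subtree can be pruned to a single leaf or to one cherry node, again by deleting leaves separated from both $a$ and $b$. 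A pruned subtree hanging off a node adjacent to $N(a)$ on the sibling or root side contributes only a leaf. For $a$ this is because $E$-type irreducibility forces that side to carry a single leaf. The child subtree of $N(b)$ can be a leaf or a cherry. I also need to argue that a parent of $N(a)$ can be removed, e.g.\ by re-rooting: leaves above $N(a)$ are separated from $b$, and from $a$ unless adjacent. The surviving shapes, with $a$ at depth one or two and $b$ one level deeper, are exactly the four pictured trees (all $n$-colorings). This bookkeeping is the main obstacle. I would handle it by listing the possible positions (root side, sibling side, child side) and checking each with Proposition~\ref{easy}.

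A cleaner route would be to reduce $T\setminus b$ and $T\setminus a$ to irreducible forms among $A,\dots,E$ (Proposition~\ref{irreducible}). One then observes that $T$ is determined by gluing two irreducible neighbourhoods that overlap, and the pictured trees are exactly the possible unions.
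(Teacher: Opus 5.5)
Your proposal follows the same route as the paper. You dispose of the separated and cherry cases ($N(a)=N(b)$) as trivial, strip leaves separated from both $a$ and $b$, and check that what survives, with $N(a)$ the parent of $N(b)$, is one of the four pictured shapes; your Step 3 is, if anything, more explicit than the paper's ``casework on $|L(T)|$''.

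Step 1 needs one repair. Suppressing $N(b)$ shortens paths through it, so ``deleting a leaf only merges nodes'' does not justify that separation persists, and the claim fails when $N(a)=N(b)$. For example, take a cherry $\{a,b\}$ whose sibling is a cherry containing $c$: in $T\setminus b$ the leaf $c$ is no longer separated from $a$. So treat the cherry case before Step 1. Once $N(a)\neq N(b)$, any path from $N(a)$ to $N(c)$ through $N(b)$ has length at least $3$, so $c$ stays separated from $a$ in $T\setminus b$, and symmetrically from $b$ in $T\setminus a$.

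Also, the sibling of $N(b)$ under $N(a)$ is just $a$. The side subtree you actually need to prune to a single leaf is the sibling of $N(a)$ under its parent.
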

\begin{proof}
    If $a$ and $b$ are separated in $T$ then $([T \setminus a, b], [T \setminus b,a]) = ([T,b],[T,a])$, so $Q_{T,a,b}$ trivially holds. If both $[T,a]$ and $[T,b]$ are not irreducible, then there is $c \in L(T) \setminus \{a,b\}$ that is separated from both $a$ and $b$, so removing $c$ does not change $Q$. Further, if $N(a)=N(b)$, then then $([T,a],[T \setminus a,b])=([T,b], [T \setminus b,a])$, so $Q$ again trivially holds. By doing casework on $|L(T)|$ (which is at most 5), we obtain that the $T$ must be among the $n$-colorings of the four families of marked trees above. 
\end{proof}
\begin{cor} \label{quadratic}
    For (not necessarily distinct) $i,j,k,l$ ranging over $[n]$, the quadratic relations in $\Theta(\partial \mathfrak{T}_3(n))$ are generated by the following families:
    $$\begin{array}{ll}
         C(i,j)B(j) = D(i,j)B(i), & C(i,j)C(j,k) = E(i,j,k)C(i,k), \\
         D(j,k)D(i,j) = E(i,j,k)D(i,k) & E(i,j,k)E(i,k,l) = E(j,k,l)E(i,j,l).
    \end{array}$$
\end{cor}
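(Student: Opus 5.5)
By the preceding proposition, every nontrivial quadratic relation $Q_{T,a,b}$ is a minimal one. So the plan is to go through the four families of doubly-marked trees shown above, with an arbitrary $n$-coloring. For each, I will compute the four classes $[T,a]$, $[T\setminus a,b]$, $[T,b]$, $[T\setminus b,a]$ as generators from Proposition \ref{irreducible}. Each computation deletes leaves separated from the marked point, using Proposition \ref{sep} and the criterion of Proposition \ref{easy}, until an irreducible marked tree remains. That tree is then read off as $A$, $B$, $C$, $D$ or $E$, with its colors listed by increasing distance from the root.

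For the first tree, let the root have color $i$ and the child node have color $j$, with $a$ on the root and $b$ on the child. Then:
\begin{itemize}
\item $[T,a]$ is the marked three-leaf tree $C(i,j)$ read from the root leaf.
\item $[T\setminus a,b]=B(j)$, since removing $a$ contracts the root and leaves a cherry of color $j$.
\item $[T,b]=D(i,j)$.
\item $[T\setminus b,a]=B(i)$.
\end{itemize}
This gives $C(i,j)B(j)=D(i,j)B(i)$. I must match orientations carefully against the pictures defining $C$ and $D$: in the definitions the $C$ marked leaf is the one hanging from the lower node. Whichever way the labels fall, the relation has this shape.

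The second tree has nodes colored $i,j,k$ down a spine, with $a$ on the root node and $b$ on the middle node. Here $[T,a]$ is irreducible only if the leaves on $k$ stay at distance one from $N(a)$. They sit two nodes away, so they are separated, and the analysis reduces to a smaller tree with $a$'s class appearing as $C(i,j)$-type. The remaining terms are $[T\setminus a,b]=C(j,k)$, $[T,b]=E(i,j,k)$ and $[T\setminus b,a]=C(i,k)$, giving $C(i,j)C(j,k)=E(i,j,k)C(i,k)$.

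The third tree has $a$ and $b$ on the middle and bottom nodes. Analogously it yields $D(j,k)D(i,j)=E(i,j,k)D(i,k)$.

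The fourth tree has spine colors $i,j,k,l$ with the marks on the middle two nodes. After deleting separated leaves, both sides become products of $E$'s: $E(i,j,k)E(i,k,l)=E(j,k,l)E(i,j,l)$. The key point is that contracting the node $j$ or $k$ once its marked leaf is removed fuses the adjacent spine edges.

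The main obstacle is bookkeeping rather than ideas. For each term I must determine exactly which leaves are separated, noting that deleting a leaf contracts its node, which can change adjacencies and hence irreducibility. I must also keep the color order consistent with the definitions of $C$, $D$ and $E$. I will handle this by always writing the reduced tree explicitly as a path of colored nodes from the root and comparing it directly with the defining pictures. Since the minimal equations exhaust all nontrivial quadratic relations, these four families generate them all.
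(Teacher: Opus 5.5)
Your proposal is correct and is essentially the paper's argument. The corollary follows from reducing every nontrivial $Q_{T,a,b}$ to one of the four minimal doubly-marked trees and reading off the four classes in each, and your identifications are all right for arbitrary, not necessarily distinct, colorings (e.g.\ $[T,a]=C(i,j)$ after deleting the separated leaves on the $k$-node, and $[T,b]=E(i,j,k)$). The orientation you settled on is the correct one, since the marked leaf of $C(i,j)$ hangs from the root. So the hedge ``whichever way the labels fall'' is unnecessary, and taken literally it is false: swapping $C$ and $D$ would give a different relation.
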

We now combine the families of linear equations in Corollary \ref{linear} and quadratic equations in Corollary \ref{quadratic} to reduce the number of quadratic generators in $\partial \mathfrak{T}_3(n)$.
\begin{prop} \label{system}
    The ring $\Theta(\partial \mathfrak{T}_3(n))$ is generated by $B(i)$, $C(i,j)$ subject to the following quadratic constraints for all pairwise distinct $i,j,k \in [n]$: \begin{align*} B(i)(B(i)-S(i))&=0 \tag{4.8a}\label{systema} \\ C(i,i)(C(i,i)-S(i))&=0 \tag{4.8b}\label{systemb}\\ C(i,j)C(j,i) \tag{4.8c}\label{systemc} &= 0 \\ C(i,j)(C(i,j)-S(i))&=0 \tag{4.8d}\label{systemd} \\  C(i,j)B(j) &= (B(j) - C(j,i))B(i) \tag{4.8e}\label{systeme} \\    C(i,j)C(j,k)&=(C(j,k)-C(j,i))C(i,k) \tag{4.8f}\label{systemf} \end{align*} where \begin{align*}S(i) &= B(i)+C(i,i)-D(i,i) \\&= B(i) + C(i,i) - \frac{1}{2} \left(B(i)-C(i,i)+\sum_{p \neq i} (C(p,i)-B(p))-1\right). \tag{4.8g}\label{systemg}\end{align*} 
\end{prop}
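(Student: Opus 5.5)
We start from the presentation given by Proposition \ref{irreducible}, Corollary \ref{linear} and Corollary \ref{quadratic}: $\Theta(\partial \mathfrak{T}_3(n))$ is the commutative ring on $A$, $B(i)$, $C(i,j)$, $D(i,j)$, $E(i,j,k)$ modulo the listed linear and quadratic families. The plan is to use the linear relations to eliminate $A$, $D$ and $E$ in favour of $B$ and $C$. Then we substitute these expressions into the quadratic families and check that what survives is exactly \eqref{systema}--\eqref{systemf}.

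\textbf{Elimination.} The first relation gives $A = 1+\sum_i B(i)$. For $j\neq i$, $B(i)=C(i,j)+D(j,i)$ gives $D(j,i)=B(i)-C(i,j)$, so every off-diagonal $D$ is expressed. For $D(i,i)$, substitute these into $B(i)=1+C(i,i)+D(i,i)+\sum_p D(i,p)$. This yields $2D(i,i)=B(i)-C(i,i)-1-\sum_{p\neq i}(B(p)-C(p,i))$, which is the formula in \eqref{systemg}; here we invert $2$, consistent with the answer $\ZZ[\tfrac12]$. Next, $E(i,j,i)=0$ for $i \neq j$. The relations $C(i,i)=C(i,j)+E(j,i,i)$, $D(i,i)=D(j,i)+E(i,i,j)$ and $C(i,j)=C(i,k)+E(k,i,j)$ express $E(j,i,i)$, $E(i,i,j)$ and $E(k,i,j)$ with $k\neq i$ as differences. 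The relation $0=1+\sum_p D(i,p)+E(i,i,i)$ expresses $E(i,i,i)$; comparing it with the $B(i)$ relation gives $E(i,i,i)=B(i)-C(i,i)-1-\ldots$, and in any case $E(i,i,i)=-(1+\sum_p D(i,p))$. Some of these eliminations can be done in two ways: $E(k,i,j)$ via different routes, $E(i,i,j)$ via $D$ versus via the $C$-relation with $k=i$, and $E(j,i,i)$ via $C(i,i)$. Each such double determination is a consequence relation, which we must check is either automatically satisfied or implied by the quadratic list.

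\textbf{Substitution into the quadratics.} Go through Corollary \ref{quadratic} case by case on the coincidence pattern of indices.
\begin{itemize}
\item $C(i,j)B(j)=D(i,j)B(i)$ with $i\neq j$ becomes \eqref{systeme} after writing $D(i,j)=B(j)-C(j,i)$. With $i=j$ it becomes $C(i,i)B(i)=D(i,i)B(i)$, i.e.\ $B(i)(B(i)-S(i))=0$, which is \eqref{systema}.
\item $C(i,j)C(j,k)=E(i,j,k)C(i,k)$ with $i,j,k$ distinct gives \eqref{systemf}, using $E(i,j,k)=C(j,k)-C(j,i)$.
\item The same family with $k=i$, $j\neq i$ uses $E(i,j,i)=0$ and gives \eqref{systemc}.
\item The cases $j=k$ or $i=j$ involve $E(i,j,j)$, $E(i,i,k)$, $E(i,i,i)$. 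After substituting, they should produce \eqref{systemb} and \eqref{systemd}, which have the form $C(\cdot)(C(\cdot)-S(i))=0$.
\end{itemize}
The $D\cdot D$ family and the $E\cdot E$ family must then be shown to follow from \eqref{systema}--\eqref{systemf} together with the linear relations. The plan is to rewrite each $D$ and $E$ as a difference of $B$'s and $C$'s and expand. Each resulting identity should be a sum of instances of \eqref{systeme}, \eqref{systemf} and \eqref{systemc}, possibly with indices permuted, plus the consistency relations from the elimination step.

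\textbf{Main obstacle.} The hard part is the bookkeeping in this last step. The four-index family $E(i,j,k)E(i,k,l)=E(j,k,l)E(i,j,l)$ splits into many coincidence patterns among $i,j,k,l$, and several patterns involve $E(i,i,i)$, which carries the nonlinear-looking $S(i)$. The first attempt is to treat $n=2$ completely by hand, since all patterns already appear with two or three colors. Then we argue that every pattern with more distinct indices reduces, via \eqref{systemf}, to a telescoping identity. The double determinations of $E$ must be handled alongside this: we expect them to be equivalent to $E(k,i,j)$ being independent of the route, i.e.\ $C(i,j)-C(i,k)$ consistent, which holds identically. If they are not automatic, they would have to be added to the list, and we would check that they coincide with instances of \eqref{systemd} or \eqref{systemb}.
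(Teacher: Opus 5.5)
Your strategy is the same as the paper's. You solve the families of Corollary~\ref{linear} for $A$, $D$, $E$, substitute into Corollary~\ref{quadratic}, and read off \eqref{systema}--\eqref{systemf} from the first two quadratic families. Then you check, by cases on coinciding indices, that the $D\cdot D$ and $E\cdot E$ families are redundant; the paper writes out only the case $i=k\neq j$.

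\textbf{The step that would go wrong: ``double determinations''.} There are none to reconcile.
\begin{itemize}
\item Every family in Corollary~\ref{linear} is asserted only for pairwise distinct indices.
\item Each eliminated symbol is the unknown in exactly one family. The symbols are $A$; $D(j,i)$ with $j\neq i$; $D(i,i)$; $E(i,i,i)$; $E(j,i,i)$; $E(i,i,j)$; $E(i,j,i)$; $E(k,i,j)$.
\item So the linear relations impose no condition on the $B$'s and $C$'s beyond inverting $2$.
\end{itemize}
In particular, your ``$C$-relation with $k=i$'', $C(i,j)=C(i,i)+E(i,i,j)$, does not hold in $\Theta(\partial\mathfrak{T}_3(n))$. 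Combined with $E(i,i,j)=D(i,i)-D(j,i)$, it forces $E(i,i,i)=C(i,i)+D(i,i)-B(i)=2C(i,i)-S(i)=0$. This fails for actual measures: $\mu_{10}$ in Appendix~\ref{list} has $C(1,1)=0$ and $S(1)=\tfrac12$. For $k=i$ the underlying amalgamation has extra terms, because the two new leaves may be identified or become siblings. It gives $C(i,j)=1+C(i,i)+E(i,i,j)+\sum_p D(i,p)$, which already follows from the families with left-hand sides $B(i)$ and $D(i,i)$. Your fallback of adding non-automatic consistency conditions to the list would therefore produce a wrong presentation, so drop that step.

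\textbf{Labelling correction.} In $C(i,j)C(j,k)=E(i,j,k)C(i,k)$:
\begin{itemize}
\item the case $j=k\neq i$ gives \eqref{systemc} again, since $E(i,j,j)=C(j,j)-C(j,i)$;
\item \eqref{systemd} comes from $i=j\neq k$, via $E(i,i,k)=C(i,k)+C(i,i)-S(i)$;
\item \eqref{systemb} comes from $i=j=k$.
\end{itemize}

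\textbf{The redundancy check.} You leave this as a plan, and it needs \eqref{systema}, \eqref{systemb}, \eqref{systemd}, not only \eqref{systemc}, \eqref{systeme}, \eqref{systemf}. Write $e_{ab}$ for the left side minus the right side of \eqref{systeme} at $(i,j)=(a,b)$, and $f_{abc}$ for the same for \eqref{systemf} at $(i,j,k)=(a,b,c)$.

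For $D(j,k)D(i,j)=E(i,j,k)D(i,k)$, the difference of the two sides is:
\begin{itemize}
\item $f_{kji}-e_{jk}$ for distinct indices;
\item $B(j)(B(j)-S(j))-C(j,i)(C(j,i)-S(j))$ for $j=k\neq i$;
\item $B(i)(B(i)-S(i))-C(i,i)(C(i,i)-S(i))$ for $i=j=k$.
\end{itemize}

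For $E(i,j,k)E(i,k,l)=E(j,k,l)E(i,j,l)$, the difference is:
\begin{itemize}
\item $f_{jkl}-f_{jki}$ for four distinct indices, a pattern absent for $n\le 3$, contrary to your remark;
\item $C(i,i)(C(i,i)-S(i))-C(i,l)(C(i,l)-S(i))$ for $i=j=k\neq l$.
\end{itemize}

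Every other pattern of either family falls into one of these types:
\begin{itemize}
\item trivial: both sides contain a factor $E(a,b,a)=0$, or the two sides coincide;
\item $\pm$ one instance of \eqref{systeme} or \eqref{systemf} plus one instance of \eqref{systemc};
\item a difference $X(X-S(a))-Y(Y-S(a))$ with $X,Y\in\{B(a),C(a,b)\}$, as in the $j=k$ patterns.
\end{itemize}
With these corrections your argument reaches the same level of completeness as the paper's.
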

\begin{proof}
    It is easy to check that the above families of equations are true by expressing the relations in Corollary \ref{quadratic} in terms of $B(i)$ and $C(i,j)$ using the relations in Corollary \ref{linear}. Meanwhile, the proposition essentially claims that the two families in the bottom row of Corollary \ref{quadratic}, which we denote $Q_3$ and $Q_4$ from left to right, are redundant, which can be shown by taking cases on which of $i,j,k,l$ are equal. 
    
    We illustrate the case $i=k$ and $i \neq j$. The right hand side of $Q_3$ vanishes since $E(i,j,i)=0$, while $D(i,j)D(j,i) = (B(j)-C(j,i))(B(i)-C(i,j))$ also vanishes due to \ref{systemc} and \ref{systeme}. The left-hand side of $Q_4$ vanishes for any $l$, and so does the right-hand side for $l=i$ and $l=j$. If $l \not\in \{i,j\}$, then by the bottom right family in Corollary \ref{linear} we still have $E(j,i,l)E(i,j,l)=(C(i,l)-C(i,j))(C(j,l)-C(j,i))$, which is zero by \ref{systemc} and \ref{systemf}.
\end{proof}
Using this simplified presentation, we now classify all measures $\mu: \Theta(\partial \mathfrak{T}_3(n)) \to \CC$. For a marked structure $X$ (along with some marked point), write $\mu(X) = \mathbf{X}$.
\begin{prop} \label{nonzero}
  For any measure $\mu$ and each $n \geq 1$, we have $\mathbf{S}(i) \neq 0$ for all $i \in [n]$.   
\end{prop}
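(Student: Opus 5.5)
We argue by contradiction: fix a measure $\mu$ and an index $i \in [n]$, and suppose $\mathbf{S}(i)=0$. The idea is that the quadratic constraints of Proposition \ref{system} that involve $S(i)$ then force every generator adjacent to color $i$ to vanish. The linear relation defining $B(i)$ in Corollary \ref{linear}, which has a constant term $1$, then cannot be satisfied.

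\emph{Step 1: collapse of the $i$-generators.} Substituting $\mathbf{S}(i)=0$ into \eqref{systema}, \eqref{systemb} and \eqref{systemd} gives $\mathbf{B}(i)^2=0$, $\mathbf{C}(i,i)^2=0$ and $\mathbf{C}(i,j)^2=0$ for every $j\neq i$. Since $\CC$ is a field, $\mathbf{B}(i)=\mathbf{C}(i,i)=\mathbf{C}(i,j)=0$. From $S(i)=B(i)+C(i,i)-D(i,i)$ we also get $\mathbf{D}(i,i)=0$. The relation $B(i)=C(i,j)+D(j,i)$ of Corollary \ref{linear} then gives $\mathbf{D}(j,i)=0$ for all $j\neq i$.

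\emph{Step 2: the linear contradiction.} The relation $B(i)=1+C(i,i)+D(i,i)+\sum_{p}D(i,p)$ now reads $0=1+\sum_{p\neq i}\mathbf{D}(i,p)$. For $n=1$ the sum is empty and this is already $0=1$, so the proposition holds. For $n\geq 2$ it remains to show $\mathbf{D}(i,p)=0$ for each $p\neq i$. Equivalently, using $B(p)=C(p,i)+D(i,p)$, it suffices to show $\mathbf{B}(p)=\mathbf{C}(p,i)$.

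As a consistency check, \eqref{systemg} with $\mathbf{S}(i)=0$ gives $\sum_{p\neq i}(\mathbf{C}(p,i)-\mathbf{B}(p))=1$. This is the same equation as above, so the linear data alone will not finish the argument, and quadratic input is needed.

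\emph{Step 3 (main obstacle): $\mathbf{D}(i,p)=0$ for $p\neq i$.} I would use the quadratic families of Corollary \ref{quadratic} with $i$ placed in the first slot, since those see $D(i,p)$ directly. First, $D(j,k)D(i,j)=E(i,j,k)D(i,k)$ with $j=k=p$ gives
\[
\mathbf{D}(i,p)\bigl(\mathbf{D}(p,p)-\mathbf{E}(i,p,p)\bigr)=0 .
\]
Next, the linear relations $D(p,p)=D(i,p)+E(p,p,i)$ and $C(p,p)=C(p,i)+E(i,p,p)$ rewrite the bracket as $\mathbf{D}(i,p)+\mathbf{E}(p,p,i)-\mathbf{C}(p,p)+\mathbf{C}(p,i)$. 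Finally, the relation $D(i,i)=D(p,i)+E(i,i,p)$ gives $\mathbf{E}(i,i,p)=0$.

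The plan is to combine these facts with the relations for index $p$, namely \eqref{systemd} in the form $\mathbf{C}(p,i)(\mathbf{C}(p,i)-\mathbf{S}(p))=0$ and \eqref{systeme} with the pair $(p,i)$. The goal is either to show that the bracket is a unit, or to reach a direct contradiction in the case where it vanishes.

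If this stalls, I would switch to a cleaner route: apply $Q_{T,a,b}$ to a minimal double-marked tree whose root has color $i$ and which has one leaf hanging off a child of color $p$. This is chosen so that one side of $[T,a][T\setminus a,b]=[T,b][T\setminus b,a]$ is a multiple of $\mathbf{D}(i,p)^2$ and the other is a multiple of $\mathbf{B}(i)=0$. That would give $\mathbf{D}(i,p)=0$ directly.

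Once $\mathbf{D}(i,p)=0$ for all $p\neq i$, Step 2 gives $0=1$, a contradiction. Hence $\mathbf{S}(i)\neq 0$.
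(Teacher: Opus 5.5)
Your Steps 1 and 2 are correct, and they settle $n=1$ more directly than the paper does (the paper enumerates the four solutions). The gap is Step 3: it carries the entire content of the proposition, and neither of the tools you propose can establish it.

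\emph{The bracket relation gives nothing new.} Rewrite $\mathbf{D}(i,p)\bigl(\mathbf{D}(p,p)-\mathbf{E}(i,p,p)\bigr)=0$ using Corollary \ref{linear} and \eqref{systemg}. It becomes $(\mathbf{B}(p)-\mathbf{C}(p,i))(\mathbf{B}(p)+\mathbf{C}(p,i)-\mathbf{S}(p))=0$, which is just \eqref{systema} minus \eqref{systemd} for color $p$. It is satisfied by $(\mathbf{B}(p),\mathbf{C}(p,i))=(\mathbf{S}(p),0)$ and by $(0,\mathbf{S}(p))$. In both cases $\mathbf{D}(i,p)=\pm\mathbf{S}(p)\neq 0$ while the bracket vanishes, so the bracket is never a unit when $\mathbf{D}(i,p)\neq 0$. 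Also, \eqref{systeme} for the pair $\{i,p\}$ reduces to $0=0$ after Step 1.

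\emph{The fallback tree does not exist.} Every nontrivial $Q_{T,a,b}$ reduces to one of the four families in Corollary \ref{quadratic}. None of them contains $D(i,p)^2$ with $i\neq p$: in $D(j,k)D(l,j)=E(l,j,k)D(l,k)$ the left side is $D(i,p)^2$ only if $(j,k)=(l,j)=(i,p)$, which forces $i=p$. Moreover, every family member containing $B(i)$ becomes $0=0$ after Step 1. In fact, for $n=2$ your own Step 2 equation says $\mathbf{D}(i,p)=-1$. So ``$\mathbf{D}(i,p)=0$'' is not a local lemma you can prove on the way; it is equivalent to the contradiction itself.

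\emph{Where the contradiction actually comes from.} It needs the exact formula \eqref{systemg} for the \emph{other} colors, which your argument never uses. Take $n=2$.
\begin{itemize}
\item If $(\mathbf{B}(p),\mathbf{C}(p,i))=(0,\mathbf{S}(p))$, then $\mathbf{S}(p)=1$. Here \eqref{systemg} reads $2\mathbf{S}(p)=\mathbf{B}(p)+3\mathbf{C}(p,p)+1$, forcing $\mathbf{C}(p,p)=\tfrac13\notin\{0,\mathbf{S}(p)\}$.
\item If $(\mathbf{B}(p),\mathbf{C}(p,i))=(\mathbf{S}(p),0)$, then $\mathbf{S}(p)=-1$ and $\mathbf{C}(p,p)=-\tfrac23\notin\{0,\mathbf{S}(p)\}$.
\item If $\mathbf{B}(p)=\mathbf{C}(p,i)$, then $\mathbf{D}(i,p)=0\neq-1$.
\end{itemize}

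For general $n$ these constraints are coupled across all colors, and the paper handles this globally:
\begin{itemize}
\item It inducts on $n$. Deleting color $i$ leaves a solution of the $(n-1)$-color system, because $\mathbf{B}(i)=\mathbf{C}(i,m)=0$. Hence $\mathbf{S}(m)\neq 0$ for all $m\neq i$.
\item It uses $\sum_{p\neq i}(\mathbf{C}(p,i)-\mathbf{B}(p))=1$ to eliminate the $\mathbf{B}$'s from \eqref{systemg}. This gives a homogeneous integer system $M\mathbf{s}=\mathbf{0}$ in $\mathbf{s}=(\mathbf{S}(m))_{m\neq i}$, with diagonal entries in $\{\pm1,\pm2\}$.
\item It shows, via \eqref{systemc}, \eqref{systemf} and $\mathbf{S}(m)\neq 0$, that the off-diagonal support of $M$ is acyclic. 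So $\det M=\pm 2^k$, hence $\mathbf{s}=\mathbf{0}$, contradicting the inductive hypothesis.
\end{itemize}
Your proposal lacks both the induction and this linear-algebra step. Without an argument of that global kind, Step 3 cannot be completed.
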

\begin{proof}
    We proceed by induction on $n$. Suppose that $2$ is a unit in $\Theta(\partial \mathfrak{T}_3(n))$. For $n=1$, by \ref{systemg}, we obtain
        $S(1) = \frac{1}{2}(B(1) + 3C(1,1)+1)$, so \ref{systema} and \ref{systemb} imply that \begin{align*}
    B(1)(3C(1,1)+1-B(1)) &= 0, \\ C(1,1)(B(1)+C(1,1)+1) &= 0.
\end{align*} Since $\mu$ is a ring homomorphism and $\CC$ is an integral domain, we split cases as follows. If $\mathbf{B}(1) = 0$, then $(\mathbf{C}(1,1),\mathbf{S}(1)) \in \left\{\left(0,\frac{1}{2}\right),(-1,-1)\right\}$. If $\mathbf{B}(1)=3\mathbf{C}(1,1)+1$, then $(\mathbf{C}(1,1),\mathbf{S}(1)) \in \left\{(0,1), \left(-\frac{1}{2},-\frac{1}{2}\right)\right\}$. We check that all four of these solutions are valid and indeed witness $\mathbf{S}(1) \neq 0$.

Now, for the sake of contradiction, suppose $\mathbf{S}(n)=0$ for some fixed $n>1$. It follows that $\mathbf{B}(n) = \mathbf{C}(n,i) =\mathbf{D}(n,n)=0$ for all $i \in [n]$ and by \ref{systemg}, \begin{align} \label{e1} \sum_{p \neq n} (\mathbf{C}(p,n)-\mathbf{B}(p))=1.\end{align} This means that for all $m \in [n-1]$, \begin{align*}
    \mathbf{S}(m) &= \mathbf{B}(m) + \mathbf{C}(m,m) - \frac{1}{2}\left(\mathbf{B}(m)-\mathbf{C}(m,m)+\sum_{p \neq m}(\mathbf{C}(p,m)-\mathbf{B}(p))-1\right) \\ &= \frac{1}{2}\left(3\mathbf{C}(m,m)+\mathbf{B}(m)-\sum_{p \in [n-1]\setminus m} (\mathbf{C}(p,m)-\mathbf{B}(p))+1\right),
\end{align*} which combined with Equation \ref{e1} gives $$3\mathbf{C}(m,m)+\mathbf{C}(m,n)-2\mathbf{S}(m)+\sum_{p \in [n-1] \setminus m} (\mathbf{C}(p,n)-\mathbf{C}(p,m))=0.$$  We thus obtain the system in Proposition \ref{system} for $i,j,k \in [n-1]$ with the additional constraints of Equation \ref{e1} and the fact that $\mathbf{C}(m,n)(\mathbf{C}(m,n)-\mathbf{S}(m))=0$ as $m$ varies. Hence the inductive hypothesis applies, giving $\mathbf{S}(m) \neq 0$ for all $m \in [n-1]$. Letting $\mathbf{C} = (c_{ij})_{i,j=1}^{n-1} \in \RR^{(n-1) \times (n-1)}$ and $\mathbf{s} = (\mathbf{S}(i))_{i=1}^{n-1} \in \RR^{n-1}$ where \begin{align*}
c_{ij} &=
\begin{cases}
   3\delta_{\mathbf{C}(i,i),\mathbf{S}(i)} + \delta_{\mathbf{C}(i,n),\mathbf{S}(i)} - 2 & \text{if } i=j, \\[6pt]
   \delta_{\mathbf{C}(j,n),\mathbf{S}(j)} - \delta_{\mathbf{C}(j,i),\mathbf{S}(j)}       & \text{if } i\neq j,
\end{cases}
\end{align*}
we have \begin{align} \label{CS} \mathbf{Cs} = \mathbf{0} \end{align} where $\mathbf{0} \in \RR^{n-1}$ and the entries of $\mathbf{C}$ are subject to \ref{systemc}, \ref{systeme}, and \ref{systemf}. Before we proceed with the matrix reformulation of the system, consider the following lemma.  
\begin{lemma}
    Let $G$ be a finite directed acyclic graph (DAG). If $\mathbf{A}$ is its adjacency matrix, then there is a permutation matrix $\mathbf{P}$ such that $\mathbf{PAP^{-1}}$ is upper triangular. \label{perm}
\end{lemma}
\begin{proof}
    Take the topological ordering of the vertex set $V$ given by $v_1 \prec  \dots \prec v_n$. Let $\mathbf{A} = (a_{ij})_{i,j=1}^n$ and $\mathbf{P}$ be the permutation sending the standard order of $V$ to $(v_1, \dots ,v_n)$. For all pairs $(i,j)$ in $\mathbf{PAP^{-1}}$ such that $i < j$, there is either no edge between $v_i$ and $v_j$ or an edge from $v_i$ to $v_j$, both of which result in $a_{ij}=0$. It follows that all entries below the diagonal of $\mathbf{PAP^{-1}}$ are zero as desired. 
\end{proof}
Let $G_{\mathbf{C}}$ be the induced graph from $\mathbf{C}$, i.e., $\mathbf{C}$ is the adjacency matrix of $G_{\mathbf{C}}$. We claim that $G_{\mathbf{C}}$ is a DAG with self-loops adjoined. 

Observe that \begin{align*}
    c_{ij}c_{ji} &= (\delta_{\mathbf{C}(j,n),\mathbf{S}(j)} - \delta_{\mathbf{C}(j,i),\mathbf{S}(j)})(\delta_{\mathbf{C}(i,n),\mathbf{S}(j)} - \delta_{\mathbf{C}(i,j),\mathbf{S}(j)}) \\ &= \delta_{\mathbf{C}(j,n),\mathbf{S}(j)}(\delta_{\mathbf{C}(i,n),\mathbf{S}(i)}-\delta_{\mathbf{C}(i,j),\mathbf{S}(j)})-\delta_{\mathbf{C}(j,i),\mathbf{S}(j)}\delta_{\mathbf{C}(i,n),\mathbf{S}(i)} \\ &= 0, 
\end{align*} where the second equality follows from \ref{systemc} and the last equality from \ref{systemf}. This implies that $G_{\mathbf{C}}$ is a directed graph.

Now suppose for the sake of contradiction that there is a cycle $v_{a_1} \to v_{a_2} \to \cdots \to v_{a_l} \to v_{a_1}$ for $v_{a_1}, \dots, v_{a_l} \in G_{\mathbf{C}}$ where $l \geq 2$. This means that for all $i \in \{1, \dots, l\}$, we have $\delta_{\mathbf{C}(a_{i+1},n),\mathbf{S}(a_{i+1})} - \delta_{\mathbf{C}(a_{i+1},a_i),\mathbf{S}(a_{i+1})} \in \{-1,1\},$ or, equivalently, \begin{align} \label{dag}
    (\mathbf{C}(a_{i+1},n),\mathbf{C}(a_{i+1},a_i)) \in \{(\mathbf{S}(a_{i+1}),0), (0,\mathbf{S}(a_{i+1}))\}
\end{align} where indices are taken mod $l$.

First, suppose that $\mathbf{C}(a_2,n)=\mathbf{S}(a_2)$, so $\mathbf{C}(a_2,a_1)=0$. We repeatedly apply \ref{systemf}. Notice that $$\mathbf{C}(a_2,a_3)\mathbf{C}(a_3,n)=(\mathbf{C}(a_3,n)-\mathbf{C}(a_3,a_2))\mathbf{C}(a_2,n),$$ so if $\mathbf{C}(a_3,n)=0$, then $\mathbf{C}(a_3,a_2)\mathbf{C}(a_2,n)=\mathbf{C}(a_3,a_2)\mathbf{S}(a_2)=0$, which implies that $\mathbf{C}(a_3,a_2)=0$ by the inductive hypothesis. However, this means that $\mathbf{C}(a_3,n)=\mathbf{C}(a_3,a_2)=0$, contradicting \ref{dag}. Hence we must have $\mathbf{C}(a_3,n)=\mathbf{S}(a_3)$ and $\mathbf{C}(a_3,a_2)=0$. By induction, $(\mathbf{C}(a_{i+1},n),\mathbf{C}(a_{i+1},a_i))=(\mathbf{S}(a_{i+1}),0)$ for all $i$, and \begin{align*} 0 &= \mathbf{C}(a_2,a_1)\mathbf{C}(a_1,n) \\ &=(\mathbf{C}(a_1,n)-\mathbf{C}(a_1,a_2)\mathbf{C}(a_2,n)\\ &= (\mathbf{S}(a_1)-\mathbf{C}(a_1,a_2))\mathbf{C}(a_2,n), \end{align*} implying that $\mathbf{C}(a_1,a_2)=\mathbf{S}(a_1)$. However, observe that $$\mathbf{C}(a_1,a_2)\mathbf{C}(a_2,a_3) = (\mathbf{C}(a_2,a_3)-\mathbf{C}(a_2,a_1))\mathbf{C}(a_1,a_3),$$ which gives $\mathbf{C}(a_1,a_3)=\mathbf{C}(a_1,a_2)=\mathbf{S}(a_1)$. Analogously, we obtain $$\mathbf{C}(a_1,a_2)=\mathbf{C}(a_1,a_3) = \dots = \mathbf{C}(a_1,a_{l-1}) = \mathbf{S}(a_1),$$ yet \begin{align*} \mathbf{S}(a_1)\mathbf{S}(a_{l-1}) &= \mathbf{C}(a_1,a_{l-1})\mathbf{C}(a_{l-1},a_l) \\ &= (\mathbf{C}(a_{l-1},a_l)-\mathbf{C}(a_{l-1},a_1))\mathbf{C}(a_1,a_l)\\ &= 0.\end{align*} The left-hand side above is nonzero by the inductive hypothesis, yielding a contradiction.

If instead $\mathbf{C}(a_2,n)=0$ and $\mathbf{C}(a_2,a_1)=\mathbf{S}(a_2)$, then we have $\mathbf{C}(a_1,a_2)=0$, so \begin{align*} 0 &= \mathbf{C}(a_1,a_2)\mathbf{C}(a_2,n) \\ &= (\mathbf{C}(a_2,n)-\mathbf{C}(a_2,a_1))\mathbf{C}(a_1,n) \\ &= -\mathbf{S}(a_2)\mathbf{C}(a_1,n), \end{align*} implying that $\mathbf{C}(a_1,n)=0$. Similarly, $\mathbf{C}(a_{i+1},n)=0$ for all $i$, and \ref{dag} implies that $\mathbf{C}(a_{i+1},a_i)=\mathbf{S}(a_{i+1})$. Further, note that \begin{align*}
    \mathbf{S}(a_3)\mathbf{S}(a_2) &= \mathbf{C}(a_3,a_2)\mathbf{C}(a_2,a_1) \\ &= (\mathbf{C}(a_2,a_1)-\mathbf{C}(a_2,a_3))\mathbf{C}(a_3,a_1) \\ &= \mathbf{S}(a_2)\mathbf{C}(a_3,a_1),
\end{align*} which yields $\mathbf{C}(a_3,a_1)=\mathbf{S}(a_3)$. By an analogous process, we obtain $$\mathbf{C}(a_2,a_1)=\mathbf{C}(a_3,a_1)=\dots=\mathbf{C}(a_l,a_1)=\mathbf{S}(a_l),$$ so $\mathbf{C}(a_l,a_1)\mathbf{C}(a_1,a_l)=\mathbf{S}(a_l)\mathbf{S}(a_1)=0$, which contradicts the inductive hypothesis.

It thus follows that $G_{\mathbf{C}}$, ignoring self-loops, is a DAG. By Lemma \ref{perm}, there is a permutation matrix $\mathbf{P}$ such that $\mathbf{PCP^{-1}}$ is upper triangular. Since performing row-column permutations on $\mathbf{C}$ does not change its set of diagonal elements, we have \begin{align*}
    \det(\mathbf{C}) &= \det(\mathbf{PCP^{-1}}) \\ &= \prod_{i=1}^{n-1} c_{ii} \\ &= \prod_{i=1}^{n-1} (3\delta_{\mathbf{C}(i,i),\mathbf{S}(i)} + \delta_{\mathbf{C}(i,n),\mathbf{S}(i)} - 2) \\ &= \pm 2^k \neq 0,
\end{align*} for a suitable nonnegative integer $k$, implying that there are no non-trivial solutions to Equation $\ref{CS}$. This is the desired contradiction.
\end{proof}

The above proposition implies that the Kronecker deltas $\delta_{\mathbf{B}(i),\mathbf{S}(i)}$, $\delta_{\mathbf{C}(i,i),\mathbf{S}(i)}$, and $\delta_{\mathbf{C}(i,j),\mathbf{S}(i)}$ are zero if and only if $\mathbf{B}(i)$, $\mathbf{C}(i,i)$, and $\mathbf{C}(i,j)$ are zero, which motivates the following object.
\begin{defn} \label{nqr}
    An \emph{$n$-code} is a pair of functions $(\beta,\chi)$ where $\mathcal{\beta}: [n] \to \{0,1\}$ and $\chi : [n] \times [n] \to \{0,1\}$ satisfy the following properties for pairwise distinct $i,j,k \in [n]$:
    \begin{align}
    \tag{4.11a} \label{nqra} \chi(i,j)\chi(j,i)&= 0 \\
    \tag{4.11b} \label{nqrb} \beta(i)\beta(j)&=\beta(i)\chi(j,i)+\beta(j)\chi(i,j) \\
    \tag{4.11c} \label{nqrc} \chi(i,k)\chi(j,k) &= \chi(i,j)\chi(j,k) + \chi(j,i)\chi(i,k).
\end{align}
Note that \ref{nqra}, \ref{nqrb}, and \ref{nqrc} are reformulations of \ref{systemc}, \ref{systeme}, and \ref{systemf}, respectively, and there are no constraints involving $\chi(i,i)$.
\end{defn}
\begin{prop} \label{biject}
    There is a natural bijection between measures on $\partial \mathfrak{T}_3(n)$ and $n$-codes where $\delta_{\mathbf{B}(i),\mathbf{S}(i)}$, $\delta_{\mathbf{C}(i,i),\mathbf{S}(i)}$, and $\delta_{\mathbf{C}(i,j),\mathbf{S}(i)}$ correspond to $\beta(i)$, $\chi(i,i)$, and $\chi(i,j)$, respectively.
\end{prop}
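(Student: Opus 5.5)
The plan is to construct the map in both directions and check that they are mutually inverse. Start with a measure $\mu$. By Proposition \ref{system}, $\mu$ is determined by the values $\mathbf{B}(i)$ and $\mathbf{C}(i,j)$. Relations \ref{systema}, \ref{systemb} and \ref{systemd} force each of $\mathbf{B}(i)$, $\mathbf{C}(i,i)$, $\mathbf{C}(i,j)$ to lie in $\{0,\mathbf{S}(i)\}$. Proposition \ref{nonzero} gives $\mathbf{S}(i)\neq 0$, so these two options are distinct. Setting
\[
\beta(i)=\delta_{\mathbf{B}(i),\mathbf{S}(i)}, \qquad \chi(i,j)=\delta_{\mathbf{C}(i,j),\mathbf{S}(i)}
\]
therefore gives well-defined $\{0,1\}$-valued functions, with $\mathbf{B}(i)=\beta(i)\mathbf{S}(i)$ and $\mathbf{C}(i,j)=\chi(i,j)\mathbf{S}(i)$.

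To see that $(\beta,\chi)$ is an $n$-code, substitute these expressions into \ref{systemc}, \ref{systeme} and \ref{systemf} and divide by the nonzero products of the $\mathbf{S}$'s.
\begin{itemize}
\item \ref{systemc} becomes $\chi(i,j)\chi(j,i)\mathbf{S}(i)\mathbf{S}(j)=0$, which is \ref{nqra}.
\item \ref{systeme} becomes $\chi(i,j)\beta(j)=(\beta(j)-\chi(j,i))\beta(i)$. This is a $\{0,1\}$-identity, and one checks it is equivalent to \ref{nqrb} using \ref{nqra}.
\item \ref{systemf} becomes $\mathbf{S}(i)\mathbf{S}(j)\chi(i,j)\chi(j,k)=\mathbf{S}(j)\mathbf{S}(i)(\chi(j,k)-\chi(j,i))\chi(i,k)$. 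Rearranged, this gives \ref{nqrc}.
\end{itemize}
This defines a map from measures to $n$-codes.

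For injectivity, and for the inverse map, I would show that the code determines the numbers $\mathbf{S}(i)$ uniquely. Substituting into \ref{systemg} gives, for each $i$, a linear equation in the vector $(\mathbf{S}(p))_p$:
\[
2\mathbf{S}(i)=(\beta(i)+3\chi(i,i))\mathbf{S}(i)+1-\sum_{p\neq i}(\chi(p,i)-\beta(p))\mathbf{S}(p).
\]
This is a system $\mathbf{M}\mathbf{s}=\mathbf{1}$ with $\{0,\pm1\}$-type coefficients. I would rerun the argument of Proposition \ref{nonzero}: show that the off-diagonal support of $\mathbf{M}$, an edge $p\to i$ whenever $\chi(p,i)-\beta(p)\neq0$, is acyclic, using \ref{nqra}--\ref{nqrc}. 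Lemma \ref{perm} then makes $\mathbf{M}$ triangular after a permutation. Its diagonal entries are $2-\beta(i)-3\chi(i,i)\in\{2,1,-1,-2\}$, so $\det\mathbf{M}=\pm2^k$ is invertible over $\ZZ[\frac12]$. Hence $\mathbf{s}$ is uniquely determined, and all values of $\mu$ are recovered from $(\beta,\chi)$.

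For surjectivity, start from any $n$-code. Solve $\mathbf{M}\mathbf{s}=\mathbf{1}$ by the triangular structure, and then verify that no $\mathbf{S}(i)$ vanishes. The natural way to check this is by back-substitution along the topological order, since each step divides by an entry in $\{\pm1,\pm2\}$ and adds $1$. Then define $\mathbf{B}(i)=\beta(i)\mathbf{S}(i)$ and $\mathbf{C}(i,j)=\chi(i,j)\mathbf{S}(i)$. Relations \ref{systema}--\ref{systemd} hold automatically, and \ref{systeme}, \ref{systemf} reduce back to the code axioms. By Proposition \ref{system} this defines a ring homomorphism, hence a measure, and the two constructions are visibly inverse.

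I expect the main obstacle to be the acyclicity of the dependency graph for a general code, and with it the nonvanishing of the solved $\mathbf{S}(i)$. The cycle-chasing in Proposition \ref{nonzero} used nonvanishing of $\mathbf{S}$ as an inductive input. Here I would first try to redo that chase purely combinatorially in the $\{0,1\}$-world of \ref{nqra}--\ref{nqrc}, where the casework is finite for each cycle position. If that is awkward, I would fall back on induction on $n$: remove a sink color and mirror the reduction used in the proof of Proposition \ref{nonzero}.
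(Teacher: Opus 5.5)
Your architecture matches the paper's. You encode \ref{systemg} as a linear system $\mathbf{D}\mathbf{s}=\mathbf{1}$ whose coefficients are read off from the code, show that the off-diagonal support of $\mathbf{D}$ is acyclic, and use Lemma \ref{perm} to get $\det\mathbf{D}=\pm2^k$. You also rightly notice something the paper's proof does not address explicitly: in the code-to-measure direction one needs $\mathbf{S}(i)\neq0$, since otherwise the Kronecker deltas of the constructed measure need not return the original code.

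\textbf{The gap.} Your justification of $\mathbf{S}(i)\neq0$ does not work. Back-substitution gives $\mathbf{S}(i)=\bigl(1-\sum_{j\neq i}d_{ij}\mathbf{S}(j)\bigr)/d_{ii}$, and knowing $d_{ii}\in\{\pm1,\pm2\}$ says nothing about whether the numerator is $0$.

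\textbf{The fix.} Set $\mathbf{B}(i)=\beta(i)\mathbf{S}(i)$ and $\mathbf{C}(i,j)=\chi(i,j)\mathbf{S}(i)$. Then \ref{systema}, \ref{systemb}, \ref{systemd} hold because $\beta,\chi$ are $\{0,1\}$-valued. Relations \ref{systemc}, \ref{systeme}, \ref{systemf} are the code axioms multiplied by $\mathbf{S}(i)\mathbf{S}(j)$, and \ref{systemg} holds by construction. None of this uses $\mathbf{S}(i)\neq0$. So Proposition \ref{system} already gives a measure, and Proposition \ref{nonzero}, which holds for every measure, then yields $\mathbf{S}(i)\neq0$.

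A direct alternative also exists. Put $\mathbf{A}=1+\sum_p\beta(p)\mathbf{S}(p)$ and substitute; row $i$ becomes $(2-3\chi(i,i))\mathbf{S}(i)=\mathbf{A}-\sum_{p\neq i}\chi(p,i)\mathbf{S}(p)$. By \ref{nqrc}, the $\chi$-predecessors of each $i$ form a chain. By \ref{nqrb}, $B=\beta^{-1}(1)$ is a chain closed under predecessors. Propagating down these chains gives $\mathbf{S}(i)=\frac{\mathbf{A}}{2-3\chi(i,i)}\prod_q\lambda_q$, the product taken over predecessors $q$ of $i$, together with $\mathbf{A}\prod_{q\in B}\lambda_q=1$. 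Here $\lambda_q=\frac{1-3\chi(q,q)}{2-3\chi(q,q)}\in\{\frac12,2\}$, so every $\mathbf{S}(i)$ is a nonzero signed power of $2$.

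\textbf{Acyclicity.} The step you flag as the main obstacle is short in the $\{0,1\}$-world, and working there is the right call. The paper's cycle chase is phrased with values and divides by $\mathbf{S}(a_i)$, so for codes it is valid only after translating each step into a code identity.
\begin{itemize}
\item By \ref{nqra} and \ref{nqrc}, $\chi$ restricted to distinct pairs is a strict partial order: if $\chi(i,j)=\chi(j,k)=1$, then \ref{nqrc} forces $\chi(i,k)=1$.
\item By \ref{nqrb}, $\chi$ is total on $B$, and $\chi(p,b)=0$ whenever $p\notin B$ and $b\in B$.
\end{itemize}
Now take your edges $p\to i$, meaning $\chi(p,i)\neq\beta(p)$. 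No edge goes from $[n]\setminus B$ into $B$. Inside $B$, an edge $p\to i$ means $\chi(i,p)=1$. From $p\notin B$, it means $\chi(p,i)=1$. Hence any directed cycle lies entirely in $B$ or entirely in its complement, and would be a cycle of the strict order $\chi$ (traversed backward or forward), which is impossible. With these two points filled in, your argument is complete and agrees with the paper's.
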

\begin{proof}
    We equivalently show that each choice of $\delta_{B(i),S(i)}$,  $\delta_{\mathbf{C}(i,i),\mathbf{S}(i)}$, and $\delta_{\mathbf{C}(i,j),\mathbf{S}(i)}$ satisfying \ref{systemc}, \ref{systeme}, and \ref{systemf} yields a unique solution to the family of linear equations in \ref{systemg}. Notice that rewriting \ref{systemg} gives \begin{align*}
     2\mathbf{S}(i)-3\mathbf{C}(i,i)-\mathbf{B}(i)+\sum_{p \neq i}(\mathbf{C}(p,i)-\mathbf{B}(p)) = 1.
    \end{align*} Following the approach in Proposition \ref{nonzero}, let $\mathbf{D}=(d_{ij})_{i,j=1}^n \in \RR^{n \times n}$ and $\mathbf{s}=(\mathbf{S}(i))_{i=1}^{n} \in \RR^n$ where \begin{align*}
d_{ij} &=
\begin{cases}
   2-3\delta_{\mathbf{C}(i,i),\mathbf{S}(i)}-\delta_{\mathbf{B}(i),\mathbf{S}(i)} & \text{if } i=j, \\[6pt]
 \delta_{\mathbf{C}(j,i),\mathbf{S}(j)}-\delta_{\mathbf{B}(j),\mathbf{S}(j)}      & \text{if } i\neq j.
\end{cases}
\end{align*}
We have \begin{align} \label{ds}
    \mathbf{Ds} &= \begin{pmatrix}
d_{11} & d_{12} & \dotsc  & d_{1n}\\
d_{21} & d_{22} & \dotsc  & d_{2n}\\
\vdots  & \vdots  & \ddots  & \vdots \\
d_{n1} & d_{n2} & \dotsc  & d_{nn}
\end{pmatrix}\begin{pmatrix}
S( 1)\\
S( 2)\\
\vdots \\
S( n)
\end{pmatrix} =\begin{pmatrix}
1\\
1\\
\vdots \\
1
\end{pmatrix}
\end{align}
where the right-hand side is the $n \times 1$ all-ones matrix. It suffices to show that $\mathbf{D}$ is invertible, or $\det(\mathbf{D}) \neq 0$. In fact, we claim that $G_{\mathbf{D}}$, the induced graph from $\mathbf{D}$, is a DAG with self-loops adjoined and existing at every vertex. 

First, we have \begin{align*} 
    d_{ij}d_{ji} &= (\delta_{\mathbf{C}(j,i),\mathbf{S}(j)}-\delta_{\mathbf{B}(j),\mathbf{S}(j)})(\delta_{\mathbf{C}(i,j),\mathbf{S}(i)}-\delta_{\mathbf{B}(i),\mathbf{S}(i)}) \\ &= \delta_{\mathbf{B}(j),\mathbf{S}(j)}\delta_{\mathbf{B}(i),\mathbf{S}(i)}-\delta_{\mathbf{B}(j),\mathbf{S}(j)}\delta_{\mathbf{C}(i,j),\mathbf{S}(i)}- \delta_{\mathbf{B}(i),\mathbf{S}(i)}\delta_{\mathbf{C}(j,i),\mathbf{S}(j)} \\ &= 0
\end{align*} where the last equality follows from \ref{systeme}. Thus, $G_{\mathbf{D}}$ is a directed graph. 

We now claim that $G_{\mathbf{D}}$ has no directed cycle of length at least two. For the sake of contradiction, suppose there is a cycle $v_{a_1} \to v_{a_2} \to \dots \to v_{a_l} \to v_{a_1}$ for $v_{a_1}, \dots, v_{a_l} \in G_{\mathbf{D}}$ where $l \geq 2$. This means that for each $i \in \{1,\dots,l\}$, we have \begin{align} \label{choose}
(\mathbf{C}(a_{i+1},a_i),\mathbf{B}(a_{i+1})) \in \{(0,\mathbf{S}(a_{i+1})), (\mathbf{S}(a_{i+1}),0)\}
\end{align} where all indices are taken mod $l$. 

If $(\mathbf{C}(a_2,a_1),\mathbf{B}(a_2))=(0,\mathbf{S}(a_2))$, then by \ref{systeme}, we have \begin{align*}
    \mathbf{C}(a_1,a_2)\mathbf{B}(a_2) = (\mathbf{B}(a_2)-\mathbf{C}(a_2,a_1))\mathbf{B}(a_1) = \mathbf{B}(a_2)\mathbf{B}(a_1).  
\end{align*} Since $\mathbf{B}(a_2)=\mathbf{S}(a_2) \neq 0$, it follows that $\mathbf{C}(a_1,a_2)=\mathbf{B}(a_1)=\mathbf{S}(a_1)$. Similarly, $\mathbf{C}(a_i,a_{i+1})=\mathbf{S}(a_i)$ for all $i$. Applying \ref{systemf}, we have $$\mathbf{C}(a_1,a_2)\mathbf{C}(a_2,a_3) = (\mathbf{C}(a_2,a_3)-\mathbf{C}(a_2,a_1))\mathbf{C}(a_1,a_3) = \mathbf{C}(a_2,a_3)\mathbf{C}(a_1,a_3),$$ which implies that $\mathbf{C}(a_1,a_2)=\mathbf{C}(a_1,a_3)=\mathbf{S}(a_1)$. Analogously, $$\mathbf{C}(a_1,a_2) = \dots = \mathbf{C}(a_1,a_{l-1}) = \mathbf{S}(a_1).$$ However, \begin{align*}
    \mathbf{S}(a_1)\mathbf{S}(a_{l-1}) &= \mathbf{C}(a_1,a_{l-1})\mathbf{C}(a_{l-1},a_l) \\ &= (\mathbf{C}(a_{l-1},a_l)-\mathbf{C}(a_{l-1},a_1))\mathbf{C}(a_1,a_l) \\ &= 0, 
\end{align*} which is absurd. 

If instead $(\mathbf{C}(a_2,a_1),\mathbf{B}(a_2))=(\mathbf{S}(a_2),0)$, then $\mathbf{C}(a_1,a_2)=0$, and \ref{systeme} yields \begin{align*}
   \mathbf{S}(a_2)\mathbf{B}(a_1) &=  \mathbf{C}(a_2,a_1)\mathbf{B}(a_1) \\ &= (\mathbf{B}(a_1)-\mathbf{C}(a_1,a_2))\mathbf{B}(a_2) \\ &= 0,  
\end{align*} so $B(a_1)=0$. We thus obtain $\mathbf{B}(a_i)=0$ for all $i$, which by \ref{choose} implies that $\mathbf{C}(a_{i+1},a_i)=\mathbf{S}(a_{i+1})$ for all $i$. Now, observe that \begin{align*}
    \mathbf{S}(a_2)\mathbf{S}(a_1) &= \mathbf{C}(a_2,a_1)\mathbf{C}(a_1,a_l) \\ &= (\mathbf{C}(a_1,a_l)-\mathbf{C}(a_1,a_2))\mathbf{C}(a_2,a_l) \\ &= \mathbf{S}(a_1)\mathbf{C}(a_2,a_l)
\end{align*} so $\mathbf{C}(a_2,a_l)=\mathbf{S}(a_2)$. By induction, $\mathbf{C}(a_{l-2},a_l)=\mathbf{S}(a_{l-2})$, but \begin{align*}
    \mathbf{S}(a_{l-1})\mathbf{S}(a_{l-2}) &= \mathbf{C}(a_{l-1},a_{l-2})\mathbf{C}(a_{l-2},a_l) \\ &= (\mathbf{C}(a_{l-2},a_l)-\mathbf{C}(a_{l-2},a_{l-1}))\mathbf{C}(a_{l-1},a_l) \\ &= 0,
\end{align*} which is the desired contradiction, and the claim follows.

Since $G_{\mathbf{D}}$ is a DAG ignoring self loops, again by Lemma \ref{perm}, there is a permutation matrix $\mathbf{P}$ such that $\mathbf{PDP^{-1}}$ is upper triangular, so \begin{align*}
    \det(\mathbf{D}) &= \det(\mathbf{PDP^{-1}}) \\ &= \prod_{i=1}^n d_{ii} \\ &= \prod_{i=1}^n (2-3\delta_{C(i,i),S(i)} - \delta_{B(i),S(i)}) \\ &= \pm 2^k \neq 0
\end{align*} for some nonnegative integer $k$, giving the proposition.
\end{proof}
\begin{corollary} \label{image}
    All measures on $\partial \mathfrak{T}_3(n)$ are valued in $\ZZ \left[\frac 12 \right]$. 
\end{corollary}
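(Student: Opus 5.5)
The plan is to read off the values of a measure $\mu$ from the linear system constructed in the proof of Proposition \ref{biject}. Fix a measure $\mu$ on $\partial \mathfrak{T}_3(n)$. By Proposition \ref{biject}, the $n$-code data $\beta(i)=\delta_{\mathbf{B}(i),\mathbf{S}(i)}$, $\chi(i,j)=\delta_{\mathbf{C}(i,j),\mathbf{S}(i)}$ is well defined. By Proposition \ref{nonzero} and equations \ref{systema}, \ref{systemb}, \ref{systemd}, each of $\mathbf{B}(i)$, $\mathbf{C}(i,j)$ lies in $\{0,\mathbf{S}(i)\}$. Concretely, $\mathbf{B}(i)=\beta(i)\mathbf{S}(i)$ and $\mathbf{C}(i,j)=\chi(i,j)\mathbf{S}(i)$. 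It therefore suffices to show that every $\mathbf{S}(i)\in \ZZ\left[\frac12\right]$.

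For this I will use the system \eqref{ds}, $\mathbf{D}\mathbf{s}=\mathbf{1}$. Its coefficient matrix $\mathbf{D}$ has entries in $\{-1,0,1,2,-2,\dots\}$; more precisely they are integers determined by $(\beta,\chi)$. The proof of Proposition \ref{biject} shows $\det \mathbf{D}=\pm 2^k$. By Cramer's rule (equivalently, $\mathbf{D}^{-1}=\operatorname{adj}(\mathbf{D})/\det\mathbf{D}$ with $\operatorname{adj}(\mathbf{D})$ an integer matrix), $\mathbf{s}=\mathbf{D}^{-1}\mathbf{1}$ has entries in $\ZZ\left[\frac12\right]$. Hence all $\mathbf{B}(i)$ and $\mathbf{C}(i,j)$ lie in $\ZZ\left[\frac12\right]$.

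It remains to cover all generators and all embeddings. By Proposition \ref{system}, $\Theta(\partial \mathfrak{T}_3(n))$ is generated by the $B(i)$ and $C(i,j)$. Alternatively, using Proposition \ref{irreducible}, the other generators are expressed through Corollary \ref{linear} as integer combinations of the $B$'s, $C$'s and $1$, or via \ref{systemg} with a factor $\frac12$ for $D(i,i)$. For instance, $A=1+\sum B(i)$, $D(j,i)=B(i)-C(i,j)$, $E(k,i,j)=C(i,j)-C(i,k)$, and $D(i,i)$ is given by \ref{systemg}. Every embedding is a composite of one-point extensions, so by \ref{mb} its measure is a product of these values. Therefore $\mu$ takes values in $\ZZ\left[\frac12\right]$.

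The only subtle point is making sure that the relations of Proposition \ref{system} are derived over $\ZZ\left[\frac12\right]$ and not over a larger ring. This holds because \ref{systemg} only divides by $2$: the relation from Corollary \ref{linear}, $0=1+\sum_p D(i,p)+E(i,i,i)$, combined with $D(i,i)=D(j,i)+E(i,i,j)$ yields $2D(i,i)$ as an integer combination. Thus nothing beyond inverting $2$ is needed, and $\det\mathbf{D}$ being a power of two is exactly the key input.
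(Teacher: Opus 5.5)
Your argument is correct and is essentially the paper's proof. Both rest on $\mathbf{D}$ being an integer matrix with $\det\mathbf{D}=\pm 2^k$ (from the proof of Proposition \ref{biject}), so $\mathbf{s}=\mathbf{D}^{-1}\mathbf{1}\in\ZZ\left[\frac12\right]^n$, after which every generator's value is read off from the $\mathbf{S}(i)$ and the code.

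Your explicit treatment of $A$, $D$, $E$ through the integer relations of Corollary \ref{linear} is actually more careful than the paper's remark that every irreducible class has measure $0$ or $\mathbf{S}(i)$. That remark fails, for instance, for $\mu(D(i,i))=-\mathbf{S}(i)$ when $\beta(i)=\chi(i,i)=0$.

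The one slip is in your last paragraph. The factor $2D(i,i)$ comes from $B(i)=1+C(i,i)+D(i,i)+\sum_{p}D(i,p)$ together with $B(p)=C(p,i)+D(i,p)$, not from the two relations you cite. In any case $\mu(D(i,i))=\mathbf{B}(i)+\mathbf{C}(i,i)-\mathbf{S}(i)$ needs no division at all.
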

\begin{proof}
Since $\mathbf{D}$ has integer entries and determinant a power of two up to sign, the entries of $\mathbf{D}^{-1}$ lie in $\ZZ\left[\frac 12\right]$. Recall that each class of irreducible marked trees in $\Theta(\partial \mathfrak{T}_3(n))$ has measure either zero or equal to that of $S(i)$. The latter is equal to the sum of the entries in row $i$ of $\mathbf{D}^{-1}$, which is an element of $\ZZ\left[\frac 12\right]$.
\end{proof}
\begin{remark}
    In fact, applying a theoretical result due to Nekrasov and Snowden \cite[Theorem 3.4]{nekrasov2024upperboundsmeasuresdistal}, we can show that all measures on $\partial \mathfrak{T}_3(n)$ have images in $\CC$ that are either zero or a power of two up to a sign. The result states that if $m$ divides $|\mathrm{Aut}(X)|$ for all $X \in \mathfrak{F}$ for some Fra\"iss\'e class $\mathfrak{F}$, then values of measures on $\mathfrak{F}$ are rationals only witnessing prime divisors of $m$. Indeed, we later show that $2$ is the only prime dividing $|\mathrm{Aut}(T)|$ for all $T \in \partial \mathfrak{T}_3(n)$ (Proposition \ref{poweroftwo}).
\end{remark}
\begin{theorem} \label{maintheorem}
Given a directed rooted tree $T$ with edges labeled by $\{1,\dots,n\}$ and a distinguished vertex $v$ (which could be the root), there is a corresponding $\ZZ \left[\frac 12 \right]$-valued measure $\mu_{T,v}$ on $\partial \mathfrak{T}_3(n)$. As a result, we have an isomorphism of rings $\Theta(\partial \mathfrak{T}_3(n)) \cong \ZZ\left[\frac 12 \right]^{(2n+2)^n}$.
\end{theorem}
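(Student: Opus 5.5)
By Proposition \ref{biject}, measures on $\partial \mathfrak{T}_3(n)$ are in bijection with $n$-codes $(\beta,\chi)$. So the theorem reduces to two tasks: build a bijection between $n$-codes and directed rooted trees on edges labeled by $[n]$ with a distinguished vertex, and then count these trees.

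For the ring isomorphism, Corollary \ref{image} shows every measure is $\ZZ[\frac12]$-valued. Each $n$-code determines the values $\mathbf{S}(i)$ uniquely through the invertible system \eqref{ds}, with $\det \mathbf{D}=\pm 2^k$. Hence, after inverting $2$, the relations of Proposition \ref{system} should split $\Theta(\partial \mathfrak{T}_3(n))$ into a product of copies of $\ZZ[\frac12]$, one per $n$-code. The plan is as follows.

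\begin{enumerate}
\item Show that the idempotents $B(i)/S(i)$ and $C(i,j)/S(i)$ make sense in $\Theta(\partial \mathfrak{T}_3(n))[\frac12]$, i.e.\ that each $S(i)$ is invertible there. I would deduce this from Proposition \ref{nonzero} applied over every residue field, together with reducedness.
\item Conclude that $\Theta$ is a finite product of copies of $\ZZ[\frac12]$ indexed by codes.
\item Check that $2$ is a unit in $\Theta$. This follows from \ref{systemg}, where $D(i,i)$ is itself a generator, so the halving is forced by a relation.
\end{enumerate}

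Of these, the reducedness and flatness needed to pass from ``finitely many $\CC$-points, each defined over $\ZZ[\frac12]$'' to an honest ring isomorphism is the step I expect to be most delicate.

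For the combinatorial bijection, the plan is to read the code as a graph on the color set $[n]$ together with a special symbol $0$ for the root.
\begin{itemize}
\item The free data $\chi(i,i)\in\{0,1\}$ record an independent binary choice per color.
\item The relations \eqref{nqra}--\eqref{nqrc} on $\chi(i,j)$ for $i\ne j$ say that the relation $\chi(i,j)=1$ is antisymmetric and satisfies a ``tree-like transitivity'': if $i$ and $j$ both point to $k$, then exactly one of them points to the other. This is the characteristic property of the ancestor relation in a rooted forest.
\item The condition \eqref{nqrb} on $\beta$ likewise says that the colors $i$ with $\beta(i)=1$ form a chain in this order.
\end{itemize}
I would prove by induction on $n$ that solutions $(\chi|_{i\ne j},\beta)$ correspond to rooted forests on $[n]$ plus a distinguished chain datum. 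Removing a maximal color (a leaf in the ancestor order) and checking how many extensions exist gives the count.

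Packaging this, the vertices of the tree $T$ are its edges' endpoints, with edges labeled by colors. The ancestor relation between edges encodes $\chi(i,j)$, the choice of $\chi(i,i)$ encodes edge direction, and the distinguished vertex $v$ encodes $\beta$: $\beta(i)=1$ exactly when edge $i$ lies on the root-to-$v$ path.

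Finally, I would count these trees and check that there are $(2n+2)^n$ of them. There are $2^n$ direction choices times the number of rooted trees with $n$ labeled edges and a distinguished vertex, which should be $(n+1)^{n-1}\cdot(n+1)=(n+1)^n$ by Cayley's formula. This gives $2^n(n+1)^n=(2n+2)^n$.

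The main obstacle is verifying that the transitivity relation \eqref{nqrc} characterizes exactly forest ancestry, and matching $\beta$ with the distinguished vertex. I would attack this by small cases $n=2,3$ first, then the leaf-removal induction.
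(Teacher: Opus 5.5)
Your combinatorial core is the paper's own construction. The paper's map $\Psi$ sends a tree to exactly the code you describe: $\chi(i,j)=1$ iff edge $i$ lies on the root-to-$j$ path, $\chi(i,i)$ records orientation, and $\beta$ records the root-to-$v$ path. It then checks \eqref{nqra}--\eqref{nqrc} and inverts $\Psi$ by laying the $\beta$-edges down as a path from the root and inserting the remaining edges one at a time. Your forest-order/leaf-removal induction is an equivalent packaging. The extra root symbol $0$ makes the count cleaner: labelling each non-root vertex by the edge to its parent identifies rooted $[n]$-edge-labelled trees with vertex-labelled trees on $\{0,\dots,n\}$, so $(n+1)^{n-1}$ is literally Cayley. (The paper's intermediate count $(n+1)^{n-2}$ of unrooted trees is not even an integer at $n=1$.)

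However, your readings of the relations are too weak, and this affects the count.
\begin{itemize}
\item In \eqref{nqrb}, the case $\beta(i)=1$, $\beta(j)=0$ forces $\chi(j,i)=0$. So $\beta^{-1}(1)$ is an ancestor-closed chain, not merely a chain, and that is exactly why there are $n+1$ choices (the endpoint $v$). For instance, with $n=2$ and $\chi(1,2)=1$, $\beta=(0,1)$ is a chain but violates \eqref{nqrb}.
\item Likewise, \eqref{nqrc} with $\chi(i,k)=1$, $\chi(j,k)=0$ gives $\chi(j,i)=0$, i.e.\ ordinary transitivity. You need this in addition to ``two ancestors of $k$ are comparable'' to get a forest order.
\end{itemize}

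The genuine gap is your step 3. Relation \eqref{systemg} comes from $B(i)=1+C(i,i)+2D(i,i)+\sum_{p\neq i}D(i,p)$, which only expresses $2D(i,i)$ in terms of other generators. It does not make $2$ a unit, and with the presentation of Corollaries \ref{linear} and \ref{quadratic} taken at face value, $2$ is not one. Write $B,C,D,E$ for $B(1),C(1,1),D(1,1),E(1,1,1)$. For $n=1$ the relations include $B=1+C+2D$, $E=-1-D$, $D^2=ED$ and $C^2=EC$. Modulo $2$ these give $D=0$, $E=1$, $B=1+C$ and $C^2=C$, whence $\Theta/2\Theta\cong\mathbb{F}_2[C]/(C^2-C)\neq 0$.

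So what can actually be proved is $\Theta(\partial\mathfrak{T}_3(n))[\tfrac12]\cong\ZZ[\tfrac12]^{(2n+2)^n}$. This is also all the paper's argument supports: it counts measures and writes ``as a result'', and its Propositions \ref{nonzero} and \ref{system} already work with $2$ inverted.

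For that corrected statement, your steps 1--2 need neither reducedness nor flatness.
\begin{itemize}
\item $S(i)$ is a unit in $\Theta[\tfrac12]$ as soon as it is nonzero in every residue field. The determinant argument of Proposition \ref{nonzero} works over any field of characteristic $\neq 2$.
\item The idempotents $B(i)/S(i)$ and $C(i,j)/S(i)$ then split $\Theta[\tfrac12]$ into pieces indexed by $0/1$-assignments.
\item The non-code pieces vanish, because on them \eqref{systemc}, \eqref{systeme} and \eqref{systemf} reduce to $\pm S(i)S(j)=0$.
\item On a code piece, $\det\mathbf{D}=\pm 2^k$ forces $\mathbf{s}\in\ZZ[\tfrac12]^n$. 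So the piece is a quotient of $\ZZ[\tfrac12]$ that admits a ring map back to $\ZZ[\tfrac12]$ (the explicit solution), hence equals $\ZZ[\tfrac12]$.
\end{itemize}
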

\begin{proof}
    Let $\mathcal{Q}_n$ denote the set of $n$-codes and $\mathcal{T}_n$ denote the set of $[n]$-labeled directed rooted trees with a distinguished vertex. For a tree $T$, let $r(T)$ be its root and $v(T)$ be its distinguished vertex. We write $i$ for the edge labeled by $i$, and say that $i$ \emph{points towards} a vertex $u$ if the path from the tail of $i$ to $u$ contains $i$. Also, by the path from $u$ to $i$, we mean the path from $u$ to the endpoint further away from $u$ among the two endpoints of $i$. Consider the map $\Psi: \mathcal{T}_n \to \mathcal{Q}_n$ given by $\Psi(T)=(\beta_T,\chi_T)$ such that the following rules hold for all distinct $i,j \in [n]$: \begin{align*}
        \chi_T(i,i) &= \begin{cases} 1 & \text{if $i$ points towards $r(T)$} , \\[6pt]
            0 & \text{otherwise},
            \end{cases} \\
        \beta_T(i) &= \begin{cases}
            1 & \text{if $i$ lies on the simple undirected path from $r(T)$ to $v(T)$} , \\[6pt]
            0 & \text{otherwise},
        \end{cases} \\
        \chi_T(i,j) &= \begin{cases}
            1 & \text{if $i$ lies on the simple undirected path from $r(T)$ to $j$} , \\[6pt]
            0 & \text{otherwise}.
        \end{cases}
    \end{align*}
We first check that for any tree $T \in \mathcal{T}$, its image satisfies the properties in Definition \ref{nqr}. For a vertex $u$ and an edge $i=\{a,b\}$, define $d(u,i) = \min\{d(u,a), d(u,b)\}$. If $\chi_T(i,j)=1$, then $d(u,i) < d(u,j)$, forcing $\chi_T(j,i)=0$. Thus, $\chi_T(i,j)\chi_T(j,i)=0$ for all distinct $i,j$, satisfying \ref{nqra}. 

If $\chi_T(i,j)=\chi_T(j,i)=0$ for some distinct $i,j$, then it is impossible for $i,j$ to both lie on the path from $r(T)$ to $v(T)$, since that would cause one of them to lie on the path from $r(T)$ to the other. As a result, $\beta_T(i)\beta_T(j)=0$, which satisfies \ref{nqrb}. Further, if there is $k \in [n] \setminus \{i,j\}$ such that $\chi_T(i,k)=\chi_T(j,k)=1$, then the path from $r(T)$ to $k$ contains both $i$ and $j$, imposing an order between $i$ and $j$, which is a contradiction. Thus we also have $\chi_T(i,k)\chi_T(j,k)=0$ for all such $k$, satisfying $\ref{nqrc}$. 

If $(\chi_T(i,j),\chi_T(j,i))=(1,0)$, then if $\beta(j)=1$, then $i$ must also lie on the path from $r(T)$ to $v(T)$, so $\beta(i)=1$. On the other hand, $\beta(i)=0$ analogously implies that $\beta(j)=0$. Hence $\beta(j)(\beta(i)-1)=0$, so \ref{nqrb} still holds. Moreover, for any $k \in [n] \setminus \{i,j\}$, if $\chi_T(j,k)=1$, then $i$ must also lie on the path from $r(T)$ to $k$, so $\chi_T(i,k)=1$. If $\chi_T(i,k)=0$, then $j$ could not have lied on the path from $r(T)$ to $k$, so $\chi_T(j,k)=0$. It follows that $\chi_T(j,k)(\chi_T(i,k)-1)$, satisfying $\ref{nqrc}$.

The case $(\chi_T(i,j),\chi_T(j,i))=(0,1)$ is similar, and there are no constraints on $\chi_T(i,i)$ in Definition \ref{nqr}. Hence, we have exhausted all cases, and $\Psi$ is a valid map.

Now we show that $\Psi$ is a surjection. Suppose $(\beta, \chi)$ is an $n$-code. Define $$B = \{b \in [n]: \beta(b)=1\} = \{b_1, \dots, b_m\}.$$ By \ref{nqrb}, we have $\chi(b_i,b_j)+\chi(b_j,b_i)=1$ for all distinct $i,j$, so \ref{nqra} implies that $(\chi(b_i,b_j), \chi(b_j,b_i)) \in \{(0,1),(1,0)\}$. Further, \ref{nqrc} requires transitivity where $\chi(b_i,b_j)=\chi(b_j,b_k)=1$ implies that $\chi(b_i,b_k)=1$, so we suppose without loss of generality that $(\chi(b_{i+1},b_i), \chi(b_i,b_{i+1}))=(0,1)$ for all $i \in \{1, \dots, m-1\}$. This induces a natural ordering $b_1  \prec \dots \prec b_m$, so consider a tree $T \in \mathcal{T}$ where the subtree induced by edges in $B$ is a rooted path in which $d(r(T),b_i) < d(r(T),b_{i+1})$. Also, let the endpoint of $b_m$ that is further away from $r(T)$ be the distinguished vertex in $T$. The picture of $T|_B$  (the induced subgraph on vertices connected by edges in $B$) is the following:
\begin{center}
        \begin{tikzpicture}[scale=0.8, transform shape]
            \node (0) at (0,0) [circle, fill, inner sep=1.5pt] {};
            \node at (0,-0.5) {$r(T)$};
            \node (2) at (3/4,3/4) [circle, fill, inner sep = 1.5pt] {};
            \draw (0)--(2);
            \node at (2/8,5/8) {$b_1$};
            \draw (0)--(3/16,3/16);
            \draw (18/32,18/32) -- (2);
            \node (d4) at (33/32,33/32) {$\cdot$};
            \node (d5) at (18/16,18/16) {$\cdot$};
            \node (d6) at (39/32,39/32) {$\cdot$};
            \node (4) at (3/2,3/2) [circle, fill, inner sep = 1.5pt] {};
            \draw (2) -- (30/32,30/32);
            \draw (42/32,42/32) -- (4);
            \node (5) at (9/4,9/4) [circle, fill, inner sep = 1.5pt] {};
            \draw[thin] (5) circle[radius=4pt];
            \draw (4) -- (5);
            \node at (13/8.5,16/7.5) {$b_m$};
            \node at (23/8,9/4) {$v(T)$};
        \end{tikzpicture}
    \end{center}
We claim that $T|_B$ uniquely extends to a tree $T$ satisfying $\Psi(T)=(\beta,\chi)$. We proceed by induction on $|[n] \setminus B|$. If $|[n]\setminus B|=0$ then $T|_B = T$. Otherwise, consider any $c' \in [n]\setminus B$, and let $k$ be the maximal index such that $\chi(b_i,c')=1$ for all $1 \leq i \leq k$. Let
$$C_k = \{c \in [n]\setminus B: (\chi(b_i,c), \chi(b_{k+1},c))=(0,1) \, \,\forall 1 \leq i \leq k\}.$$
and define the following pairwise disjoint subsets of $C_k$: \begin{align*}
    X &= \{c \in C_k: (\chi(c,c'),\chi(c',c))=(0,0)\} \\
    Y &= \{c \in C_k: (\chi(c,c'),\chi(c',c))=(0,1)\} \\
    Z &= \{c \in C_k: (\chi(c,c'),\chi(c',c))=(1,0)\}.
\end{align*}By the inductive hypothesis, it suffices to (uniquely) extend $T\setminus c'$ to $T$ by inserting the edge $c'$ such that \ref{nqrc} holds for $\Psi(T)$. Indeed, first note that the path from $r(T)$ to $c'$ cannot pass through any edges in $X$; in fact, the path can only contain edges in $Y$. There is a unique position for $c'$ in $T$ such that all edges in $Y$ precede $c'$ on the path and all edges in $Z$ succeed $c'$. This completes the induction showing that the constructed $T$ uniquely satisfies $\Psi(T) = (\beta,\chi)$, implying that $\Psi$ is both surjective and injective, so it is a bijection. 

Finally, note that there are $(n+1)^{n-2}$ trees with edges labeled by $[n]$ by a well-known variant of Cayley's theorem \cite[Proposition 2.1]{Cameron95}. There are each $n+1$ ways to choose a root and a distinguished vertex among the $n+1$ vertices, and $2^n$ ways to orient the edges with no non-trivial automorphisms, giving $$|\mathcal{T}_n| = |\mathcal{Q}_n| = 2^n(n+1)^n = (2n+2)^n,$$ as desired.
\end{proof}
\begin{remark}
    Using the oligomorphic group perspective, \cite{snowden2023measurescoloredcircle} determined that there are also $(2n+2)^n$ measures on the Fra\"iss\'e class $\mathfrak{F}(n)$ of $n$-colored totally ordered sets. We note that there is a natural bijection from the set of measures of $\partial \mathfrak{T}_3(n)$ to that of $\mathfrak{F}(n)$ given by taking the sign of the measure; indeed, measures on $\mathfrak{F}(n)$ are valued in $\{-1,0,1\}$. There is also a natural bijection between their classes of irreducible marked structures, where $B(i)$ corresponds to the one-point marked structure with color $i$; $C(i,j)$, $D(i,j)$ correspond to the two-point structures with the smaller element colored $i$ (and marked for $C(i,j)$) and larger element colored $j$ (and marked for $D(i,j)$); $E(i,j,k)$ corresponds to the three-element structure with the middle element marked and colors $i,j,k$ in increasing order. The relations among them are also identical except for the existence of the sum $\sum_{p \in [n]} D(i,p)$ in the relations inside $\Theta(\partial \mathfrak{T}_3(n))$, which is due to suppressing vertices of degree two and differentiates the relations in the two cases.
\end{remark}
\subsection{An example} \label{examp} We now illustrate an example of how a measure $\mu_{T,v}$ on $\partial \mathfrak{T}_3(n)$ can be produced given an $[n]$-labeled directed rooted tree $T$ with a distinguished vertex $v$. Consider the following tree $T \in \mathcal{T}_8$:
\begin{center}
    \begin{tikzpicture}[scale = 0.8, transform shape]
        \node (0) at (0,0) [circle, fill, inner sep = 1.5pt] {};
        \node (1) at (0,1) [circle, fill, inner sep = 1.5pt] {};
        \node (2) at (-1,1) [circle, fill, inner sep = 1.5pt] {};
        \node (3) at (1,1) [circle, fill, inner sep = 1.5pt] {};
        \node (4) at (-0.75,2) [circle, fill, inner sep = 1.5pt] {};
        \node (5) at (0.75,2) [circle, fill, inner sep = 1.5pt] {};
        \node (6) at (1.25,2) [circle, fill, inner sep = 1.5pt] {};
        \node (7) at (-1.25,2) [circle, fill, inner sep = 1.5pt] {};
        \node (8) at (0,2) [circle, fill, inner sep = 1.5pt] {};
        \draw[thin] (8) circle[radius=4pt];

        \draw[mid arrow] (0) -- (1);
        \draw[mid arrow] (2) -- (0);
        \draw[mid arrow] (0) -- (3);
        \draw[mid arrow] (2) -- (4);
        \draw[mid arrow] (5) -- (3);
        \draw[mid arrow] (3) -- (6);
        \draw[mid arrow] (7) -- (2);
        \draw[mid arrow] (1) -- (8);

        \node at (-1.35,1.5) {\small{$1$}};
        \node at (-0.65,1.5) {\small{$2$}};
        \node at (-0.65,0.3) {\small{$3$}};
        \node at (0.65,0.3) {\small{$6$}};
        \node at (1.35,1.5) {\small{$8$}};
        \node at (0.65,1.5) {\small{$7$}};
        \node at (-0.22,0.5) {\small{$4$}};
        \node at (0.22,1.5) {\small{$5$}};
    \end{tikzpicture}
\end{center}
The circled vertex $v$ is distinguished. We have $\beta(i)=1$ for $i \in \{4,5\}$ and $\beta(i)=0$ for $i \in \{1,2,3,6,7,8\}$. The directed edges of $T$ give $\chi(i,i)=1$ for $i \in \{1,3,7\}$ and $\chi(i,i)=0$ for $i \in \{2,4,5,6,8\}$. Tracing paths from the root to the leaves of $T$, we have the following table, where the value of $\chi(i,j)$ appears in row $i$, column $j$:
\[
\begin{array}{c|cccccccc}
  & 1& 2& 3& 4& 5& 6& 7& 8\\ \hline
1 & 1& 0& 0& 0& 0& 0& 0& 0\\
2 & 0& 0& 0& 0& 0& 0& 0& 0\\
3 & 1& 1& 1& 0& 0& 0& 0& 0\\
4 & 0& 0& 0& 0& 1& 0& 0& 0\\
5 & 0& 0& 0& 0& 0& 0& 0& 0\\
6 & 0& 0& 0& 0& 0& 0& 1& 1\\
7 & 0& 0& 0& 0& 0& 0& 1& 0\\
8 & 0& 0& 0& 0& 0& 0& 0& 0
\end{array}
\]
Under the bijection in Proposition \ref{biject}, we construct the associated matrix $$\mathbf{D}_T = \begin{pmatrix}
-1 & 0 & 1 & -1 & -1 & 0 & 0 & 0\\
0 & 2 & 1 & -1 & -1 & 0 & 0 & 0\\
0 & 0 & -1 & -1 & -1 & 0 & 0 & 0\\
0 & 0 & 0 & 1 & -1 & 0 & 0 & 0\\
0 & 0 & 0 & 0 & 1 & 0 & 0 & 0\\
0 & 0 & 0 & -1 & -1 & 2 & 0 & 0\\
0 & 0 & 0 & -1 & -1 & 1 & -1 & 0\\
0 & 0 & 0 & -1 & -1 & 1 & 0 & 2
\end{pmatrix}$$
which results in $$\mathbf{s} = (\mathbf{S}(i))_{i=1}^8 = \mathbf{D}_T^{-1} \mathbf{1}_8 = 
    (-8,4,-4,2,1,2,-2,1)^{\top}.$$ It also follows that \begin{align*}
    \mathbf{A} = \mu_{T,v}(A) &= 1+\sum_{i=1}^8 \mathbf{B}(i) \\ &= 1+\mathbf{S}(4) + \mathbf{S}(5) \\ &=  4.
\end{align*}We thus find, for instance, that \begin{center}
\begin{tikzpicture}
  \node (d5)  at (0,0.5)   [circle,fill, inner sep=1.5pt] {};
  \node (d6)  at (0.5,0)   [rectangle, draw, inner sep=1.5pt] {2};
  \node (d7)  at (1,0.5)   [rectangle, draw, inner sep=1.5pt] {1};
  \node (d8)  at (0.5,1)   [circle, fill, inner sep=1.5pt] {};
  \node (d9)  at (1.5,1)   [rectangle, draw, inner sep=1.5pt] {3};
  \node (d10) at (1,1.5)   [circle, fill, inner sep=1.5pt] {};
  \node (d11) at (2,1.5)   [circle, fill, inner sep=1.5pt] {};
  \draw (d5)--(d6) (d7)--(d6) (d7)--(d8) (d7)--(d9)
        (d9)--(d10) (d9)--(d11);
  \path (0,-0.25) coordinate (bbSW);
  \path ( 2, 1.75) coordinate (bbNE);
  \node[anchor=east] at (bbSW |- 0,0.75)
    {$\left(\vphantom{\rule{0pt}{1cm}}\right.$};
  \node[anchor=west] at (bbNE |- 0,0.75)
    {$\left.\vphantom{\rule{0pt}{1cm}}\right)$};
  \node[anchor=east] at (-0.4,0.75) {$\mu_{T,v}$};
  \node[anchor=west] at (2.50,0.75)
    {$= \mathbf{AB}(3)\mathbf{C}(1,3)\mathbf{C}(2,1) = 0$};
\end{tikzpicture}
\end{center}
while
\begin{center}
\begin{tikzpicture}
  \node (d5)  at (0.5,0.5)   [rectangle,draw, inner sep=1.5pt] {5};
  \node (d6)  at (1,0)   [rectangle, draw, inner sep=1.5pt] {4};
  \node (d7)  at (1.5,0.5)   [rectangle, draw, inner sep=1.5pt] {5};
  \node (d8)  at (1.25,1)   [circle, fill, inner sep=1.5pt] {};
  \node (d9)  at (1.75,1)   [circle, fill, inner sep=1.5pt] {};
  \node (1) at (0.75,1) [circle, fill, inner sep = 1.5pt] {};
  \node (2) at (0.25,1) [circle, fill, inner sep = 1.5pt] {};

  \draw (d5)--(d6) (d7)--(d6) (d7)--(d8) (d7)--(d9) (1)--(d5) (2)--(d5);

  \path (0.2,-0.25) coordinate (bbSW);
  \path (1.8, 1.75) coordinate (bbNE);

  \node[anchor=east] at (bbSW |-0,0.5)
    {$\left(\vphantom{\rule{0pt}{0.8cm}}\right.$};
  \node[anchor=west] at (bbNE |- 0,0.5)
    {$\left.\vphantom{\rule{0pt}{0.8cm}}\right)$};

  \node[anchor=east] at (-0.2,0.5) {$\mu_{T,v}$};

  \node[anchor=west] at (2.3,0.5)
    {$=\mathbf{AB}(4)\mathbf{D}(4,5)^2 = \mathbf{A}\mathbf{S}(4)\mathbf{S}(5)^2 = 8$.};
\end{tikzpicture}
\end{center}
In general, for this example, if at least one of the colors in $\{1,2,3,6,7,8\}$ appears in a tree $t$, then $\mu_{T,v}(t)=0$. 
\subsection{Quasi-regular measures} Recall that a measure $\mu$ on a Fra\"iss\'e class $\mathfrak{F}$ with Fra\"iss\'e limit $\Omega$ is quasi-regular (Definition \ref{quasidef}) if there exists an open subgroup $U$ of $\mathrm{Aut}(\Omega)$ such that the corresponding measure $\mu^*$ is regular on the class of transitive $\mathscr{E}$-smooth $U$-sets. We showed that quasi-regularity corresponds to regularity on a relative Fra\"iss\'e class (Corollary \ref{quasicor}) and now consider quasi-regular measures on $\partial \mathfrak{T}_3(n)$.
\begin{prop}
    For each $n \geq 1$, there are no quasi-regular measures on $\partial \mathfrak{T}_3(n)$ that are not regular.
\end{prop}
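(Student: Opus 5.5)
By Corollary \ref{quasicor}, a quasi-regular measure $\mu$ comes with a structure $A \in \partial \mathfrak{T}_3(n)$ such that $\mu(A \to X) \neq 0$ for every embedding $A \to X$. I will show that this forces $\mu$ to be regular. By multiplicativity it suffices to check that $\mu$ is nonzero on every one-point extension. By Proposition \ref{irreducible}, it suffices further to check that $\mu$ is nonzero on the irreducible classes $A$, $B(i)$, $C(i,j)$, $D(i,j)$, $E(i,j,k)$. So fix an irreducible marked tree $(T,a)$, which has at most four leaves. The goal is to build $X \supseteq A$ together with a leaf $a \in X \setminus A$ such that $[X,a] = [T,a]$. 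Then the embedding $A \to X$ factors as $A \to X\setminus a \to X$, and
\[0 \neq \mu(A \to X) = \mu(A \to X \setminus a)\,\mu([X,a])\]
gives $\mu([T,a]) \neq 0$.

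\emph{Construction of $X$.} Take a new root $r$ and give it two children. One child is the root of (a copy of) the tree underlying $A$; if $|A| = 1$, this child is simply that leaf. The other child is a padding node $u_1$. Below $u_1$ hang the root of $T$ and a further chain of padding nodes $u_2, u_3, \dots$, each with a leaf attached, which keeps the tree binary and reduced. All padding nodes may be given any color. The leaves of $T$ then span a substructure isomorphic to $T$, and the leaves of $A$ span a copy of $A$. The relation $S$ restricted to $L(T)$ is unchanged because the root-to-leaf paths inside the $T$-subtree are unchanged, so these are genuine embeddings $A \to X$ and $T \to X$. Since $N(a)$ is a node of $T$ and the padding separates it from $r$, the root of $A$, and every padding node carrying a leaf, every leaf of $X$ outside $T$ has its neighboring node neither equal nor adjacent to $N(a)$. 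Hence every such leaf is separated from $a$ by Proposition \ref{easy}.

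\emph{Main obstacle.} The key difficulty is to apply Proposition \ref{sep} repeatedly without destroying separation. Deleting a leaf suppresses degree-two nodes, which can pull other leaves closer to $N(a)$; for example, deleting $e$ from a bottom pair $\{e,f\}$ makes $f$ a child of $u_1$, which may be adjacent to $N(a)$. I would handle this by choosing a deletion order that always removes the leaf farthest from $N(a)$ first: the leaves of $A$, then the padding leaves from the bottom of the chain upward. I would check that at each stage the remaining non-$T$ leaves still satisfy the criterion of Proposition \ref{easy}.

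If some configuration resists this, the fallback is to avoid deleting all the way down. The plan is to argue directly, as in the proof of Proposition \ref{irreducible}, that the irreducible class of $(X,a)$ is determined by the colored local configuration around $N(a)$: $N(a)$, its neighboring nodes, and their leaf children. By construction this configuration is the same in $X$ as in $T$, possibly after choosing the color of $u_1$ to match when $N(a)$ is the root of $T$, so that the parent of $N(a)$ has the required color. Since $(T,a)$ is irreducible, this yields $[X,a] = [T,a]$.

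Granting this, every irreducible class is nonzero, so $\mu$ is regular. This is consistent with the earlier corollary that there are no regular measures on $\partial \mathfrak{T}_3(n)$ for $n \ge 2$, since $E(i,j,i) = 0$: for such $n$ the argument shows there are no quasi-regular measures either.
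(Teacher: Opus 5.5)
There is a genuine gap, and it sits in the case your fallback was meant to handle. Your construction hangs $T$ below a padding node $u_1$, so in $X$ the node $N(a)$ always acquires a parent node.

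\emph{Where the construction works.} When $N(a)$ is not the root of $T$ (the classes $D(i,j)$ and $E(i,j,k)$) this is harmless. The root of $T$ keeps both of its $T$-children, so every leaf outside $T$ stays at distance at least two from $N(a)$ in any deletion order. You do get $[X,a]=[T,a]$, and the deletion-order ``obstacle'' you worried about does not arise.

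\emph{Where it fails.} When $N(a)$ is the root of $T$ (the classes $B(i)$ and $C(i,j)$), $T$ has no parent of $N(a)$ whose color you could ``match''. As you delete the outside leaves, the last one ends up hanging from the node adjacent to $N(a)$ and can no longer be removed. So $(X,a)$ reduces to $D(c,i)$, respectively $E(c,i,j)$, where $c$ is the color of the surviving top node. Choosing $c$ does not help, because these are different classes. Moreover, no construction that places the base inside $X\setminus a$ can realize the class $A$ once the base is nonempty. Nor can it realize $B(i)$ once the base has two leaves, since $(X,a)$ reduces to $B(i)$ only when $X$ has exactly two leaves. So some algebra is unavoidable.

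\emph{Why this matters for $n=1$.} Consider the measure on $\partial\mathfrak{T}_3(1)$ with $\mathbf{B}(1)=\mathbf{C}(1,1)=0$, $\mathbf{S}(1)=\tfrac12$, $\mathbf{D}(1,1)=\mathbf{E}(1,1,1)=-\tfrac12$ and $\mathbf{A}=1$. It is non-regular, yet it is nonzero on every class your construction correctly realizes, so your argument cannot exclude it.

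\emph{For $n\ge 2$ your conclusion is fine.} $E(i,j,i)$ has $N(a)$ off the root, so it is realized above the base, and $E(i,j,i)=0$ rules out quasi-regularity. The paper uses $C(i,j)C(j,i)=0$ instead; either works.

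\emph{How to repair $n=1$.} Realize the root class directly. Let $X$ have a new root whose children are the new leaf $a$ and a node $w$, and let $w$ have children a new leaf $b$ and the tree of the base. Then $(X,a)$ reduces to $C(1,1)$ with the base inside $X\setminus a$, so $\mathbf{C}(1,1)\neq 0$. Your construction gives $\mathbf{D}(1,1)\neq 0$. The argument then closes as follows:
\begin{itemize}
\item By \ref{systemb}, $\mathbf{C}(1,1)=\mathbf{S}(1)$.
\item The first line of \ref{systemg} then gives $\mathbf{B}(1)=\mathbf{D}(1,1)\neq 0$.
\item Now \ref{systema} gives $\mathbf{B}(1)=\mathbf{S}(1)$, and the second line of \ref{systemg} yields $\mathbf{S}(1)=-\tfrac12$.
\item Hence $\mathbf{A}=\mathbf{D}(1,1)=\mathbf{E}(1,1,1)=\pm\tfrac12$, so $\mu$ is the regular measure.
\end{itemize}
This is essentially the paper's argument. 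The paper exhibits an extension with $\mu(T\to X)=\mu(C(1,1))\,\mu(D(1,1))$ and uses the fact that this product vanishes for each of the three non-regular solutions found in the proof of Proposition \ref{nonzero}.
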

\begin{proof}
    Let $\mu: \Theta(\partial \mathfrak{T}_3(n)) \to \CC$ be a non-regular measure. By Corollary \ref{quasicor}, it suffices to show that there does not exist $T \in \partial\mathfrak{T}_3(n)$ such that $\mu(T \to X)$ is nonzero for all $X$ which $T$ embeds into. Suppose for the sake of contradiction that such $T$ exists. 
    
    For $n=1$, consider the embedding of $T$ into the following tree $X$
    \begin{center}
    \begin{tikzpicture}[scale=0.5]
    \node (0) at (0,0) [rectangle, draw, inner sep = 1.5pt] {$1$};
    \node (1) at (-1,1) [circle, fill, inner sep = 1.5pt] {};
    \node (2) at (1,1) [rectangle, draw, inner sep = 2pt] {$T$};
    \node (3) at (2,2) [rectangle, draw, inner sep = 1.5pt] {$1$};
    \node (4) at (1,3) [circle, fill, inner sep = 1.5pt] {};
    \node (5) at (3,3) [circle, fill, inner sep = 1.5pt] {};
    \draw (0)--(1);
    \draw (0)--(2);
    \draw (2)--(3);
    \draw (3)--(4);
    \draw (3)--(5);
    \node at (-1,1.5) {$a$};
    \node at (1,3.6) {$b$};
    \node at (3,3.5) {$c$};
    \end{tikzpicture}
    \end{center}
where $a,b \in X \setminus T$ and $c \in X \cap T$. We have \begin{align*}
    \mu(T \to X) &= \mu([X \setminus b, a])\mu([X,b]) \\ &= \mu(C(1,1))\mu(D(1,1)).
\end{align*} From \ref{systema} and \ref{systemb}, we have $$B(1)C(1,1) = B(1)C(1,1) = C(1,1)D(1,1),$$ and since $\mu$ is not regular, at least two of the above terms vanish under $\mu$. Consequently, $\mu(T \to X) = \mu(C(1,1))\mu(D(1,1))=0$, a contradiction.

For $n \geq 2$, suppose the root of $T$ has color $i$ and $j$ is a color distinct from $i$. Take $X$ to be as follows:
\begin{center}
    \begin{tikzpicture}[scale=0.5]
    \node (0) at (0,0) [rectangle, draw, inner sep = 1.5pt] {$i$};
    \node (2) at (1,1) [rectangle, draw, inner sep = 1.5pt] {$j$};
    \node (3) at (2,2) [rectangle, draw, inner sep = 2pt] {$T$};
    \node (4) at (0,2) [circle, fill, inner sep = 1.5pt] {};
    \node (5) at (-1,1) [circle, fill, inner sep = 1.5pt] {};
    \draw (0)--(2);
    \draw (2)--(3);
    \draw (2)--(4);
    \draw (0)--(5);
    \node at (-1,1.5) {$a$};
    \node at (0,2.6) {$b$}; 
    \end{tikzpicture}
\end{center}
We have \begin{align*}
    \mu(T \to X) &= \mu([X \setminus a,b])\mu([X,a]) \\ &= \mu(C(j,i))\mu(C(i,j)) \\ &= 0
\end{align*} where the last equality follows from \ref{systemc}. We thus obtain the desired contradiction, and $\mu$ is not quasi-regular.
\end{proof}
\section{\texorpdfstring{Fraïssé subclasses of $\partial \mathfrak{T}_3(n)$ and induced regular measures}{Fraïssé subclasses of delta T3(n) and the induced regular measure}} \label{sec5}
In this section, we investigate subclasses of $\partial \mathfrak{T}_3(n)$ that are the support of some measure. Since these subclasses (known as induced subclasses) are Fra\"iss\'e, we begin by classifying all Fra\"iss\'e subclasses of $\partial \mathfrak{T}_3(n)$, providing a recursive formula to count them. Afterwards, we explicitly describe the set of induced subclasses and their unique induced regular measure with a formula. These subclasses, along with their measures, are used to construct semisimple tensor categories in Section \ref{category}. At the end of the section, we illustrate our classification results for the $n=2$ case to concretely demonstrate the values of measures on the subclasses.
\subsection{\texorpdfstring{Classification of Fraïssé subclasses of $\partial \mathfrak{T}_3(n)$}{Classification of Fraïssé subclasses of delta T3(n)}} \label{classification} Given a Fraïssé class $\mathfrak{F}$, the classification and enumeration of its subclasses is not only an interesting combinatorial problem, but also has numerous implications in further analysis of measures in \ref{induced}.

Let $\mathfrak{T}'$ be a Fraïssé subclass of $\partial \mathfrak{T}_3(n)$ and $X_{i,j}$ denote the following tree for colors $i,j \in [n]$: \begin{center}
    \begin{tikzpicture}[scale = 0.8, transform shape]
        \node (5) at (1,0.5) [circle,fill, inner sep=1.5pt] {};

  \node (6) at (1.5,0) [rectangle, draw, inner sep=1.5pt] {$i$};
  \node (7) at (2, 0.5) [rectangle, draw, inner sep = 1.5pt] {$j$};
  \node (8) at (1.5,1) [circle, fill, inner sep=1.5pt] {};
  \node (9) at (2.5,1) [circle, fill, inner sep = 1.5pt] {};

  \draw (5) -- (6);
  \draw (7) -- (6);
  \draw (7) -- (8);
  \draw (7) -- (9); 
    \end{tikzpicture}
\end{center}
\begin{lemma} \label{71}
    If $X_{i,j}, X_{j,k} \in \mathfrak{T}'$ for pairwise distinct $i,j,k$, then $X_{i,k} \in \mathfrak{T}'$.
\end{lemma}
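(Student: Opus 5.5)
The idea is to use the amalgamation property of $\mathfrak{T}'$ (condition \ref{fd}) together with closure under substructures (condition \ref{fb}), choosing the amalgamation so that \emph{every} possible amalgam contains a copy of $X_{i,k}$. Recall that $X_{i,j}$ is the three-leaf tree whose root has color $i$ and whose other node, of color $j$, is the parent of two leaves.

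Let the leaves of $X_{i,j}$ be $a$ (the child of the root) and $b,c$ (the children of the $j$-node). Let the leaves of $X_{j,k}$ be $b$ (the child of the root) and $c,d$ (the children of the $k$-node). The two-leaf substructure $\{b,c\}$ is the same in both, namely the unique two-leaf tree. In $X_{i,j}$ its root has color $j$; in $X_{j,k}$ the meet of $b$ and $c$ is the root, also of color $j$. So there is a common embedding of this substructure, and the amalgamation property yields an amalgam $Z\in\mathfrak{T}'$ with leaves $a,b,c,d$. We cannot identify $a$ or $d$ with anything: $a\ne d$ because their meets with $b$ have different colors ($i$ versus $j$, and $i\ne j$).

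Next we pin down $Z$ using the restrictions.
\begin{itemize}
\item The restriction of $Z$ to $\{a,b,c\}$ is $X_{i,j}$. Hence $\mathrm{meet}(b,c)$ lies strictly below $\mathrm{meet}(a,b)=\mathrm{meet}(a,c)$, with colors $j$ and $i$ respectively.
\item The restriction to $\{b,c,d\}$ is $X_{j,k}$. Hence $\mathrm{meet}(c,d)$, of color $k$, lies strictly below $\mathrm{meet}(b,c)=\mathrm{meet}(b,d)$, of color $j$.
\end{itemize}
In a rooted binary tree these facts force the shape $((b,(c,d)),a)$. Its root is the meet of $a$ with everything, of color $i$; below it is the node $\mathrm{meet}(b,c)$ of color $j$; and below that is $\mathrm{meet}(c,d)$ of color $k$. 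There is no freedom: the amalgam is unique, and it lies in $\mathfrak{T}'$.

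Finally, restrict $Z$ to $\{a,c,d\}$. Here $\mathrm{meet}(c,d)$, of color $k$, is strictly below $\mathrm{meet}(a,c)$, of color $i$. After suppressing the degree-two $j$-node, this substructure is exactly $X_{i,k}$. By \ref{fb}, $X_{i,k}\in\mathfrak{T}'$.

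The main point needing care is the determination of $Z$. We must check that no alternative amalgam (for instance one that identifies leaves, or places $a$ elsewhere) is consistent with both restrictions. The meet-order argument above handles this directly, and the distinctness $i\ne j$ is used only to rule out identifying $a$ with $d$.
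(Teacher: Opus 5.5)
Your proof is correct and follows the paper's argument. Like the paper, you amalgamate $X_{i,j}$ on $\{a,b,c\}$ with $X_{j,k}$ on $\{b,c,d\}$ over the common two-leaf subtree on $\{b,c\}$, note that the amalgam is forced to be the caterpillar with nodes colored $i,j,k$ from the root down, and restrict to $\{a,c,d\}$ to get $X_{i,k}$ by \ref{fb}. Your meet-order argument usefully justifies the uniqueness of the amalgam, which the paper only asserts; note that $a\neq d$ already follows from the shapes of the two restrictions, so $i\neq j$ is not actually needed at that step.
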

\begin{proof}
    Consider the two following trees $T_1, T_2$ with a common subtree on $\{b,c\}$:
    \begin{center}
\begin{tikzpicture}[scale = 0.8, transform shape]
\node (5) at (1,0.5) [circle,fill, inner sep=1.5pt] {};
\node (6) at (1.5,0) [rectangle, draw, inner sep=1.5pt] {$i$};
  \node (7) at (2, 0.5) [rectangle, draw, inner sep = 1.5pt] {$j$};
  \node (8) at (1.5,1) [circle, fill, inner sep=1.5pt] {};
  \node (9) at (2.5,1) [circle, fill, inner sep = 1.5pt] {};

  \draw (5) -- (6);
  \draw (7) -- (6);
  \draw (7) -- (8);
  \draw (7) -- (9); 

  \node at (1,0.8) {$a$};
  \node at (1.5,1.3) {$b$};
  \node at (2.5,1.3) {$c$};

  \node (5) at (4,0.5) [circle,fill, inner sep=1.5pt] {};

  \node (6) at (4.5,0) [rectangle, draw, inner sep=1.5pt] {$j$};
  \node (7) at (5, 0.5) [rectangle, draw, inner sep = 1.5pt] {$k$};
  \node (8) at (4.5,1) [circle, fill, inner sep=1.5pt] {};
  \node (9) at (5.5,1) [circle, fill, inner sep = 1.5pt] {};

  \draw (5) -- (6);
  \draw (7) -- (6);
  \draw (7) -- (8);
  \draw (7) -- (9); 

  \node at (4,0.8) {$b$};
  \node at (4.5,1.3) {$c$};
  \node at (5.5,1.3) {$d$};
    \end{tikzpicture}
    \end{center}
The only amalgamation of $T_1$ and $T_2$ over $\{b,c\}$ is the following tree $T$ 
\begin{center}
\begin{tikzpicture}[scale = 0.8, transform shape]
\node (5) at (1,0.5) [circle,fill, inner sep=1.5pt] {};
\node (6) at (1.5,0) [rectangle, draw, inner sep=1.5pt] {$i$};
  \node (7) at (2, 0.5) [rectangle, draw, inner sep = 1.5pt] {$j$};
  \node (8) at (1.5,1) [circle, fill, inner sep=1.5pt] {};
  \node (9) at (2.5,1) [rectangle, draw, inner sep = 1.5pt] {$k$};
  \node (10) at (2,1.5) [circle, fill, inner sep=1.5pt] {};
  \node (11) at (3,1.5) [circle,fill,inner sep = 1.5pt] {};

  \draw (5) -- (6);
  \draw (7) -- (6);
  \draw (7) -- (8);
  \draw (7) -- (9); 
  \draw (9) -- (10);
  \draw (9) -- (11);

  \node at (1,0.8) {$a$};
  \node at (1.5,1.3) {$b$};
  \node at (2,1.8) {$c$};
  \node at (3,1.8) {$d$};
  \end{tikzpicture}
  \end{center}
  which must belong to $\mathfrak{T}'$ by condition \ref{fd}. The induced subtree on $\{a,c,d\}$ in $T$ is isomorphic to $X_{i,k}$, which by condition \ref{fb} implies that $X_{i,k} \in \mathfrak{T}'$. 
\end{proof}
We say that a color $k \in [n]$ \emph{appears} in a subclass $\mathfrak{T}'$ if there exists a tree with node(s) colored $k$ in $\mathfrak{T}'$. Alternatively, $\mathfrak{T}'$ has color $k$ if $k$ appears.
\begin{lemma} \label{72}
    If $i$ and $j$ both appear in $\mathfrak{T}'$, then at least one of $X_{i,j}$ and $X_{j,i}$ is in $\mathfrak{T}'$. 
\end{lemma}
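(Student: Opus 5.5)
Here $i$ and $j$ are distinct colors; for $i=j$ the claim is just $X_{i,i}\in\mathfrak{T}'$, which needs separate handling. The plan is to use the amalgamation property \ref{fd} once and then read off the conclusion from the possible amalgamation shapes. Since $i$ appears, some tree in $\mathfrak{T}'$ has a node colored $i$. That node is the least common ancestor of two leaves, so restricting to those two leaves via \ref{fb} shows that the two-leaf tree $P_i$, whose root is colored $i$, lies in $\mathfrak{T}'$. Likewise $P_j\in\mathfrak{T}'$.

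Next I amalgamate $P_i$ (leaves $a,b$) and $P_j$ (leaves $c,d$) over the empty structure. Condition \ref{fd} gives some $Z'\in\mathfrak{T}'$ realizing an amalgamation. The embeddings $i',j'$ must preserve the relation $S$ and hence the colors of least common ancestors. In $Z'$ the node $\mathrm{lca}(a,b)$ is colored $i$, and $\mathrm{lca}(c,d)$ is colored $j$. In particular the two pairs cannot be identified as sets, so $Z'$ has three or four leaves.

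I then split into cases on the shape of $Z'$.

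If $Z'$ has three leaves, say $b=c$ after identification, the tree is a root with one leaf and one internal child. Its two nodes are $\mathrm{lca}(a,b)$ and $\mathrm{lca}(b,d)$, which carry the distinct colors $i$ and $j$. Whichever node is the root, $Z'$ is isomorphic to $X_{i,j}$ or $X_{j,i}$.

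If $Z'$ has four leaves, two configurations are possible. In the first, the root splits into two cherries; then $\{a,b\}$ and $\{c,d\}$ must be exactly the two cherries, because otherwise some lca would be the root and both pairs would share it, forcing equal colors. In this configuration the root has some color $k$, and I instead pick a different amalgamation. Here the plan relies on the fact that \ref{fd} only guarantees that some amalgamation lies in $\mathfrak{T}'$, not a chosen one.

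To get around this, I would amalgamate over a nonempty common substructure so that every amalgamation is forced to contain a caterpillar. I would take the one-leaf structure $\{b\}=\{c\}$ as the common substructure. Every amalgamation of $P_i$ and $P_j$ over one shared leaf then either has three leaves, which gives $X_{i,j}$ or $X_{j,i}$ as above, or has four leaves $a,b,d$ plus the identification $a=d$. Identifying $a$ with $d$ returns to the three-leaf case with $b,a$ as the pairs, so it again gives $X_{i,j}$ or $X_{j,i}$.

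On reflection, the amalgamation is of $P_i$ and $P_j$ over the common one-point structure, and the leaf sets are $\{a,b\}$ and $\{b,d\}$. The result has at most three leaves, namely $a$, $b$, $d$, possibly with $a=d$. The case $a=d$ is impossible because the lca of $\{a,b\}$ would have to carry both colors $i$ and $j$. So $Z'$ is a three-leaf tree whose two nodes are colored $i$ and $j$, and it is therefore $X_{i,j}$ or $X_{j,i}$ by the first case. This argument with a shared leaf is the one I would actually write. The only delicate point is justifying that in a three-leaf binary rooted tree the two lcas are the two distinct nodes, which is immediate.
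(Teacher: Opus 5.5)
Your final argument is the paper's proof: use \ref{fb} to get the two-leaf trees with roots colored $i$ and $j$, amalgamate them over a shared leaf, and observe that (as $i\neq j$ rules out identifying the other two leaves) the only amalgamations are $X_{i,j}$ and $X_{j,i}$, one of which must lie in $\mathfrak{T}'$ by \ref{fd}; the detour through the empty-structure amalgamation can simply be dropped. One small correction: the case $i=j$ does not ``need separate handling'' but is false in general, since identifying the two non-shared leaves gives a legitimate amalgamation; for example, in $\partial \mathfrak{T}_3(n)^{\mathrm{ord}}_I$ with $i\notin I$ the color $i$ appears but $X_{i,i}$ does not, so the lemma should be read for distinct $i,j$.
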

\begin{proof}
    By \ref{fb}, the two following trees belong to $\mathfrak{T}'$: \begin{center}
\begin{tikzpicture}[scale = 0.8, transform shape]
\node (5) at (1,0.5) [circle,fill, inner sep=1.5pt] {};
\node (6) at (1.5,0) [rectangle, draw, inner sep=1.5pt] {$i$};
  \node (7) at (2, 0.5) [circle, fill, inner sep=1.5pt] {};

  \draw (5) -- (6);
  \draw (7) -- (6);

  \node at (1,0.8) {$a$};
  \node at (2,0.8) {$b$};

  \node (5) at (4,0.5) [circle,fill, inner sep=1.5pt] {};
  \node (6) at (4.5,0) [rectangle, draw, inner sep=1.5pt] {$j$};
  \node (7) at (5, 0.5) [circle, fill, inner sep=1.5pt] {};

  \draw (5) -- (6);
  \draw (7) -- (6);

  \node at (4,0.8) {$b$};
  \node at (5,0.8) {$c$};
    \end{tikzpicture}
    \end{center}
The only two amalgamations over the one-point tree on $b$ are
\begin{center}
\begin{tikzpicture}[scale = 0.8, transform shape]
\node (5) at (1,0.5) [circle,fill, inner sep=1.5pt] {};
\node (6) at (1.5,0) [rectangle, draw, inner sep=1.5pt] {$i$};
  \node (7) at (2, 0.5) [rectangle, draw, inner sep=1.5pt] {$j$};
\node (8) at (1.5,1) [circle, fill, inner sep = 1.5pt] {};
\node (9) at (2.5,1) [circle, fill, inner sep = 1.5pt] {};
  \draw (5) -- (6);
  \draw (7) -- (6);
  \draw (7) -- (8);
  \draw (7) -- (9);

  \node at (1.5,1.3) {$b$};
  \node at (2.5,1.3) {$c$};
  \node at (1,0.8) {$a$};

  \node (5) at (4,0.5) [circle,fill, inner sep=1.5pt] {};
\node (6) at (4.5,0) [rectangle, draw, inner sep=1.5pt] {$j$};
  \node (7) at (5, 0.5) [rectangle, draw, inner sep=1.5pt] {$i$};
\node (8) at (4.5,1) [circle, fill, inner sep = 1.5pt] {};
\node (9) at (5.5,1) [circle, fill, inner sep = 1.5pt] {};
  \draw (5) -- (6);
  \draw (7) -- (6);
  \draw (7) -- (8);
  \draw (7) -- (9);

  \node at (4.5,1.3) {$a$};
  \node at (5.5,1.3) {$c$};
  \node at (4,0.8) {$b$};
    \end{tikzpicture}
    \end{center}
at least one of which is in $\mathfrak{T}'$ by \ref{fd}.
\end{proof}
\begin{lemma} \label{73}
    If $X_{i,j}, X_{j,i} \in \mathfrak{T}'$ for distinct $i,j$, then $X_{i,i}, X_{j,j} \in \mathfrak{T}'$.
\end{lemma}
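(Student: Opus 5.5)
Amalgamate $X_{i,j}$ with $X_{j,i}$ over a common two-leaf substructure chosen so that every amalgamation contains a copy of $X_{i,i}$ (and, by symmetry, of $X_{j,j}$). Then condition \ref{fd} puts one amalgamation in $\mathfrak{T}'$, and condition \ref{fb} extracts the desired three-leaf tree from it. This mirrors the proofs of Lemmas \ref{71} and \ref{72}.

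Concretely, write $X_{i,j}$ on leaves $\{a,b,c\}$: the root has color $i$, $a$ hangs off the root, and $b,c$ are the children of the node colored $j$. Write $X_{j,i}$ on leaves $\{a',b',c'\}$: the root has color $j$, $a'$ hangs off the root, and $b',c'$ are the children of the node colored $i$. Take the common substructure to be the two-leaf tree on $\{a,b\}$ identified with $\{a',b'\}$, with $a=a'$ and $b=b'$. Both restrictions have root colored $i$ in the first tree and $j$ in the second, so this identification is not color-compatible. I will therefore instead use the pair $\{b,c\}\leftrightarrow\{b',c'\}$, whose induced cherries have colors $j$ and $i$ respectively. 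If this also fails, I will amalgamate over the one-point tree, as in Lemma \ref{72}, and use a single shared leaf.

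The cleanest choice seems to be the following. Amalgamate $X_{i,j}$ and $X_{j,i}$ over a single leaf $x$, taking $x=a$ in the first tree and $x=b'$ in the second. In any amalgamation, the root-to-leaf path must accommodate both a root colored $i$ with $x$ directly below it, and a $j$-node directly above the cherry $\{x,c'\}$ colored $i$. I will enumerate the amalgamations as in Lemma \ref{72}: determine how the two trees' root paths interleave, allowing identification of $c$ or $b$ with $a'$ or $c'$ where the relation $S$ permits. Then I will check in each case that some three leaves induce the pattern root $i$, then child $i$ (that is, $X_{i,i}$). The mechanism is that the root of color $i$ from $X_{i,j}$ sits above the $i$-node of $X_{j,i}$ that parents $x$. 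Choosing the leaf $a'$, the root-side leaf $x$, and $c'$ should then give a root colored $i$ over a node colored $i$, provided the two $i$-nodes do not coincide; that coincidence is excluded because the $i$-root of $X_{i,j}$ has $x$ as a direct child, while in $X_{j,i}$ the node above $x$ is the $i$-node below a $j$-node.

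The main obstacle is this case analysis of amalgamations: I must confirm that every amalgamation, not just one, contains a copy of $X_{i,i}$, because \ref{fd} only guarantees that some amalgamation lies in $\mathfrak{T}'$. If a single shared leaf leaves too much freedom, I will first apply Lemma \ref{71}-style transitivity by building the four-leaf tree with colors $i,j,i$ along a path. Amalgamating $X_{i,j}$ and $X_{j,i}$ over the cherry $\{b,c\}$ with $b'$ placed suitably forces uniqueness, as in Lemma \ref{71}, giving a tree with a path of node colors $i,j,i$ whose outer-leaf restriction is $X_{i,i}$. The same construction with the roles of $i$ and $j$ swapped yields $X_{j,j}$.
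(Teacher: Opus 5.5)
Your main route, amalgamating $X_{i,j}$ and $X_{j,i}$ over a single shared leaf $x$, has a genuine gap. It is not true that every such amalgamation contains $X_{i,i}$, and \ref{fd} only guarantees that one amalgamation lies in $\mathfrak{T}'$.

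Here is a counterexample. Take the caterpillar whose root is colored $j$ with leaf child $a'$, whose middle node is colored $i$ with leaf child $x$, and whose bottom node is colored $j$ with leaf children $b$ and $c=c'$. Deleting $a'$ gives $X_{i,j}$ on $\{x,b,c\}$, with $x$ as the root-side leaf. Deleting $b$ gives $X_{j,i}$ on $\{a',x,c'\}$, with $x$ in the $i$-colored cherry. So this is a legitimate amalgamation over $\{x\}$. Its only other three-leaf restriction is $X_{j,j}$ on $\{a',b,c\}$, and it contains no $X_{i,i}$.

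Your argument for ruling this out fails at the claim that the $i$-colored node separating $x$ from $\{b,c\}$ is the root, with $x$ as a direct child. In an amalgamation it is only some ancestor of $x$. It can coincide with the $i$-colored parent of $x$ coming from $X_{j,i}$, sitting below the $j$-colored root. In fact, locating this node on the root-to-$x$ path relative to the two ancestors of $x$ from $X_{j,i}$ shows that a one-point amalgamation only forces ``$X_{i,i}$ or $X_{j,j}$''. Swapping $i$ and $j$ yields the same disjunction, so this route cannot give both conclusions. Your second candidate substructure also fails, for the reason you state: $\{b,c\}$ and $\{b',c'\}$ carry colors $j$ and $i$, so they are not isomorphic.

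The construction in your last paragraph is correct and is exactly the paper's proof. Make it the proof rather than a fallback, and state the identification precisely. The cherry $\{b,c\}$ of $X_{i,j}$ has color $j$, so it must be matched with the $j$-colored pair $\{a',b'\}$ of $X_{j,i}$, say $b=a'$ and $c=b'$. Consider the caterpillar whose root is colored $i$ and carries $a$, then a $j$-node carrying $b$, then an $i$-node carrying $c$ and $c'$. It restricts to the two given trees. Since $N(a)$ and $N(c')$ are at distance two, Proposition~\ref{easy} makes it the unique amalgamation; equivalently, the proof of Lemma~\ref{71} never uses $k\neq i$. Hence it lies in $\mathfrak{T}'$ by \ref{fd}, and its restriction to $\{a,c,c'\}$ is $X_{i,i}$ by \ref{fb}. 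The $j,i,j$ caterpillar gives $X_{j,j}$ in the same way.
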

\begin{proof}
    Consider the two trees $T_1$ and $T_2$
    \begin{center}
\begin{tikzpicture}[scale = 0.8, transform shape]
\node (5) at (1,0.5) [circle,fill, inner sep=1.5pt] {};
\node (6) at (1.5,0) [rectangle, draw, inner sep=1.5pt] {$i$};
  \node (7) at (2, 0.5) [rectangle, draw, inner sep=1.5pt] {$j$};
\node (8) at (1.5,1) [circle, fill, inner sep = 1.5pt] {};
\node (9) at (2.5,1) [rectangle, draw, inner sep = 1.5pt] {$i$};
\node (10) at (2,1.5) [circle, fill, inner sep = 1.5pt] {};
\node (11) at (3,1.5) [circle, fill, inner sep = 1.5pt] {};
  \draw (5) -- (6);
  \draw (7) -- (6);
  \draw (7) -- (8);
  \draw (7) -- (9);
  \draw (9) -- (10);
  \draw (9) -- (11);

  \node at (1.5,1.3) {$b$};
  \node at (2,1.8) {$c$};
  \node at (3,1.8) {$d$};
  \node at (1,0.8) {$a$};

  \node (5) at (4,0.5) [circle,fill, inner sep=1.5pt] {};
\node (6) at (4.5,0) [rectangle, draw, inner sep=1.5pt] {$j$};
  \node (7) at (5, 0.5) [rectangle, draw, inner sep=1.5pt] {$i$};
\node (8) at (4.5,1) [circle, fill, inner sep = 1.5pt] {};
\node (9) at (5.5,1) [rectangle, draw, inner sep = 1.5pt] {$j$};
\node (10) at (5,1.5) [circle, fill, inner sep = 1.5pt] {};
\node (11) at (6,1.5) [circle, fill, inner sep = 1.5pt] {};
  \draw (5) -- (6);
  \draw (7) -- (6);
  \draw (7) -- (8);
  \draw (7) -- (9);
  \draw (9) -- (10);
  \draw (9) -- (11);

  \node at (4.5,1.3) {$b$};
  \node at (5,1.8) {$c$};
  \node at (6,1.8) {$d$};
  \node at (4,0.8) {$a$};
    \end{tikzpicture}
    \end{center}
By Proposition \ref{easy}, $T_1$ is the unique amalgamation of $T_1 \setminus a \cong X_{j,i}$ and $T_1 \setminus c \cong X_{i,j}$ over $T_1 \setminus \{a,c\}$, so we must have $T_1 \in \mathfrak{T}'$ by condition \ref{fd}. Similarly, $T_2 \in \mathfrak{T}'$. Finally, notice that the induced subtrees on $a,c,d$ in $T_1$ and $T_2$ are $X_{i,i}$ and $X_{j,j}$ respectively, implying that $X_{i,i}, X_{j,j} \in \mathfrak{T}'$ by \ref{fb}. 
\end{proof}
Let $X_n = \{X_{i,j} : i,j \in [n]\}$ be the set of trees with three leaves in $\partial \mathfrak{T}_3(n)$, and $A$ be a subset of $X_n$. Let $\mathfrak{T}'_A$ denote the corresponding subclass of $\partial \mathfrak{T}_3(n)$ of all structures whose (only) three-element substructures are those in $A$. Clearly, if $A$ is nonempty, then $\mathfrak{T}'_A$ is unique. If $A$ is empty, then $\mathfrak{T}'_A$ is empty or contains structures with at most one or two leaves; we denote the latter classes as $\mathfrak{T}_{\le 1}$ and $\mathfrak{T}_{\le 2}$.
\begin{prop} \label{74}
    The subclass $\mathfrak{T}'_A \subseteq \partial \mathfrak{T}_3(n)$ is Fraïssé if and only if it satisfies the conditions in Lemmas \ref{71}, \ref{72}, \ref{73}.  
\end{prop}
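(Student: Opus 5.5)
Necessity is already done: Lemmas \ref{71}, \ref{72}, \ref{73} show that any Fra\"iss\'e subclass satisfies the three conditions. So the work is sufficiency. Let $A$ satisfy the three conditions. Conditions \ref{fa}, \ref{fb}, \ref{fc} for $\mathfrak{T}'_A$ are immediate from the definition ``all three-leaf substructures lie in $A$,'' since restriction cannot create new three-leaf substructures. The proof therefore reduces to the amalgamation property \ref{fd}.

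The first step is to make $A$ transparent. Define a relation $\to$ on the set $P$ of colors appearing in $A$ by $i \to j$ iff $X_{i,j} \in A$; this means ``a node of color $j$ may sit strictly below a node of color $i$ on a root-leaf path.'' A tree belongs to $\mathfrak{T}'_A$ exactly when every ancestor--descendant pair of nodes $(u,w)$ has $\mathrm{col}(u) \to \mathrm{col}(w)$. To see this, take a leaf under the other child of $u$ and two leaves under the two children of $w$; these three leaves induce $X_{\mathrm{col}(u),\mathrm{col}(w)}$, and every three-leaf substructure arises this way.

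From the three lemmas, $\to$ has the following properties:
\begin{itemize}
\item it is transitive off the diagonal (Lemma \ref{71});
\item it is total on distinct colors (Lemma \ref{72});
\item $i\to j\to i$ forces $i\to i$ and $j\to j$ (Lemma \ref{73}).
\end{itemize}
I will verify that $\to$ is then a ``total preorder up to reflexivity.'' Collapsing mutually related colors gives a linear order of classes. Within a multi-element class every color is reflexive, and a singleton class may or may not be reflexive. Note that Lemma \ref{72} requires colors to appear, so I must check that a color appearing in $\mathfrak{T}'_A$ lies in $P$. This may fail when $A$ is small, for instance when $A=\{X_{i,i}\}$ only. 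These degenerate cases, together with $\mathfrak{T}_{\le 1}$ and $\mathfrak{T}_{\le 2}$, I will handle separately by direct inspection.

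Next I amalgamate $T_1,T_2\in\mathfrak{T}'_A$ over a common $T_0$. I will not use the generic construction from the Fra\"iss\'e proof of $\partial\mathfrak{T}_3(n)$, which hangs new branches at fresh nodes and may create forbidden ancestor pairs. Instead I will build the amalgamation inductively, one leaf at a time. It suffices to amalgamate one-point extensions $T_0\cup\{a\}$ and $T_0\cup\{b\}$, because general amalgamation follows by the standard iteration. In this case $a$ hangs off an edge $e_a$ of $T_0$ (or above the root) via a new node $u_a$, and $b$ does likewise via $u_b$. If $e_a\ne e_b$, inserting both is legal: each new ancestor--descendant pair already occurred in $T_1$ or $T_2$. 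If $e_a=e_b$, the new nodes $u_a$ and $u_b$ are stacked on one edge in some order, say $u_a$ above $u_b$. The only new ancestor pair is then $(\mathrm{col}(u_a),\mathrm{col}(u_b))$, together with pairs through the existing ancestors and descendants of the edge, which were already legal. Totality of $\to$ lets me choose the order so that $\mathrm{col}(u_a)\to\mathrm{col}(u_b)$ when the colors differ. When they coincide, say both equal $i$, I instead identify $u_a=u_b$ as a single node of color $i$ with children the lower part of the edge, $a$, and $b$. This is not binary, so I replace it by two stacked $i$-nodes; that requires $i\to i$. If $i$ is not reflexive, $a$ and $b$ both hang below $i$-nodes on the same edge, and I can place the $i$-node on a different side, making $a$ and $b$ siblings under one $i$-node attached to the edge. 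This is one node, not two, and it is valid.

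The main obstacle is checking that the remaining ancestor pairs, which come from transitivity through the inserted nodes, are all legal. I expect this to follow from off-diagonal transitivity together with Lemma \ref{73} supplying reflexivity exactly where a color repeats along a path.
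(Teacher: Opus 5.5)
Your route differs from the paper's. The paper inducts on $|T_2\setminus T|$, amalgamates in $\partial\mathfrak{T}_3(n)$, and then slides $a$ along a root-to-leaf branch. You instead recast membership in $\mathfrak{T}'_A$ as ``$\mathrm{col}(u)\to\mathrm{col}(w)$ for every ancestor--descendant pair of nodes'' and reduce to amalgamating two one-point extensions. That framework is sound and makes every check local, but one case of your one-point step fails as written.

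The failing case is $e_a=e_b$ with $\mathrm{col}(u_a)=\mathrm{col}(u_b)=i$ and $X_{i,i}\notin A$. Here your ``siblings under one $i$-node'' tree is not a valid amalgamation:
\begin{itemize}
\item A binary node cannot carry the lower part of the edge together with both $a$ and $b$. So you need a node $w$ subdividing the edge and the cherry node $v=\mathrm{lca}(a,b)$ below it.
\item Deleting $b$ suppresses $v$, so $w$ must be colored $i$.
\item The pair $(w,v)$ then requires $X_{i,\mathrm{col}(v)}\in A$. For $\mathrm{col}(v)=i$ this is exactly the forbidden tree, and in general it can fail for every color.
\end{itemize}
Concretely, take $n=2$ and $A=\{X_{1,1},X_{1,2}\}$. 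Let $T_0$ be a cherry on $\{x,y\}$ with root colored $1$, and hang $a$ (resp.\ $b$) on the edge to $x$ via a node of color $2$. Since $\mathrm{lca}(a,x)$ and $\mathrm{lca}(b,x)$ are both $2$-nodes, every amalgamation with $a\neq b$ contains $X_{2,2}$ or some $X_{2,j}$, none of which lies in $A$. The repair is immediate. When $e_a=e_b$ and the colors agree, $T_0\cup\{a\}$ and $T_0\cup\{b\}$ are the same one-point extension of $T_0$. Identifying $a$ with $b$ therefore gives the amalgamation $T_1\in\mathfrak{T}'_A$, since amalgamations need only be jointly surjective. This also makes your stacking detour unnecessary when $X_{i,i}\in A$.

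Second, for $e_a\neq e_b$ your claim that every new ancestor--descendant pair already occurs in $T_1$ or $T_2$ is false. If $u_a$ lands above $u_b$, the pair $(u_a,u_b)$ is new. It is still admissible. The lower endpoint $c_a$ of $e_a$ is then a node of $T_0$ strictly between them, so $\mathrm{col}(u_a)\to\mathrm{col}(c_a)$ holds in $T_1$ and $\mathrm{col}(c_a)\to\mathrm{col}(u_b)$ holds in $T_2$. Moreover, $i\to j\to k$ implies $i\to k$ for all colors:
\begin{itemize}
\item for distinct $i,j,k$ this is Lemma~\ref{71};
\item if $i=j$ or $j=k$, the conclusion is one of the hypotheses;
\item if $i=k\neq j$, it is Lemma~\ref{73}.
\end{itemize}
This is precisely the ``main obstacle'' you flagged, and it closes this way. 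With these two corrections, together with your use of Lemma~\ref{72} to order $u_a,u_b$ when their colors differ, the one-point amalgamation goes through, and hence so does the proposition.
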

\begin{proof}
    The converse is clear, so we show the forward direction. Conditions \ref{fa} and \ref{fb} are satisfied by construction, while \ref{fc} is inherited from $\partial \mathfrak{T}_3(n)$. It thus suffices to check \ref{fd}. Let $T_1,T_2$ be tree structures in $\mathfrak{T}'_A$ with common substructure $T$. We proceed by induction on $|T_2 \setminus T|$. 
    
    If $|T_2 \setminus T| = 0$, then $T_1$ is the desired amalgamation. Otherwise, by the inductive hypothesis, there exists an amalgamation $T' \in \mathfrak{T}'_A$ of $T_1$ and $T_2 \setminus a$ for any $a \in T_2 \setminus T$. By the amalgamation property of $\partial \mathfrak{T}_3(n)$, there is an amalgamation $T'' \in \partial \mathfrak{T}_3(n)$ such that the restriction of $T''$ to $T_1 \cup T_2 \setminus a$ is isomorphic to $T'$. Now, let $N(a)$ denote the node adjacent to $a$ and $C(a)$ denote its color. For $b \in T_1$, if there is $c \in T_2$ such that $X_{C(a),C(b)}, X_{C(b),C(c)} \in A$ (resp. $X_{C(c),C(b)}, X_{C(b),C(a)} \in A$), then by Lemma \ref{71}, we have $X_{C(a),C(c)} \in A$ (resp. $X_{C(c),C(a)} \in A$), implying that no new three-element structures are introduced in $A$ by $T''$. Otherwise, let $B=\{b_1, \dots, b_m\}$ be the set of $b_i \in T_1$ such that such $c$ does not exist and either the simple path from the root to $b_i$ contains $a$ or the path from the root to $a$ contains $b_i$, since only such $b_i$ may potentially introduce new three-element substructures in $T''$. Without loss of generality, suppose that $X_{C(b_i)},X_{C(b_{i+1})} \in A$ for $i = 1, \dots, n-1$. The induced subgraph on $B \cup \{a\}$ is the following branch:
    \begin{center}
        \begin{tikzpicture}[scale = 0.8, transform shape]
            \node (0) at (0,0) [rectangle, draw, inner sep=1.5pt] {};
            \node (1) at (-0.5,0.5) [circle, draw, inner sep = 1.5pt] {$b_1$};
            \draw (0)--(1);
            \node (d1) at (9/32, 9/32) {$\cdot$};
            \node (d2) at (6/16,6/16) {$\cdot$};
            \node (d3) at (15/32,15/32) {$\cdot$};
            \node (2) at (3/4,3/4) [rectangle, draw, inner sep = 1.5pt] {};
            \node (3) at (1/4,5/4) [circle, draw, inner sep = 1.5pt] {$a$};
            \draw (2) -- (3);
            \draw (0)--(3/16,3/16);
            \draw (18/32,18/32) -- (2);
            \node (d4) at (33/32,33/32) {$\cdot$};
            \node (d5) at (18/16,18/16) {$\cdot$};
            \node (d6) at (39/32,39/32) {$\cdot$};
            \node (4) at (3/2,3/2) [rectangle, draw, inner sep = 1.5pt] {};
            \node (5) at (1, 2) [circle, draw, inner sep = 1.5pt] {$b_n$};
            \draw (4) -- (5);
            \draw (2) -- (30/32,30/32);
            \draw (42/32,42/32) -- (4);
        \end{tikzpicture}
    \end{center}
Let $j$ be the smallest index such that $X_{C(a),C(b_j)} \in A$. Reposition $a$ along the branch such that $N(a)$ is in between $N(b_{j-1})$ and $N(b_j)$ where $a$ becomes the root if $j=1$. Then, for all $i \geq j$, we have $X_{C(a),C(b_i)} \in A$ by Lemma \ref{71}, so we obtain a desired modification of $T''$ that now belongs to $\mathfrak{T}'_A$. 
\end{proof}
Let $S(X_n)$ denote the set of subsets $A \subseteq X_n$ such that $\mathfrak{T}'_A$ satisfies Lemmas \ref{71}, \ref{72}, \ref{73}. Let $\mathcal{F}_n$ denote the set of Fraïssé subclasses of $\partial \mathfrak{T}_3(n)$ excluding $\mathfrak{T}_{\leq 1}$ and $\mathfrak{T}_{\leq 2}$. The above construction establishes a map $\Phi: S(X_n) \to \mathcal{F}_n$ given by $\Phi(A) = \mathfrak{T}'_A$ (where $\Phi(\varnothing) = \varnothing$). 
\begin{prop} \label{75}
The map $\Phi$ is a surjection.
\end{prop}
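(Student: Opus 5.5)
To prove that $\Phi$ is surjective, I take an arbitrary $\mathfrak{T}' \in \mathcal{F}_n$ and exhibit a preimage. The natural choice is
$$A := \{X_{i,j} \in X_n : X_{i,j} \in \mathfrak{T}'\},$$
the set of three-leaf trees lying in $\mathfrak{T}'$. It then remains to show two things: that $A \in S(X_n)$, and that $\mathfrak{T}'_A = \mathfrak{T}'$.

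\emph{Step 1: $A \in S(X_n)$.} Since $\mathfrak{T}'$ is Fra\"iss\'e, Lemmas \ref{71}, \ref{72}, \ref{73} apply to it directly. Their conclusions are statements about which $X_{i,j}$ belong to $\mathfrak{T}'$, which is exactly membership in $A$. For Lemma \ref{72} I have to check that "color $i$ appears in $\mathfrak{T}'$" is the same notion as in $\mathfrak{T}'_A$. Suppose a node of color $i$ occurs in some $T \in \mathfrak{T}'$. Then the substructure consisting of two leaves below that node is the two-leaf tree with root $i$, and it lies in $\mathfrak{T}'$ by \ref{fb}. So appearance of colors is detected by two-leaf substructures.

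I must also rule out $A = \varnothing$ when $\mathfrak{T}'$ contains large structures. If $A$ were empty, every structure in $\mathfrak{T}'$ would have at most two leaves, since any tree with at least three leaves contains some $X_{i,j}$ as a substructure. Then $\mathfrak{T}'$ would be $\varnothing$, $\mathfrak{T}_{\le 1}$, or a class of trees with at most two leaves. The first is $\Phi(\varnothing)$ and the second is excluded from $\mathcal{F}_n$. For the third, two roots of colors $i,j$ amalgamate over a leaf into a three-leaf tree, so \ref{fd} forces $A \neq \varnothing$ unless only one color appears. I need to make sure the convention for $\mathfrak{T}_{\le 2}$ covers these cases, possibly a small one-color or two-color subtlety to note.

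\emph{Step 2: $\mathfrak{T}'_A = \mathfrak{T}'$.} For $\mathfrak{T}' \subseteq \mathfrak{T}'_A$: every three-leaf substructure of a tree in $\mathfrak{T}'$ is in $\mathfrak{T}'$ by \ref{fb}, hence in $A$. For the reverse inclusion, which is the main obstacle, I argue by induction on the number of leaves $|L(T)|$ that every $T$ all of whose three-leaf substructures lie in $A$ belongs to $\mathfrak{T}'$. The cases $|L(T)| \le 3$ follow from the definition of $A$ together with the two-leaf observation from Step 1.

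For the inductive step with $|L(T)| \ge 4$, I choose two leaves $a,c$ that are separated in the sense of Proposition \ref{easy}. Such a pair exists, for instance a leaf below the root on one side and a leaf far from it, chosen so that $N(a)$ and $N(c)$ are neither equal nor adjacent. By Proposition \ref{easy}, $T$ is the unique amalgamation of $T \setminus a$ and $T \setminus c$ over $T \setminus \{a,c\}$. Both $T \setminus a$ and $T \setminus c$ have all their three-leaf substructures in $A$, so they lie in $\mathfrak{T}'$ by induction. Then \ref{fd} gives some amalgamation in $\mathfrak{T}'$, and uniqueness forces it to be $T$.

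The delicate point is the existence of a separated pair. If no such pair exists, every two nodes adjacent to leaves are equal or adjacent, which bounds $T$ to at most four leaves: either the balanced four-leaf tree or the caterpillar. I would treat these directly by amalgamating over three leaves and using that among the two or three possible amalgamations, the wrong ones contain a three-leaf substructure not in $A$ or violate the fixed shape. Alternatively, I would pick the pair as in the proof of Lemma \ref{73}, where distance-two separation in a four-leaf caterpillar already holds. So in fact only the balanced tree with two cherries needs checking: its two cherry leaves $a$ and $c$ have nodes at distance $2$ through the root, hence are separated, and the argument goes through uniformly.
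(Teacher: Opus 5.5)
Your proposal is correct and follows the paper's proof. You take $A$ to be the three-leaf trees in $\mathfrak{T}'$, get $\mathfrak{T}'\subseteq\mathfrak{T}'_A$ from \ref{fb}, and prove the reverse inclusion by induction on the number of leaves: a separated pair (Proposition \ref{easy}) makes $T$ the unique amalgamation of $T\setminus a$ and $T\setminus c$, so $T$ lies in $\mathfrak{T}'$ by \ref{fd}. Your extra checks only make explicit what the paper calls ``easy to see'': that $A\in S(X_n)$, that a separated pair always exists once $|L(T)|\ge 4$ (both four-leaf shapes already have one, as you eventually note), and the degenerate empty-$A$ and one-color cases.
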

\begin{proof}
    Let $\mathfrak{T}'$ be a Fraïssé subclass of $\partial \mathfrak{T}_3(n)$ such that its set of three-element structures is $A$. By the definition of $\mathfrak{T}'_A$, we have $\mathfrak{T}' \subseteq \mathfrak{T}'_A$. On the other hand, let $T$ be a tree structure in $\mathfrak{T}'_A$. We show by induction on $|T|$ that $T \in \mathfrak{T}'$. This is clear for $|T| \leq 3$ by taking induced substructures in $\mathfrak{T}'$, so suppose $|T| \geq 4$. It is easy to see that there must exist leaves $a$ and $b$ such that $a$ and $b$ are separated, i.e., $N(a)$ and $N(b)$ are neither the same nor adjacent. By Proposition \ref{easy}, $T$ is the unique amalgamation of $T \setminus a$ and $T \setminus b$, yet $|T \setminus a| = |T \setminus b| = |T|-1$, so by the inductive hypothesis, $T \setminus a, T \setminus b \in \mathfrak{T}'$, implying that $T \in \mathfrak{T}'$ by \ref{fd}. Thus, $\mathfrak{T}'_A \subseteq \mathfrak{T}'$, so we have $\mathfrak{T}' = \mathfrak{T}'_A$. This means that given any Fra\"iss\'e class in $\mathcal{F}_n$, its preimage under $\Phi$ exists and is determined by its three-element structures.
\end{proof}
\begin{prop} \label{76}
   The map $\Phi$ is injective. 
\end{prop}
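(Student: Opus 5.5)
To prove that $\Phi$ is injective, I will show that $A$ can be read off from $\mathfrak{T}'_A$: it is exactly the set of three-leaf structures lying in $\mathfrak{T}'_A$. Concretely, take $A, A' \in S(X_n)$ with $\mathfrak{T}'_A = \mathfrak{T}'_{A'}$. I will prove $A \subseteq A'$; the reverse inclusion then follows by symmetry.

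Fix $X_{i,j} \in A$. The first step is to check that $X_{i,j}$ itself belongs to $\mathfrak{T}'_A$. Its only three-element substructure is $X_{i,j}$, which lies in $A$. By the definition of $\mathfrak{T}'_A$ as the class of structures whose three-element substructures all lie in $A$, we get $X_{i,j} \in \mathfrak{T}'_A = \mathfrak{T}'_{A'}$. This in turn means that every three-element substructure of $X_{i,j}$ lies in $A'$. Again the only such substructure is $X_{i,j}$, so $X_{i,j} \in A'$.

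The main point needing care is that $\mathfrak{T}'_A$ really does contain every member of $A$, rather than merely restricting the three-element substructures of the structures it happens to contain. The membership criterion is local, so this holds, but I will state it explicitly. The empty case also needs attention: $\Phi(\varnothing) = \varnothing$, while for nonempty $A$ the class $\mathfrak{T}'_A$ contains some $X_{i,j}$ and is therefore nonempty. So the empty set is not identified with any other element of $S(X_n)$. The classes $\mathfrak{T}_{\le 1}$ and $\mathfrak{T}_{\le 2}$ are excluded from $\mathcal{F}_n$ and play no role. Combining injectivity with Proposition \ref{75} makes $\Phi$ a bijection, which is what the enumeration results that follow will rely on.
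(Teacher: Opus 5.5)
Your proposal is correct and is essentially the paper's argument: the paper simply says injectivity follows from the definition of $\mathfrak{T}'_A$. Your observation spells this out: a three-leaf structure lies in $\mathfrak{T}'_A$ exactly when it belongs to $A$, because its only three-element substructure is itself, so $A$ is recovered as the set of three-leaf members of $\Phi(A)$; your separate check of the convention $\Phi(\varnothing)=\varnothing$ is a harmless extra step.
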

\begin{proof}
    This follows from the definition of $\mathfrak{T}'_A$.
\end{proof}
\begin{corollary} \label{class}
    The map $\Phi$ is bijective.
\end{corollary}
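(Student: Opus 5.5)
This follows directly from Propositions \ref{75} and \ref{76}: a map that is both surjective and injective is bijective. I would still check that both propositions are stated for the same domain and codomain. The domain is $S(X_n)$ together with the convention $\Phi(\varnothing)=\varnothing$. The codomain $\mathcal{F}_n$ excludes $\mathfrak{T}_{\le 1}$ and $\mathfrak{T}_{\le 2}$.

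For \textbf{well-definedness}, I would use Proposition \ref{74}. For every $A \in S(X_n)$ the class $\mathfrak{T}'_A$ satisfies the conditions of Lemmas \ref{71}, \ref{72} and \ref{73}, so it is Fra\"iss\'e. When $A$ is nonempty it contains three-leaf trees, so it is neither $\mathfrak{T}_{\le 1}$ nor $\mathfrak{T}_{\le 2}$, and hence lies in $\mathcal{F}_n$. The empty set goes to the empty class by convention.

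For \textbf{surjectivity}, Proposition \ref{75} shows that every $\mathfrak{T}' \in \mathcal{F}_n$ equals $\mathfrak{T}'_A$, where $A$ is its set of three-element structures. I would also confirm that this $A$ lies in $S(X_n)$. That holds because Lemmas \ref{71}, \ref{72} and \ref{73} were proved for an arbitrary Fra\"iss\'e subclass $\mathfrak{T}'$.

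For \textbf{injectivity}, Proposition \ref{76} applies. If $\mathfrak{T}'_A=\mathfrak{T}'_{A'}$, then the two classes have the same set of three-leaf members, and that set is $A$ on one side and $A'$ on the other. The only care needed is to check that every $X_{i,j}\in A$ actually occurs in $\mathfrak{T}'_A$. It does, since $X_{i,j}$ is itself a structure whose only three-element substructure lies in $A$.

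The main obstacle is the degenerate case $A=\varnothing$. I would handle it by noting that the excluded classes $\mathfrak{T}_{\le 1}$ and $\mathfrak{T}_{\le 2}$ are exactly the nonempty Fra\"iss\'e subclasses with no three-leaf structures, so the convention $\Phi(\varnothing)=\varnothing$ keeps the correspondence one-to-one.
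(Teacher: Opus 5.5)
Your proposal is correct and is the paper's argument: the paper states the corollary with no separate proof, as an immediate consequence of Proposition~\ref{75} (surjectivity) and Proposition~\ref{76} (injectivity). Your extra checks are consistent with the paper's conventions and only spell out what it leaves implicit: well-definedness via Proposition~\ref{74}, membership of a class's three-element set in $S(X_n)$, and the degenerate case $A=\varnothing$.
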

As a result, each Fraïssé subclass of $\partial \mathfrak{T}_3(n)$ is determined by its three-element structures, i.e., a subset of $X_n$ satisfying Lemmas \ref{71}, \ref{72}, and \ref{73}. We introduce an additional result to enumerate these subclasses, which is the following.  
\begin{prop} \label{enumerate}
    For each $n \geq 2$, let $f(n)$ denote the number of Fraïssé subclasses of $\partial \mathfrak{T}_3(n)$ with $n$ colors. Then, we have \begin{equation} \label{7.2} f(n) = n f(n-1) + \sum_{m=0}^{n-1} \binom{n}{m} f(m) \end{equation} where $f(0)=1$, $f(1)=2$.  
\end{prop}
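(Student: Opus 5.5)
By Corollary \ref{class}, $f(n)$ counts the subsets $A \subseteq X_n$ that satisfy the conditions of Lemmas \ref{71}, \ref{72}, \ref{73} and in which every color of $[n]$ appears. So the plan is to turn these conditions into an order-theoretic description of $A$ and then count by peeling off the class of colors closest to the root.

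\textbf{Encoding $A$ as a relation.} For distinct colors write $i \to j$ when $X_{i,j} \in A$, and record separately which loops $X_{i,i}$ lie in $A$. Since $n \geq 2$ and every color appears, Lemma \ref{72} makes $\to$ total on distinct pairs. Lemma \ref{71} makes it transitive on pairwise distinct triples. Lemma \ref{73} says that whenever $i \to j$ and $j \to i$, both loops $X_{i,i}$ and $X_{j,j}$ lie in $A$.

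\textbf{Structure of $\to$.} I will show that ``$i = j$ or ($i \to j$ and $j \to i$)'' is an equivalence relation, using transitivity on distinct triples. I will then show that $\to$ induces a strict linear order $K_1 \prec K_2 \prec \cdots$ on the equivalence classes, so that $i \to j$ for all $i \in K_s$ and $j \in K_t$ with $s < t$. Within a class of size at least $2$ all pairs go both ways, so Lemma \ref{73} forces every loop in that class. For a singleton class the loop is unconstrained, because Lemmas \ref{71} and \ref{73} involve only distinct colors or mutual pairs.

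\textbf{Converse.} Conversely, any such data satisfies the three lemmas. This data is: an ordered set partition of $[n]$, all loops present in the non-singleton blocks, and an arbitrary choice of loops in the singleton blocks. Checking the three lemmas against this data is routine. It also verifies that the bijection restricts correctly to classes in which all $n$ colors appear.

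\textbf{Counting.} I will count by the first block $K_1$, the colors adjacent to the root.
\begin{itemize}
\item If $K_1 = \{i\}$ is a singleton, there are $n$ choices of $i$ and $2$ choices for the loop $X_{i,i}$. The remaining colors form an arbitrary structure of the same type on $n-1$ colors, giving $2n f(n-1)$.
\item If $|K_1| = n-m \geq 2$, there are $\binom{n}{m}$ choices of block, its loops are forced, and the rest contributes $f(m)$, with $f(0) = 1$ accounting for $K_1 = [n]$. This gives $\sum_{m=0}^{n-2} \binom{n}{m} f(m)$.
\end{itemize}
Writing $2n f(n-1) = n f(n-1) + \binom{n}{n-1} f(n-1)$ yields \eqref{7.2}.

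As a check, $f(1) = 2$: the single color's loop is either present or absent. For $n = 2$ the count gives $f(2) = 8 + 1 = 9$, which matches \eqref{7.2}.

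\textbf{Main obstacle.} The delicate step is the structural one: proving that mutual pairs form an equivalence relation whose classes are linearly ordered, using only transitivity on pairwise distinct triples, and confirming that no hidden constraint ties a singleton's loop to the other colors. I will handle this by explicit casework on triples $i,j,k$ according to which of the pairs among them are mutual.
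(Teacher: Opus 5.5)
Your proposal is correct and follows essentially the same route as the paper. Both reduce, via Corollary~\ref{class}, to counting maps $\varepsilon$ that satisfy the conditions of Lemmas~\ref{71}--\ref{73}, and then split on whether an extremal block of mutually related colors is a singleton (contributing $2nf(n-1)$) or has size $n-m\geq 2$ (contributing $\binom{n}{m}f(m)$). The only difference is which block is removed. The paper peels off the \emph{last} block, its ``self-connected and isolated'' set $U$, whose existence it proves by a terse induction on maximally self-connected sets. You peel off the first block after proving the full ordered-set-partition structure, which makes the uniqueness of that block and the converse gluing step more transparent.
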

\begin{proof}
    Since the empty class is Fraïssé, we have $f(0)=1$. Let $$\varepsilon(i,j) := \mathbf{1}_{X_{i,j} \in \mathfrak{T}'}$$ for some Fra\"iss\'e subclass $\mathfrak{T}' \subseteq \partial \mathfrak{T}_3(n)$. For $n \geq 1$, by Propositions \ref{75} and \ref{76}, $f(n)$ is equal to the number of maps $\varepsilon : [n] \times [n] \to \{0,1\}$ satisfying the following properties for all pairwise distinct $i,j,k \in [n]$:
    \begin{enumerate}
        \item If $\varepsilon(i,j) = \varepsilon(j,k) = 1$ then $\varepsilon(i,k) = 1$.
        \item If $\varepsilon(i,j) = \varepsilon(j,i) = 1$, then $\varepsilon(i,i) = \varepsilon(j,j)=1$.
        \item We have ($\varepsilon(i,j),\varepsilon(j,i)) \neq (0,0)$.
    \end{enumerate}
Property (3) guarantees that all $n$ colors appear in the counted subclass. We say that a nonempty subset $U$ of $[n]$ is \emph{self-connected} if $\varepsilon(i,j) = \varepsilon(j,i) = 1$ for all $i,j \in U$ and \emph{isolated} if $\varepsilon(i,k) = 0$ for all $k \in [n] \setminus U$. We claim that there exists a self-connected and isolated set $U$ for each $\varepsilon$. Note that $U$ is unique if it exists, and Lemma \ref{73} ensures that the case $i=j$ is consistent with the properties of $U$. 

To prove the claim, we proceed by induction on $n$. The result is clear for $n \leq 2$, so suppose $n \geq 3$. Fix a map $\varepsilon$, and consider any self-connected $W \subseteq [n]$. If $\varepsilon(i,k) = 0$ for some $i \in W$ and all $k \in [n] \setminus W$, then $W$ is our desired choice, so suppose there is $k$ such that $\varepsilon(i,k)=1$. We also suppose $W$ is \emph{maximally self-connected}, meaning that $W$ is not properly contained in any larger self-connected set. This implies that $\varepsilon(k,i) = 0$, since otherwise Lemma \ref{71} implies that $W \cup \{k\}$ is self-connected. In fact, by Lemma \ref{71}, we conclude that $(\varepsilon(w,k), \varepsilon(k,w)) = (1,0)$ for all $w \in W$. Let $K \subseteq [n] \setminus W$ denote the set of such $k$. Observe that for all $l \in [n] \setminus (W \cup K)$, we have $(\varepsilon(w,l), \varepsilon(l,w)) = (0,1)$, so it follows that $\varepsilon(l,k)=1$, while $\varepsilon(k,l)=0$ by the maximal self-connectedness of $W$. Now, by the inductive hypothesis, there is a self-connected, isolated set $U \subseteq [n] \setminus W$. It is easy to see that $U \subseteq K$, meaning that $\varepsilon(u,w) = 0$ for all $u \in U$ and $w \in W$, which gives the claim.

The right-hand side of Equation \ref{7.2} is obtained by doing casework on $|U|=n-m$. Note that $|U| > 0$ since all $n$ colors appear in the subclass. If $|U|=1$, suppose $U = \{u_1\}$. We have $\varepsilon(u_1,u_1) \in \{0,1\}$, while there are $f(n-1)$ maps $\varepsilon : [n] \setminus \{u_1\} \times [n] \setminus \{u_1\} \to \{0,1\}$. There are $n$ choices for $u_1$, contributing $$2nf(n-1) = nf(n-1)+\binom{n}{n-1}f(n-1)$$ possible maps. 

If $2 \leq |U| \leq n$, then there are $f(n-|U|)=f(m)$ maps $\varepsilon : [n] \setminus U \times [n] \setminus U \to \{0,1\}$ while Lemma \ref{73} implies that $\varepsilon(u,u)=1$ for all $u \in U$. Since there are $$\binom{n}{|U|} = \binom{n}{n-m} = \binom{n}{m}$$ choices for $U$, we have $$\sum_{m=0}^{n-2} \binom{n}{m}f(m)$$ additional maps, and the proposition follows. 
\end{proof}
\begin{corollary}
    The number of Fraïssé subclasses of $\partial \mathfrak{T}_3(n)$ for $n=1,2,3,4, \dots$ with $n$ colors is $2,9,61,551, \dots$
\end{corollary}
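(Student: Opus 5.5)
The plan is to iterate the recursion of Proposition \ref{enumerate}, starting from the base values $f(0)=1$ and $f(1)=2$. First I will confirm $f(1)=2$ directly: with one color, the only three-leaf tree is $X_{1,1}$. Lemmas \ref{71}, \ref{72} and \ref{73} put no constraint on it, and color $1$ must appear. So the admissible sets are $A=\varnothing$ and $A=\{X_{1,1}\}$. These agree with the two choices $\varepsilon(1,1)\in\{0,1\}$ for a single self-connected isolated set $U=\{1\}$.

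Next I will substitute into
\[
f(n) = n f(n-1) + \sum_{m=0}^{n-1} \binom{n}{m} f(m)
\]
for $n=2,3,4$ in turn:
\[
f(2) = 2\cdot 2 + \bigl(1 + 2\cdot 2\bigr) = 9,
\]
\[
f(3) = 3\cdot 9 + \bigl(1 + 3\cdot 2 + 3\cdot 9\bigr) = 27+34 = 61,
\]
\[
f(4) = 4\cdot 61 + \bigl(1 + 4\cdot 2 + 6\cdot 9 + 4\cdot 61\bigr) = 244 + 307 = 551.
\]
The listed terms are exactly these values, and the trailing dots denote continuing the same recursion.

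As a sanity check I will hand-enumerate the case $n=2$, counting maps $\varepsilon$ that satisfy properties (1)--(3) in the proof of Proposition \ref{enumerate}. If exactly one of $\varepsilon(1,2)$, $\varepsilon(2,1)$ is $1$, there are $2$ such choices, and $\varepsilon(1,1)$, $\varepsilon(2,2)$ are free, giving $8$ maps. If both are $1$, then Lemma \ref{73} forces the diagonal to be $1$, giving $1$ more map. The total is $9$, matching the recursion.

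The main point to flag is not arithmetic but interpretation. The corollary counts subclasses in which all $n$ colors appear, as property (3) in the proof of Proposition \ref{enumerate} requires. It therefore relies on that proposition's casework over the unique self-connected isolated set $U$, which I take as given. Beyond that, the argument is a direct computation.
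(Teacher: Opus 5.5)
Your proposal is correct and follows the paper's route: the corollary is obtained by evaluating the recursion of Proposition \ref{enumerate} from $f(0)=1$, $f(1)=2$, and your arithmetic giving $f(2)=9$, $f(3)=61$, $f(4)=551$ is right. The direct checks of $f(1)$ and $f(2)$ via the maps $\varepsilon$ are a harmless extra consistency check.
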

\begin{remark}
    The sequence $(f(n))_{n \geq 0}$ appears in OEIS \cite[A006155]{oeis}. Its exponential generating function is $$F(x) = \frac{1}{2-x-e^x}.$$ 
\end{remark}
\subsection{Induced regular measures} \label{induced}
Given a fixed measure $\mu$ on a Fraïssé class $\mathfrak{F}$, let $\mathfrak{F}'$ be the subclass consisting of all structures $X$ such that $\mu(X) \neq 0$ (where $\mu(X) = \mu(\varnothing \to X)$) and $\mu'$ be the restriction of $\mu$ to $\mathfrak{F}'$. We say that $\mathfrak{F}'$ is the \emph{support} of $\mu$ and $\mu'$ is the \emph{induced regular measure} on $\mathfrak{F}'$. A subclass $\mathfrak{F}' \subseteq \mathfrak{F}$ that is the support of some measure on $\mathfrak{F}$ is called an \emph{induced subclass}. The following proposition justifies the regularity of $\mu'$; we include the proof for familiarity with induced regular measures.
\begin{prop}[{\cite[Proposition 2.18, p.12]{harman2024arborealtensorcategories}}]
    If $\mathfrak{F}'$ is an induced subclass of $\mathfrak{F}$, then it is a Fra\"iss\'e class, and $\mu'$ is a regular measure on $\mathfrak{F}'$.
\end{prop}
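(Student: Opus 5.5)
The plan is to prove two things: that $\mathfrak{F}'$ satisfies the Fra\"iss\'e axioms, and that $\mu'$ is a regular measure. Both reduce to two basic facts about $\mu(X)=\mu(\varnothing\to X)$. First, by \ref{mb}, for any embedding $i\colon Y\to X$ we have $\mu(X)=\mu(i)\mu(Y)$, since $\varnothing\to X$ factors through $i$. Second, by \ref{ma}, isomorphic structures have equal $\mu$-value, because $\varnothing\to X$ and $\varnothing\to X'$ differ by an isomorphism. I will use these repeatedly and deduce each axiom from them.

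Axiom \ref{fa} follows directly from the second fact. For \ref{fb}, take $Y\subseteq X$ with $X\in\mathfrak{F}'$. Then $0\neq\mu(X)=\mu(i)\mu(Y)$, and since $\CC$ is an integral domain, $\mu(Y)\neq 0$. The same identity gives regularity of $\mu'$: for any embedding $i\colon Y\to X$ in $\mathfrak{F}'$, $\mu(i)\neq 0$ because $\mu(X)\neq0$. Axiom \ref{fc} is inherited from $\mathfrak{F}$.

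The main step is amalgamation \ref{fd}. Let $X,Y\in\mathfrak{F}'$ share a substructure $Z$, with $j\colon Z\to Y$, and let $Z'_1,\dots,Z'_n$ be all amalgamations in $\mathfrak{F}$, with embeddings $i'_k\colon Y\to Z'_k$. By \ref{mc}, $\mu(Z\to X)=\sum_k\mu(i'_k)$. I will multiply this by $\mu(Y)$ and use \ref{mb} on both sides. On the left, $\mu(Z\to X)\mu(Y)=\mu(Z\to X)\mu(j)\mu(Z)=\mu(X)\mu(j)$. On the right, the sum becomes $\sum_k\mu(Z'_k)$. This gives
\[
\mu(X)\,\mu(Z\to Y)=\sum_k \mu(Z'_k).
\]
The left side is nonzero, because $\mu(Y)=\mu(j)\mu(Z)\neq0$ forces $\mu(j)\neq0$, and $\mu(X)\neq 0$. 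So some $\mu(Z'_k)\neq 0$, which means that $Z'_k$ lies in $\mathfrak{F}'$.

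Finally, $\mu'$ satisfies \ref{ma}--\ref{mc} on $\mathfrak{F}'$. Axioms \ref{ma} and \ref{mb} restrict directly from $\mu$. For \ref{mc} over $\mathfrak{F}'$, the amalgamations in $\mathfrak{F}$ that are missing from $\mathfrak{F}'$ are exactly those with $\mu(Z'_k)=0$, and for these $\mu(i'_k)=\mu(Z'_k)/\mu(Y)=0$. Dropping them therefore leaves the sum in \ref{mc} unchanged.

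I expect the main obstacle to be the bookkeeping in the amalgamation step, in particular keeping straight which embedding each $\mu$-value refers to, since the paper's convention in \ref{mc} pairs $i\colon Z\to X$ with embeddings out of $Y$. Once $\mu(\cdot)$ of structures is used as a normalizing factor, the rest is formal.
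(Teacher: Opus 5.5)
Your proposal is correct and follows the same route as the paper. The identity $\mu(X)=\mu(i)\mu(Y)$ gives closure under substructures and regularity, amalgamation follows because the \ref{mc} relation has a nonzero left-hand side, and \ref{mc} for $\mu'$ holds by dropping the vanishing terms. Your step of multiplying through by $\mu(Y)$ to get $\mu(X)\mu(Z\to Y)=\sum_k\mu(Z'_k)$ only makes explicit the passage from $\mu(Y\to Z'_k)\neq 0$ to $Z'_k\in\mathfrak{F}'$, which the paper leaves implicit.
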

\begin{proof}
    The argument follows that of \cite{harman2024arborealtensorcategories}, which we include for completeness. Suppose that $X \in \mathfrak{F}'$ and there is an embedding $i: Y \to X$ (which could be an isomorphism). We have $\mu(X)=\mu(i)\mu(Y)$, so $\mu(X) \neq 0$ implies that $\mu(Y) \neq 0$, or $Y \in \mathfrak{F}'$. Hence $\mathfrak{F}'$ satisfies \ref{fa} and \ref{fb}. 

    Clearly, $\mathfrak{F}'$ satisfies \ref{fc} since it is a subclass of $\mathfrak{F}$, so it suffices to show \ref{fd}. Given structures $X, Y \in \mathfrak{F}'$ with a common substructure $Z$, suppose $Z'_1, \dots, Z'_m$ are all amalgamations of $X$ and $Y$ over $Z$. We have \begin{align} \label{trivial} \mu(Z \to X) = \sum_{k=1}^m \mu(Y \to Z'_i). \end{align} Since the left hand side is nonzero, there must exist some $i$ such that $\mu(Y \to Z'_i)$ is nonzero, so $Z'_i \in \mathfrak{F}'$, as desired.

    Now we show that $\mu'$ defines a measure on $\mathfrak{F}'$. Properties \ref{ma} and \ref{mb} are obvious. Observe that $\mathfrak{F}'$ simply excludes the structures $Z_k'$ where $\mu(Z_k') = \mu(Y \to Z_k') = 0$, which are just the terms on the right-hand side of Equation \ref{trivial} that vanish, so the equation still holds. Thus \ref{mc} is also satisfied.

    Finally, given an embedding $i: Y \to X$ where $X,Y \in \mathfrak{F}'$, we have $\mu'(i) = \mu'(X)\mu'(Y)^{-1}$, which is nonzero, so $\mu'$ is indeed regular on $\mathfrak{F}'$.
\end{proof}
From Theorem \ref{maintheorem}, let $\mu$ be a $\mathbb{Z}\left[\frac 12\right]$-valued measure, i.e., $\mu: \Theta(\partial \mathfrak{T}_3(n)) \to \ZZ \left[\frac 12 \right]$ is a ring homomorphism realizing one of the $(2n+2)^n$ measures on $\partial \mathfrak{T}_3(n)$. Suppose that $\mathfrak{T}' \subseteq \partial \mathfrak{T}_3(n)$ is an induced subclass with induced regular measure $\mu'$. In particular, by Propositions \ref{75} and \ref{76}, $\mathfrak{T}'$ is determined by its three-element structures. We narrow down the list of possible three-element structures of supports with the following result.
\begin{lemma} \label{reduce}
    For all distinct $i,j \in [n]$, at most one of $X_{i,j}$ and $X_{j,i}$ is in $\mathfrak{T}'$.
\end{lemma}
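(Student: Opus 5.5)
Fix distinct colors $i,j\in[n]$ and suppose, for contradiction, that both $X_{i,j}$ and $X_{j,i}$ lie in the support $\mathfrak{T}'$, i.e.\ $\mu(X_{i,j})\neq 0$ and $\mu(X_{j,i})\neq 0$. The plan is to express $\mu(X_{i,j})$ as a product of values on irreducible marked structures and reach a contradiction with relation \ref{systemc}, $C(i,j)C(j,i)=0$.

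First, compute the measure of $X_{i,j}$ by removing leaves one at a time. In $X_{i,j}$ the root has color $i$, its leaf child is $a$, and its other child, of color $j$, carries leaves $b,c$. Deleting $a$ first gives
\[
\mu(X_{i,j}) = \mu([X_{i,j},a])\,\mu(X_{i,j}\setminus a).
\]
Here $[X_{i,j},a]$ is exactly the marked structure $C(i,j)$, with the root leaf marked. This matches the notation used in the quasi-regularity proof, where the analogous factor was written $\mu(C(i,j))$. Then $X_{i,j}\setminus a$ is a two-leaf tree with root colored $j$, so its measure is $\mathbf{B}(j)\mathbf{A}$. Hence
\[
\mu(X_{i,j})=\mathbf{C}(i,j)\,\mathbf{B}(j)\,\mathbf{A},
\]
and symmetrically $\mu(X_{j,i})=\mathbf{C}(j,i)\,\mathbf{B}(i)\,\mathbf{A}$.

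If both were nonzero, then in particular $\mathbf{C}(i,j)\neq0$ and $\mathbf{C}(j,i)\neq0$. Their product would then be nonzero in $\CC$, contradicting \ref{systemc} of Proposition \ref{system} (equivalently \ref{nqra}) once we apply the ring homomorphism $\mu$. This gives the lemma.

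The main point needing care is the identification of the marked structure obtained by deleting the leaf attached to the root with the generator $C(i,j)$ rather than $B$ or $D$. This comes down to checking the paper's convention that arguments record node colors by increasing distance from the root, with the marked leaf adjacent to the root node. If that convention instead made the factor $D(j,i)$, I would delete a leaf adjacent to the $j$-node first, so that the root-leaf marking gives $C(i,j)$ via the quadratic relation $C(i,j)B(j)=D(i,j)B(i)$ of Corollary \ref{quadratic}. Multiplicativity of $\mu$ along chains of one-point extensions makes the choice of deletion order immaterial, so some factorization must contain $C(i,j)$ as a factor.
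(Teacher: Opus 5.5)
Your proof is correct and is essentially the paper's argument. The paper cites regularity of the induced measure $\mu'$ on $\mathfrak{T}'$ to get $\mu'(C(i,j))\neq 0$ and $\mu'(C(j,i))\neq 0$, then contradicts \ref{systemc}; you make that nonvanishing explicit through the factorization $\mu(X_{i,j})=\mathbf{A}\,\mathbf{B}(j)\,\mathbf{C}(i,j)$, and your identification of $[X_{i,j},a]$ with $C(i,j)$ (marked leaf at the root) matches the paper's convention, so the hedge in your last paragraph is unnecessary.
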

\begin{proof}
    Since $\mu'$ is regular on $\mathfrak{T}'$, if both $X_{i,j}$ and $X_{j,i}$ belong to $\mathfrak{T}'$, then $\mu'(C(i,j))$ and $\mu'(C(j,i))$ are both nonzero. As a result, $\mu'(C(i,j)C(j,i))$ is nonzero, contradicting \ref{systemc}. 
\end{proof}
Let $B$ be the two-leaf tree structure with node colored $k$. Recall that $A = [\varnothing \to \bullet]$ and $B(k) = [\bullet \to B]$ are isomorphism classes of embeddings in $\Theta(\partial \mathfrak{T}_3(n))$.
\begin{lemma} \label{anonzero}
    For each $n \geq 1$, we have $\mu(A) \neq 0$ for all measures $\mu : \Theta(\partial \mathfrak{T}_3(n)) \to \CC$. 
\end{lemma}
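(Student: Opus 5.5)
The plan is to express $\mu(A)$ through a single value $\mathbf{S}(i)$ and then invoke Proposition~\ref{nonzero}, which guarantees $\mathbf{S}(i)\neq 0$. Corollary~\ref{linear} gives $A=1+\sum_{p\in[n]}B(p)$. By \eqref{systema}, each $\mathbf{B}(p)$ is either $0$ or $\mathbf{S}(p)$. Let $\mathcal{B}=\{p:\mathbf{B}(p)=\mathbf{S}(p)\}$; by Proposition~\ref{nonzero} this is exactly $\{p:\mathbf{B}(p)\neq 0\}$. If $\mathcal{B}=\varnothing$, then $\mu(A)=1\neq 0$ and we are done. So assume $\mathcal{B}\neq\varnothing$.

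\emph{Step 1: an identity for $\mu(A)$.} Rewrite the linear relation \eqref{systemg} as
$$2\mathbf{S}(i)-3\mathbf{C}(i,i)-\mathbf{B}(i)+\sum_{p\neq i}\bigl(\mathbf{C}(p,i)-\mathbf{B}(p)\bigr)=1,$$
which is the $i$-th row of the system \eqref{ds}. Moving $\sum_p\mathbf{B}(p)$ to the other side and using $\mu(A)=1+\sum_p\mathbf{B}(p)$ gives, for \emph{every} $i\in[n]$,
$$\mu(A)=2\mathbf{S}(i)-3\mathbf{C}(i,i)+\sum_{p\neq i}\mathbf{C}(p,i).$$
The freedom to choose $i$ is what makes the argument work.

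\emph{Step 2: choose the root-most element of $\mathcal{B}$.} As in the surjectivity part of the proof of Theorem~\ref{maintheorem}, relations \eqref{systemc}, \eqref{systeme} and \eqref{systemf} make $\mathcal{B}$ a chain $b_1\prec\cdots\prec b_m$. Here $\mathbf{C}(b_k,b_l)=\mathbf{S}(b_k)$ and $\mathbf{C}(b_l,b_k)=0$ for $k<l$. Take $i=b_1$ and show that $\mathbf{C}(p,b_1)=0$ for all $p\neq b_1$, using \eqref{systeme} with $j=b_1$:
$$\mathbf{C}(p,b_1)\mathbf{B}(b_1)=\bigl(\mathbf{B}(b_1)-\mathbf{C}(b_1,p)\bigr)\mathbf{B}(p).$$
There are two cases.
\begin{itemize}
\item If $p\notin\mathcal{B}$, the right side vanishes because $\mathbf{B}(p)=0$. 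Since $\mathbf{B}(b_1)=\mathbf{S}(b_1)\neq 0$, this forces $\mathbf{C}(p,b_1)=0$.
\item If $p=b_k$ with $k>1$, then $\mathbf{C}(b_1,p)=\mathbf{S}(b_1)=\mathbf{B}(b_1)$, so the right side again vanishes and $\mathbf{C}(p,b_1)=0$.
\end{itemize}
This step carries the real content. I expect the main care to be in rederiving the chain ordering with $b_1$ minimal directly from \eqref{systeme}, without appealing to the tree picture. Concretely, for any $b,b'\in\mathcal{B}$, \eqref{systeme} together with \eqref{systemc} shows that exactly one of $\mathbf{C}(b,b')$, $\mathbf{C}(b',b)$ equals the corresponding $\mathbf{S}$ and the other is $0$. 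Transitivity of this relation then follows from \eqref{systemf}, so a minimal element $b_1$ exists.

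\emph{Step 3: conclude.} Step~1 now reduces to $\mu(A)=2\mathbf{S}(b_1)-3\mathbf{C}(b_1,b_1)$. By \eqref{systemb}, $\mathbf{C}(b_1,b_1)\in\{0,\mathbf{S}(b_1)\}$, so $\mu(A)\in\{2\mathbf{S}(b_1),\,-\mathbf{S}(b_1)\}$. Both are nonzero by Proposition~\ref{nonzero}. As a consistency check, the example of Section~\ref{examp} has $\mathcal{B}=\{4,5\}$ with $b_1=4$, $\mathbf{C}(4,4)=0$ and $\mathbf{S}(4)=2$, giving $\mu(A)=4$, as computed there.
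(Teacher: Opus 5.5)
Your proof is correct, but it takes a different route from the paper's.

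\textbf{The paper's route.} The paper argues by contradiction. Assuming $\mathbf{A}=0$ makes all $n$ rows of the linear system homogeneous, giving $\mathbf{M}\mathbf{s}=\mathbf{0}$ with $m_{ii}=2-3\delta_{\mathbf{C}(i,i),\mathbf{S}(i)}$ and $m_{ij}=\delta_{\mathbf{C}(j,i),\mathbf{S}(j)}$ for $i\neq j$. It then uses \eqref{systemc} and \eqref{systemf} to show that $G_{\mathbf{M}}$ has no directed cycles apart from self-loops. Lemma~\ref{perm} then gives $\det\mathbf{M}=\pm 2^k\neq 0$, which forces $\mathbf{s}=\mathbf{0}$ and contradicts Proposition~\ref{nonzero}.

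\textbf{Your route.} Your Step~1 in effect shows that $\mathbf{M}\mathbf{s}=\mu(A)\,\mathbf{1}$ holds for every measure. Rather than inverting $\mathbf{M}$, you read off one row whose off-diagonal entries all vanish. That is the row of $b_1$, the minimum of the chain $\mathcal{B}$ (the edge at the root in the tree picture), and \eqref{systeme} supplies $\mathbf{C}(p,b_1)=0$ for $p\neq b_1$.

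\textbf{What each approach buys.} Your argument yields an explicit value: $\mu(A)=1$ if $\mathcal{B}=\varnothing$, and $\mu(A)=\bigl(2-3\delta_{\mathbf{C}(b_1,b_1),\mathbf{S}(b_1)}\bigr)\mathbf{S}(b_1)$ otherwise. This agrees with Proposition~\ref{formula} and with the example of Section~\ref{examp}. It also spares you from ruling out directed cycles of arbitrary length. The price is that you need \eqref{systeme} and the chain structure of $\mathcal{B}$, which you derive correctly from \eqref{systemc}, \eqref{systeme}, \eqref{systemf} and Proposition~\ref{nonzero}. The paper's argument never invokes \eqref{systeme}. Both proofs rest on Proposition~\ref{nonzero} in the same way.
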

\begin{proof}
    Again let $\mathbf{X}$ denote $\mu(X)$. The proof idea is the same as that in Propositions \ref{nonzero} and \ref{biject} by showing that the coefficient matrix of the $\mathbf{S}(i)$ is the adjacency matrix of a DAG with nonzero diagonal terms (so the DAG has self-loops adjoined). Indeed, suppose for the sake of contradiction that $$\mathbf{A} = 1+\sum_{i \in [n]} \mathbf{B}(i) = 0.$$ It follows that \begin{align*} 2\mathbf{S}(i)&=\mathbf{B}(i)+3\mathbf{C}(i,i)+1 +\sum_{p \in [n] \setminus i} \mathbf{B}(p) - \sum_{p \in [n] \setminus i} \mathbf{C}(p,i) \\ &= 3\mathbf{C}(i,i)-\sum_{p \in [n] \setminus i} \mathbf{C}(p,i). \end{align*} Let $\mathbf{M} = (m_{ij})_{i,j=1}^n \in \RR^{(n-1) \times (n-1)}$ and again $\mathbf{s}=(\mathbf{S}(i))_{i=1}^n \in \RR^n$ where \begin{align*}
m_{ij} &=
\begin{cases}
    2-3\delta_{\mathbf{C}(i,i),\mathbf{S}(i)} & \text{if } i=j, \\[6pt]
   \delta_{\mathbf{C}(j,i),\mathbf{S}(j)}     & \text{if } i\neq j.
\end{cases}
\end{align*} We thus obtain the matrix equation $\mathbf{M}\mathbf{s}=\mathbf{0}$. Since $\mathbf{C}(i,j)\mathbf{C}(j,i)=0$ for all distinct $i,j$, $G_{\mathbf{M}}$, the induced graph from $\mathbf{M}$, is directed. To see that $G_{\mathbf{M}}$ has no cycles of length at least two, note that if there was a cycle $a_1 \to \cdots \to a_l \to a_1$ for $l \geq 2$, then $(\mathbf{C}(a_i,a_{i+1}),(\mathbf{C}(a_{i+1},a_i))=(0,\mathbf{S}(a_{i+1}))$ for $i = 1, \dots, l$ where indices are taken mod $l$. Applying \ref{systemf} with $(i,j,k)=(a_1,a_{i+1},a_{i+2})$ for each $i$, we obtain $\mathbf{C}(a_1,a_{i+1})=0$ for all $i$, so $\mathbf{C}(a_1,a_l)=\mathbf{S}(a_1)=0$, a contradiction. Hence $\mathbf{M}$ is the adjacency matrix of a DAG with self-loops adjoined. By Lemma \ref{perm}, there is a permutation matrix $\mathbf{P}$ such that $\mathbf{PMP^{-1}}$ is upper triangular. Consequently, $\det(\mathbf{M}) = \prod_{i \in [n]} m_{ii} = \pm 2^k \neq 0$ for some nonnegative integer $k$, yet $\mathbf{s} \neq \mathbf{0}$ by Proposition \ref{nonzero}, so the matrix equation admits no solutions and the conclusion follows.
\end{proof}
\begin{remark}
    The above lemma ensures that induced subclasses are not automatically the empty Fraïssé class, since if $\mu(A)$ vanishes, then $\mu(X)$ vanishes for all $X$ where $|X| \geq 1$.
\end{remark}
\begin{lemma} \label{bk}
    A color $k$ appears in $\mathfrak{T}'$ if and only if $\mu(B(k)) \neq 0$.
\end{lemma}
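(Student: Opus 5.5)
Both directions come down to computing the measure of the two-leaf tree and using that $\mathfrak{T}'$ is closed under substructures. Write $B_k$ for the two-leaf structure whose root is colored $k$. Decomposing $\varnothing\to\bullet\to B_k$ gives
\[
\mu(B_k)=\mu(A)\,\mu(B(k)),
\]
and $\mu(A)\neq 0$ by Lemma \ref{anonzero}. So $\mu(B_k)\neq 0$, i.e.\ $B_k\in\mathfrak{T}'$, exactly when $\mu(B(k))\neq 0$.

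For the reverse direction, if $\mu(B(k))\neq 0$ then $B_k\in\mathfrak{T}'$. This tree has a node colored $k$, so $k$ appears in $\mathfrak{T}'$.

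For the forward direction, suppose $k$ appears, so some $T\in\mathfrak{T}'$ has a node $v$ colored $k$. Choose one leaf from the subtree below each child of $v$. These two leaves $a,b$ span a two-leaf substructure whose root is the least common ancestor of $a$ and $b$, namely $v$. After suppressing degree-two vertices this substructure is isomorphic to $B_k$. By the closure argument in the proof that induced subclasses are Fra\"iss\'e (if $\mu(X)\neq0$ and $Y\to X$ then $\mu(Y)\neq 0$), we get $B_k\in\mathfrak{T}'$. Hence $\mu(B_k)\neq 0$, and the factorization above gives $\mu(B(k))\neq 0$.

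The only step needing care is checking that the induced substructure on $\{a,b\}$ really has root color $k$. This follows because the ternary relation $S$, together with the coloring convention used in defining $\partial\mathfrak{T}_3(n)$, assigns to a two-leaf substructure the color of the least common ancestor of its two leaves. Everything else is immediate from Lemma \ref{anonzero} and multiplicativity of $\mu$.
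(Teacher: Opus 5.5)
Your proof is correct and is essentially the paper's argument: you factor $\mu(B_k)=\mu(A)\,\mu(B(k))$, use $\mu(A)\neq 0$ from Lemma \ref{anonzero}, and use that $k$ appears in $\mathfrak{T}'$ exactly when the two-leaf tree with root colored $k$ lies in $\mathfrak{T}'$. The paper only asserts that last equivalence ("Notice that"), whereas you justify it by taking one leaf below each child of a $k$-colored node and using closure of the support under substructures.
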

\begin{proof}
    Recall that $B$ denotes the two-leaf tree structure with node colored $k$. Notice that $k$ appears in $\mathfrak{T}'$ if and only if $B \in \mathfrak{T}'$, or $\mu(B) \neq 0$. Since $\mu(B) = \mu(\varnothing \to B) = \mu(A)\mu(B(k))$, and $\mu(A)$ is nonzero by Lemma \ref{anonzero}, $\mu(B)$ is nonzero if and only if $\mu(B(k))$ is nonzero, which implies the desired result. 
\end{proof}
Observe that if $\mathfrak{T}'$ is a subclass of $\partial \mathfrak{T}_3(n)$, then it is also a subclass of $\partial \mathfrak{T}_3(m)$ for any $m \geq n$. If the colors $1, \dots, n$ all appear in $\mathfrak{T}'$, then $\partial \mathfrak{T}_3(n)$ can be viewed as the minimal Fraïssé class of node-colored rooted binary trees that contains $\mathfrak{T}'$. The following proposition implies that given an induced (Fra\"iss\'e) subclass $\mathfrak{T}'$ of $\partial \mathfrak{T}_3(n)$ for minimal $n$, the collection of all induced regular measures restricted from $\partial \mathfrak{T}_3(n)$ accounts for all induced regular measures on $\mathfrak{T}'$. In other words, for each $n$, it suffices to consider induced subclasses of $\partial \mathfrak{T}_3(n)$ with $n$ colors when computing induced regular measures.
\begin{prop}
   Let $\mathfrak{T}'$ be a Fra\"iss\'e subclass of $\partial \mathfrak{T}_3(n)$ with $n$ colors and $\mu'$ be an induced regular measure on $\mathfrak{T}'$. If $\mathfrak{T}'$ is the support of $\nu$ where $\nu$ is a measure on $\partial \mathfrak{T}_3(m)$ for any $m \geq n$, then $ \nu|_{\mathfrak{T}'}=\mu'$.   
\end{prop}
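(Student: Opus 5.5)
The plan is to show that any measure supported on $\mathfrak{T}'$ takes forced values on the generators, so every such measure restricts to the same regular measure. Let $\nu$ be a measure on $\partial \mathfrak{T}_3(m)$ with support $\mathfrak{T}'$, and let $\mu$ be the measure on $\partial \mathfrak{T}_3(n)$ inducing $\mu'$. Restricted to $\mathfrak{T}'$, a measure is determined by its values on the irreducible marked structures lying in $\mathfrak{T}'$. By Proposition \ref{irreducible}, these are classes of the form $A$, $B(i)$, $C(i,j)$, $D(i,j)$, $E(i,j,k)$ with colors in $[n]$. Every embedding in $\mathfrak{T}'$ factors into one-point extensions inside $\mathfrak{T}'$. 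Each such extension is reduced to an irreducible one by deleting separated leaves (Proposition \ref{easy}), and this stays inside $\mathfrak{T}'$ by \ref{fb}. Hence it suffices to show that $\nu$ and $\mu$ agree on those irreducible classes whose underlying tree lies in $\mathfrak{T}'$.

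\textbf{Step 1: record the $0/1$ pattern.} By Proposition \ref{biject}, each irreducible generator has $\nu$-value either $0$ or $\mathbf{S}_\nu(i)$, with the choice recorded by the $m$-code of $\nu$. Since the support of $\nu$ is exactly $\mathfrak{T}'$, this pattern is read off from $\mathfrak{T}'$ for every color $i \le n$:
\begin{itemize}
\item $\beta(i)=1$ iff $\nu(B(i)) \ne 0$, iff $i$ appears (Lemma \ref{bk});
\item $\chi(i,j)=1$ iff $X_{i,j}\in\mathfrak{T}'$, since $\nu(X_{i,j})=\nu(A)\nu(B(i))\nu(C(i,j))$ and $\nu(A)\ne 0$ by Lemma \ref{anonzero}.
\end{itemize}
For colors $p>n$ no node of color $p$ appears, so $\beta(p)=0$ and $\chi(p,q)=\chi(q,p)=0$ whenever $q\le n$ and $p \neq q$. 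The same pattern on $[n]$ holds for $\mu$.

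\textbf{Step 2: the linear system decouples.} The main point is that the $\mathbf{S}$-values for colors $1,\dots,n$ satisfy the same linear system under $\nu$ as under $\mu$. In the system $\mathbf{D}\mathbf{s}=\mathbf{1}$ from the proof of Proposition \ref{biject}, for $i \le n$ the row entries $d_{ip}$ with $p>n$ equal $\delta_{\mathbf{C}(p,i),\mathbf{S}(p)}-\delta_{\mathbf{B}(p),\mathbf{S}(p)}=0$ by Step 1. So the rows indexed by $[n]$ involve only $\mathbf{S}(1),\dots,\mathbf{S}(n)$, and their coefficients are determined by the common pattern on $[n]$. The resulting $n\times n$ subsystem is the system for $\mu$ on $\partial\mathfrak{T}_3(n)$. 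By the DAG/determinant argument of Proposition \ref{biject}, applied to this $n$-code, it has determinant $\pm 2^k\ne 0$, so $\mathbf{S}_\nu(i)=\mathbf{S}_\mu(i)$ for all $i\le n$.

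\textbf{Step 3: compare the generators.} Each generator supported in $\mathfrak{T}'$ takes value $\mathbf{S}(i)$ for its appropriate color $i \le n$ under both measures. For $D$ and $E$, use Corollary \ref{linear} to express them through $B$ and $C$ with colors in $[n]$; no extra colors enter, because the sums $\sum_p D(i,p)$ have vanishing terms for $p>n$. The value $\mathbf{A}=1+\sum_i\mathbf{B}(i)$ agrees because $\mathbf{B}(p)=0$ for $p>n$. Hence $\nu|_{\mathfrak{T}'}=\mu'$.

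The main obstacle is Step 1 for the $D$- and $E$-type generators and for the $\chi(i,i)$ entries. I would verify these by writing each such generator as a quotient $\nu(Y)/\nu(Y')$ of structures in $\mathfrak{T}'$, then applying the linear relations of Corollary \ref{linear}.
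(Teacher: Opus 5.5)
Your strategy is the paper's. You force $\nu(B(p))=0$ and $\nu(C(p,i))=0$ for $p>n\ge i$, so rows $1,\dots,n$ of the matrix $\mathbf{D}$ for $\nu$ vanish in columns $n+1,\dots,m$, and the leading $n\times n$ block is exactly the system for $\mu$. Your Steps 2 and 3 (invertibility of that block, and recovering $\mathbf{A}$ and the $D$-, $E$-values from Corollary \ref{linear} via $D(i,p)=B(p)-C(p,i)$) spell out what the paper leaves implicit.

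However, Step 1 is wrong exactly where the argument hinges. The marked leaf of $C(i,j)$ is the leaf child of the root. Deleting it leaves the two-leaf tree with root colored $j$, so $\nu(X_{i,j})=\nu(A)\,\nu(B(j))\,\nu(C(i,j))$, equivalently $\nu(A)\,\nu(B(i))\,\nu(D(i,j))$. It is not $\nu(A)\,\nu(B(i))\,\nu(C(i,j))$. This has two consequences:
\begin{enumerate}
\item Your factorization yields the claim $\chi(q,p)=0$ for $q\le n<p$, which is false. In the appendix table, $\mu_1$ has support $\partial\mathfrak{T}_3(1)$, yet $\mathbf{C}(1,2)=\mathbf{B}(1)=\mathbf{S}(1)=-\tfrac12$, so $\chi(1,2)=1$.
\item Your factorization gives no information about $\chi(p,q)$, because $\nu(B(p))=0$ already kills the product $\nu(X_{p,q})$. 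Yet $\chi(p,i)=0$ for $p>n\ge i$ is precisely what Step 2 needs to get $d_{ip}=0$.
\end{enumerate}

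The repair is short. With the correct factorization, $X_{p,i}\notin\mathfrak{T}'$ and $\nu(A)\nu(B(i))\neq 0$ give $\nu(C(p,i))=0$. Alternatively, as the paper does, substitute $\nu(B(p))=0$ into (\ref{systeme}), $C(i,p)B(p)=(B(p)-C(p,i))B(i)$, to get $\nu(C(p,i))\,\nu(B(i))=0$. Drop the claim about $\chi(q,p)$: it is never used, is not constrained by the support, and does not appear in the rows indexed by $[n]$.

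For $i,j\le n$ the corrected factorization still gives $\chi(i,j)=1$ if and only if $X_{i,j}\in\mathfrak{T}'$, since $\nu(B(j))\neq 0$. This includes $j=i$, so the $\chi(i,i)$ entries are no obstacle. The $D$- and $E$-type classes never enter $\mathbf{D}$ at all. So the ``main obstacle'' in your last paragraph is already handled by Step 3.
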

\begin{proof}
    Recall that computing a measure is equivalent to solving the linear system encoded by $\mathbf{D}$ in Equation \ref{ds} with appropriate constraints on the entries of $\mathbf{D}$. Since $\mathfrak{T}'$ is the support of $\nu$, the colors $n+1, n+2, \dots, m$ do not appear in $\mathfrak{T}'$. It follows from Lemma \ref{bk} that for all $j \in [m] \setminus [n]$, we have $\nu(B(j))=0$. By \ref{systeme}, we obtain $\nu(B(i)C(j,i))=0$ for all $i \in [n]$, yet $i$ appears in $\mathfrak{T}'$, so $\nu(B(i)) \neq 0$, which means $\nu(C(j,i))=0$ for all $i \in [n]$ and $j \in [m]\setminus[n]$. It follows that $d_{ij}=0$ for all such $i,j$, so the coefficient matrix $\mathbf{D} \in \RR^{m \times m}$ corresponding to $\nu$ is \begin{align*}
         \mathbf{D} &= \begin{pmatrix}
d_{11}  & \dotsc  & d_{1n} & 0 & \dotsc & 0 \\
d_{21} & \dotsc  & d_{2n} & 0 & \dotsc & 0 \\
\vdots  & \ddots  & \vdots  & \vdots & \ddots & \vdots \\
d_{n1} & \dotsc  & d_{nn} & 0 & \dotsc & 0 \\
d_{(n+1)1} & \dotsc & d_{(n+1)n} & d_{(n+1)(n+1)} & \dotsc & d_{(n+1)m} \\ \vdots  & \ddots  & \vdots  & \vdots & \ddots & \vdots \\ d_{m1} & \dotsc & d_{mn} & d_{m(n+1)} & \dotsc & d_{mm}
\end{pmatrix}. 
    \end{align*} Notice that the leading principal matrix of order $n$ of $\mathbf{D}$ encodes precisely the constraints on $\mu'$, while the upper-right off diagonal block is a zero submatrix, so $\nu$ is subject to the exact same constraints on the colors $1, \dots, n$ as $\mu'$ (with potentially more constraints on colors $n+1,\dots, m$), implying that $\nu|_{\mathfrak{T}'} = \mu'$, as desired. 
\end{proof}
By the above proposition, it suffices to consider Fraïssé subclasses of $\partial \mathfrak{T}_3(n)$ where all $n$ colors appear. For each $n \geq 1$, let $\mathcal{F}'_n$ denote the set of Fraïssé subclasses of $\partial \mathfrak{T}_3(n)$ with $n$ colors such that exactly one of $X_{i,j}$ and $X_{j,i}$ is in $\mathfrak{T}'$ for all $\mathfrak{T}' \in \mathcal{F}'_n$.

\begin{prop} \label{714}
    A subclass $\mathfrak{T}'$ with $n$ colors is an induced subclass of $\partial \mathfrak{T}_3(n)$ if and only if $\mathfrak{T}' \in \mathcal{F}'_n$.
\end{prop}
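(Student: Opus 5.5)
The forward direction follows from results already established. For the converse, the plan is to realize the given subclass as the support of an explicit $n$-code via Proposition \ref{biject}, and then identify that support using Corollary \ref{class}.

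\emph{Forward direction.} Suppose $\mathfrak{T}'$ is the support of some measure $\mu$ on $\partial \mathfrak{T}_3(n)$ and has $n$ colors. By the proposition on induced subclasses (Proposition 2.18 of \cite{harman2024arborealtensorcategories}, reproved above), $\mathfrak{T}'$ is Fra\"iss\'e. Lemma \ref{72} shows that for distinct $i,j$ at least one of $X_{i,j}, X_{j,i}$ lies in $\mathfrak{T}'$. Lemma \ref{reduce} shows that at most one does. Hence $\mathfrak{T}' \in \mathcal{F}'_n$.

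\emph{Converse, choice of code.} Let $\mathfrak{T}' \in \mathcal{F}'_n$ and put $\varepsilon(i,j) = \mathbf{1}_{X_{i,j} \in \mathfrak{T}'}$. Define a candidate $n$-code by
\[
\beta \equiv 1 \quad\text{and}\quad \chi = \varepsilon,
\]
including on the diagonal, where $\chi(i,i)$ is unconstrained. We check the axioms of Definition \ref{nqr}:
\begin{itemize}
\item[(4.11a)] holds because exactly one of $\varepsilon(i,j),\varepsilon(j,i)$ equals $1$.
\item[(4.11b)] with $\beta \equiv 1$ reads $1 = \chi(i,j) + \chi(j,i)$, which is precisely the membership condition for $\mathcal{F}'_n$.
\item[(4.11c)] must be verified by cases. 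If $\chi(i,k) = \chi(j,k) = 1$, then exactly one of $\chi(i,j), \chi(j,i)$ is $1$, so the right side is $\chi(i,k)$ or $\chi(j,k)$, hence $1$. Conversely, if (say) $\chi(i,j) = \chi(j,k) = 1$, then Lemma \ref{71} forces $\chi(i,k) = 1$.
\end{itemize}
Proposition \ref{biject} then produces a measure $\mu$ with $\delta_{\mathbf{B}(i),\mathbf{S}(i)} = 1$ and $\delta_{\mathbf{C}(i,j),\mathbf{S}(i)} = \varepsilon(i,j)$ for all $i,j$.

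\emph{Converse, identifying the support.} By Proposition \ref{nonzero}, $\mathbf{S}(i) \neq 0$, so $\mathbf{B}(i) = \mathbf{S}(i) \neq 0$ for every $i$, and $\mathbf{C}(i,j) \neq 0$ exactly when $\varepsilon(i,j) = 1$. By Lemma \ref{anonzero}, $\mathbf{A} \neq 0$.

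Next compute $\mu(X_{i,j})$ by building $X_{i,j}$ one leaf at a time. Start from a single leaf, then add a second leaf to form the $j$-colored cherry, then attach the leaf hanging off the $i$-colored root. This gives
\[
\mu(X_{i,j}) = \mathbf{A}\,\mathbf{B}(j)\,\mathbf{C}(i,j)
\]
up to the bookkeeping of which generator appears. Equivalently, via the relation $C(i,j)B(j) = D(i,j)B(i)$ of Corollary \ref{quadratic}, this equals $\mathbf{A}\,\mathbf{B}(i)\,\mathbf{D}(i,j)$. Since $\mathbf{A}$ and all $\mathbf{B}(\cdot)$ are nonzero, $X_{i,j}$ lies in the support if and only if $\varepsilon(i,j) = 1$; this also covers $i = j$.

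Therefore the support of $\mu$ is a Fra\"iss\'e subclass with all $n$ colors present (Lemma \ref{bk}) and with exactly the same set of three-leaf structures as $\mathfrak{T}'$. Corollary \ref{class} says a Fra\"iss\'e subclass is determined by its three-element structures, so the support equals $\mathfrak{T}'$.

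The main obstacle is the leaf-by-leaf decomposition of $\mu(X_{i,j})$ into the generators of Proposition \ref{irreducible}. One has to be careful about which marked structure arises at each step, and one might need Proposition \ref{easy} to discard separated leaves. If the direct bookkeeping is messy, I would instead compute $\mu(X_{i,j})$ by adding the two cherry leaves last, which yields a factor $\mathbf{D}(i,j)$. I would then use $\mathbf{D}(i,j) = \mathbf{B}(j) - \mathbf{C}(j,i)$ (Corollary \ref{linear}) together with $\beta \equiv 1$ to read off nonvanishing.
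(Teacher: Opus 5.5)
Your proposal is correct and follows the paper's proof. The forward direction is exactly Lemma~\ref{72} combined with Lemma~\ref{reduce}. For the converse, the paper makes the same choice $\beta \equiv 1$, $\chi(i,j)=\mathbf{1}_{X_{i,j}\in\mathfrak{T}'}$, phrased after relabeling colors so that $X_{i,j}\in\mathfrak{T}'$ iff $i<j$; that relabeling implicitly uses the transitivity from Lemma~\ref{71} that you invoke for (4.11c).

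Your explicit identification of the support via $\mu(X_{i,j})=\mathbf{A}\,\mathbf{B}(j)\,\mathbf{C}(i,j)$ together with Corollary~\ref{class} fills in a step the paper only asserts. That formula is exact as written, since adding the leaf adjacent to the $i$-colored root gives precisely the marked structure $C(i,j)$. The one case Corollary~\ref{class} does not literally cover is $n=1$ with $X_{1,1}\notin\mathfrak{T}'$, and it is immediate because both classes are then the trees with at most two leaves.
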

\begin{proof}
    Since exactly one of $X_{i,j}$ and $X_{j,i}$ is in $\mathfrak{T}'$, there is an induced total order on $[n]$ where $i \prec j$ if and only if $X_{i,j} \in \mathfrak{T}'$. Without loss of generality, suppose that $X_{i,j} \in \mathfrak{T}'$ if and only if $i < j$. For all $i,j$, let $$(\mathbf{B}(i),\mathbf{C}(i,j),\mathbf{C}(j,i))= (\mathbf{S}(i),\mathbf{S}(i),0)$$ and $$\mathbf{C}(i,i) = \begin{cases} \mathbf{S}(i) & \text{if } X_{i,i} \in \mathfrak{T}', \\ 0 & \text{otherwise}. \end{cases}$$ The above construction satisfies $\ref{system}$ and defines the unique induced regular measure on $\mathfrak{T}'$ (since $\mathbf{S}(i) \neq 0$) as desired.  

    On the other hand, if $\mathfrak{T}' \not\in \mathcal{F}_n'$, then by Lemma \ref{72}, there exist $i,j$ such that $X_{i,j}$ and $X_{j,i}$ are both in $\mathfrak{T}'$, which by Lemma \ref{reduce} means that $\mathfrak{T}'$ is not the support of any measure.
\end{proof}

\begin{corollary}
    For each $n \geq 1$, there are $2^nn!$ induced subclasses of $\partial \mathfrak{T}_3(n)$ with $n$ colors. 
\end{corollary}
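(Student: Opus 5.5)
By Proposition \ref{714}, the task reduces to counting $|\mathcal{F}'_n|$: the Fra\"iss\'e subclasses of $\partial \mathfrak{T}_3(n)$ in which all $n$ colors appear and, for each pair of distinct $i,j$, exactly one of $X_{i,j}$, $X_{j,i}$ lies in the subclass. Corollary \ref{class} says each such subclass is determined by its set $A\subseteq X_n$ of three-leaf structures. It therefore suffices to count the subsets $A$ that satisfy Lemmas \ref{71}, \ref{72}, \ref{73} together with the condition that exactly one of $X_{i,j}, X_{j,i}$ lies in $A$ for every $i\neq j$.

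I would record $A$ as a relation on $[n]$: for $i\ne j$, write $i\prec j$ iff $X_{i,j}\in A$. The ``exactly one'' condition makes $\prec$ a tournament on $[n]$, and Lemma \ref{71} makes it transitive, so $\prec$ is a strict total order. Conversely, every total order gives a valid off-diagonal pattern:
\begin{itemize}
\item Lemma \ref{72} holds because each pair is comparable.
\item Lemma \ref{71} holds by transitivity.
\item The hypothesis of Lemma \ref{73} never occurs, since $X_{i,j}$ and $X_{j,i}$ are never both present.
\end{itemize}
There are $n!$ choices of total order.

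Next come the diagonal entries $X_{i,i}$. The three lemmas impose a condition on $X_{i,i}$ only through Lemma \ref{73}, and that lemma is vacuous here. Lemma \ref{71} is stated only for pairwise distinct indices, so it places no constraint on the diagonal. Each $X_{i,i}$ can therefore be included or excluded independently, giving $2^n$ choices.

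I must also check that the resulting $\mathfrak{T}'_A$ really has all $n$ colors appearing. For $n\ge 2$ this holds because every color occurs in some $X_{i,j}$ with $i\ne j$. For $n=1$, the subclass with $A=\varnothing$ is $\mathfrak{T}_{\le 2}$, which still contains the two-leaf tree with color $1$. I would treat $n=1$ separately and check it against the $n=1$ case of Proposition \ref{714}, the intended count being $2=2^1\cdot 1!$.

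Combining these via the bijection $\Phi$ of Corollary \ref{class} gives $2^n n!$. The main subtlety is confirming that no hidden constraint ties $X_{i,i}$ to the order. Rather than relying on the lemmas alone, I would verify this directly through the measure construction in the proof of Proposition \ref{714}: setting $\mathbf{C}(i,i)\in\{0,\mathbf{S}(i)\}$ freely still satisfies the system in Proposition \ref{system}, since \eqref{systemb} constrains only $\mathbf{C}(i,i)$ itself. Each of the $2^n n!$ classes is therefore genuinely realized as a support.
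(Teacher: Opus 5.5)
Your proposal is correct and follows the paper's proof. Both reduce via Proposition \ref{714} to counting $\mathcal{F}'_n$, then multiply the $n!$ total orders on the colors by the $2^n$ independent choices of which diagonal trees $X_{i,i}$ to include. Your explicit checks go beyond the paper's two-line argument: you verify that every total order satisfies Lemmas \ref{71}--\ref{73}, and you treat $n=1$ separately, where $A=\varnothing$ must correspond to $\mathfrak{T}_{\le 2}$ rather than to $\Phi(\varnothing)=\varnothing$. The paper leaves both of these points implicit.
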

\begin{proof}
    By Proposition \ref{714}, it suffices to count the number of classes in $\mathcal{F}'_n$. There are $n!$ total orderings of the $n$ colors, and for each $i \in [n]$, we can either have $X_{i,i} \in \mathfrak{T}'$ or $X_{i,i} \not \in \mathfrak{T}'$, giving $2^n$ independent choices. Consequently, we obtain $2^nn!$ distinct classes in $\mathcal{F}'_n$, and $2^n$ classes up to color-swapping automorphisms, each admitting an induced regular measure.   
\end{proof}
We now determine a formula for the induced regular measure on each of the induced (Fra\"iss\'e) subclasses with $n$ colors. Without loss of generality, suppose that each subclass is \emph{naturally ordered}; that is, $X_{i,j}$ belongs to the subclass if and only if $i < j$. From now on, let each naturally ordered induced subclass with $n$ colors be denoted as $\partial \mathfrak{T}_3(n)^{\mathrm{ord}}_I$ for some $I \subseteq [n]$ where $i \in I$ if and only if $X_{i,i} \in \partial \mathfrak{T}_3(n)^{\mathrm{ord}}_I$. Note that each $I$ uniquely corresponds to a naturally ordered induced subclass of $\partial \mathfrak{T}_3(n)$ with $n$ colors by Corollary \ref{class}. Let $\mu_n^I$ denote the corresponding induced regular measure. For a tree $T$, let $\ell(T)$ denote the number of leaves and $v_T(i)$ denote the number of nodes of color $i$.
\begin{prop} \label{formula}
For an induced subclass $\partial \mathfrak{T}_3(n)^{\mathrm{ord}}_I$ where $I \subseteq [n]$ and a tree $T \in \partial \mathfrak{T}_3(n)_I^{\mathrm{ord}}$, we have $$\mu_n^I(T)=
\begin{cases}
1 & \text{if }\ell(T)=0,\\[6pt]
\displaystyle\prod_{i=1}^n \Bigl(P_I(i)^{v_T(i)}\,(1+P_I(i))^{\,1+\sum_{j=1}^{i-1} v_T(j)}\Bigr)
& \text{if }\ell(T)\ge 1,
\end{cases}$$where $$P_I(i) = \frac{1}{1-3\cdot \mathbf{1}_I(i)}.$$
\end{prop}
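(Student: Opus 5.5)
The plan is to prove the product formula in an equivalent, more structural form,
\[
\mu_n^I(T) \;=\; \mathbf{A}\prod_{x \text{ node of } T} \mathbf{S}(c(x)),
\]
where $c(x)$ is the color of node $x$. This uses two closed forms for the induced regular measure of Proposition \ref{714}, namely $\mathbf{S}(i)=P_I(i)\prod_{p>i}(1+P_I(p))$ and $\mathbf{A}=\prod_{p=1}^n(1+P_I(p))$. Granting these, a node of color $j$ contributes $P_I(j)$ together with one factor $(1+P_I(i))$ for each $i>j$, and $\mathbf{A}$ contributes one factor $(1+P_I(i))$ for every $i$. Collecting factors gives exactly $P_I(i)^{v_T(i)}(1+P_I(i))^{1+\sum_{j<i}v_T(j)}$. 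The case $\ell(T)=0$ is immediate from Definition \ref{fraissemeasure}, and the case $\ell(T)=1$ is the empty product times $\mathbf{A}$.

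\emph{Step 1: solve for $\mathbf{S}(i)$ and $\mathbf{A}$.} In the naturally ordered class, Proposition \ref{714} gives $\mathbf{B}(p)=\mathbf{S}(p)$ for all $p$, and $\mathbf{C}(p,i)=\mathbf{S}(p)$ if $p<i$ while $\mathbf{C}(p,i)=0$ if $p>i$. Also $\mathbf{C}(i,i)=\mathbf{1}_I(i)\mathbf{S}(i)$. Substituting into the rewritten form of (\ref{systemg}) used in Proposition \ref{biject}, the terms with $p<i$ cancel and the terms with $p>i$ contribute $+\mathbf{S}(p)$. This leaves
\[
(1-3\cdot\mathbf{1}_I(i))\,\mathbf{S}(i)=1+\sum_{p>i}\mathbf{S}(p).
\]
Put $R(i)=1+\sum_{p\ge i}\mathbf{S}(p)$. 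Then $\mathbf{S}(i)=P_I(i)R(i+1)$ and $R(i)=(1+P_I(i))R(i+1)$ with $R(n+1)=1$. Downward induction on $i$ gives both closed forms, since $\mathbf{A}=1+\sum_i\mathbf{B}(i)=R(1)$ by Corollary \ref{linear}.

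\emph{Step 2: induction on $\ell(T)$.} For $\ell(T)\ge 2$, I would choose a leaf $a$ whose sibling is also a leaf; such a leaf exists at a deepest node. Deleting $a$ suppresses exactly the node $x=N(a)$, and $\mu(T)=\mu(T\setminus a)\,\mu([T,a])$. It therefore suffices to show $\mu([T,a])=\mathbf{S}(c(x))$. By Proposition \ref{easy} and the reduction in Proposition \ref{irreducible}, $[T,a]$ equals an irreducible class in which the marked leaf hangs from a bottom node of color $c(x)$ with a leaf sibling. According to whether $x$ is the root or has a parent of color $i$, this class is $B(c(x))$ or one of $C(i,c(x))$, $D(i,c(x))$. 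Its value is then read off from Corollary \ref{linear}; for instance $D(j,i)=B(i)-C(i,j)$, combined with the specialization in Step 1.

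\emph{Main obstacle.} The hard part is this last identification: matching the marking conventions of $C$ and $D$ with the geometry, and checking that the value is $\mathbf{S}$ of the \emph{removed} node's color rather than its parent's. The relation $C(i,j)B(j)=D(i,j)B(i)$ of Corollary \ref{quadratic} should make the two readings consistent. If the convention turns out to assign the parent's color, I would instead pick the leaf so that the removed node's class is of type $D$, and verify directly that $\mathbf{D}(i,j)=\mathbf{S}(j)$ for $i<j$ in the support. I would also check both cases $j\in I$ and $j\notin I$ via $D(j,j)=S(j)-B(j)-C(j,j)+D(j,j)$-type rewrites of (\ref{systemg}).
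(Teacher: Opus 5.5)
Your proposal is correct and is essentially the paper's proof. Both solve the triangular system $(1-3\cdot\mathbf{1}_I(i))\mathbf{S}(i)=1+\sum_{p>i}\mathbf{S}(p)$ by downward induction to get $\mathbf{S}(i)$ and $\mathbf{A}$, then induct on $\ell(T)$ via $\mu(T)=\mu(T\setminus a)\,\mu([T,a])$ with $\mu([T,a])=\mathbf{S}(c(N(a)))$; your form $\mathbf{A}\prod_x\mathbf{S}(c(x))$ is just a tidier way to do the exponent bookkeeping.

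Choosing a cherry leaf is a nice economy over the paper's casework on arbitrary leaves, since only $B(c(x))$ and $D(i,c(x))$ can occur, never $C$ or $E$. Your convention worry also resolves in your favor: in the paper's figure $D(i,j)$ marks a leaf of the lower node $j$, so $\mathbf{D}(i,j)=\mathbf{B}(j)-\mathbf{C}(j,i)=\mathbf{S}(j)$ for $i<j$. Likewise $\mathbf{D}(j,j)=\mathbf{B}(j)+\mathbf{C}(j,j)-\mathbf{S}(j)=\mathbf{S}(j)$ for $j\in I$, which is the only repeated-color case that occurs because $X_{j,j}$ must lie in the class. Your displayed rewrite of (\ref{systemg}) in the last line is garbled, but this is the intended computation.
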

\begin{proof}
Fix a subset $I$, and, for convenience, let $\mu_n^I = \mu$ and $P_I = P$. Recall the coefficient matrix $\mathbf{D}$ from Proposition \ref{biject}, where $d_{ii} = 2-3\delta_{\mathbf{C}(i,i),\mathbf{S}(i)}-\delta_{\mathbf{B}(i),\mathbf{S}(i)}$ and $d_{ij} = \delta_{\mathbf{C}(j,i),\mathbf{S}(j)}-\delta_{\mathbf{B}(j),\mathbf{S}(j)}$ for distinct $i,j$. By the unique construction given in the proof of Proposition \ref{714}, we have $$(\delta_{\mathbf{B}(i),\mathbf{S}(i)}, \delta_{\mathbf{C}(i,j), \mathbf{S}(i)}, \delta_{\mathbf{C}(j,i), \mathbf{S}(j)}, \delta_{\mathbf{C}(i,i),\mathbf{S}(i)}) = (1,1,0,\mathbf{1}_I(i))$$ for all $i,j$ where $i < j$. This means that \begin{align*}
d_{ij} &=
\begin{cases}
   1-3\cdot \mathbf{1}_I(i) & \text{if } i=j, \\[6pt]
 -1      & \text{if } i < j, \\[6pt] 0 & \text{if } i >j.
\end{cases}
\end{align*}
In other words, $\mathbf{D}$ is an upper triangular matrix with diagonal entries equal to $1-3 \cdot \mathbf{1}_I(i)$ and all entries above the diagonal equal to $-1$. The system $\mathbf{Ds} = \mathbf{1}$ thus encodes the linear system $$d_{ii}\mathbf{S}(i)-\sum_{j>i} \mathbf{S}(j) = d_{ii}\mu(S(i)) - \sum_{j > i} \mu(S(j)) = 1$$ over all $i \in [n]$. By inducting downwards with base case $\mu(S(n)) = \frac{1}{d_{nn}}$, we obtain $$\mu(S(i)) = \frac{1}{d_{ii}}\prod_{j=i+1}^n \frac{d_{jj}+1}{d_{jj}} = P(i) \prod_{j=i+1}^n \left(1 + P(j)\right)$$ 
for all $i$. Now recall that $$\mu(A) = \mu([\varnothing \to \bullet]) = 1 + \sum_{i=1}^n \mu(B(i)).$$ Notice that by letting $$Q(i) = \prod_{j=i}^n (1+P(j)),$$ we have $$\mu(S(i)) = Q(i)-Q(i+1),$$ which implies that \begin{align*}
    \mu(A) &= 1 + \sum_{i=1}^n \mu(S(i)) \\ &= Q(1) \\ &= \prod_{j=1}^n (1 + P(j)). 
\end{align*}
From the value of $\mu$ on irreducible marked structures, we now prove the general formula for $\mu$. We induct on $\ell(T)$. 

If $\ell(T) = 0$, then $\mu(T) = \mu(\varnothing \to \varnothing) = 1$, and if $\ell(T)=1$, then indeed $\mu(T) = \mu(A) = \prod_{j=1}^n (1 + P(j))$. If $\ell(T) \geq 2$, then $$\mu(T) = \mu(T \setminus a)\mu([T,a])$$ for some leaf $a \in T$ where $(T,a)$ denotes the embedding $T\setminus a \to T$. Let $c(a) \in [n]$ denote the color of $N(a)$ (the unique node adjacent to $a$). We split cases based on the position of $a$ in $T$.

If $N(a)$ is the root, then either $[T,a] = B(c(a))$ or $[T,a] = C(c(a),j)$ where $j$ is the color of the node closest to the root. In both cases, we have \begin{align*}\mu(T) &= \mu(T \setminus a) \cdot \mu(S(c(a)) \\ &= \mu(T \setminus a) \cdot \left(P(C(a)) \prod_{j=c(a)+1}^n (1+P(j))\right) \\ &= \Bigl(
   P(C(a))^{v_T(c(a))-1}
   \prod_{i \in [n]\setminus c(a)} P(i)^{v_T(i)}
   \prod_{i=1}^{c(a)} (1+P(i))^{1+\sum_{j=1}^{i-1} v_T(j)}
\\ &\qquad\cdot
   \prod_{i=c(a)+1}^{n} (1+P(i))^{\sum_{j=1}^{i-1} v_T(j)}
   \Bigr)
   \Bigl(
   P(C(a)) \prod_{j=c(a)+1}^{n} (1+P(j))
   \Bigr) \\ &= \prod_{i=1}^n \Bigl(P(i)^{v_T(i)}(1+P(i))^{1+\sum_{j=1}^{i-1}v_T(j)}\Bigr), \end{align*} which is the stated expression. 

If $N(a)$ is not the root and adjacent to only one other node, say, of color $j < c(a)$ (as $X_{j,c(a)} \in \partial \mathfrak{T}_3(n)^{\mathrm{ord}}_I)$, then $[T,a]=D(j,c(a))=B(c(a))-C(c(a),j)$ by Corollary \ref{linear}, so \begin{align*}\mu([T,a]) &= \mu(B(c(a))-C(c(a),j)) \\ &= \mu(S(c(a))\\ &= P(C(a)) \prod_{j=c(a)+1}^n (1+P(j)),\end{align*} and the rest of the computation is the same as the case above. 

Finally, if $N(a)$ is adjacent to two nodes, then we do casework on the colors of these two nodes. Here we illustrate the case where they both also have color $c(a)$; the other cases can be checked similarly using the relationships in Corollary \ref{linear}. Indeed, we have \begin{align*}[T,a] &= E(c(a),c(a),c(a)) \\ &= -1-\sum_{p \in [n]} D(c(a),p) \\ &= C(c(a),c(a)) + D(c(a),c(a))-B(c(a),c(a)) \\ &= 2C(c(a),c(a)) - S(c(a)),\end{align*} so again $\mu([T,a]) = \mu(S(c(a))$ as desired, completing the induction.
\end{proof}
\begin{remark}
    If we assume another induced total ordering $\sigma \in S_n$ (so the subclass is not necessarily naturally ordered), then we alternatively have $$\mu_n^I(S(i)) = P_I(i) \prod_{j=\sigma(i)+1}^n (1+P_I(j)).$$
\end{remark}
\begin{remark} We note an interesting occurrence of the lexicographic and wreath products. Suppose $I = [n]$. Let $\Omega_1$ and $\Omega_{\mathrm{ord}}$ denote the Fra\"iss\'e limits of $\partial \mathfrak{T}_3(1)$ and $\partial \mathfrak{T}_3(n)^{\mathrm{ord}}_I$, respectively. If $\Omega_1^{n \circ}$ denotes the $n$-iterated lexicographic product of $\Omega_1$, then one can check that $\Omega_{\mathrm{ord}} \cong \Omega_1^{\circ n}$. Consequently, if $\mathrm{Aut}(\Omega_1)^{\wr n}$ denotes the $n$-iterated wreath product of $\mathrm{Aut}(\Omega_1)$, then $\mathrm{Aut}(\Omega_{\mathrm{ord}}) \cong \mathrm{Aut}(\Omega_1)^{\wr n}$. 
\end{remark}
\subsection{The $n=2$ case} \label{n=2}
We end the section by illustrating our classification results on $\partial \mathfrak{T}_3(2)$ and assigning each of the $(2\cdot 2+2)^2=36$ measures to their corresponding support.

Let $\mathfrak{T}'$ be an infinite Fra\"iss\'e subclass of $\partial \mathfrak{T}_3(2)$ (if $\mathfrak{T}'$ is finite, then it only contains trees with at most four leaves). By Corollary \ref{class}, it suffices to determine the set of three-leaf trees in $\mathfrak{T}'$. Let $X_{1,1}$, $X_{1,2}$, $X_{2,1}$, $X_{2,2}$ denote the following trees, from left to right
\begin{center}
    \begin{tikzpicture}
        \node (0) at (0,0) [rectangle, draw, inner sep = 1.5pt] {$1$};
        \node (1) at (-1/2,1/2) [circle, fill, inner sep = 1.5pt] {};
        \node (2) at (1/2,1/2) [rectangle, draw, inner sep = 1.5pt] {$1$};
        \node (3) at (0,1) [circle, fill, inner sep =1.5pt] {};
        \node (4) at (1,1) [circle, fill, inner sep = 1.5pt] {};

        \draw (0)--(1);
        \draw (0)--(2);
        \draw (2)--(3);
        \draw (2)--(4);

        \node (0) at (2,0) [rectangle, draw, inner sep = 1.5pt] {$1$};
        \node (1) at (3/2,1/2) [circle, fill, inner sep = 1.5pt] {};
        \node (2) at (5/2,1/2) [rectangle, draw, inner sep = 1.5pt] {$2$};
        \node (3) at (2,1) [circle, fill, inner sep =1.5pt] {};
        \node (4) at (3,1) [circle, fill, inner sep = 1.5pt] {};
\draw (0)--(1);
        \draw (0)--(2);
        \draw (2)--(3);
        \draw (2)--(4);
        \node (0) at (4,0) [rectangle, draw, inner sep = 1.5pt] {$2$};
        \node (1) at (7/2,1/2) [circle, fill, inner sep = 1.5pt] {};
        \node (2) at (9/2,1/2) [rectangle, draw, inner sep = 1.5pt] {$1$};
        \node (3) at (4,1) [circle, fill, inner sep =1.5pt] {};
        \node (4) at (5,1) [circle, fill, inner sep = 1.5pt] {};
\draw (0)--(1);
        \draw (0)--(2);
        \draw (2)--(3);
        \draw (2)--(4);
        \node (0) at (6,0) [rectangle, draw, inner sep = 1.5pt] {$2$};
        \node (1) at (11/2,1/2) [circle, fill, inner sep = 1.5pt] {};
        \node (2) at (13/2,1/2) [rectangle, draw, inner sep = 1.5pt] {$2$};
        \node (3) at (6,1) [circle, fill, inner sep =1.5pt] {};
        \node (4) at (7,1) [circle, fill, inner sep = 1.5pt] {};
    \draw (0)--(1);
        \draw (0)--(2);
        \draw (2)--(3);
        \draw (2)--(4);
    \end{tikzpicture}
\end{center}
and recall that $\varepsilon(i,j) = \mathbf{1}_{X_{i,j} \in \mathfrak{T}'}$. Notice that $\mathfrak{T}'$ is infinite if and only if $\varepsilon(i,i) \neq 0$ for some $i$. Define the Fra\"iss\'e subclasses \[
\begin{aligned}
\partial \mathfrak{T}_3(2)^{\mathrm{nt}\text{-}1}
&=\Bigl\{T\in \partial \mathfrak{T}_3(2):
\text{every node not adjacent to a leaf has color }1\Bigr\},\\
\partial \mathfrak{T}_3(2)^{\mathrm{nr}\text{-}1}
&=\Bigl\{T\in \partial \mathfrak{T}_3(2):
\text{every non-root node has color }1\Bigr\},\\
\partial \mathfrak{T}_3(2)^{\mathrm{nt}\text{-}2}
&=\Bigl\{T\in \partial \mathfrak{T}_3(2):
\text{every node not adjacent to a leaf has color }2\Bigr\},\\
\partial \mathfrak{T}_3(2)^{\mathrm{nr}\text{-}2}
&=\Bigl\{T\in \partial \mathfrak{T}_3(2):
\text{every non-root node has color }2\Bigr\},\\
\partial \mathfrak{T}_3(2)^{\mathrm{ord}} &= \Bigl\{T\in \partial \mathfrak{T}_3(2): \text{no $2$ lies above a $1$ on any root-leaf path}\Bigr\},\\ 
\partial \mathfrak{T}_3(2)^{\mathrm{rev}} &= \Bigl\{T\in \partial \mathfrak{T}_3(2): \text{no $1$ lies above a $2$ on any root-leaf path}\Bigr\}.
\end{aligned}
\]

Using Lemma \ref{73}, we match the existence of three-leaf trees in $\mathfrak{T}'$ and their corresponding Fra\"iss\'e subclasses:
\[
\begin{array}{c c c c | c}
\varepsilon(1,2) & \varepsilon(2,1) & \varepsilon(1,1) & \varepsilon(2,2) & \text{Subclass } \\
\hline
0 & 0 & 1 & 0 & \partial \mathfrak{T}_3(1)\\
0 & 0 & 0 & 1 & \partial \mathfrak{T}_3(1)\\
1 & 0 & 1 & 0 & \partial\mathfrak{T}_3(2)^{\mathrm{nt}\text{-}1}\\
1 & 0 & 0 & 1 & \partial\mathfrak{T}_3(2)^{\mathrm{nr}\text{-}2}\\
1 & 0 & 1 & 1 & \partial \mathfrak{T}_3(2)^{\mathrm{ord}}\\
0 & 1 & 0 & 1 & \partial\mathfrak{T}_3(2)^{\mathrm{nt}\text{-}2}\\
0 & 1 & 1 & 0 & \partial\mathfrak{T}_3(2)^{\mathrm{nr}\text{-}1}\\
0 & 1 & 1 & 1 & \partial\mathfrak{T}_3(2)^{\mathrm{rev}}\\
1 & 1 & 1 & 1 & \partial \mathfrak{T}_3(2)\\
\end{array}
\]

Given an induced regular measure $\mu'$ on $\mathfrak{T}'$, by Lemma \ref{bk},  $\varepsilon(i,j)=1$ for distinct $i,j$ if and only if $\mu'(B(i)), \mu'(B(j))$, and $\mu'(C(i,j))$ are all nonzero. Meanwhile, $\varepsilon(i,i)=1$ if and only if $\mu'(B(i))$ and $\mu'(C(i,i))$ are both nonzero. 

We use the computer algebra system Macaulay 2 \cite{M2} to analyze the polynomial ring $\QQ[B(1), C(1,1), C(1,2), B(2), C(2,1), C(2,2)]$ modulo the ideal $I$ generated by the quadratic equations from Proposition \ref{system} for $n=2$. The command \texttt{dim(R/I)} returns $0$\ and \texttt{degree(R/I)} returns $36$, so the sytem has $36$ complex solutions counted with multiplicity. By calling the \texttt{rationalPoints} function, we obtain $36$ distinct solutions with coordinates that are powers of two up to sign, corresponding to $36$ $\ZZ\left[\frac 12 \right]$-valued measures on $\partial \mathfrak{T}_3(2)$, which verifies our theoretical results from Section \ref{ms}. In Appendix \ref{list}, we list the values of each of the measures and their supports according to the above table.
\begin{remark}
By restricting each of the measures $\mu_1, \mu_2, \dots, \mu_8$ to their support, we recover the induced regular measure $\mu_1^I$ on $\partial \mathfrak{T}_3(1)^{\mathrm{ord}}_I = \partial \mathfrak{T}_3(1)$ where $I = \{1\}$. This is the only regular measure on $\partial \mathfrak{T}_3(n)$ across all $n \geq 1$.    
\end{remark}
\section{Applications to tensor categories} \label{category}
In this section, we construct families of semisimple tensor categories using the induced subclasses of $\partial \mathfrak{T}_3(n)$ and induced regular measures from Section \ref{sec5}. We first recall some background on tensor categories and the Harman--Snowden construction. We then utilize oligomorphic groups to prove a semisimplicity criterion for these categories. Finally, we specialize the construction to the family of induced subclasses $\partial \mathfrak{T}_3(n)^{\mathrm{ord}}_I$, producing infinite families of semisimple tensor categories of superexponential growth.

\subsection{Background on tensor categories} Let $\mathbb{K}$ be a field, and $\mathcal{C}$ be a $\mathbb{K}$-linear symmetric monoidal category (with a bilinear tensor product) with monoidal unit $\mathds{1}$. An object $X$ is said to be \emph{rigid} if there is a dual object $X^*$ equipped with an evaluation morphism $\mathrm{ev}_X:X^* \otimes X \to \mathds{1}$ and coevaluation morphism $\mathrm{coev}_X : \mathds{1} \to X \otimes X^*$ satisfying some properties \cite[\S 2.10]{etingof2017tensor}. A category is rigid if all of its objects are rigid, and \emph{locally finite} if all objects have finite length and all Hom spaces are finite-dimensional.
\begin{defn} \label{tenscat}
    A \emph{tensor category} is a $\mathbb{K}$-linear symmetric monoidal category $\mathcal{C}$ that is abelian, locally finite, rigid, and $\mathrm{End}(\mathds{1})=\mathbb{K}$.
\end{defn}
An important property of tensor categories is their growth, which is defined based on their objects. For an object $X$, let \emph{length} refer to its Jordan--Holder length.
\begin{defn} \label{subexp}
    A tensor category $\mathcal{C}$ is of \emph{subexponential growth} if for every object $X \in \mathcal{C}$, there exists a natural number $N_X$ such that $\mathrm{length}(X^{\otimes n }) \leq (N_X)^n$ for each $n \geq 0$. Otherwise, $\mathcal{C}$ is of \emph{superexponential growth}.
\end{defn}
\subsection{The Harman--Snowden construction} Suppose we have a Fra\"iss\'e class $\mathfrak{F}$ equipped with a $\CC$-valued measure $\mu$. We now recall the Harman--Snowden construction \cite{harman2024oligomorphicgroupstensorcategories}, which produces the $\CC$-linear rigid symmetric monoidal category $\mathbf{Perm}(\mathfrak{F};\mu)$.

For each structure $X \in \mathfrak{F}$, let the formal symbol $\mathrm{Vec}_X$ be the corresponding object. Given two objects $\mathrm{Vec}_X$ and $\mathrm{Vec}_Y$, let  $$\mathrm{Hom}(X,Y)=\mathbb{C}[\mathrm{Amalg}(X,Y)]$$ where $\mathrm{Amalg}(X,Y)$ denotes the set of isomorphism classes of amalgamations of $X$ and $Y$. In other words, $\mathrm{Hom}(X,Y)$ is the $\CC$-module freely generated by isomorphism classes of amalgamations of $X$ and $Y$, of which there are finitely many.    

Now suppose $Y_{1,2}$ is an amalgamation of $X_1, X_2 \in \mathfrak{F}$ and $Y_{2,3}$ is an amalgamation of $X_2,X_3 \in \mathfrak{F}$. Let $\varphi_{1,2}$ and $\varphi_{2,3}$ denote the corresponding morphisms $\mathrm{Vec}_{X_1} \to \mathrm{Vec}_{X_2}$ and $\mathrm{Vec}_{X_2} \to \mathrm{Vec}_{X_3}$. Composition is given by $$\varphi_{2,3} \circ \varphi_{1,2} = \sum_{Y_{1,3}} \sum_{Z} \mu(Y_{1,3} \subset Z) \cdot \varphi_{1,3}$$ where $Y_{1,3}$ ranges over all amalgamations of $X_1$ and $X_3$ and $Z$ ranges over all amalgamations of $X_1$, $X_2$, and $X_3$ such that the following diagram commutes: \begin{center}\[\begin{tikzcd}[row sep=1em, column sep = 1em]
	&&&& {X_2} \\
	\\
	\\
	\\
	&& {Y_{1,2}} &&&& {Y_{2,3}} \\
	&&&& Z \\
	\\
	{X_1} &&&& {Y_{1,3}} &&&& {X_3}
	\arrow[from=1-5, to=5-3]
	\arrow[from=1-5, to=5-7]
	\arrow[from=1-5, to=6-5]
	\arrow[dotted, from=5-3, to=6-5]
	\arrow[dotted, from=5-7, to=6-5]
	\arrow[from=8-1, to=5-3]
	\arrow[from=8-1, to=6-5]
	\arrow[from=8-1, to=8-5]
	\arrow[dotted, from=8-5, to=6-5]
	\arrow[from=8-9, to=5-7]
	\arrow[from=8-9, to=6-5]
	\arrow[from=8-9, to=8-5]
\end{tikzcd}\] \end{center}
Here, all the arrows denote embeddings in $\mathfrak{F}$. The above construction bilinearly extends to arbitrary compositions. We define $\mathbf{Perm}(\mathfrak{F}; \mu)$ to be the additive category freely generated by the objects $\{\mathrm{Vec}_X\}$, where morphisms extend uniquely by additivity. There is a tensor product $\mathbf{\otimes}$ on $\mathbf{Perm}(\mathfrak{F}; \mu)$ where $$\mathrm{Vec }_X \, \mathbf{\otimes} \, \mathrm{ Vec}_Y = \bigoplus_{Z \in \mathrm{Amalg}(X,Y)} \mathrm{Vec}_Z,$$ which has the expected tensor properties \cite{harman2024oligomorphicgroupstensorcategories}. 

The unit object is $\mathds{1} = \mathrm{Vec}_{\varnothing}$, and all objects are self-dual with evaluation (which acts on summands) and coevaluation given by projection onto and inclusion of the diagonal amalgamation, respectively. The action of $\otimes$ on morphisms allows rigidity properties to hold; again, see \cite{harman2024arborealtensorcategories} for further details. It follows from the construction that the categorical dimension of $\mathrm{Vec}_X$ is $\mu(X)$, and the trace of an endomorphism equals $\mu(X)$ for the diagonal amalgamation and vanishes otherwise. 

We further recall one construction.
\begin{defn}
    The \emph{Karoubi envelope} $\mathcal{C}^{\mathrm{kar}}$ of a category $\mathcal{C}$ has objects given by $(X,e)$, where $X \in \mathcal{C}$ and $e: X \to X$ is an idempotent. A morphism $(X,e) \to (Y,f)$ is a morphism $m: X \to Y$ in $\mathcal{C}$ such that $m = f \circ m \circ e$, and composition is inherited from $\mathcal{C}$.
\end{defn}
\begin{remark}
    For concreteness, we work over $\CC$ in this paper. All constructions and statements (including those for measures) remain valid over any field $k$ of characteristic $\neq 2$ with identical proofs.
\end{remark}
\subsection{The oligomorphic group perspective} \label{perspective}
For a Fra\"iss\'e class $\mathfrak{F}$, let $\Omega$ be its Fra\"iss\'e limit. From our discussion in Section \ref{s2}, the group $G = \mathrm{Aut}(\Omega)$ with its natural action on $\Omega$ is oligomorphic. There is a relative version of $G$, which is $(G, \mathscr{E})$, the class of transitive $\mathscr{E}$-smooth $G$-sets where $\mathscr{E}$ is the collection of pointwise stabilizers of finite subsets of $\Omega$. Returning to this perspective allows us to establish category-theoretical results applicable to Fra\"iss\'e classes and measures on them.

Recall that given a measure $\mu$ on $\mathfrak{F}$, there is a unique corresponding measure $\mu^*$ on $(G, \mathscr{E})$ such that if $i : X \to Y$ is an embedding in $\mathfrak{F}$ inducing a map of finitary $\mathscr{E}$-smooth $G$ sets $i^* : \Omega^Y \to \Omega^X$, where $\Omega^X$ is the set of embeddings $X \to \Omega$, then $\mu^*(i^*) = \mu(i)$. We similarly define the $\CC$-linear rigid symmetric monoidal category $\mathbf{Perm}(G,\mathscr{E}; \mu^*)$. Its objects are $\mathrm{Vec}_{\mathscr{X}}$ where $\mathscr{X}$ is a finitary $\mathscr{E}$-smooth $G$-set; morphisms $\mathrm{Vec}_{\mathscr{X}} \to \mathrm{Vec}_{\mathscr{Y}}$ are given by $G$-invariant $\mathscr{Y} \times \mathscr{X}$ matrices where composition is given by matrix multiplication relying on $\mu^*$ that follows a theory of integration developed in \cite[\S 7]{harman2024oligomorphicgroupstensorcategories}. The category's tensor product is defined on objects by $\mathrm{Vec}_{\mathscr{X}} \otimes \mathrm{Vec}_{\mathscr{Y}} = \mathrm{Vec}_{\mathscr{X} \times \mathscr{Y}}$. By \cite[Theorem 6.9, p.45]{harman2024oligomorphicgroupstensorcategories}, there is an equivalence of categories $$\mathbf{Perm}(G,\mathscr{E};\mu^*) \cong \mathbf{Perm}(\mathfrak{F}; \mu)$$ where $\mathrm{Vec}_X$ corresponds to $\mathrm{Vec}_{\Omega^X}$, and $\mathrm{Hom}(\mathrm{Vec}_X, \mathrm{Vec}_Y)$ corresponds to \\ $\mathrm{Hom}(\mathrm{Vec}_{\Omega^X}, \mathrm{Vec}_{\Omega^Y})$ via identifying $G$-orbits on $\Omega^Y \times \Omega^X$ with amalgamations of $X$ and $Y$. Consequently, we can translate properties of $(G, \mathscr{E})$ and $\mathfrak{F}$ back and forth, which is especially useful for certain results that are more conveniently considered from the former's perspective.
\subsection{Semi-simplicity criterion}
We now introduce a criterion for semisimple tensor categories that arise from Fra\"iss\'e classes.
\begin{prop} \label{condition} Let $\mu$ be regular measure on a Fraïssé class $\mathfrak{F}$ valued in $\ZZ\left[\frac{1}{N}\right]$ for some integer $N \geq 1$. If every prime divisor of $|\mathrm{Aut}(X)|$ divides $N$ for every structure $X \in \mathfrak{F}$, then the Karoubi envelope of $\mathbf{Perm}(\mathfrak{F};\mu)$, denoted $\mathbf{Rep}(\mathfrak{F};\mu)$, is a semisimple tensor category. 
\end{prop}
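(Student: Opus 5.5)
We will work on the oligomorphic side (Section~\ref{perspective}) and reduce to the Harman--Snowden semisimplicity theorem \cite{harman2024oligomorphicgroupstensorcategories}. That theorem says: if $\mu^*$ is a regular measure on $(G,\mathscr{E})$ valued in a field $\mathbb{K}$ of characteristic zero, and $G$ has a \emph{$\mathbb{K}$-pro-order} neighborhood basis of open subgroups, then the Karoubi envelope of $\mathbf{Perm}(G,\mathscr{E};\mu^*)$ is a semisimple pre-Tate category. Being a $\mathbb{K}$-pro-order subgroup roughly means that the index of every open subgroup of $U$ is invertible in $\mathbb{K}$, in the sense of the measure. Abelianness, local finiteness and rigidity then follow, and we already have $\mathrm{End}(\mathds{1})=\CC$, because $\mathrm{Amalg}(\varnothing,\varnothing)$ is a single point. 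Combined with the equivalence $\mathbf{Perm}(G,\mathscr{E};\mu^*)\cong\mathbf{Perm}(\mathfrak{F};\mu)$, this gives the claim. So the plan is to verify these two hypotheses for the measure $\mu^*$ corresponding to $\mu$.

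Regularity of $\mu^*$ on $(G,\mathscr{E})$ is immediate from the remark after Definition~\ref{reg}, since $\mu$ is regular. The substance is the pro-order condition. Instead of working over $\CC$ directly, we will use the integral version. The ring $\ZZ[\tfrac1N]$ is a $\ZZ$-algebra in which the relevant integers are units, and the Harman--Snowden criterion can be phrased as: there is a basis of open subgroups $U=G(A)$ such that, for every open subgroup $V\subseteq U$, the number $[U:V]$ (when finite) and the measures of the relevant transitive $U$-sets are invertible.

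The key step is to identify these finite indices concretely. Take $U=G(A)$ and an open $V\supseteq G(B)$ with $A\subseteq B$ finite. The finite quotients that arise are governed by $N_G(G(B))/G(B)$, and by homogeneity of $\Omega$ this is the group of automorphisms of the substructure $B$ fixing $A$ pointwise. So every index in question divides $|\mathrm{Aut}(X)|$ for some $X\in\mathfrak{F}$. By hypothesis, every prime factor of such a number divides $N$, so the number is a unit in $\ZZ[\tfrac1N]$.

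We expect the main obstacle to be matching the paper's hypothesis exactly to the precise form of the pro-order or semisimplicity condition in \cite{harman2024oligomorphicgroupstensorcategories}. The divisibility statement there is usually phrased via "$\mathbb{K}$-pro-order" together with the measure values of transitive sets. Our argument will therefore use that each transitive $U$-set $\Omega_A^X$ has $\mu^*$-value $\mu(A\to X)$, which is invertible by regularity and lies in $\ZZ[\tfrac1N]$. It will also use that the stabilizer-to-normalizer indices are orders of automorphism groups of finite structures. The argument should then mimic the Delannoy or arboreal cases, where $N=2$ and automorphism groups are $2$-groups. Once this is checked, the Karoubi envelope $\mathbf{Rep}(\mathfrak{F};\mu)$ is semisimple, and its remaining tensor-category axioms are inherited from $\mathbf{Perm}(\mathfrak{F};\mu)$.
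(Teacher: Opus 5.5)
Your proposal has a genuine gap at the central step. The semisimplicity criterion you invoke is not a precise theorem, and in the form you state it, it never uses the hypothesis. You describe the condition as indices of open subgroups, or their measures, being invertible in a characteristic-zero field. Over $\CC$ every nonzero integer is invertible, so such a condition is either automatic or reduces to regularity. Your cited theorem would then say that every regular $\CC$-valued measure gives a semisimple $\mathbf{Rep}(\mathfrak{F};\mu)$, and the assumption on $|\mathrm{Aut}(X)|$ would play no role. That is not what the Harman--Snowden machinery provides. Besides regularity, one also needs that nilpotent endomorphisms in $\mathbf{Perm}$ have categorical trace $0$, and that is the step that actually consumes the arithmetic hypothesis. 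Your ``integral version'' (invertibility of finite indices $[U:V]$ and of the values $\mu(A\to X)$) is not a citable criterion, and you never explain how it yields the trace-zero condition.

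The missing idea is to extend the measure to a larger stabilizer class and then reduce modulo primes.

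\begin{enumerate}
\item The paper takes $\mathscr{E}^+$ to be the set of open subgroups containing some $G(A)$ with finite index. For $V\in\mathscr{E}^+$ it defines $\mu^+(G/V)=[V:G(X)]^{-1}\mu^*(G/G(X))$, where $X$ is the algebraic closure of $A$ and $G(X)\subset V\subset G[X]$.
\item Since $[V:G(X)]$ divides $[G[X]:G(X)]=|\mathrm{Aut}(X)|$, and the prime factors of that number divide $N$, $\mu^+$ is again $\ZZ[\tfrac1N]$-valued and regular.
\item Because $\mathscr{E}^+$ is closed under finite-index overgroups and $\ZZ[\tfrac1N]$ reduces modulo every prime $p\nmid N$, \cite[Theorem 7.11]{harman2024oligomorphicgroupstensorcategories} gives trace zero for nilpotent endomorphisms in $\mathbf{Perm}(G,\mathscr{E}^+;\mu^+)$.
\item Combined with regularity, \cite[Proposition 7.19]{harman2024oligomorphicgroupstensorcategories} makes the endomorphism algebras there semisimple. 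This passes to the full subcategory $\mathbf{Perm}(G,\mathscr{E};\mu^*)$ and then to its Karoubi envelope.
\end{enumerate}

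Your observation that the relevant finite indices divide orders of automorphism groups of finite substructures is the right ingredient. Its role, however, is to keep the extended measure $p$-integral for $p\nmid N$, so that the mod-$p$ trace argument applies on a class closed under finite-index overgroups. Without the extension to $\mathscr{E}^+$ and the trace-zero step, your argument does not reach semisimplicity.
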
 
Let $\Omega$ be the Fra\"iss\'e limit of $\mathfrak{F}$, and $G$ be the automorphism group of $\Omega$. We work with the corresponding measure $\mu^*$ on $(G, \mathscr{E})$,  which is also $\ZZ\left[\frac 1N\right]$-valued. Let $\mathscr{E}^+$ be the set of all open subgroups of $G$ that contain some member of $\mathscr{E}$ with finite index.
\begin{lemma} \label{eplus}
    Every $\ZZ\left[\frac 1N\right]$-valued regular measure $\mu^*$ on $(G, \mathscr{E})$ extends uniquely to a $\ZZ\left[\frac 1N\right]$-valued regular measure $\mu^+$ on $(G, \mathscr{E}^+)$. 
\end{lemma}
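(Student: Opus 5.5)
The extension is forced, so I will define it by the forcing formula and then check it is well defined and still a measure. Every transitive $\mathscr{E}^+$-smooth $G$-set has the form $G/V$ with $V \in \mathscr{E}^+$, so $V \supseteq U$ for some $U \in \mathscr{E}$ with $[V:U] < \infty$. Then $G/U \to G/V$ is a map of transitive sets with finite fibers of size $[V:U]$. Any measure $\mu^+$ extending $\mu^*$ must satisfy multiplicativity and additivity over fibers. Additivity over a finite fiber, viewed through the base change to a point, gives $\mu^+(G/U \to G/V) = [V:U]$. Multiplicativity then gives
\[
\mu^+(G/V) = \frac{\mu^*(G/U)}{[V:U]} .
\]
This proves uniqueness. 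For existence I take this as the definition, following the general extension result of Harman--Snowden for $\mathscr{E}^+$ (their treatment of measures on stabilizer classes closed under finite-index overgroups).

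\textbf{Well definedness.} Suppose $U, U' \in \mathscr{E}$ both sit in $V$ with finite index. Then $U \cap U'$ is again in $\mathscr{E}$, being the pointwise stabilizer of a union of finite sets, and it has finite index in both. So it suffices to compare $U' \subseteq U$ with $U' \in \mathscr{E}$ and $[U:U']$ finite. Here I need $\mu^*(G/U' \to G/U) = [U:U']$. I get this from the additivity axiom for $\mu^*$: restricted to a point of $G/U$, the fiber is a finite $U$-set, and the measure of a finite transitive $U$-set equals its cardinality, since it is a finite set of points. Relative maps $G/W \to G/V$ are handled by setting $\mu^+ = \mu^+(G/W)/\mu^+(G/V)$, after checking this agrees with fiberwise integration. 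The axioms (isomorphism invariance, multiplicativity, additivity over decompositions of fibers) then follow by pulling everything back to $\mathscr{E}$-level sets, where they hold for $\mu^*$, and dividing by indices.

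\textbf{Values in $\ZZ\left[\frac1N\right]$ and regularity.} This is the main obstacle, and it is where the hypothesis on $\mathrm{Aut}(X)$ enters. Take $U = G(A)$ with $A$ finite. An overgroup $V$ of finite index stabilizes the finite set $A$ setwise, up to a finite-index issue. So $[V:U]$ divides $|\mathrm{Stab}_G(\{A\})/G(A)|$, which embeds in $\mathrm{Aut}(A)$ as a structure in $\mathfrak{F}$. The claim to prove is exactly this: any open $V \supseteq G(A)$ with finite index lies in the setwise stabilizer of $A$. I would argue it from oligomorphicity. The $V$-orbit of any $a \in A$ is finite, and the algebraic closure of $A$ in a Fraïssé limit with no algebraicity is $A$ itself. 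If algebraicity can occur, I would first enlarge $A$ to its finite algebraic closure, which lies in the same class.

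Granting the claim, every prime dividing $[V:U]$ divides some $|\mathrm{Aut}(X)|$, hence divides $N$. So $1/[V:U] \in \ZZ\left[\frac1N\right]$, and $\mu^+$ takes values in $\ZZ\left[\frac1N\right]$. Regularity is immediate, since $\mu^*(G/U) \neq 0$ forces $\mu^+(G/V) \neq 0$.
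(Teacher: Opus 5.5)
Your proposal is correct and follows essentially the same route as the paper. You use the forced formula $\mu^+(G/V)=\mu^*(G/U)/[V:U]$, and the key step is the same: after replacing $A$ by its finite algebraic closure $X$, one has $G(X)\subset G(A)\subset V\subset G[X]$, so $[V:G(X)]$ divides $[G[X]:G(X)]=|\mathrm{Aut}(X)|$ and is therefore invertible in $\ZZ\left[\frac 1N\right]$. You also spell out uniqueness and independence of the choice of $U$ (via $U\cap U'$ and the fact that a finite set has measure equal to its cardinality), which the paper leaves implicit.
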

\begin{proof}
We follow the proofs of \cite[Theorem 2.15]{nekrasov2024upperboundsmeasuresdistal} and \cite[Lemma 4.4]{harman2024arborealtensorcategories}. Let $V \in \mathscr{E^+}$. By definition, there is some finite $A \subset \Omega$ such that if $X$ is the algebraic closure of $A$, then we have the finite-index containments $$G(X) \subset G(A) \subset V \subset G[X]$$ where $G[X]$ is the subgroup of $G$ fixing $X$ as a set. It follows that $$\mu^+(G/V) = [V:G(X)]^{-1} \cdot \mu^*(G/G(X)).$$ Since every prime divisor of $|\mathrm{Aut}(X)| = [G[X]:G(X)]$ divides $N$, so does every prime divisor of $[V:G(X)]$, implying that $\mu^+(G/V)$ is a well-defined $\ZZ\left[\frac 1N\right]$-valued measure. But $\mu^*$ is regular, so $\mu^+(G/V)$ is a unit in $\ZZ\left[\frac 1N\right]$ and $\mu^+$ is also regular, as desired.    
\end{proof}
\begin{remark}
    Note that the previous Lemma shows an isomorphism 
    \[\Theta (G, \mathscr{E})\left[1/N\right]\cong \Theta (G, \mathscr{E}^+)\left[1/N\right],\]
    and therefore, if one considers more general ring-valued measures, we have the equality of corresponding measures as long as $N$ is invertible in the ring.
\end{remark}
\begin{proof}[Proof of Proposition \ref{condition}]
   From Lemma \ref{eplus}, consider the rigid symmetric monoidal category $\mathbf{Perm}(G, \mathscr{E}^+;\mu^+)$ where $\mu^+$ is a regular $\ZZ\left[\frac 1N\right]$-valued measure on $(G,\mathscr{E}^+)$ that restricts to a regular measure $\mu^*$ on $(G,\mathscr{E})$. Since $\ZZ\left[\frac 1N \right]$ can be reduced mod $p$ for all $p \nmid N$, and $\mathscr{E}^+$ is upward closed under finite-index containment, we apply \cite[Theorem 7.11]{harman2024oligomorphicgroupstensorcategories} to obtain that nilpotent endomorphisms in the category have categorical trace $0$. Combined with the regularity of $\mu^+$, by \cite[Proposition 7.19]{harman2024oligomorphicgroupstensorcategories}, endomorphism algebras in $\mathbf{Perm}(G, \mathscr{E}^+;\mu^+)$ are semisimple. It follows that endomorphism algebras in $\mathbf{Perm}(G, \mathscr{E};\mu^*)$, which is a full subcategory of $\mathbf{Perm}(G, \mathscr{E}^+;\mu^+)$, are also semisimple. This implies that the Karoubi envelope of $\mathbf{Perm}(G, \mathscr{E};\mu^*)$ is a semisimple tensor category. The proposition thus follows from our discussion in \ref{perspective}.
\end{proof}
Finally, given a (not necessarily regular) measure $\mu$ with support $\mathfrak{F}'$ and induced regular measure $\mu'$, we can relate their two resulting categories through a process called semi-simplification \cite[\S 2.3]{etingof2019semisimplificationtensorcategories} as follows.
\begin{prop}[{\cite[Corollary 4.6, p.24]{harman2024arborealtensorcategories}}] \label{semi}
If $\mathbf{Rep}(\mathfrak{F}';\mu')$ is semisimple, then nilpotent endormophisms in $\mathbf{Rep}(\mathfrak{F};\mu)$ have categorical trace $0$, and $\mathbf{Rep}(\mathfrak{F}';\mu')$ is the semi-simplification of $\mathbf{Rep}(\mathfrak{F};\mu)$.
\end{prop}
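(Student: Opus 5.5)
This result is quoted from \cite[Corollary 4.6]{harman2024arborealtensorcategories}, and I would prove it by passing to the oligomorphic side, as in \S\ref{perspective}. Let $G=\mathrm{Aut}(\Omega)$, and let $\mu^*$ be the measure on $(G,\mathscr{E})$ corresponding to $\mu$. The support $\mathfrak{F}'$ is a Fra\"iss\'e subclass, and structures in it are exactly those $X$ with $\mu(X)\neq 0$.

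The plan has three steps.

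\textbf{Step 1: the trace form on $\mathbf{Perm}(\mathfrak{F};\mu)$.} A morphism $\mathrm{Vec}_X\to\mathrm{Vec}_Y$ is a linear combination of amalgamations $W$ of $X$ and $Y$. I would check from the composition formula that the pairing $(\varphi,\psi)\mapsto\mathrm{tr}(\psi\circ\varphi)$ on basis amalgamations is a sum of terms $\mu(W)$ times structure constants. Hence the pairing only sees amalgamations $W$ with $\mu(W)\neq0$, i.e.\ $W\in\mathfrak{F}'$.

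\textbf{Step 2: negligible morphisms.} Let $\mathcal{N}$ be the tensor ideal of negligible morphisms, i.e.\ those $f$ with $\mathrm{tr}(g\circ f)=0$ for every $g$. I claim $\mathcal{N}$ is spanned by two kinds of morphisms:
\begin{itemize}
\item[(i)] the basis amalgamations $W\notin\mathfrak{F}'$;
\item[(ii)] combinations of amalgamations in $\mathfrak{F}'$ that are negligible for the pairing in $\mathbf{Perm}(\mathfrak{F}';\mu')$.
\end{itemize}
If $X\notin\mathfrak{F}'$, then $\mathrm{id}_{\mathrm{Vec}_X}$ has trace $\mu(X)=0$, and more generally $\mathrm{Vec}_X$ becomes a zero object modulo negligibles. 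The key point is that the measure in (\ref{mc}) restricted to $\mathfrak{F}'$ just drops the vanishing terms, as in the proof that $\mu'$ is a measure. It follows that the quotient functor
$$\mathbf{Perm}(\mathfrak{F};\mu)\to\mathbf{Perm}(\mathfrak{F};\mu)/\mathcal{N}$$
factors through the functor $\mathbf{Perm}(\mathfrak{F};\mu)\to\mathbf{Perm}(\mathfrak{F}';\mu')$ that kills the objects $\mathrm{Vec}_X$ with $X\notin\mathfrak{F}'$ and the amalgamations outside $\mathfrak{F}'$. I would verify that this functor is a well-defined symmetric monoidal functor. This requires that the composition constants $\mu(Y_{1,3}\subset Z)$ agree when all structures lie in $\mathfrak{F}'$, and that they vanish otherwise after multiplication by the relevant $\mu$'s.

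\textbf{Step 3: identify the semisimplification.} Since $\mathbf{Rep}(\mathfrak{F}';\mu')$ is semisimple, its trace form is nondegenerate, so it has no nonzero negligible morphisms. Therefore
$$\mathbf{Rep}(\mathfrak{F};\mu)/\mathcal{N}\simeq\mathbf{Rep}(\mathfrak{F}';\mu'),$$
after passing to Karoubi envelopes, which commutes with this quotient. This is the semisimplification in the sense of \cite[\S 2.3]{etingof2019semisimplificationtensorcategories}.

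For the trace statement, let $f$ be a nilpotent endomorphism in $\mathbf{Rep}(\mathfrak{F};\mu)$. Its image under the trace-preserving functor is nilpotent in a semisimple category. There it has trace zero, since it is nilpotent in a finite-dimensional semisimple endomorphism algebra and the categorical trace is a sum of dimension-weighted ordinary traces on the simple summands. Traces are preserved by the functor, so $\mathrm{tr}(f)=0$.

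\textbf{Main obstacle.} I expect the hardest part to be Step 2: showing that the functor to $\mathbf{Perm}(\mathfrak{F}';\mu')$ is a functor, i.e.\ that composition is compatible after discarding the non-support amalgamations. The issue is that intermediate amalgamations $Z$ may lie outside $\mathfrak{F}'$ even when $Y_{1,3}$ lies in it. I would handle this via the factorization $\mu(Y_{1,3}\subset Z)=\mu(Z)/\mu(Y_{1,3})$, which makes sense because $\mu(Y_{1,3})\neq0$. This shows that such terms vanish automatically. The argument is essentially that of \cite{harman2024arborealtensorcategories}, working in $(G,\mathscr{E})$, where the support corresponds to an open union of orbits.
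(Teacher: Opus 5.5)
Your argument is correct and is essentially the standard proof of this result; the paper itself does not reprove it but only quotes \cite[Corollary 4.6]{harman2024arborealtensorcategories}. That proof constructs the full, trace-preserving symmetric monoidal functor $\mathbf{Perm}(\mathfrak{F};\mu)\to\mathbf{Perm}(\mathfrak{F}';\mu')$ that kills objects and amalgamations outside the support (functoriality via $\mu(Y_{1,3}\subset Z)=\mu(Z)/\mu(Y_{1,3})$), then uses semisimplicity of the target to get trace zero for nilpotents and to identify the kernel with the negligible ideal.

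Your Step 1 can be sharpened: the trace pairing is diagonal in the amalgamation basis, with $\mathrm{tr}(W'\circ W)=\mu(W)$ if $W'$ is the transpose of $W$ and $0$ otherwise. Hence the negligible ideal is exactly the span of amalgamations outside $\mathfrak{F}'$, and regularity of $\mu'$ alone already rules out nonzero negligibles in the target, so semisimplicity is really needed only for the nilpotent-trace statement.
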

\subsection{Results} \label{results}
Recall the collection of induced subclasses $\partial \mathfrak{T}_3(n)^{\mathrm{ord}}_I$ with $n$ colors each consisting of trees where colors monotonically increase along every root-leaf path and containing $X_{i,i}$ if and only if $i \in I$. There are $2^n$ of these induced subclasses, each equipped with an induced regular measure $\mu_n^I$. Below are the central constructions of our paper, which we have worked towards in the past sections.
\begin{theorem} \label{main}
For each integer $n \geq 1$ and each subset $I \subseteq [n]$, the category $\mathbf{Rep}(\partial \mathfrak{T}_3(n)^{\mathrm{ord}}_I;\mu^I_n)$ is a semisimple tensor category.
\end{theorem}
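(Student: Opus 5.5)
The plan is to apply the semisimplicity criterion, Proposition \ref{condition}, with $N = 2$. There are three hypotheses to check for the class $\mathfrak{F} = \partial \mathfrak{T}_3(n)^{\mathrm{ord}}_I$ and the measure $\mu = \mu_n^I$.

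First, $\mathfrak{F}$ is a Fra\"iss\'e class. This holds because it is an induced subclass: it is the support of a measure on $\partial \mathfrak{T}_3(n)$ by Proposition \ref{714}, and supports of measures are Fra\"iss\'e by the result quoted from \cite{harman2024arborealtensorcategories}.

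Second, $\mu_n^I$ is regular and valued in $\ZZ\left[\frac 12\right]$. Regularity is immediate because $\mu_n^I$ is an induced regular measure. The valuedness follows from Corollary \ref{image}, or directly from the explicit formula in Proposition \ref{formula}. There $P_I(i) \in \{1, -\frac12\}$ and $1+P_I(i) \in \{2, \frac12\}$, so every value $\mu_n^I(T)$ is $\pm$ a power of two. For an embedding $i: Y \to X$ we have $\mu(i) = \mu(X)\mu(Y)^{-1}$, which is again $\pm 2^k$. So all values lie in $\ZZ\left[\frac 12\right]$, and are even units there.

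Third, the key combinatorial input: for every tree structure $T \in \partial \mathfrak{T}_3(n)$ (in particular every $T \in \mathfrak{F}$), $|\mathrm{Aut}(T)|$ is a power of $2$. I would prove this by induction on the number of leaves. An automorphism of the leaf structure $(L(T), S)$ is induced by a color-preserving automorphism of the underlying reduced rooted binary tree. This uses Cameron's reconstruction, adapted to the ternary relation $S$: the rooted tree, including node colors, is determined by the leaf structure, so the two automorphism groups coincide. Such an automorphism fixes the root $r$. Let the two subtrees at $r$ be $T_1, T_2$. Then
\[
\mathrm{Aut}(T) \cong \begin{cases} \mathrm{Aut}(T_1) \times \mathrm{Aut}(T_2) & \text{if } T_1 \not\cong T_2, \\ \mathrm{Aut}(T_1) \wr \ZZ/2 & \text{if } T_1 \cong T_2, \end{cases}
\]
so $|\mathrm{Aut}(T)|$ is either $|\mathrm{Aut}(T_1)||\mathrm{Aut}(T_2)|$ or $2|\mathrm{Aut}(T_1)|^2$, and the induction closes. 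The base cases of zero or one leaf give the trivial group. Thus every prime divisor of $|\mathrm{Aut}(X)|$ divides $2$; this is also the fact referred to as Proposition \ref{poweroftwo}.

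With these three facts, Proposition \ref{condition} applies with $N=2$ and shows that $\mathbf{Rep}(\partial \mathfrak{T}_3(n)^{\mathrm{ord}}_I;\mu^I_n)$ is a semisimple tensor category. I expect the main obstacle to be the third step. The point to handle carefully is that automorphisms of the structure $(L(T),S)$ really correspond to rooted, node-coloring-preserving tree automorphisms. For this I would argue that the rooted tree can be recovered from $S$: $S(a,b,c)$ holds exactly when $b$ and $c$ meet strictly below their meeting point with $a$, so the clusters $\{b,c\} \cup \{x : S(y,b,x)\ \text{fails for appropriate } y\}$ recover the nodes. The node colors are part of the structure, since $S$ is enriched by node colors in the class. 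Once this identification is in place, the wreath-product recursion is routine.
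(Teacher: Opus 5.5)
Your proposal is correct and follows the paper's proof. The paper also obtains Theorem \ref{main} from Proposition \ref{condition} with $N=2$, using the regularity and $\ZZ\left[\frac12\right]$-valuedness of $\mu_n^I$ together with Lemma \ref{poweroftwo}, which it proves by your induction at the root: a direct product when the two subtrees are non-isomorphic, and an extra $\ZZ/2\ZZ$ factor when they are isomorphic. Your argument that automorphisms of the leaf structure come from color-preserving rooted tree automorphisms is a step the paper only asserts, so it adds rigor but does not change the route.
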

We will show the above theorem by applying Proposition \ref{condition}. Since $\mu_n^I$ is always regular on its induced subclass, it suffices to check the semisimplicity criterion.
\begin{lemma} \label{poweroftwo}
    For each $n \geq 1$ and $T \in \partial \mathfrak{T}_3(n)$, we have $|\mathrm{Aut}(T)| = 2^k$ for some nonnegative integer $k$.
\end{lemma}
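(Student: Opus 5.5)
We prove the statement by strong induction on the number of leaves $\ell(T)$, using the fact that an automorphism of the structure $L(T)$ (preserving the ternary relation $S$) is the same as a color-preserving automorphism of the underlying node-colored rooted binary tree fixing the root. The first step is to record this identification. The relation $S$ determines the rooted tree shape, since it reconstructs the nested system of clades $\{$leaves below a node$\}$. It also determines the node colors, since structures are considered up to isomorphism of node-colored trees. Hence every structure automorphism $\sigma$ of $L(T)$ extends uniquely to a tree automorphism that fixes the root and preserves node colors, and conversely. Equivalently, the argument can be carried out directly on leaf sets via the clade system.

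For the induction, the cases $\ell(T)\le 1$ are trivial: $\mathrm{Aut}(T)$ is trivial. If $\ell(T)\ge 2$, the root $r$ is a node with exactly two children, whose subtrees $T_1$ and $T_2$ have strictly fewer leaves. Any automorphism fixes $r$, so it either preserves each of the two clades $L(T_1)$ and $L(T_2)$ or swaps them. The clades are exactly the two maximal proper blocks. This gives a homomorphism $\mathrm{Aut}(T)\to \ZZ/2$ whose kernel is $\mathrm{Aut}(T_1)\times \mathrm{Aut}(T_2)$. The kernel consists of the automorphisms preserving each side, and these restrict to automorphisms of each subtree because colors and $S$ restrict. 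Conversely, independent automorphisms of $T_1$ and $T_2$ glue to an automorphism of $T$. Therefore
\[
|\mathrm{Aut}(T)| \in \{\,|\mathrm{Aut}(T_1)|\,|\mathrm{Aut}(T_2)|,\ 2\,|\mathrm{Aut}(T_1)|\,|\mathrm{Aut}(T_2)|\,\},
\]
where the second option occurs exactly when $T_1\cong T_2$ as node-colored rooted trees. By the inductive hypothesis, both options are powers of two.

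The only real obstacle is the identification step: checking that structure automorphisms of $(L(T),S)$ correspond exactly to color-preserving rooted tree automorphisms. In particular, we need that the clades are recoverable from $S$. For distinct leaves $b,c$, the clade of their lowest common ancestor is $\{b,c\}\cup\{x : \neg S(x,b,c)\}$. For the root split, leaves $b,c$ lie on the same side exactly when some third leaf $a$ satisfies $S(a,b,c)$, or when $\ell(T)=2$. Once this is in place, the remaining steps are routine group theory: the kernel-and-quotient count is the standard wreath-type recursion.
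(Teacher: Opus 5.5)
Your proof is correct and is essentially the paper's argument: split at the root, observe that every automorphism preserves or swaps the two root subtrees, and conclude that the order is $|\mathrm{Aut}(T_1)|\,|\mathrm{Aut}(T_2)|$ or twice that. The only difference is that you justify, via clade recovery, the identification of structure automorphisms with tree automorphisms, which the paper simply asserts (as $\mathrm{Aut}(T)\le\mathrm{Aut}(X)$ followed by Lagrange). One harmless slip: the clause ``or when $\ell(T)=2$'' in your root-split criterion is wrong, because with two leaves they lie on opposite sides of the root; for distinct $b,c$ the correct test is just the existence of a third leaf $a$ with $S(a,b,c)$, and your main argument does not depend on that remark.
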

\begin{proof}
    Let $\mathrm{Aut}(X)$ denote the automorphism group of the entire tree (which also includes nodes) preserving edge relations and relation $S$. Note that $\mathrm{Aut}(T)$ is a subgroup of $\mathrm{Aut}(X)$, so it suffices to show that $|\mathrm{Aut}(X)|$ is a power of two.

    We proceed by induction on $|X|$. For the base case $|X|=1$, $X$ is the one-leaf tree where the only automorphism is the identity map, so $|\mathrm{Aut}(X)|=1$. Now, given a tree $T$, suppose its root branches out to subtrees $U$ and $V$:
    \begin{center}
    \begin{tikzpicture}
        \node (0) at (0,0) [circle, fill, inner sep = 1.5pt] {};
        \node (1) at (-1/2,1/2) [rectangle, draw, inner sep = 1.5pt] {$U$};
        \node (2) at (1/2,1/2) [rectangle, draw, inner sep = 1.5pt] {$V$};
        \draw (0)--(1);
        \draw (0)--(2);
    \end{tikzpicture}
    \end{center}
If $U \not\cong V$, then $\mathrm{Aut}(X)$ acts on $U$ and $V$ separately, so $\mathrm{Aut}(X) \cong \mathrm{Aut}(U) \times \mathrm{Aut}(V)$. Since $|U|,|V|<|X|$, by the inductive hypothesis, $$|\mathrm{Aut}(X)|=|\mathrm{Aut}(U)||\mathrm{Aut}(V)|$$ is indeed a power of two.

If $U \cong V$, then there is an automorphism $s$ swapping the children of the root, thus also swapping $U$ and $V$. Notice that $\{1,s\} \cong \ZZ/2\ZZ$ permutes the normal subgroup $\mathrm{Aut}(U) \times \mathrm{Aut}(V) \unlhd \mathrm{Aut}(X)$, so we have $$\mathrm{Aut}(T) \cong (\mathrm{Aut}(U) \times \mathrm{Aut}(V)) \rtimes \ZZ/2\ZZ,$$ which implies that $$|\mathrm{Aut}(X)| = 2|\mathrm{Aut}(U)||\mathrm{Aut}(V)|.$$ This is again a power of two by the inductive hypothesis.
\end{proof}
This completes the proof of Theorem \ref{main}. For each subset $I$ of $[n]$, let $\tilde{\mu}^I_n$ denote an extension of $\mu^I_n$ to $\partial \mathfrak{T}_3(n) \supseteq \partial \mathfrak{T}_3(n)^{\mathrm{ord}}_I$.
\begin{corollary}
    Let $n \geq 1$ be an integer. Then nilpotent endomorphisms in $\mathbf{Rep}(\partial \mathfrak{T}_3(n);\tilde{\mu}^I_n)$ have categorical trace $0$, and the semi-simplification of \\$\mathbf{Rep}(\partial \mathfrak{T}_3(n);\tilde{\mu}^I_n)$ is $\mathbf{Rep}(\partial \mathfrak{T}_3(n)^{\mathrm{ord}}_I;\mu^I_n)$.
\end{corollary}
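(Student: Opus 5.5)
The plan is to combine Theorem \ref{main} with Proposition \ref{semi}, taking $\mathfrak{F} = \partial \mathfrak{T}_3(n)$, $\mu = \tilde{\mu}^I_n$, $\mathfrak{F}' = \partial \mathfrak{T}_3(n)^{\mathrm{ord}}_I$ and $\mu' = \mu^I_n$. Proposition \ref{semi} needs two inputs: that $\partial \mathfrak{T}_3(n)^{\mathrm{ord}}_I$ is exactly the support of $\tilde{\mu}^I_n$ with induced regular measure $\mu^I_n$, and that $\mathbf{Rep}(\partial \mathfrak{T}_3(n)^{\mathrm{ord}}_I;\mu^I_n)$ is semisimple. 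Given both, the conclusion follows immediately.

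First I fix what $\tilde{\mu}^I_n$ is. By Proposition \ref{714}, the subclass $\partial \mathfrak{T}_3(n)^{\mathrm{ord}}_I$ lies in $\mathcal{F}'_n$ and is an induced subclass. Its proof exhibits an explicit assignment
\[
(\mathbf{B}(i),\mathbf{C}(i,j),\mathbf{C}(j,i)) = (\mathbf{S}(i),\mathbf{S}(i),0) \quad (i<j), \qquad \mathbf{C}(i,i) = \mathbf{1}_I(i)\,\mathbf{S}(i).
\]
Through Proposition \ref{biject} this is an $n$-code, hence a genuine measure on $\partial \mathfrak{T}_3(n)$. So I read "an extension" as any measure $\tilde{\mu}^I_n$ on $\partial \mathfrak{T}_3(n)$ whose support is $\partial \mathfrak{T}_3(n)^{\mathrm{ord}}_I$, and this one exists. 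By the uniqueness part of Proposition \ref{714} (or the proposition before it, on supports in larger classes), the restriction of any such measure to its support equals $\mu^I_n$. This gives the identification of $\mu'$ with $\mu^I_n$.

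Next, Theorem \ref{main} supplies semisimplicity of $\mathbf{Rep}(\partial \mathfrak{T}_3(n)^{\mathrm{ord}}_I;\mu^I_n)$. That theorem rests on Proposition \ref{condition}, which in turn uses Lemma \ref{poweroftwo} and Corollary \ref{image} with $N=2$. Applying Proposition \ref{semi} then yields both claims: nilpotent endomorphisms in $\mathbf{Rep}(\partial \mathfrak{T}_3(n);\tilde{\mu}^I_n)$ have trace $0$, and the semisimplification is $\mathbf{Rep}(\partial \mathfrak{T}_3(n)^{\mathrm{ord}}_I;\mu^I_n)$.

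The only delicate point is that Proposition \ref{semi} requires $\mathfrak{F}'$ to be exactly the support of $\mu$, not merely a subclass on which $\mu$ is nonzero. If the phrase "an extension" were read as an arbitrary measure agreeing with $\mu^I_n$ on the subclass, I would have to rule out a support strictly larger than $\partial \mathfrak{T}_3(n)^{\mathrm{ord}}_I$. I would handle this through three-leaf trees, using Corollary \ref{class}. The support is Fra\"iss\'e, contains all $n$ colors, and by Lemma \ref{reduce} contains at most one of each pair $X_{i,j}, X_{j,i}$. It already contains $X_{i,j}$ for $i<j$, so it cannot contain $X_{j,i}$. The diagonal values $\mathbf{C}(i,i)$ are fixed by agreement with $\mu^I_n$ on $X_{i,i}$ whenever $i \in I$. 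For $i \notin I$ some further care is needed; I would resolve it by taking the extension to be the explicit one above, where $\mathbf{C}(i,i)=0$ for $i \notin I$.
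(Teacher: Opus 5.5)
Your proposal is correct and is the paper's argument: Theorem \ref{main} gives semisimplicity of $\mathbf{Rep}(\partial \mathfrak{T}_3(n)^{\mathrm{ord}}_I;\mu^I_n)$, the support of $\tilde{\mu}^I_n$ is $\partial \mathfrak{T}_3(n)^{\mathrm{ord}}_I$ with induced regular measure $\mu^I_n$, and Proposition \ref{semi} then yields both claims. Your final caveat is unnecessary, because the extension is unique: agreement with $\mu^I_n$ already fixes $\mathbf{B}(i)=\mathbf{S}(i)\neq 0$ and every off-diagonal $\chi(i,j)$, so the upper-triangular system $\mathbf{D}\mathbf{s}=\mathbf{1}$ forces $\chi(i,i)=\mathbf{1}_I(i)$.
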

\begin{proof}
    By Theorem \ref{main}, $\mathbf{Rep}(\partial \mathfrak{T}_3(n)^{\mathrm{ord}}_I;\mu^I_n)$ is a semisimple tensor category. The support of $\tilde{\mu}^I_n$ is $\partial \mathfrak{T}_3(n)^{\mathrm{ord}}_I$, whose induced regular measure is $\mu^I_n$, so the corollary follows from Proposition \ref{semi}.
\end{proof}
Finally, we explicitly show that our categories have superexponential growth, which is an essential property of tensor categories arising from measures.
\begin{prop}
For each $n \geq 1$ and each nonempty $I \subseteq [n]$, the tensor category $\mathbf{Rep}(\partial \mathfrak{T}_3(n)^{\mathrm{ord}}_I;\mu^I_n)$ is of superexponential growth.
\end{prop}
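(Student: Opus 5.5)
Since $I$ is nonempty, we fix $i \in I$. The plan is to take $X$ to be the one-leaf structure, so the object is $\mathrm{Vec}_\bullet$, and to show that the length of $\mathrm{Vec}_\bullet^{\otimes m}$ grows faster than any exponential. Because the category is semisimple (Theorem \ref{main}), the length of an object $Y$ is at least the number of indecomposable summands of $Y$. We will bound this from below by exhibiting many pairwise non-isomorphic summands $\mathrm{Vec}_Z$.

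By definition of the tensor product, $\mathrm{Vec}_\bullet^{\otimes m}=\bigoplus_Z \mathrm{Vec}_Z$, where $Z$ runs over the iterated amalgamations of $m$ one-point structures. These amalgamations are exactly the structures with at most $m$ leaves, each counted once for every surjection from $[m]$ onto its leaf set up to automorphism. In particular the sum contains, for every structure $T$ with $m$ leaves, at least $m!/|\mathrm{Aut}(T)|$ summands isomorphic to $\mathrm{Vec}_T$. Each $\mathrm{Vec}_T$ is a nonzero object, since its dimension is $\mu_n^I(T)\neq 0$ by regularity. Hence
\[
\mathrm{length}(\mathrm{Vec}_\bullet^{\otimes m}) \;\ge\; \sum_{|L(T)|=m} \frac{m!}{|\mathrm{Aut}(T)|} \;\ge\; \frac{m!}{2^{m-1}}\cdot \#\{T\in \partial \mathfrak{T}_3(n)^{\mathrm{ord}}_I : |L(T)|=m\}.
\]
For the second inequality we use Lemma \ref{poweroftwo} together with the observation that a rooted binary tree with $m$ leaves has $m-1$ nodes, so $|\mathrm{Aut}(T)|\le 2^{m-1}$; this bound holds because each node contributes at most one swap.

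Next we note that $\partial \mathfrak{T}_3(n)^{\mathrm{ord}}_I$ contains every binary tree all of whose nodes have color $i$. These trees are allowed because $X_{i,i}$ lies in the class and colors along any path are then trivially nondecreasing. So there is at least one such $T$ with $m$ leaves; in fact there are Wedderburn--Etherington many, but one suffices. This gives a length of at least $m!/2^{m-1}$, which is superexponential, and rules out any bound $N^m$.

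The main obstacle is justifying the assertion that the length is at least the number of summands, i.e.\ that each $\mathrm{Vec}_Z$ is nonzero in the Karoubi envelope. We plan to argue this via the identity endomorphism: its trace is $\mu_n^I(Z)\neq 0$, so $\mathrm{id}\neq 0$. Semisimplicity then shows that each such summand contributes length at least $1$. A second point to check is the multiplicity count of amalgamations. If that bookkeeping proves awkward, we would instead use the cruder fact that $\mathrm{Vec}_\bullet^{\otimes m}$ contains the summand $\mathrm{Vec}_T$ with $\dim\mathrm{End}(\mathrm{Vec}_T)=|\mathrm{Amalg}(T,T)|\ge m!/2^{m-1}$. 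In a semisimple category, $\dim\mathrm{End}(Y)\le \mathrm{length}(Y)^2$, which again yields superexponential growth.
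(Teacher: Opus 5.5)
Your main argument is correct and is essentially the paper's. Both expand $\mathrm{Vec}_\bullet^{\otimes m}$ and count its $m$-leaf summands, each $T$ appearing $m!/|\mathrm{Aut}(T)|$ times (the paper's $f_m(T)$); you bound the length directly by additivity over nonzero summands and handle general $n$ and $I$ through the all-$i$ trees, whereas the paper passes through $\dim\mathrm{End}(\mathrm{Vec}_\bullet^{\otimes m})\ge\bigl(\sum_{|X|=m}f_m(X)\bigr)^2$ together with $\dim\mathrm{End}\le\mathrm{length}^2$ and uses the exact count $(2m-3)!!$ for $n=1$.

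Discard the fallback, though. The bound $|\mathrm{Amalg}(T,T)|\ge m!/2^{m-1}$ for a single summand is unjustified and false in general. For the all-$i$ comb, the two copies' spines can only share an initial segment along which their leaves branch off in order, so self-amalgamations grow only exponentially in $m$. The endomorphism route therefore works only as in the paper, by summing $f_m(X)f_m(X')\,|\mathrm{Amalg}(X,X')|$ over all pairs of $m$-leaf summands.
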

\begin{proof}
We resolve the $n=1$ case, from which the result easily follows for larger $n$. For each structure $X$, let $f_m(X)$ be the number of ways to label the elements of $X$ with $\{1, \dots, m\}$ up to an automorphism (where every element has at least one label). For example, if $X$ is the tree structure with two leaves, then $f_m(X)=2^{m-1}-1$ for all $m \geq 2$. Let $T$ be the one-leaf tree structure. By the definition of the tensor product, we find that $$\mathrm{Vec}_T^{\otimes m} = \bigoplus_{|X| \leq m} \mathrm{Vec}_X^{\oplus f_m(X)}$$ where each copy of $\mathrm{Vec}_X$ comes from a distinct choice of embeddings $(i_1, \dots, i_m) : T^m \to X$, which there are precisely $f_m(X)$ of. Now, since $\mathbf{Rep}(\partial \mathfrak{T}_3(1)^{\mathrm{ord}}_I;\mu^I_1)$ is a semisimple tensor category (Theorem \ref{main}), the inequalities $$\mathrm{length}(X^{\otimes m}) \leq \dim \End (X^{\otimes m}) \leq \mathrm{length}(X^{\otimes m})^2$$ hold for all objects $X$ in the category, so we alternatively consider $$\mathrm{End}(\mathrm{Vec}_T^{\otimes m}) = \mathrm{End}\left(\bigoplus_{|X| \leq m} \mathrm{Vec}_X^{\oplus f_m(X)}\right) = \bigoplus_{|X|,|X'| \leq m} \mathrm{Hom}\left(\mathrm{Vec}_X^{\oplus f_m(X)},\mathrm{Vec}_{X'}^{\oplus f_m(X')}\right).$$ In particular,  \begin{align*}
    \dim \mathrm{End}(\mathrm{Vec}_T^{\otimes m}) &\geq \dim \bigoplus_{|X|=|X'| = m} \mathrm{Hom}\left(\mathrm{Vec}_X^{\oplus f_m(X)},\mathrm{Vec}_{X'}^{\oplus f_m(X')}\right) \\ &= \sum_{|X| = |X'| = m} f_m(X)f_m(X')\left|\mathrm{Amalg}(X,X')\right| \\ &\geq \left(\sum_{|X| = m} f_m(X)\right)^2. 
\end{align*} 
Observe that the sum inside parentheses is just the number of labeled $m$-element structures in $\partial \mathfrak{T}_3(1)$, which is equal to $(2m-3)!! = (2m-3)(2m-5) \cdots 3 \cdot 1$ by \cite[A001147]{oeis}. As a result, we obtain \begin{align*}
    \mathrm{length}(X^{\otimes m}) &\geq \sqrt{\dim \mathrm{End}(\mathrm{Vec}_T^{\otimes m})} \\ &\geq \sum_{|X|=m} f_m(X) \\ &= (2m-3)!! \\ &= \exp{(\Theta(m \log m))},
    \end{align*} which is superexponential growth.
\end{proof}
\begin{remark}
    If $I$ is empty, then $X_{i,i} \notin \partial \mathfrak{T}_3(n)_I^{\mathrm{ord}}$ for all $i \in [n]$. As a result, its Fra\"iss\'e limit is finite (with $2^n$ elements), so $\mathbf{Rep}(\partial \mathfrak{T}_3(n)_{\varnothing}^{\mathrm{ord}};\mu_n^{\varnothing})$ is a symmetric fusion category with exponential growth, and thus equivalent to the category of representations of an affine supergroup scheme by Deligne's theorem.
\end{remark}
\newpage
\appendix
\section{\texorpdfstring{Measures on $\partial \mathfrak{T}_3(2)$ and their supports}{Measures on dT3(2) and their induced subclasses}} \label{list}
In the table below, we list the values of each of the $36$ measures on $\partial \mathfrak{T}_3(2)$. In particular, the listed values are for the irreducible marked structures in $\Theta(\partial \mathfrak{T}_3(2))$ that generate the rest of the measure ring. The last column indicates the supports of the measures (\ref{induced}, \ref{n=2}), which is labeled as ``Trivial" if the support is finite.
\begin{table}[ht]
\centering
\begin{tabular}{c|rrrrrr|c}

 & $B(1)$ & $C(1,1)$ & $C(1,2)$ & $B(2)$ & $C(2,1)$ & $C(2,2)$ & Support \\
\hline
$\mu_{1}$  & -0.5  & -0.5  & -0.5  & 0     & 0     & -1    & \multirow{8}{*}{$\partial \mathfrak{T}_3(1)$} \\
$\mu_{2}$  & -0.5  & -0.5  & -0.5  & 0     & 0     & 0     & \\
$\mu_{3}$  & -0.5  & -0.5  & 0     & 0     & 0     & -0.5  & \\
$\mu_{4}$  & -0.5  & -0.5  & 0     & 0     & 0     & 0     & \\
$\mu_{5}$  & 0     & -1    & 0     & -0.5  & -0.5  & -0.5  & \\
$\mu_{6}$  & 0     & -0.5  & 0     & -0.5  & 0     & -0.5  & \\
$\mu_{7}$  & 0     & 0     & 0     & -0.5  & -0.5  & -0.5  & \\
$\mu_{8}$  & 0     & 0     & 0     & -0.5  & 0     & -0.5  & \\
\hline
$\mu_{9}$ & -1    & -1    & -1    & 1     & 0     & 0     & $\partial\mathfrak{T}_3(2)^{\mathrm{nt}\text{-}1}$ \\

$\mu_{10}$ & 0.5   & 0     & 0.5   & -0.5  & 0     & -0.5  & $\partial\mathfrak{T}_3(2)^{\mathrm{nr}\text{-}2}$ \\

$\mu_{11}$ & -0.25 & -0.25 & -0.25 & -0.5  & 0     & -0.5  & $\partial \mathfrak{T}_3(2)^{\mathrm{ord}}$ \\

$\mu_{12}$ & 1     & 0     & 0     & -1    & -1    & -1    & $\partial\mathfrak{T}_3(2)^{\mathrm{nt}\text{-}2}$ \\

$\mu_{13}$ & -0.5  & -0.5  & 0     & 0.5   & 0.5   & 0     & $\partial\mathfrak{T}_3(2)^{\mathrm{nr}\text{-}1}$ \\

$\mu_{14}$  & -0.5  & -0.5  & 0     & -0.25 & -0.25 & -0.25 & $\partial \mathfrak{T}_3(2)^{\mathrm{rev}}$ \\
\hline
$\mu_{15}$ & 0     & -2    & 0     & 0     & -1    & -1    & \multirow{22}{*}{Trivial} \\
$\mu_{16}$ & 0     & -1    & -1    & 0     & 0     & 0     & \\
$\mu_{17}$ & 0     & -1    & 0     & 0     & 0     & -1    & \\
$\mu_{18}$ & 0     & -1    & -1    & 0     & 0     & -2    &  \\
$\mu_{19}$ & 0     & -1    & 0     & 0     & 0     & 0     &  \\
$\mu_{20}$ & 0     & -0.5  & 0     & 0     & 0.5   & 0     &  \\
$\mu_{21}$ & 0     & 0     & 0     & 0     & -1    & -1    &  \\
$\mu_{22}$ & 0     & 0     & 0     & 0     & 0     & -1    &  \\
$\mu_{23}$ & 0     & 0     & 0     & 0     & 0     & 0     &  \\
$\mu_{24}$ & 0     & 0     & 0.5   & 0     & 0     & -0.5  & \\
$\mu_{25}$ & 0     & 0     & 0     & 0     & 0.5   & 0     &  \\
$\mu_{26}$ & 0     & 0     & 0.5   & 0     & 0     & 0     &  \\
$\mu_{27}$ & 0     & -2    & 0     & 1     & 0     & 0     &  \\
$\mu_{28}$ & 0     & -1    & 0     & 1     & 1     & 0     &  \\
$\mu_{29}$ & 0     & 0     & 0     & 1     & 0     & 0     &  \\
$\mu_{30}$ & 0     & 0     & 0     & 1     & 1     & 0     &  \\
$\mu_{31}$ & 1     & 0     & 0     & 0     & 0     & 0     &  \\
$\mu_{32}$ & 1     & 0     & 1     & 0     & 0     & -1    & \\
$\mu_{33}$ & 1     & 0     & 0     & 0     & 0     & -2    &  \\
$\mu_{34}$ & 1     & 0     & 1     & 0     & 0     & 0     &  \\
$\mu_{35}$ & 1     & 0     & 0     & 2     & 2     & 0     &  \\
$\mu_{36}$ & 2     & 0     & 2     & 1     & 0     & 0     &  \\
\bottomrule
\end{tabular}
\end{table}

\bibliographystyle{amsalpha}
\bibliography{biblio}

@misc{harman2024oligomorphicgroupstensorcategories,
  author       = {Nate Harman and Andrew Snowden},
  title        = {Oligomorphic groups and tensor categories},
  year         = {2024},
  howpublished = {arXiv:2204.04526 [math.RT]. Available at \url{https://arxiv.org/abs/2204.04526}}
}

@misc{oeis,
  author       = {{OEIS Foundation Inc.}},
  title        = {The {O}n-{L}ine {E}ncyclopedia of {I}nteger {S}equences},
  year         = {2025},
  howpublished = {Published electronically at \url{https://oeis.org}}
}

@misc{harman2024arborealtensorcategories,
  author       = {Nate Harman and Ilia Nekrasov and Andrew Snowden},
  title        = {Arboreal tensor categories},
  year         = {2024},
  howpublished = {arXiv:2308.06660 [math.RT]. Available at \url{https://arxiv.org/abs/2308.06660}}
}

@misc{snowden2024thirtysevenmeasurespermutations,
  author       = {Andrew Snowden},
  title        = {The thirty-seven measures on permutations},
  year         = {2024},
  howpublished = {arXiv:2404.08775 [math.CO]. Available at \url{https://arxiv.org/abs/2404.08775}}
}

@article{Del02,
  author  = {Pierre Deligne},
  title   = {Cat{\'e}gories tensorielles},
  journal = {Mosc. Math. J.},
  volume  = {2},
  year    = {2002},
  pages   = {227--248},
  doi     = {10.17323/1609-4514-2002-2-2-227-248},
  note    = {Available at \url{http://mi.mathnet.ru/mmj54}}
}

@misc{snowden2023measurescoloredcircle,
  author       = {Andrew Snowden},
  title        = {Measures for the colored circle},
  year         = {2023},
  howpublished = {arXiv:2302.08699 [math.CO]. Available at \url{https://arxiv.org/abs/2302.08699}}
}

@article{Fraisse1953,
  author  = {Fra\"iss\'e, Roland},
  title   = {Sur certaines relations qui g\'en\'eralisent l'ordre des nombres rationnels},
  journal = {C. R. Acad. Sci. Paris},
  volume  = {237},
  year    = {1953},
  pages   = {540--542}
}

@article{treelikeobjects,
  author  = {Cameron, Peter J.},
  title   = {Some treelike objects},
  journal = {Quart. J. Math. Oxford Ser. (2)},
  volume  = {38},
  year    = {1987},
  pages   = {155--183},
  doi     = {10.1093/qmath/38.2.155},
  note    = {Available at \url{https://doi.org/10.1093/qmath/38.2.155}}
}

@misc{M2,
  author       = {Grayson, Daniel R. and Stillman, Michael E.},
  title        = {Macaulay2, a software system for research in algebraic geometry},
  year         = {2026},
  howpublished = {Available at \url{http://www2.macaulay2.com}}
}

@article{Cameron95,
  author  = {Cameron, Peter J.},
  title   = {Counting two-graphs related to trees},
  journal = {Electron. J. Combin.},
  volume  = {2},
  year    = {1995},
  pages   = {Research Paper 4, approx.\ 8},
  doi     = {10.37236/1198},
  note    = {Available at \url{https://doi.org/10.37236/1198}}
}

@misc{Nekrasov2024TensorialMeasures,
  author       = {Nekrasov, Ilia},
  title        = {Tensorial measures on {$\omega$}-categorical structures},
  year         = {2024},
  howpublished = {Deep Blue Documents Repository, University of Michigan. Available at \url{https://deepblue.lib.umich.edu/items/949af992-59ad-4437-82a5-fefa8d6f4585}}
}

@misc{nekrasov2024upperboundsmeasuresdistal,
  author       = {Ilia Nekrasov and Andrew Snowden},
  title        = {Upper bounds for measures on distal classes},
  year         = {2024},
  howpublished = {arXiv:2407.19131 [math.RT]. Available at \url{https://arxiv.org/abs/2407.19131}}
}

@book{etingof2017tensor,
  author    = {Etingof, Pavel and Gelaki, Shlomo and Nikshych, Dmitri and Ostrik, Victor},
  title     = {Tensor categories},
  publisher = {American Mathematical Society},
  year      = {2017}
}

@misc{etingof2019semisimplificationtensorcategories,
  author       = {Pavel Etingof and Victor Ostrik},
  title        = {On semisimplification of tensor categories},
  year         = {2019},
  howpublished = {arXiv:1801.04409 [math.RT]. Available at \url{https://arxiv.org/abs/1801.04409}}
}

@article{Knop_2006,
  author  = {Knop, Friedrich},
  title   = {A construction of semisimple tensor categories},
  journal = {C. R. Math. Acad. Sci. Paris},
  volume  = {343},
  year    = {2006},
  pages   = {15--18},
  doi     = {10.1016/j.crma.2006.05.009},
  note    = {Available at \url{https://doi.org/10.1016/j.crma.2006.05.009}}
}

@article{Knop_2007,
  author  = {Knop, Friedrich},
  title   = {Tensor envelopes of regular categories},
  journal = {Adv. Math.},
  volume  = {214},
  year    = {2007},
  pages   = {571--617},
  doi     = {10.1016/j.aim.2007.03.001},
  note    = {Available at \url{https://doi.org/10.1016/j.aim.2007.03.001}}
}

@misc{snowden2023fastgrowingtensorcategories,
  author       = {Andrew Snowden},
  title        = {Some fast-growing tensor categories},
  year         = {2023},
  howpublished = {arXiv:2305.18230 [math.RT]. Available at \url{https://arxiv.org/abs/2305.18230}}
}

@misc{snowden2024representationtheorysymmetrygroup,
  author       = {Andrew Snowden},
  title        = {On the representation theory of the symmetry group of the {C}antor set},
  year         = {2024},
  howpublished = {arXiv:2308.06648 [math.RT]. Available at \url{https://arxiv.org/abs/2308.06648}}
}

@misc{kriz_quantum_delannoy_categories,
  author       = {Kriz, Sophie},
  title        = {Quantum {D}elannoy categories},
  year         = {2023},
  howpublished = {Available at \url{https://krizsophie.github.io/QuantumDelannoyCategory23111.pdf}}
}

\end{document}